\documentclass[a4paper,11pt]{amsart}
\usepackage{amsmath}
\usepackage{cases}

\usepackage{amsfonts}
\usepackage[colorlinks,linkcolor=blue,citecolor=blue]{hyperref}
\usepackage{latexsym, amssymb, amsmath, amsthm, bbm}
\usepackage[all]{xy}
\usepackage{pgfplots}

\DeclareSymbolFont{EulerExtension}{U}{euex}{m}{n}
\DeclareMathSymbol{\euintop}{\mathop} {EulerExtension}{"52}
\DeclareMathSymbol{\euointop}{\mathop} {EulerExtension}{"48}

\allowdisplaybreaks[4]

\setlength{\textwidth}{5.6truein}
\setlength{\textheight}{8.2truein}
\setlength{\topmargin}{-0.13truein}
\setlength{\parindent}{0pt}
\addtolength{\parskip}{5pt}

\def \id{\operatorname{Id}}
\def \ker{\operatorname{Ker}}
\def \ord{\operatorname{ord}}
\def \mi{\operatorname{min}}
\def \im{\operatorname{im}}
\def \io{\operatorname{io}}

\def \D{\Delta}

\def \e{\varepsilon}

\def \N{\mathbb{N}}
\def \Z{\mathbb{Z}}
\def \unit{\mathbbm{1}}
\def \k{\mathbbm{k}}
\def \To{\longrightarrow}
\def \dim{\operatorname{dim}}

\def \Hom{\operatorname{Hom}}

\def \Id{\operatorname{Id}}
\def \ord{\operatorname{ord}}
\def \Rep{\operatorname{Rep}}
\def \Ext{\operatorname{Ext}}

\def \id{\operatorname{Id}}
\def \ker{\operatorname{Ker}}

\def \im{\operatorname{im}}
\def \io{\operatorname{io}}

\def \D{\Delta}

\def \e{\varepsilon}

\def \N{\mathbb{N}}

\def \Z{\mathbb{Z}}
\def \unit{\mathbbm{1}}

\numberwithin{equation}{section}

\newtheorem{theorem}{Theorem}[section]
\newtheorem{lemma}[theorem]{Lemma}
\newtheorem{proposition}[theorem]{Proposition}
\newtheorem{corollary}[theorem]{Corollary}
\newtheorem{definition}[theorem]{Definition}
\newtheorem{example}[theorem]{Example}
\newtheorem{remark}[theorem]{Remark}
\newtheorem{question}[theorem]{Question}

\newtheorem{conjecture}[theorem]{Conjecture}

\begin{document}
\title{A classification result on prime Hopf algebras of GK-dimension one}\thanks{$^\dag$Supported by NSFC 11722016.}

\subjclass[2010]{16E65, 16T05 (primary), 16P40, 16S34 (secondary)}
\keywords{GK-dimension one, Hopf algebra, Prime.}

\author{Gongxiang Liu}
\address{Department of Mathematics, Nanjing University, Nanjing 210093, China} \email{gxliu@nju.edu.cn}
\date{}
\maketitle
\dedicatory{Dedicate to Professor Shao-Xue Liu for his 90th birthday with my deepest admiration}
\begin{abstract} In this paper, we classify all prime Hopf algebras $H$ of GK-dimension one satisfying the following two conditions: 1) $H$ has a 1-dimensional representation of order PI.deg$(H)$  and 2) the invariant components of $H$ with respect to this 1-dimensional representation are domains (see Section 2 for related definitions). As consequences, 1) a number of new Hopf algebras of GK-dimension one are found and some of them are not pointed, 2) we give a partial answer to a question posed in \cite{BZ} and 3) two new series of finite-dimensional Hopf algebras are found which in particular gives us a Hopf algebra of dimension $24$ (see \cite{BGNR}).
\end{abstract}

\section{Introduction}

Throughout this paper, $\k$ denotes an algebraically closed field
of characteristic $0$, all vector spaces are over $\k$. All algebras considered in this paper are noetherian and affine unless stated otherwise. The antipode of a Hopf algebra is assumed to be bijective.
\subsection{Motivation.} We are motivated by the following three seemingly irrelevant but indeed related phenomenons. The first one is based on the next simple observation. It is well-known that the affine line $\mathbb{A}^{1}$ is a commutative algebraic group of dimension one. If we consider the infinite dimensional Taft algebra $T(n, t, \xi)$ (see Subsection \ref{ss2.3} for its definition), then we find that the affine line (here and the following we identify an affine variety with its coordinate algebra) is also a Hopf algebra in the braided tensor category $^{\mathbb{Z}_{n}}_{\mathbb{Z}_n}{\mathcal{YD}}$ of Yetter-Drinfeld modules of $\k \Z_{n}$. Intuitively,
\begin{figure}[hbt]
\begin{picture}(100,60)(0,-60)
\put(0,0){\line(-1,-1){50}}\put(60,-30){\makebox(0,0){$\in {^{\mathbb{Z}_{n}}_{\mathbb{Z}_n}{\mathcal{YD}}}.$}}
\end{picture}
\end{figure}

\noindent From this, a natural question is:
\begin{equation}\label{1.1}\textsf{ Can we realize other irreducible curves as Hopf algebras in } ^{\mathbb{Z}_{n}}_{\mathbb{Z}_n}{\mathcal{YD}}?\end{equation}
In order to answer this question, we need give two remarks at first. Firstly, observe that above line is smooth and thus the infinite dimensional Taft algebra is \emph{regular}, i.e. has finite global dimension. Secondly, it is harmless to assume that the action of $\Z_n$ on the curve is faithful since otherwise one can take a smaller group $\Z_{m}$ with $m|n$ to substitute $\Z_n$. This assumption implies the infinite dimensional Taft algebra is \emph{prime}. Put them together, the the infinite dimensional Taft algebra is prime regular of Gelfand-Kirillov dimension (GK-dimension for short) one. Under this assumption, one can show that the affine line $\k[x]$ and the multiplicative group $\k[x^{\pm1}]$ are the \emph{only} smooth curves which can be realized as Hopf algebras in $^{\mathbb{Z}_{n}}_{\mathbb{Z}_n}{\mathcal{YD}}$ (see Corollary \ref{c2.9}). Therefore, the only left chance is to consider singular curves. We find that at least for some special curves the answer is ``Yes"! As an illustration, consider the example $T(\{2,3\},1,\xi)$ (see Subsection \ref{ss4.1}) and from this example we find the the cusp $y_1^2=y_2^3$ is a Hopf algebra in $^{\mathbb{Z}_{6}}_{\mathbb{Z}_6}{\mathcal{YD}}$. That is,

\begin{tikzpicture}
\begin{picture}(100,60)(-100,0)
\draw (0,0)  parabola (1,1.5);\put(60,0){\makebox(0,0){$\in {^{\mathbb{Z}_{6}}_{\mathbb{Z}_6}{\mathcal{YD}}}.$}}
\draw  (0,0)  parabola (1,-1.5);
\end{picture}
\end{tikzpicture}

So above analysis tell us that we need consider the structures of prime Hopf algebras of GK-dimension one which are \emph{not regular} if we want to find the answer to question \eqref{1.1}.

The second one is a wide range of recent researches and interest on the classification of Hopf algebras of finite GK-dimensions. See for instance \cite{AS, AAH, BZ, GZ, Liu, LWZ, WZZ1, WZZ2, WZZ3, WLD}. Up to the authors's knowledge, there are two different lines to classify such Hopf algebras.  One line focuses on pointed versions, in particular about braidings (i.e. Nichols algebras). The first celebrated work in this line is the Rosso's basic observation about the structure of Nichols algebras of finite GK-dimension with positive braiding (see \cite[Theorem 21.]{Ro}). Then the pointed Hopf algebra domains of finite GK-dimension with generic infinitesimal braiding were classified by Andruskiewitsch and Schneider \cite[Theorem 5.2.]{AS} and Andruskiewitsch and Angiono \cite[Theorem 1.1.]{AA}. Recently, Andruskiwwitsch-Angiono-Heckenberger \cite{AAH} conjectured that a Nichols algebra of diagonal type has finite GK-dimension if and only if the corresponding generalized root system is finite, and under assuming the validity of this conjecture they classified a natural class of braided spaces whose Nichols algebra has finite GK-dimension \cite[Theorem 1.10.]{AAH}.  Another line focuses more on algebraic and homological properties of these Hopf algebras, which is motivated by noncommutative algebras and noncommutative algebraic geometry. Historically,  Lu, Wu and Zhang initiated the the program of classifying Hopf algebras of GK-dimension one \cite{LWZ}. Then the author found a new class of examples about prime regular Hopf algebras of GK-dimension one \cite{Liu}. Brown and Zhang \cite[Theorem 0.5]{BZ} made further efforts in this direction and
classified all prime regular Hopf algebras $H$ of GK-dimension one
under an extra hypothesis. In 2016, Wu, Ding and the author \cite[Theorem 8.3]{WLD} removed this hypothesis and gave a complete classification prime regular Hopf algebras of GK-dimension one at last. One interesting fact is that some non-pointed Hopf algebras of GK-dimension one were found in \cite{WLD} and as far as we know they are the only non-pointed Hopf algebras with finite GK-dimension (except GK-dimension zero) until today. For Hopf algebras $H$ of GK-dimension two, all known classification results are given under the condition of $H$ being domains. In \cite[Theorem 0.1.]{GZ}, Goodearl and Zhang classified all Hopf algebras $H$ of
GK-dimension two which are domains and satisfy the condition $\Ext^{1}_{H}(\k, \k)\neq 0$. For those with vanishing Ext-groups, some interesting examples were constructed by Wang-Zhang-Zhuang
\cite[Section 2.]{WZZ2} and they conjectured these examples together with Hopf algebras given in \cite{GZ} exhausted all Hopf algebra domains with GK-dimension two. In order to study Hopf algebras $H$ of GK-dimensions three and four, a more restrictive condition was added: $H$ is connected, that is, the coradical of $H$ is $1$-dimensional.  All connected Hopf algebras with GK-dimension three and four were classified by Zhuang in \cite[Theorem 7.6]{Zh} and Wang, Zhang
and Zhuang \cite[Theorem 0.3.]{WZZ3} respectively. So, as a natural development of this line we want to classify prime Hopf algebras of GK-dimension one without regularity.

The third one is the lack of knowledge about non-pointed Hopf algebras. In the last two decades, the people achieved an essential progress in understanding the structures and even classifications of pointed Hopf algebras under many experts's, like Andruskiewitsch, Schneider, Heckenberger etc., efforts. See for example \cite{AS1,He,He1}.  On the contrast, we know very little about non-pointed Hopf algebras. In fact, we almost can't or are very hard to provide any nontrivial examples of them. The short of examples of non-pointed Hopf algebras obviously hampers our research and understanding of non-pointed Hopf algebras. Inspired by our previous work \cite{WLD} on the classification of prime regular Hopf algebras, which prompted us to find a series of new examples of non-pointed Hopf algebras, we expect to get more examples through classifying prime Hopf algebras of GK-dimension one without regularity.

\subsection{Setting.} As the research continues, we gradually realize that the condition ``regular" is very delicate and strong. The situation becomes much worse if we just remove the regularity condition directly. In another word, we still need some ingredients from regularity at present. To get suitable ingredients, let's go back to the question \eqref{1.1} and in such case the Hopf algebra has a natural projection to the group algebra $\k\Z_n$. The first question is: what is this natural number $n?$  In the Taft algebra $H$ case, it is not hard to see that this $n$ is just the PI degree of $H$, that is, $n=$PI.deg$(H)$. So crudely speaking $n$ measures how far is a Hopf algebra from a commutative one. At the same time, the Hopf algebra who has a projection to $\k\Z_n$ will have a 1-dimensional representation $M$ with order $n$, that is $M^{\otimes n}\cong \k$. Putting them together, we form our first hypothesis about prime Hopf algebras of GK-dimension one:
\textsf{ \begin{itemize}\item[$\;\;$(Hyp1):] The Hopf algebra $H$ has a $1$-dimensional representation $\pi:\;H\to \k$ whose order is equal to PI.deg$(H)$.\end{itemize}}
The second question is: where is the curve? It is not hard to see that the curve is exactly the coinvariant algebra under the projection to $\k\Z_n$. We will see that for each $1$-dimensional representation of $H$ one has an analogue of coinvariant algebras which are called the \emph{invariant components} with respect to this representation (see Subsection \ref{subs2.2} for details.) Due to the (Hyp1), our second hypothesis is:
\textsf{\begin{itemize}\item[(Hyp2):] The invariant components with respect to $\pi_{H}$ are domains.\end{itemize}}

By definition, a Hopf algebra $H$ we considered has two invariant components, that is the left invariant component $H_{0,\pi}^l$ and right invariant component $H_{0,\pi}^r$ (see Definition \ref{de1}). By Lemma \ref{l2.7}, we see  that $H_{0,\pi}^{l}$ is a domain if and only if $H_{0,\pi}^r$ is a domain. So the (Hyp2) can be weakened to require that any one of two invariant components is a domain. But, in practice (Hyp2) is more convenient for us.

Usually, one may wonder that (Hyp1) is strange and strong. Actually, any noetherian affine Hopf algebra $H$ has natural $1$-dimensional representations: the space of right (resp. left) homological integrals. The order of any one of these 1-dimensional modules is called the \emph{integral order} (see Subsection \ref{subs2.2} for related definitions) of $H$ and we denote it by $\io (H)$, which is used widely in the regular case. So a plausible alternative of (Hyp1) is

\textsf{(Hyp1)$'$ $\quad$ $\io(H)=$ PI.deg$(H)$.}

Clearly, (Hyp1)$'$ is stronger than (Hyp1) and should be easier to use (Hyp1)$'$ instead of (Hyp1). But we will see that the (Hyp1)$'$ is not so good because it excludes some nice and natural examples (see Remark \ref{r4.3}).

 Note that all prime regular Hopf algebras of GK-dimension one satisfy both (Hyp1)$'$ and (Hyp2) automatically (see \cite[Theorem 7.1.]{LWZ}). Since we have examples which satisfy (Hyp1) and (Hyp2) while they are not regular (see, say, the example about the cusp given above), regularity is a really more stronger than (Hyp1) $+$ (Hyp2) for prime Hopf algebras of GK-dimension one.

The main result of this paper is to give a classification of all prime Hopf algebras of GK-dimension one satisfying (Hyp1) $+$ (Hyp2) (see Theorem \ref{t7.1}). As byproducts, a number of new Hopf algebras, in particular some non-pointed Hopf algebras, were found and the answer to question \eqref{1.1} was given easily. Moreover, many new, up to the author's knowledge, finite-dimensional Hopf algebras were gotten which in particular helps us to find a Hopf algebra of dimension $24$  (see \cite{BGNR}).

\subsection{Strategy and organization.} In a word, the idea of this paper just is to build a ``relative version" (i.e. with respect to any $1$-dimensional representation rather than just the $1$-dimensional representation of homological integrals) and extend the methods of \cite{BZ,WLD} to our general setting. So the strategy of the proof of the main result is divided into two parts: the ideal case and the remaining case. However, we need point out that the most significant difference between the regular Hopf algebras of GK-dimension one and our setting is: In the regular case, the invariant components are Dedekind domains (see \cite[Theorem 2.5 (f)]{BZ}) while in our case they are just required to be general domains! At the first glance, there is a huge distance between a general domain and a Dedekind domain. A contribution of this paper is to overcome this difficulty and prove that we can classify these domains under the requirement that they are the invariant components of prime Hopf algebra of GK-dimension one. To overcome this difficulty, a new concept called \emph{a fraction of natural number} is introduced (see Definition \ref{d3.1}).

As the first step to realize our idea, we construct a number of new prime Hopf algebras of GK-dimension one which are called the ``fraction versions" of known examples of prime regular Hopf algebras of GK-dimension one. Then we use the concepts so called representation minor, denoted as $\im (\pi)$,  and representation order, denoted as $\ord (\pi)$, of a noetherian affine Hopf algebra $H$  to deal with the ideal case, that is, the case either $\im(\pi)=1$ or $\ord (\pi)=\im(\pi)$. In the ideal case, we proved that every prime Hopf algebras of GK-dimension one satisfying (Hyp1) $+$ (Hyp2) must be isomorphic to either a known regular Hopf algebra given in \cite[Section 3]{BZ} or a fraction version of one of these regular Hopf algebras. Then, we consider the remaining case, that is the case $\ord(\pi)>\im(\pi)>1$ (note that by definition $\im(\pi)|\ord(\pi)$). We show that for each prime Hopf algebra $H$ of GK-dimension one in the remaining case one always can construct a Hopf subalgebra $\widetilde{H}$ which lies in the ideal case. As one of difficult parts of this paper, we show that $\widetilde{H}$ indeed determine the structure of $H$ essentially and from which we can not only get a complete classification of prime Hopf algebras of GK-dimension one satisfying (Hyp1) $+$ (Hyp2) but also find a series of new examples of non-pointed Hopf algebras. At last, we give some applications of our results, in particular the questions \eqref{1.1} is solved, a partial solution to \cite[Question 7.3C.]{BZ} is given and  some new examples of finite dimensional Hopf algebras including semisimple and nonsemisimple Hopf algebras are found. In particular, we provide an example of 24-dimensional Hopf algebra, which seems not written out explicitly in \cite{BGNR}. Moreover,  at the end of the paper we formulate a conjecture (see Conjecture \ref{con7.19}) about the structure of a general prime Hopf algebra of GK-dimension one for further researches and considerations.

The paper is organized as follows. Necessary definitions, known examples and preliminary results are collected in Section 2. In particular, in order to compare regular Hopf algebras and non-regular ones, the widely used tool called homological integral is recalled. The definition of a fraction of natural number, a fraction version of a Taft algebra and some combinatorial relations, which are crucial to the following analysis, will be given in Section 3. Section 4 is devoted to construct new examples of prime Hopf algebras of GK-dimension one which satisfy (Hyp1) and (Hyp2). We should point out that the proof of the example $D(\underline{m},d,\gamma)$, which are not pointed in general, being a Hopf algebra is quite nontrivial. The properties of these new examples are also built in this section and in particular we show that they are pivotal Hopf algebras. The question about the classification of prime Hopf algebras of GK-dimension one satisfying (Hyp1) $+$ (Hyp2) in ideal cases is solved in Section 5, and Section 6 is designed to solve the same question in the remaining case. The main result is formulated in the last section and we end the paper with some consequences, questions and a conjecture on the structure of a general prime Hopf algebra of GK-dimension one. Among of them, a new kinds of semisimple Hopf algebras are found and studied. Their fusion rules are given. We also give another series of finite-dimensional nonsemisimple Hopf algebras in this last section.

\textbf{{Acknowledgments.}} The work started during my visiting to Department of Mathematics, MIT. I would like thank, from the bottom of my heart, Professor Pavel Etingof for his heuristic discussion, encouragements and hospitality. The author also want to thank Professor James Zhang for his continued help and support for the author, and in particular for showing him their examples of non-regular Hopf algebras given in Subsection \ref{sub4.2}. I appreciate Professors Ken Brown, Q.-S. Wu and D.-M Lu for useful communications and in particular thank Ken Brown for showing the author his nice slides on infinite dimensional Hopf algebras.

\section{Preliminaries}
In this section we recall the urgent needs around affine noetherian Hopf algebras for completeness and the convenience of the reader. About general background knowledge, the reader is referred to \cite{Mo} for Hopf algebras, \cite{MR} for noetherian rings, \cite{Br,LWZ,BZ,Go} for exposition about noetherian Hopf algebras and \cite{EGNO} for general knowledge of tensor categories.

Usually we are working on left modules (resp. comodules). Let $A^{op}$ denote the opposite algebra of $A$. Throughout, we use the symbols $\Delta,\epsilon$ and $S$ respectively, for the coproduct, counit and antipode of a Hopf algebra $H$, and the Sweedler's notation for coproduct $\D(h)=\sum h_1\otimes h_2=h_1\otimes h_2=h'\otimes h''\;(h\in H)$ will be used freely. Similarly, the coaction of left comodule $M$ is denoted by $\delta(m)=m_{(-1)}\otimes m_{(0)}\in H\otimes M,\;m\in M.$

 \subsection{Stuffs from ring theory and Homological integrals.}
In this paper, a ring $R$ is called \emph{regular} if it has finite global dimension, it is \emph{prime} if $0$ is a prime ideal and it is \emph{affine} if it is finitely generated.

{$\bullet$ \emph{PI-degree}.}  If $Z$ is an Ore domain, then the {\it rank } of a $Z$-module $M$ is defined to be the $Q(Z)$-dimension of $Q(Z)\otimes_Z M$, where $Q(Z)$ is the quotient
division ring of $Z$.  Let $R$ be an algebra satisfying a polynomial identity (PI for short). The PI-degree of $R$ is defined to be $$\text{PI-deg}(R)=\text{min}\{n|R\hookrightarrow M_n(C)\ \text{for some commutative ring}\ C\}$$(see \cite[Chapter 13]{MR}). If $R$ is a prime PI ring with center $Z$, then the PI-degree of $R$ equals the square root of the rank of
$R$ over $Z$.

{$\bullet$\emph{ Artin-Schelter condition}.}  Recall that an algebra $A$ is said to be \emph{augmented} if there is an algebra morphism $\epsilon:\; A\to \k$. Let $(A,\epsilon)$ be an augmented noetherian algebra. Then $A$ is \emph{Artin-Schelter Gorenstein}, we usually abbreviate to \emph{AS-Gorenstein}, if
\begin{itemize}
\item [(AS1)] injdim$_AA=d<\infty$,
\item [(AS2)] dim$_\k\Ext_A^d(_A\k, \;_AA)=1$ and
dim$_\k\Ext_A^i(_A\k,\;
_AA)=0$ for all $i\neq d$,
\item [(AS3)] the right $A$-module versions of (AS1, AS2) hold.\end{itemize}

The following result is the combination of \cite[Theorem 0.1]{WZ} and  \cite[Theorem 0.2  (1)]{WZ}, which shows that a large number of Hopf algebras are AS-Gorenstein.
\begin{lemma}\label{l2.1} Each affine noetherian  PI Hopf algebra is AS-Gorenstein.
\end{lemma}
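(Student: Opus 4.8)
The statement is quoted from \cite{WZ}; should one wish to reprove it, the natural route is the following. The goal is to verify the three Artin--Schelter conditions for $H$. First one disposes of the left--right symmetry: since the antipode $S$ is bijective, $H^{op}=(H,m^{op},\Delta,S^{-1})$ is again a Hopf algebra, and it is again noetherian, affine and PI, so condition (AS3) for $H$ is literally (AS1)+(AS2) for $H^{op}$. It therefore suffices to prove that every noetherian affine PI Hopf algebra satisfies (AS1) and (AS2).

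The first main ingredient is finiteness of the injective dimension $d:=\operatorname{injdim}{}_HH$. Here one exploits the structure theory of noetherian affine PI algebras: such an algebra is a (fully bounded) noetherian ring module-finite over an affine commutative subalgebra $C$ (the center of $H$, when $H$ is prime), so $\operatorname{maxSpec}H$ is finite over $\operatorname{maxSpec}C$ and $H$ has finite GK-dimension. One then localizes the question of finite injective dimension at the closed points, where $H$ becomes module-finite over a noetherian (semi)local commutative ring; the essential extra input is \emph{homological homogeneity} --- because $H$ is a Hopf algebra it is faithfully flat over its Hopf subalgebras and its closed points are ``homogeneous'', so the relevant local homological invariants do not depend on the point and a finite bound at one point propagates to a uniform finite bound. (One may also package this via dualizing complexes: $H$ has an Auslander rigid dualizing complex by the Yekutieli--Van den Bergh theory, and being a Hopf algebra forces that complex to be a shift of an invertible bimodule, which is precisely AS-Gorensteinness.)

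The second ingredient pins down the $\Ext$-groups once $d<\infty$. Consider $\bigoplus_{i\ge 0}\Ext^{i}_H({}_H\k,{}_HH)$: viewing $H$ as a Hopf module over itself (regular action and regular coaction) and $\k$ as the trivial module, this total $\Ext$ inherits a rational comodule structure, and a ``fundamental theorem of Hopf modules'' argument forces it to be one-dimensional over $\k$. Since $\operatorname{injdim}{}_HH=d$ already gives $\Ext^{i}_H(\k,H)=0$ for $i>d$, the surviving class must sit in degree exactly $d$ and span a one-dimensional space; this is (AS2), and the one-dimensional module thus produced is the (right) homological integral recalled below.

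The genuinely hard step is the first one: bounding the injective dimension of a merely PI, not necessarily regular, Hopf algebra. In the regular case this is read off a finite projective resolution of $\k$, but in general $\k$ need not have finite projective dimension, so one must marry commutative-algebra input over the center with the rigidity imposed by comultiplication and the counit (the ``every closed point looks the same'' principle). It is precisely here that the PI hypothesis is indispensable and where the work of \cite{WZ} is concentrated, while the $\Ext$-computation giving (AS2) and the reduction of (AS3) to $H^{op}$ are comparatively formal.
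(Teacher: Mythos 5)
The paper does not prove this lemma at all: it is imported verbatim as the combination of Theorems 0.1 and 0.2(1) of \cite{WZ}, so your opening move of simply quoting that reference is exactly what the paper itself does, and is all that is required here.

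Your supplementary sketch, however, should not be read as an outline of an actual proof, because its first main ingredient fails at the stated level of generality. An affine noetherian PI algebra --- even a Hopf algebra --- is \emph{not} known to be a finite module over an affine commutative (let alone central) subalgebra; that finiteness is available for prime affine algebras of GK-dimension one by Small--Stafford--Warfield, which is the setting in which this paper uses it, but Lemma \ref{l2.1} is asserted for arbitrary affine noetherian PI Hopf algebras, and the unconditional theorem of \cite{WZ} cannot rest on such a reduction. Two further assertions in the sketch are shaky: ``$H$ is faithfully flat over its Hopf subalgebras'' is false in general (Schauenburg's counterexamples), and the existence of an Auslander rigid dualizing complex is itself not automatic for affine noetherian PI algebras absent finiteness over an affine center, so the dualizing-complex packaging presupposes part of what is to be proved. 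The parts of your sketch that do track the actual argument are the reduction of (AS3) to $H^{op}$ (legitimate here since $S$ is assumed bijective) and the Hopf-module, Larson--Sweedler-type argument forcing $\bigoplus_i \Ext^i_H({}_H\k,{}_HH)$ to be one-dimensional; the PI hypothesis enters \cite{WZ} chiefly through the Nullstellensatz for affine PI algebras --- every simple module is finite dimensional --- which lets one test injective dimension against twists of the trivial module (compare \cite{BG}), rather than through localization of $H$ at closed points of a commutative subalgebra.
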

{$\bullet$\emph{ Homological integral}.} The concept \emph{homological integral} can be defined for an AS-Gorenstein augmented algebra.
\begin{definition}\cite[Definition 1.3]{BZ}
\emph{Let $(A, \epsilon)$ be a noetherian augmented algebra and suppose that $A$ is  AS-Gorenstein of injective dimension $d$. Any non-zero element of the 1-dimensional $A$-bimodule $\Ext_A^d( _A\k,\; _AA)$ is called a \emph{left homological integral} of $A$. We write $\int_A^l=\Ext_A^d(_A\k,\; _AA)$. Any non-zero element in $\Ext_{A^{op}}^d(\k_A, A_A)$ is called a \emph{right homological integral} of $A$. We write $\int_A^r=\Ext_{A^{op}}^d(\k_A, A_A)$. By abusing the language we also call $\int_A^l$ and $\int_A^r$ the left and the right homological integrals of $A$ respectively.}
\end{definition}
\subsection{Relative version.}\label{subs2.2}
Assuming that a Hopf algebra $H$ has a $1$-dimensional representation $\pi: H\to \k$, we give some results according to this $\pi$, most of them coming from \cite[Section 2]{BZ}, by using slightly different notations with \cite{BZ}. Throughout this subsection, we fix this representation $\pi$.

\noindent{$\bullet$ \emph{Winding automorphisms}.}  We write $\Xi_\pi^l$ for the \emph{left winding automorphism} of $H$ associated to $\pi$, namely
$$\Xi_\pi^l(a):=\sum\pi(a_1)a_2 \;\;\;\;\;\;\;\textrm{for} \;a\in H.$$ Similarly we use $\Xi_\pi^r$ for the right winding automorphism of $H$ associated to $\pi$, that is,
$$\Xi_\pi^r(a):=\sum a_1\pi(a_2)\;\;\;\;\;\;\textrm{for}\; a\in H.$$

Let $G_\pi^l$ and $G_\pi^r$ be the subgroups of $\text{Aut}_{\k\text{-alg}}(H)$ generated by $\Xi_\pi^l$ and $\Xi_\pi^r$, respectively. Define:
$$G_\pi:=G_\pi^l\bigcap G_\pi^r.$$

The following is some parts of \cite[Propostion 2.1.]{BZ}.

\begin{lemma}\label{l2.3} Let $H_{0,\pi}^{l},H_{0,\pi}^{r}$ and $H_{0,\pi}$ be the subalgebra of invariants $H^{G_\pi^l},H^{G_\pi^r}$ and $H^{G_\pi}$ respectively. Then we have
\begin{itemize} \item[(1)] $H_{0,\pi}=H_{0,\pi}^{l}\bigcap H_{0,\pi}^{r}.$
\item[(2)] $\Xi_\pi^l\Xi_\pi^r=\Xi_\pi^r\Xi_\pi^l.$
\item[(3)] $\Xi_\pi^r\circ S=S\circ (\Xi_\pi^l)^{-1}$. Therefore, $S(H_{0,\pi}^{l})\subseteq H_{0,\pi}^{r}$ and $S(H_{0,\pi}^{r})\subseteq H_{0,\pi}^{l}$.
\end{itemize}
\end{lemma}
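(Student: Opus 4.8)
The plan is to derive all three parts from short Sweedler‑notation computations, using only coassociativity and the antipode axioms, after recording two facts about winding automorphisms. For algebra maps $\pi,\chi\colon H\to\k$ one checks directly from coassociativity that $\Xi_\pi^l\circ\Xi_\chi^l=\Xi_{\chi\ast\pi}^l$ and $\Xi_\chi^r\circ\Xi_\pi^r=\Xi_{\chi\ast\pi}^r$, where $\ast$ is the convolution product of the dual algebra $H^\ast$; and since $\pi$ is an algebra map it has convolution inverse $\pi\circ S$ (as $\sum\pi(S(a_1))\pi(a_2)=\pi(\sum S(a_1)a_2)=\epsilon(a)$, and symmetrically). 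Feeding $\chi=\pi\circ S$ into the composition rules (and using $\Xi_\epsilon^l=\Xi_\epsilon^r=\Id$) gives $(\Xi_\pi^l)^{-1}=\Xi_{\pi\circ S}^l$ and $(\Xi_\pi^r)^{-1}=\Xi_{\pi\circ S}^r$; in particular the inverse of a left (resp. right) winding automorphism is again a left (resp. right) winding automorphism.

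For part (2) I would expand both composites using coassociativity: for every $a\in H$,
\[
\Xi_\pi^l\bigl(\Xi_\pi^r(a)\bigr)=\sum\pi(a_1)\,a_2\,\pi(a_3)=\Xi_\pi^r\bigl(\Xi_\pi^l(a)\bigr),
\]
so $\Xi_\pi^l$ and $\Xi_\pi^r$ commute and hence $G_\pi^l$ and $G_\pi^r$ commute elementwise inside $\Aut_{\k\text{-alg}}(H)$. Part (1) then follows from the definitions: an element of $H$ lies in $H_{0,\pi}^l\cap H_{0,\pi}^r$ exactly when it is fixed by both $\Xi_\pi^l$ and $\Xi_\pi^r$, hence by every element of the (abelian, by (2)) subgroup they generate; since $G_\pi\subseteq G_\pi^l$ and $G_\pi\subseteq G_\pi^r$ this gives $H_{0,\pi}^l\cap H_{0,\pi}^r\subseteq H_{0,\pi}$, and unwinding the definition of $G_\pi$ gives the reverse inclusion.

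For part (3) I would compute the two sides of the asserted equality separately. Using $(\Xi_\pi^l)^{-1}=\Xi_{\pi\circ S}^l$,
\[
S\bigl((\Xi_\pi^l)^{-1}(a)\bigr)=S\Bigl(\sum\pi(S(a_1))\,a_2\Bigr)=\sum\pi(S(a_1))\,S(a_2),
\]
while, because $S$ reverses the coproduct, $\Delta(S(a))=\sum S(a_2)\otimes S(a_1)$, so $\Xi_\pi^r(S(a))=\sum S(a_2)\,\pi(S(a_1))=\sum\pi(S(a_1))\,S(a_2)$. Hence $\Xi_\pi^r\circ S=S\circ(\Xi_\pi^l)^{-1}$, and the identical computation with ``left'' and ``right'' interchanged gives $\Xi_\pi^l\circ S=S\circ(\Xi_\pi^r)^{-1}$. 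For the inclusions: if $a\in H_{0,\pi}^l$ then $\Xi_\pi^l(a)=a$, so $(\Xi_\pi^l)^{-1}(a)=a$, so $\Xi_\pi^r(S(a))=S(a)$; as $G_\pi^r$ is generated by $\Xi_\pi^r$, any element fixed by $\Xi_\pi^r$ is fixed by all of $G_\pi^r$, whence $S(a)\in H_{0,\pi}^r$. Symmetrically $S(H_{0,\pi}^r)\subseteq H_{0,\pi}^l$ follows from $\Xi_\pi^l\circ S=S\circ(\Xi_\pi^r)^{-1}$.

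None of this presents a genuine obstacle; the only points needing attention are that $S$ is an anti‑coalgebra morphism, so the coproduct of $S(a)$ must be flipped, and that $(\Xi_\pi^l)^{-1}$ must be identified as the \emph{left} winding automorphism attached to $\pi\circ S$ — it is exactly this closure of the left (resp. right) family of winding automorphisms under inversion that makes the bookkeeping in (3) close up.
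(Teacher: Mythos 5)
Your computations for parts (2) and (3) are correct and complete: the convolution rule $\Xi_\pi^l\circ\Xi_\chi^l=\Xi_{\chi\ast\pi}^l$, the identification $(\Xi_\pi^l)^{-1}=\Xi_{\pi\circ S}^l$, the commutation via $\sum\pi(a_1)a_2\pi(a_3)$, and the identity $\Xi_\pi^r\circ S=S\circ(\Xi_\pi^l)^{-1}$ (using that $S$ flips the coproduct) are exactly the standard arguments. The paper itself gives no proof of this lemma, deferring to \cite{BZ}, so there is nothing further to compare against for those two parts.

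The gap is in part (1), at the sentence ``unwinding the definition of $G_\pi$ gives the reverse inclusion.'' With the paper's literal definition $G_\pi=G_\pi^l\cap G_\pi^r$, the inclusion you did prove, $H_{0,\pi}^l\cap H_{0,\pi}^r\subseteq H^{G_\pi}$, is the trivial direction (it already follows from $G_\pi\subseteq G_\pi^l$, with no appeal to commutativity), while the reverse inclusion is false in general: an element fixed by the smaller group $G_\pi^l\cap G_\pi^r$ need not be fixed by $G_\pi^l$ or $G_\pi^r$. Concretely, for $T(n,t,\xi)$ with $\gcd(n,t)=1$ one has $G_\pi^l\cap G_\pi^r=\{1\}$, hence $H^{G_\pi}=H$, whereas $H_{0,\pi}^l\cap H_{0,\pi}^r=\k[x^n]$. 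The assertion is only correct if $G_\pi$ is taken to be the subgroup generated by $G_\pi^l\cup G_\pi^r$ (which equals $G_\pi^lG_\pi^r$ by part (2)); this is the reading consistent with the paper's later convention $H_{0,\pi}=H_{00,\pi}=H_{0,\pi}^l\cap H_{0,\pi}^r$ following \eqref{eq2.3}, and under it both inclusions are immediate, since an element is fixed by a generated subgroup if and only if it is fixed by the generators. You need to make that choice of group explicit; no argument can ``unwind'' the reverse inclusion for the intersection as defined.
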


\emph{$\bullet$ $\pi$-order and $\pi$-minor.} With the same notions as above, the {\it $\pi$- order} (denoted as $\ord(\pi)$) of $H$ is defined by the order of the group $G_\pi^l$ :
\begin{equation}\ord(\pi):=|G_\pi^l|.\end{equation}
\begin{lemma} We always have $|G_\pi^l|=|G_\pi^r|$.
\end{lemma}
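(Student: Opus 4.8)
The plan is to show that the left winding automorphism $\Xi_\pi^l$ and the right winding automorphism $\Xi_\pi^r$ have the same order in $\Aut_{\k\text{-alg}}(H)$, since $G_\pi^l$ is cyclic generated by $\Xi_\pi^l$ and $G_\pi^r$ is cyclic generated by $\Xi_\pi^r$, so $|G_\pi^l| = \ord(\Xi_\pi^l)$ and $|G_\pi^r| = \ord(\Xi_\pi^r)$ (with the convention that both are $\infty$ if the automorphisms have infinite order). The key observation is Lemma \ref{l2.3}(3), which states that $\Xi_\pi^r \circ S = S \circ (\Xi_\pi^l)^{-1}$, i.e. $S$ conjugates $(\Xi_\pi^l)^{-1}$ to $\Xi_\pi^r$. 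Since $S$ is assumed bijective (an anti-automorphism of $H$, hence a bijective $\k$-linear map that is an algebra anti-isomorphism), conjugation by $S$ is order-preserving on the group of bijective linear self-maps of $H$.

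The argument would go as follows. From $\Xi_\pi^r = S \circ (\Xi_\pi^l)^{-1} \circ S^{-1}$, one proves by induction that $(\Xi_\pi^r)^n = S \circ (\Xi_\pi^l)^{-n} \circ S^{-1}$ for all $n \in \Z$; this is immediate since conjugation by the fixed bijection $S$ is a group homomorphism on bijective linear endomorphisms of $H$. Consequently $(\Xi_\pi^r)^n = \id$ if and only if $S \circ (\Xi_\pi^l)^{-n} \circ S^{-1} = \id$, which (composing with $S^{-1}$ on the left and $S$ on the right) holds if and only if $(\Xi_\pi^l)^{-n} = \id$, equivalently $(\Xi_\pi^l)^{n} = \id$. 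Therefore $\Xi_\pi^r$ and $\Xi_\pi^l$ have exactly the same set of periods, hence the same order, and $|G_\pi^r| = |G_\pi^l|$.

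One small point to be careful about: $\Xi_\pi^r$ and $\Xi_\pi^l$ are a priori only $\k$-algebra automorphisms of $H$, whereas $S$ is an algebra \emph{anti}-automorphism, so $S \circ (\Xi_\pi^l)^{-1} \circ S^{-1}$ should be checked to genuinely be an algebra automorphism (it is: the composite of an anti-automorphism, an anti-automorphism — no, $(\Xi_\pi^l)^{-1}$ is an automorphism — so anti $\circ$ auto $\circ$ anti $=$ auto), but this is already guaranteed by Lemma \ref{l2.3}(3) itself since $\Xi_\pi^r$ is known to be an automorphism. So the real content is purely the order-preservation under conjugation by a bijection, and no genuine obstacle arises. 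If one wanted to avoid even invoking bijectivity of $S$, one could instead use that $\Xi_\pi^l$ and $\Xi_\pi^r$ commute (Lemma \ref{l2.3}(2)) together with $\Xi_\pi^l \circ \Xi_\pi^r = \Xi_{\pi * \pi}$ where $\pi * \pi$ is the convolution square, but the conjugation argument via $S$ is the cleanest and is exactly what the cited \cite[Proposition 2.1]{BZ} framework supplies.

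The hardest part is honestly just making sure the statement is interpreted correctly when the orders are infinite — i.e. that ``$|G_\pi^l| = |G_\pi^r|$'' is meant as an equality in $\N \cup \{\infty\}$ — and the induction/conjugation bookkeeping; there is no deep obstacle, as everything reduces to the elementary fact that conjugation by a fixed invertible map is an order-preserving group automorphism of the group of invertible maps.
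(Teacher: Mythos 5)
Your proof is correct, but it takes a different route from the paper's. You conjugate: from Lemma \ref{l2.3}(3) you get $\Xi_\pi^r = S\circ(\Xi_\pi^l)^{-1}\circ S^{-1}$, and since $S$ is a bijection (a standing assumption in this paper), conjugation preserves the order of an invertible map, so $\ord(\Xi_\pi^r)=\ord(\Xi_\pi^l)$ in $\N\cup\{\infty\}$. The paper instead argues through the convolution algebra: if $|G_\pi^l|=n$ then $(\Xi_\pi^l)^n(a)=\sum\pi^{n}(a_1)a_2=a$ forces $\pi^{n}=\e$ by uniqueness of the counit, whence $(\Xi_\pi^r)^n(a)=\sum a_1\pi^n(a_2)=a$ as well, giving $|G_\pi^l|\geq|G_\pi^r|$, and symmetrically the reverse inequality. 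The two arguments buy slightly different things: the paper's version does not need bijectivity of $S$ at all and identifies both orders with the convolution order of $\pi$ in $H^\ast$ (a fact that is convenient elsewhere), while yours leans on the antipode identity but is arguably cleaner group theory. Your parenthetical worry about anti-automorphisms is correctly dispatched (anti $\circ$ auto $\circ$ anti is an automorphism, and in any case only bijectivity is needed for the order count), and your care about the infinite-order reading of $|G_\pi^l|=|G_\pi^r|$ is a point the paper's phrasing glosses over. No gaps.
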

\begin{proof} Assume that $|G_\pi^l|=n$, and then by the definition we know that
$$a=\sum \pi^{n}(a_1)a_2$$
for all $a\in H$. Therefore, $\pi^{n}=\e$ (because above formula implies that $\pi^{n}$ is the left counit) and thus $a=\sum a_1\pi^{n}(a_2)$ for all $a$. So, $|G_\pi^l|\geq |G_\pi^r|$. Similarly, we have $|G_\pi^r|\geq |G_\pi^l|$.
\end{proof}
By this lemma,  the above definition is independent of the choice of $G_\pi^l$ or $G_\pi^r$.

The \emph{$\pi$-minor} (denoted by $\mi(\pi)$) of $H$ is defined by
\begin{equation}\mi(\pi):=|G_\pi^l/G_\pi^l\cap G_\pi^r|.\end{equation}

\begin{remark}\emph{In particular, if the 1-dimensional representation is given by the (right module structure) of left integrals, then the corresponding representation order and representation minor are called \emph{integral order} and \emph{integral minor}, denoted as
 $$\io(H)\quad\quad \textrm{and}\quad\quad \im(H),$$  respectively. Both the integral order and integral minor are used widely in \cite{BZ,WLD}. Therefore, we can consider a general $1$-dimensional representation as a relative version of homological integrals. Note that the notations $\io(H)$ and $\im(H)$ will be used freely in this paper too.}
\end{remark}

$\bullet$ \emph{Invariant components and strongly graded property}.
Let $H$ be a prime Hopf algebra of GK-dimension one. By a fundamental results of Small, Stafford and Warfield \cite{SSW}, a semiprime affine algebra of GK-dimension one is a finite module over its center. Therefore, it is PI and has finite PI-order. Now we assume that $H$ satisfies the (Hyp1) (see Subsection 1.1)  and  thereby $|G_\pi^l|=$ PI-deg$(H)$ is finite, say $n$. Moreover, since $G_\pi^l$ is a cyclic group, its character group $\widehat{G_\pi^l}:=\text{Hom}_{\k\text{-alg}}(\k G_\pi^l, \k)$ is isomorphic to itself. Similarly, the character group $\widehat{G_\pi^r}$ of $G_\pi^r$ is isomorphic to $G_\pi^r$.

%Let $H_0^l$ and $H_0^r$ be the subalgebras of invariants of
%$H^{G_\pi^l}$ and $H^{G_\pi^l}$, respectively.

Fix a primitive $n$th root $\zeta$ of 1 in $\k$, and define $\chi\in\widehat{G_\pi^l}$ and $\eta\in \widehat{G_\pi^r}$ by setting $$\chi(\Xi_\pi^l)=\zeta \quad \text{and} \quad
\eta(\Xi_\pi^r)=\zeta.$$ Thus $\widehat{G_\pi^l}=\{\chi^i|0\leqslant i\leqslant n-1\}$ and $ \widehat{G_\pi^r}=\{\eta^j|0\leqslant j\leqslant n-1\}$.

For each $0\leqslant i, j\leqslant n-1$, let
$$H_{i,\pi}^l:=\{a\in H|\Xi_\pi^l(a)=\chi^i(\Xi_\pi^l)a\} \;\;\textrm{and}\;\;
H_{j,\pi}^r:=\{a\in H|\Xi_\pi^r(a)=\eta^j(\Xi_\pi^r)a\}.$$

The  following lemma is  \cite[Theorem 2.5 (b)]{BZ} (Note that for the part (b) of \cite[Theorem 2.5.]{BZ} we don't need the condition about regularity).

\begin{lemma}\label{l2.5}
\begin{itemize} \item[(1)] $H=\bigoplus_{\chi^i\in\widehat{G_\pi^l}}H_{i,\pi}^l$ is strongly $\widehat{G_\pi^l}$-graded.
\item[(2)] $H=\bigoplus_{\eta^j\in\widehat{G_\pi^r}}H_{j,\pi}^r$ is strongly $\widehat{G_\pi^r}$-graded.
\end{itemize}
\end{lemma}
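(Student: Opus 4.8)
\emph{Proof strategy.}

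The decomposition itself is formal. Since $\Xi_\pi^l$ is an algebra automorphism of $H$ of finite order $n:=\mathrm{PI.deg}(H)=|G_\pi^l|$ (by (Hyp1)) and $\k$ is algebraically closed of characteristic zero, $H$ is the direct sum of the eigenspaces $H_{i,\pi}^l$ of $\Xi_\pi^l$ for the eigenvalues $\zeta^i$, $0\le i\le n-1$; multiplicativity of $\Xi_\pi^l$ gives $H_{i,\pi}^lH_{j,\pi}^l\subseteq H_{i+j,\pi}^l$, so $H=\bigoplus_iH_{i,\pi}^l$ is a $\widehat{G_\pi^l}$-graded algebra, and likewise for $\Xi_\pi^r$. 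Moreover (2) will follow from (1): from $\Xi_\pi^r\circ S=S\circ(\Xi_\pi^l)^{-1}$ (Lemma~\ref{l2.3}(3)) one gets $S(H_{i,\pi}^l)\subseteq H_{-i,\pi}^r$, hence $S\colon H_{i,\pi}^l\xrightarrow{\sim}H_{-i,\pi}^r$ since $S$ is bijective, and because $S$ is an anti-automorphism $H_{i,\pi}^rH_{-i,\pi}^r=S(H_{-i,\pi}^l)S(H_{i,\pi}^l)=S(H_{i,\pi}^lH_{-i,\pi}^l)$, so strong-gradedness transfers verbatim to the right grading. Thus it suffices to prove (1).

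For (1) I would use the standard criterion that a $G$-graded ring $R$ is strongly graded as soon as $1\in R_gR_{g^{-1}}$ for every $g\in G$. Since $\widehat{G_\pi^l}=\langle\chi\rangle$ is cyclic of order $n$ and each $H_{i,\pi}^lH_{-i,\pi}^l$ is a two-sided ideal of $H_{0,\pi}^l$ (hence equal to $H_{0,\pi}^l$ once it contains $1$), an easy induction reduces (1) to the two inclusions $1\in H_{1,\pi}^lH_{-1,\pi}^l$ and $1\in H_{-1,\pi}^lH_{1,\pi}^l$; and the latter is the former applied to the character $\overline\pi:=\pi\circ S$ (the convolution inverse $\pi^{\,n-1}$ of $\pi$, again of order $n$), because $\Xi_{\overline\pi}^l=(\Xi_\pi^l)^{-1}$ and hence $H_{i,\overline\pi}^l=H_{-i,\pi}^l$. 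So everything reduces to the single inclusion $1\in H_{1,\pi}^lH_{-1,\pi}^l$. One observes for free, from multiplicativity of $\Xi_\pi^l$, the averaging identity $\sum_iH_{i,\pi}^lH_{-i,\pi}^l=H_{0,\pi}^l$, and that a grouplike element of degree $\chi$ would finish the proof at once; but $H$ need not be pointed, so the Hopf structure must be exploited more seriously.

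To produce such an element, the plan is to realise the grading through a Hopf algebra surjection. The convolution powers $\e,\pi,\pi^2,\dots,\pi^{\,n-1}$ are $n$ distinct, hence linearly independent, grouplikes of the finite dual $H^\circ$, so their span is a Hopf subalgebra $\cong\k\Z_n$ of $H^\circ$; dualising its inclusion gives a surjective Hopf algebra map $p\colon H\twoheadrightarrow\k\Z_n$ whose associated comodule-algebra structure $(\id\otimes p)\circ\D$ is — up to the identification $\widehat{G_\pi^l}\cong\Z_n$ — exactly the grading $H=\bigoplus_iH_{i,\pi}^l$, with coinvariant subalgebra $H_{0,\pi}^l$ and, by surjectivity of $p$, all $H_{i,\pi}^l\neq 0$. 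Now $H=\bigoplus_iH_{i,\pi}^l$ is strongly graded precisely when $H$ is a $\k\Z_n$-Galois extension of $H_{0,\pi}^l$, and by the faithful-flatness/Galois theory of Hopf algebra quotients this holds once $H$ is faithfully flat over $H_{0,\pi}^l$; faithful flatness can in turn be derived here from the facts that $H$ is noetherian and, being prime of GK-dimension one, module-finite over its centre (\cite{SSW}). This is exactly \cite[Theorem~2.5(b)]{BZ}, whose proof, as noted above, uses no regularity — the distance between a Dedekind domain and a general domain only intervenes in the later, finer analysis of $H_{0,\pi}^l$ via fractions of natural numbers. The one genuinely delicate point in this plan is the faithful flatness of $H$ over $H_{0,\pi}^l$; the rest is formal.
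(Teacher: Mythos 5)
Your proposal is consistent with the paper, which in fact offers no proof of Lemma \ref{l2.5} at all: it simply quotes \cite[Theorem 2.5 (b)]{BZ} together with the remark that the regularity hypothesis is not used in that part of Brown--Zhang's argument, and your sketch likewise terminates in exactly that citation. The preparatory reductions you record (diagonalisation of the finite-order winding automorphism, transfer of (2) to (1) via $S$, reduction to $1\in H_{1,\pi}^lH_{-1,\pi}^l$, and the reformulation as a $\k\Z_n$-Galois/faithful-flatness statement via the Hopf surjection dual to $\langle\pi\rangle\subseteq H^\circ$) are sound, modulo the cosmetic point that the left grading is cut out by $(p\otimes\id)\circ\D$ rather than $(\id\otimes p)\circ\D$, and they are in substance the content of the cited result.
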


\begin{definition}\label{de1} \emph{The subalgebra $H_{0,\pi}^{l}$ (resp. $H_{0,\pi}^{r}$) is called the left (resp. right) \emph{invariant component} of $H$ with respect to $\pi$.}
\end{definition}

Therefore, (Hyp2) just says that both $H_{0,\pi}^{l}$ and $H_{0,\pi}^{r}$ are domains. In fact, these two algebras are closely related.
\begin{lemma}\label{l2.7} Let $H$ be a prime Hopf algebra of GK-dimension one. Then
\begin{itemize}\item[(1)] As algebras, we have $H_{0,\pi}^{l}\cong (H_{0,\pi}^r)^{op}$.
\item[(2)] If moreover either $H_{0,\pi}^{l}$ or $H_{0,\pi}^{r}$ is a domain, then both $H_{0,\pi}^{l}$ and $H_{0,\pi}^{r}$ are commutative domains and thus $H_{0,\pi}^{l}\cong H_{0,\pi}^r$.
\end{itemize}
\end{lemma}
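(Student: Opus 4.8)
For part (1) I would rely only on the antipode being a bijective algebra anti-automorphism of $H$. Conjugating by $S$ the relation $\Xi_\pi^r\circ S=S\circ(\Xi_\pi^l)^{-1}$ of Lemma \ref{l2.3}(3) shows at once that $S$ carries $\Xi_\pi^l$-invariant elements to $\Xi_\pi^r$-invariant ones, i.e. $S(H_{0,\pi}^l)\subseteq H_{0,\pi}^r$; inverting that same relation yields the corresponding statement for $S^{-1}$, so $S^{-1}(H_{0,\pi}^r)\subseteq H_{0,\pi}^l$, and the two inclusions together force $S(H_{0,\pi}^l)=H_{0,\pi}^r$. Since $S$ reverses products, its restriction is then an algebra isomorphism $H_{0,\pi}^l\to(H_{0,\pi}^r)^{op}$; this part is purely formal.

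For part (2) the plan is as follows. By (1) it is enough to treat the case where $R:=H_{0,\pi}^l$ is a domain. Because $H$ is prime, affine and of GK-dimension one it is, by Small--Stafford--Warfield \cite{SSW}, a finite module over its centre $Z$ and in particular PI, hence the same is true of its subring $R$: it is a PI domain, so it is Ore, and its division ring of fractions $D$ is --- being a PI division ring (Kaplansky) --- finite-dimensional over its centre $F$. As $H$ is a domain, every nonzero element of $R$ is invertible already in the quotient division ring $Q(H)=Q(Z)\otimes_Z H$, so $D$ embeds in $Q(H)$ and $F$ is a subfield of $Q(H)$. Now $Q(H)$ is central simple over $Q(Z)$, of dimension equal to the square of the PI-degree of $H$, and $Q(Z)$ --- the function field of the affine curve $\operatorname{Spec}Z$ --- has transcendence degree one over $\k$; consequently every subfield of $Q(H)$, $F$ included, has transcendence degree at most one over $\k$. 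Finally I would eliminate noncommutativity: if $F$ has transcendence degree $0$ then $F=\k$, so $D=\k$ as $\k$ is algebraically closed; if it has transcendence degree $1$ then $F$ is algebraic over some rational function field $\k(t)$, hence a $C_1$ field by Tsen's theorem, so $\operatorname{Br}(F)=0$ and again $D=F$. Either way $D$, and with it $R=H_{0,\pi}^l$, is commutative, and then (1) shows $H_{0,\pi}^r$ is a commutative domain isomorphic to $H_{0,\pi}^l$.

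The hard part will be exactly this commutativity in (2). In the regular setting of \cite{BZ} one knows from the outset that $H_{0,\pi}^l$ is a Dedekind domain, but here one begins with nothing more than a domain, and the real content is that a prime PI domain of GK-dimension one over an algebraically closed field admits no nontrivial central division-algebra structure; this is precisely where the triviality of the Brauer group of a curve's function field --- Tsen's theorem --- is the indispensable ingredient.
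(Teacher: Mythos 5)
Part (1) of your proposal is correct and is exactly the paper's argument: Lemma \ref{l2.3}(3) gives $S(H_{0,\pi}^{l})\subseteq H_{0,\pi}^{r}$ and $S(H_{0,\pi}^{r})\subseteq H_{0,\pi}^{l}$, and $S$ is an algebra anti-isomorphism; your extra remark that applying the same relation to $S^{-1}$ forces $S(H_{0,\pi}^{l})=H_{0,\pi}^{r}$ is a useful precision the paper leaves implicit.

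In part (2) there is a genuine gap: you write ``As $H$ is a domain, every nonzero element of $R$ is invertible already in the quotient division ring $Q(H)=Q(Z)\otimes_Z H$.'' But $H$ is only assumed \emph{prime}, not a domain --- indeed essentially none of the Hopf algebras this lemma is applied to are domains ($\k\mathbb{D}$, $T(n,t,\xi)$, $B(\underline{m},\omega,\gamma)$, $D(\underline{m},d,\gamma)$ all contain the group algebra of a nontrivial finite cyclic group, hence nontrivial idempotents), and $Q(H)$ is a simple Artinian ring rather than a division ring. So the step embedding $Q(R)$, and hence its centre $F$, into $Q(H)$ is unjustified as written. It can be repaired: since $H=\bigoplus_i H_{i,\pi}^l$ is strongly graded (Lemma \ref{l2.5}) with degree-zero part a domain, every nonzero homogeneous element is regular (if $a\in H_{i,\pi}^l$ and $ab=0$ with $b\in H_{j,\pi}^l$ both nonzero, then $(H_{-i,\pi}^l a)(bH_{-j,\pi}^l)=0$ with both factors nonzero subsets of the domain $H_{0,\pi}^l$ by strong grading, a contradiction --- this is Lemma \ref{l2.10}(a), whose proof does not need commutativity of $A_0$), and regular elements of the prime Noetherian PI ring $H$ become invertible in its Goldie quotient ring. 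More cheaply, one can bypass $Q(H)$ altogether: $\operatorname{GKdim}(R)\leq\operatorname{GKdim}(H)=1$, and for an affine PI domain the transcendence degree of the centre of the quotient division ring equals the GK-dimension. With either repair, your Kaplansky--Tsen argument does go through, but note that it is in effect a re-proof of the fact the paper simply cites, namely \cite[Lemma 4.5]{GZ}: a domain of GK-dimension one over an algebraically closed field is commutative. The paper's own proof of (2) is just that one-line reduction (strong grading gives $\operatorname{GKdim}(H_{0,\pi}^l)=1$, then cite \cite{GZ}); your write-up supplies the content behind the citation but, as it stands, rests on the false premise that $H$ is a domain.
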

\begin{proof} By Lemma \ref{l2.3}. (3), we have $S(H_{0,\pi}^{l})\subseteq H_{0,\pi}^{r}$ and $S(H_{0,\pi}^{r})\subseteq H_{0,\pi}^{l}$. Now (1) is proved.

For (2), it is harmless to assume that $H_{0,\pi}^l$ is a domain. By $H$ is of GK-dimension one and $H=\bigoplus_{\chi^i\in\widehat{G_\pi^l}}H_{i,\pi}^l$ is strongly graded (see Lemma \ref{l2.5}), $H_{0,\pi}^l$ has GK-dimension one too. Now it is well-known that a domain with GK-dimension one must be commutative (see for example \cite[Lemma 4.5]{GZ}). Therefore $H_{0,\pi}^l$ is commutative and  $H_{0,\pi}^{l}\cong H_{0,\pi}^r$ by (1). So $H_{0,\pi}^r$ is a commutative domain too.
\end{proof}

By Lemma \ref{l2.3}. (2), $\Xi_\pi^l \Xi_\pi^r=\Xi_\pi^r \Xi_\pi^l$, and thus $H_{i,\pi}^l$ is stable under the action of $G_\pi^r$. Consequently, the $\widehat{G_\pi^l}$- and $\widehat{G_\pi^r}$-gradings on $H$ are
\emph{compatible} in the sense that
$$H_{i,\pi}^l=\bigoplus_{0\leqslant j \leqslant n-1}(H_{i,\pi}^l\cap H_{j,\pi}^r)\quad \text{and}\quad H_{j,\pi}^r=\bigoplus_{0\leqslant i \leqslant n-1}(H_{i,\pi}^l\cap H_{j,\pi}^r)$$
for all $i, j$.
 Then $H$ is a bigraded algebra:
\begin{equation}\label{eq2.3} H=\bigoplus_{0\leqslant i,j\leqslant n-1} H_{ij,\pi},\end{equation}
where $H_{ij,\pi}=H_{i,\pi}^l\cap H_{j,\pi}^r$. And we write $H_{0,\pi}:=H_{00,\pi}$ for convenience.

For later use, we collect some more properties about $H$ which were proved in \cite{BZ} without the requirement about regularity. For details, see \cite[Proposition 2.1 (c)(e)]{BZ} and \cite[Lemma 6.3]{BZ}.
\begin{lemma}\label{l2.8} Let $H$ be a prime  Hopf algebra of GK-dimensional one satisfying (Hyp1). Then
\begin{itemize}
\item[(1)] $\Delta(H_{i,\pi}^l)\subseteq H_{i,\pi}^l\otimes H$ and $\Delta
(H_{j,\pi}^r)\subseteq H\otimes H_j^r$; thus $H_{i,\pi}^l$ is a right coideal of
$H$ and $H_{j,\pi}^r$ is a left coideal of $H$ for all $0\leq i,j\leq n-1$;

\item[(2)] $\Xi_\pi^r \circ S=S\circ(\Xi_\pi^l)^{-1},$
where $(\Xi_\pi^l)^{-1}=\Xi_{\pi\circ S}^l.$
\item[(3)] $S(H_{i,\pi}^l)=H_{-i,\pi}^{r}$ and $S(H_{ij,\pi})=H_{-j,-i,\pi}$.
\item[(4)] If $i\neq j$, then $\e(H_{ij,\pi})=0$.
\item[(5)] If $i=j$, then $\e(H_{ii,\pi})\neq 0.$
\end{itemize}
\end{lemma}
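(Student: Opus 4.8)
The plan is to deduce all five statements from the way the winding automorphisms $\Xi_\pi^l$ and $\Xi_\pi^r$ interact with $\Delta$, $S$ and $\e$, combined with the eigenspace decompositions of Lemma~\ref{l2.5} and the strong grading. Items (1)--(3) are essentially ``eigenvector bookkeeping'' and use only the Hopf algebra axioms together with the fact that $\zeta$ is a \emph{primitive} $n$th root of unity; items (4) and (5) bring in the counit, and (5) additionally exploits strongness of the $\widehat{G_\pi^l}$-grading.

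For (1) I would first record the operator identities $(\Xi_\pi^l\otimes\id)\circ\Delta=\Delta\circ\Xi_\pi^l$ and $(\id\otimes\Xi_\pi^r)\circ\Delta=\Delta\circ\Xi_\pi^r$, which are one-line Sweedler computations using coassociativity. Given $a\in H_{i,\pi}^l$, decompose $\Delta(a)=\sum_k w_k$ according to $H\otimes H=\bigoplus_k(H_{k,\pi}^l\otimes H)$; applying $\Xi_\pi^l\otimes\id$ gives $\sum_k\zeta^k w_k=\zeta^i\Delta(a)$, and distinctness of the $\zeta^k$ forces $w_k=0$ for $k\neq i$, i.e. $\Delta(a)\in H_{i,\pi}^l\otimes H$; the claim for $H_{j,\pi}^r$ is symmetric, and the coideal assertions are just a reformulation. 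For (2) I would compute $\Xi_\pi^r(S(a))=\sum\pi(S(a_1))S(a_2)$ using $\Delta S=(S\otimes S)\tau\Delta$, recognise the right-hand side as $S(\Xi_{\pi\circ S}^l(a))$, and then check $\Xi_\pi^l\circ\Xi_{\pi\circ S}^l=\id$ by expanding and collapsing with the antipode axiom $\sum S(b_1)b_2=\e(b)1$ and $\pi(1)=1$; hence $\Xi_{\pi\circ S}^l=(\Xi_\pi^l)^{-1}$. (This makes the inverse in Lemma~\ref{l2.3}(3) explicit.)

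For (3), part (2) shows $S$ maps the $\zeta^i$-eigenspace $H_{i,\pi}^l$ of $\Xi_\pi^l$ into the $\zeta^{-i}$-eigenspace $H_{-i,\pi}^r$ of $\Xi_\pi^r$; the mirror identity $\Xi_\pi^l\circ S=S\circ(\Xi_\pi^r)^{-1}$, proved the same way, gives $S(H_{j,\pi}^r)\subseteq H_{-j,\pi}^l$. Feeding both inclusions through $S^{-1}$ (valid since the antipode is bijective) yields the reverse containments, hence $S(H_{i,\pi}^l)=H_{-i,\pi}^r$ and $S(H_{j,\pi}^r)=H_{-j,\pi}^l$; intersecting and recalling $H_{ij,\pi}=H_{i,\pi}^l\cap H_{j,\pi}^r$ gives $S(H_{ij,\pi})=H_{-j,-i,\pi}$.

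For (4), the key point is $\e\circ\Xi_\pi^l=\pi=\e\circ\Xi_\pi^r$, both equalities being immediate from the counit axiom; thus for $a\in H_{ij,\pi}$ one gets $\zeta^i\e(a)=\pi(a)=\zeta^j\e(a)$, so $\e(a)=0$ whenever $i\neq j$ in $\{0,\dots,n-1\}$. Item (5) is the one place where more than bookkeeping is needed, and I expect it to be the main obstacle: the counit computation alone only shows $\e$ is nonzero on \emph{some} diagonal block, whereas one wants \emph{every} $H_{ii,\pi}$. To close the gap I would use that $H=\bigoplus_k H_{k,\pi}^l$ is strongly graded, so $1\in H_{0,\pi}^l=H_{1,\pi}^l\cdot H_{-1,\pi}^l$: writing $1=\sum_r x_ry_r$ with $x_r\in H_{1,\pi}^l$ and $y_r\in H_{-1,\pi}^l$, applying $\e$, and discarding off-diagonal summands by (4), one obtains $\e(H_{11,\pi})\neq 0$. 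Picking $b\in H_{11,\pi}$ with $\e(b)=1$, strongness again gives $b^i\in H_{ii,\pi}$ (a product of $i$ elements of bidegree $(1,1)$ sits in bidegree $(i,i)$) with $\e(b^i)=1\neq 0$ for every $i$, which settles (5).
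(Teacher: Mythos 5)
Your proof is correct. Note that the paper does not actually prove this lemma; it is quoted from \cite[Proposition 2.1 (c)(e)]{BZ} and \cite[Lemma 6.3]{BZ}, so what you have done is supply the (standard) eigenspace argument that the paper leaves to a citation, and every step checks out: the intertwining identities $(\Xi_\pi^l\otimes\id)\Delta=\Delta\Xi_\pi^l$, $(\id\otimes\Xi_\pi^r)\Delta=\Delta\Xi_\pi^r$, $\e\circ\Xi_\pi^l=\e\circ\Xi_\pi^r=\pi$ and $\Xi_\pi^r\circ S=S\circ\Xi_{\pi\circ S}^l$ are all one-line Sweedler computations, and your use of strong grading in (5) via $1\in H_{1,\pi}^l\cdot H_{-1,\pi}^l$ together with part (4) is exactly the right way to get $\e(H_{11,\pi})\neq 0$ and then $\e(H_{ii,\pi})\neq0$ from $b^i\in H_{ii,\pi}$. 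The only place where your wording is looser than the argument it stands for is the equality in (3): literally applying $S^{-1}$ to $S(H_{j,\pi}^r)\subseteq H_{-j,\pi}^l$ just returns the same inclusion. The clean justification is that $S$ is a bijection of $H=\bigoplus_i H_{i,\pi}^l$ onto $H=\bigoplus_i H_{-i,\pi}^r$ carrying the $i$th summand into the $(-i)$th one, so surjectivity of $S$ together with directness of the target decomposition forces each containment $S(H_{i,\pi}^l)\subseteq H_{-i,\pi}^r$ to be an equality (equivalently, $S^2$ maps each $H_{-i,\pi}^r$ bijectively onto itself); the identity $S(H_{ij,\pi})=H_{-j,-i,\pi}$ then follows by intersecting, since a bijection preserves intersections. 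This is a presentational point, not a gap.
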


\begin{remark} \emph{(1) In the regular case, that is, $H$ is a prime regular Hopf algebra of GK-dimension one, the set of all right homological integrals forms a  $1$-dimensional representation whose order is equal to the PI.deg$(H)$. In such case, the invariant components are called \emph{classical components} by \cite[Section 2]{BZ}. }

\emph{(2) In the following of this paper, we will omit the notation $\pi$ when the representation is clear from context. Therefore, say, sometimes we just write $H_{0,\pi}$ as $H_0$ when there is no confusion about which representation we are considering.}
\end{remark}

The following result is the combination of some parts of \cite[Proposition 5.1, Corollary 5.1]{BZ}, which is very useful for us.
\begin{lemma}\label{l2.10} Let $A$ be a $\k$-algebra and let $G$ be a finite abelian group of order $n$ acting faithfully on $A$. So $A$ is $\widehat{G}$-graded, $A=\bigoplus_{\chi\in \widehat{G}}A_{\chi}$. Assume that \emph{1)} this grading is strong and \emph{2)} the invariant component $A_0$ is a commutative domain. Then we have
\begin{itemize}
\item[(a)] Every non-zero homogeneous element is a regular element of $A$ and PI.deg$(A)\leq n$.
\item[(b)] There is an action $\triangleright$ of $\widehat{G}$ on $A_0$ with the following property: For any $\chi\in \widehat{G}$ and $a\in A_0$,
    \begin{equation}\label{eq2.4}(\chi\triangleright a)u_{\chi} =u_{\chi}a\end{equation}
    where $u_{\chi}$ is an arbitrary nonzero element belonging to $A_{\chi}.$
\item[(c)] \emph{PI.deg}$(A)=n$ if and only if the action $\triangleright$ is faithful.
\item[(d)] If \emph{PI.deg}$(A)=n$, then $A$ is prime.
\item[(e)] Let $K< G$ be a subgroup $\widehat{G}$ and let $B$ be the subalgebra $\bigoplus_{\chi\in K}A_{\chi}$. If \emph{PI.deg}$(A)=n$, then $B$ is prime with PI-degree $|K|.$
\end{itemize}
\end{lemma}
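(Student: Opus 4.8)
The plan is to reduce everything to the structure theory of strongly graded rings together with classical crossed-product theory, by passing to an appropriate localization. Write $A_0$ for the commutative domain of degree $0$. Since the $\widehat{G}$-grading is strong, each $A_\chi$ is an invertible $A_0$-bimodule, in particular a rank-one finitely generated projective $A_0$-module on each side, and $A_\chi A_{\chi^{-1}}=A_{\chi^{-1}}A_\chi=A_0$ for every $\chi$; in particular $A$ is module-finite over $A_0$, hence PI.

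\emph{Regularity in (a).} Let $0\neq a\in A_\chi$ and suppose $ab=0$; decomposing $b$ into homogeneous pieces we may assume $b\in A_\psi$. Choose $u_j\in A_\chi$ and $v_j\in A_{\chi^{-1}}$ with $\sum_j u_jv_j=1$. If $v_ja=0$ for all $j$, then $a=\sum_j u_j(v_ja)=0$, a contradiction, so $v_{j_0}a\in A_0\setminus\{0\}$ for some $j_0$. Left-multiplying $ab=0$ by $v_{j_0}$ gives $(v_{j_0}a)b=0$, and since $A_\psi$ is torsion-free as a left $A_0$-module (being projective over the domain $A_0$), we conclude $b=0$. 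The mirror argument rules out left zero-divisors, so every nonzero homogeneous element of $A$ is regular.

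\emph{The winding action (b) and the localization.} For fixed $\chi$, right multiplication by $a\in A_0$ on $A_\chi$ is a left $A_0$-module endomorphism; as $A_\chi$ is a rank-one projective $A_0$-module, $\operatorname{End}_{A_0}(A_\chi)\cong A_0$, so this endomorphism equals left multiplication by a unique element $\chi\triangleright a\in A_0$, i.e. $(\chi\triangleright a)u_\chi=u_\chi a$ for all $u_\chi\in A_\chi$. From $A_\chi A_\psi=A_{\chi\psi}$ one checks $(\chi\psi)\triangleright=(\chi\triangleright)\circ(\psi\triangleright)$ and that each $\chi\triangleright$ is bijective, so $\triangleright$ is an action of $\widehat{G}$ on $A_0$ by algebra automorphisms, which proves (b). Because $\chi\triangleright$ permutes $A_0\setminus\{0\}$, the set $S:=A_0\setminus\{0\}$ of regular elements is left and right Ore in $A$ (as $u_\chi s=(\chi\triangleright s)u_\chi$ with $\chi\triangleright s\in S$), and $A_S:=S^{-1}A$ is a strongly $\widehat{G}$-graded ring whose degree-$0$ part is the field $Q:=\operatorname{Frac}(A_0)$ and whose component $(A_S)_\chi\cong Q\otimes_{A_0}A_\chi$ is one-dimensional over $Q$; thus $\dim_Q A_S=n$, every nonzero homogeneous element of $A_S$ is a unit, and $\triangleright$ extends to an action of $\widehat{G}$ on $Q$. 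Since $A\hookrightarrow A_S$ and the $n$-dimensional $Q$-algebra $A_S$ embeds into $M_n(Q)$ via its left regular representation, $\operatorname{PI.deg}(A)\le n$; as $A_S$ moreover satisfies every multilinear identity of $A$, $\operatorname{PI.deg}(A)=\operatorname{PI.deg}(A_S)$. This completes (a).

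\emph{Parts (c), (d), (e).} The ring $A_S=\bigoplus_{\chi}Qu_\chi$ is exactly the generalized crossed product of $Q$ with $\widehat{G}$ for the action $\triangleright$ and the $2$-cocycle determined by $u_\chi u_\psi\in Qu_{\chi\psi}$. If $\triangleright$ is faithful, then by Artin's theorem $F:=Q^{\widehat{G}}$ satisfies $[Q:F]=n$ and $Q/F$ is Galois with group $\widehat{G}$, so $A_S$ is a crossed-product central simple $F$-algebra of degree $n$; hence $\operatorname{PI.deg}(A_S)=n$, and so $\operatorname{PI.deg}(A)=n$. If instead $\ker(\triangleright)$ has order $m>1$, then over the fixed field $F$ of the image one has $[Q:F]=n/m$, so $\dim_F A_S=n^2/m$ and therefore $\operatorname{PI.deg}(A_S)\le\sqrt{n^2/m}<n$, whence $\operatorname{PI.deg}(A)<n$. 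Combined with $\operatorname{PI.deg}(A)\le n$, this gives the equivalence in (c). For (d): if $\operatorname{PI.deg}(A)=n$ then $\triangleright$ is faithful, so $A_S$ is simple Artinian; since $A$ sits essentially inside the ring of quotients $A_S$ (any $s^{-1}a\neq 0$ satisfies $s(s^{-1}a)=a\in A\setminus\{0\}$ with $s\in A_0\subseteq A$), $A$ is a prime Goldie ring. Finally (e) follows by applying (a), (c), (d) to the strongly $K$-graded subalgebra $B:=\bigoplus_{\chi\in K}A_\chi$, whose degree-$0$ part is $A_0$: since $\operatorname{PI.deg}(A)=n$ forces $\triangleright$ to be faithful on $\widehat{G}$, its restriction to $K$ is faithful, so $\operatorname{PI.deg}(B)=|K|$ by (c) and $B$ is prime by (d).

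I expect the main obstacle to be (a) together with (b): extracting, from the hypotheses ``strong grading $+$ $A_0$ a domain'', both the regularity of homogeneous elements and the fact that the winding automorphisms $\chi\triangleright$ genuinely land in $A_0$ rather than merely in its fraction field — this is precisely where the rank-one-projective / invertible-bimodule structure of the components $A_\chi$ must be used. Once $A_S$ is identified as a crossed-product algebra over the field $Q$, parts (c)–(e) are a fairly routine application of the theory of crossed-product central simple algebras and of orders in simple Artinian rings.
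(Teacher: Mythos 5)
Your route — prove homogeneous elements are regular, define the winding action $\triangleright$ from the invertible-bimodule structure of the components, invert $S=A_0\setminus\{0\}$, and identify $A_S$ with a crossed product of $\widehat{G}$ over $Q=\operatorname{Frac}(A_0)$ — is exactly the route of the source this lemma is quoted from (the paper gives no proof; it cites \cite[Proposition 5.1, Corollary 5.1]{BZ}), and parts (a), (b), (d), (e) together with the ``faithful $\Rightarrow$ PI.deg$(A)=n$'' half of (c) are essentially correct, modulo small repairs: in (b) the Morita isomorphism identifies $\operatorname{End}({}_{A_0}A_\chi)$ with $A_0$ acting by \emph{right} multiplication, so to write right multiplication by $a$ as a left multiplication you should instead use $\operatorname{End}(A_{\chi}{}_{A_0})\cong A_0$ acting by left multiplication (both hold because $A_0$ is commutative and $A_\chi$ is invertible); and the Ore property of $S$ needs a common denominator (e.g.\ $\prod_{\chi}\chi\triangleright s$) to handle inhomogeneous elements, since $u\,s=(\chi\triangleright s)u$ only commutes $s$ past one homogeneous piece at a time.

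The genuine gap is the step ``$\dim_F A_S=n^2/m$, therefore $\mathrm{PI.deg}(A_S)\le\sqrt{n^2/m}$'' in the non-faithful direction of (c). The implicit principle ``PI-degree is at most the square root of the dimension over a central subfield'' is false: the upper triangular $2\times 2$ matrices over a field have dimension $3$ and PI-degree $2$. The conclusion you want is true here, but it needs an extra input about the structure of $A_S$, for instance: (i) $A_S=Q\ast\widehat{G}$ is semisimple Artinian (Maschke for crossed products in characteristic $0$), so it is a finite product of simple algebras $B_i$ with $\mathrm{PI.deg}(B_i)^2\le\dim_{Z(B_i)}B_i\le\dim_F B_i$, which legitimizes the square-root bound; or (ii) with $N=\ker(\triangleright)$ and $T=\bigoplus_{\chi\in N}Qu_\chi$ the twisted group algebra of $N$ over $Q$ (central $Q$, since $N$ acts trivially), note $A_S$ is a free right $T$-module of rank $n/m$, hence $A_S\hookrightarrow M_{n/m}(T)$, while $\mathrm{PI.deg}(T)\le\sqrt{m}$ because $T$ is semisimple with $Q$ central; either way $\mathrm{PI.deg}(A)\le\mathrm{PI.deg}(A_S)\le n/\sqrt{m}<n$. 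With such an argument inserted, your proof goes through; as written, that one inference is unjustified.
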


\subsection{Known examples.}\label{ss2.3}

The following examples appeared in \cite{BZ,WLD} already and we recall them for completeness.

\noindent$\bullet$ \emph{Connected algebraic groups of dimension one}. It is well-known that there are precisely two connected algebraic groups of dimension one (see, say \cite[Theorem 20.5]{Hu}) over an algebraically closed field $\k$. Therefore, there are precisely two commutative $\k$-affine domains of GK-dimension one which admit a structure of Hopf algebra, namely $H_1=\k[x]$ and $H_2=\k[x^{\pm 1}]$. For $H_1$, $x$ is a primitive element, and for $H_2$, $x$ is a group-like element. Commutativity and cocommutativity imply that $\io(H_i)=\im(H_i)=1$ for $i=1, 2$.

\noindent$\bullet$ \emph{Infinite dihedral group algebra}. Let $\mathbb{D}$ denote the infinite dihedral group $\langle g, x | g^2 = 1, gxg=x^{-1}\rangle$. Both $g$ and $x$ are group-like elements in the group algebra $\k\mathbb{D}$. By cocommutativity, $\im(\k\mathbb{D})=1$. Using \cite[Lemma 2.6]{LWZ}, one sees that as a right $H$-module, $\int_{\k\mathbb{D}}^l\cong \k\mathbb{D}/\langle x-1, g+1\rangle.$ This implies $\io(\k\mathbb{D})=2$.

\noindent$\bullet$ \emph{Infinite dimensional Taft algebras}. Let $n$ and $t$ be integers with $n>1$ and $0\leqslant t \leqslant n-1$. Fix a primitive $n$th root $\xi$ of
$1$. Let $T=T(n, t, \xi)$ be the algebra generated by $x$ and $g$ subject to the relations $$g^n=1\quad \text{and} \quad xg=\xi gx.$$
Then $T(n, t, \xi)$ is a Hopf algebra with coalgebra structure given by
$$\D(g)=g\otimes g,\ \epsilon(g)=1 \quad \text{and} \quad \D(x)=x\otimes g^t+1\otimes x,\
\epsilon(x)=0,$$ and with $$S(g)=g^{-1}\quad \text{and} \quad S(x)=-xg^{-t}.$$

As computed in \cite[Subsection 3.3]{BZ}, we have $\int_T^l \cong T/\langle x, g-\xi^{-1}\rangle,$ and the corresponding homomorphism $\pi$ yields left and right winding automorphisms \[{\Xi_{\pi}^l:}
\begin{cases}
x\longmapsto x, &\\
g\longmapsto \xi^{-1}g, &
\end{cases} \textrm{and} \;\;\;\;\;
\Xi_{\pi}^r:
\begin{cases}
x\longmapsto \xi^{-t}x, &\\
g\longmapsto \xi^{-1}g. &
\end{cases}\]
So that $G_\pi^l=\langle \Xi_{\pi}^l\rangle$ and $G_\pi^r=\langle \Xi_{\pi}^r\rangle$ have order $n$. If gcd$(n, t)=1$, then $G_\pi^l\cap G_\pi^r=\{1\}$ and \cite[Propositon 3.3]{BZ} implies
that there exists a primitive $n$th root $\eta$ of 1 such that $T(n, t, \xi)\cong T(n, 1, \eta)$ as Hopf algebras. If gcd$(n,t)\neq 1$, let $m:=n/\text{gcd}(n, t)$, then $G_\pi^l\cap G_\pi^r=\langle (\Xi_{\pi}^l)^m\rangle$.
Thus we have $\io(T(n, t, \xi))=n$ and $\im(T(n, t, \xi))=m$ for any $t$. In particular, $\im(T(n, 0, \xi))=1$, $\im(T(n, 1, \xi))=n$ and $\im(T(n, t, \xi))=m=n/t$ when $t|n$.

\noindent$\bullet$ \emph{Generalized Liu algebras}. Let $n$ and $\omega$ be positive integers. The generalized Liu algebra, denoted by $B(n, \omega, \gamma)$, is generated by $x^{\pm 1}, g$ and $y$, subject to the relations
\begin{equation*}
\begin{cases}
xx^{-1}=x^{-1}x=1,\quad  xg=gx,\quad  xy=yx, & \\
yg=\gamma gy, & \\
y^n=1-x^\omega=1-g^n, & \\
\end{cases}
\end{equation*}
where  $\gamma$ is a primitive $n$th root of 1. The comultiplication, counit and antipode of $B(n, \omega, \gamma)$  are given by
$$\Delta(x)=x\otimes x,\quad  \Delta(g)=g\otimes g, \quad   \Delta(y)=y\otimes g+1\otimes y,$$
$$\epsilon(x)=1,\quad  \epsilon(g)=1,\quad    \epsilon(y)=0,$$ and
$$S(x)=x^{-1},\quad  S(g)=g^{-1} \quad S(y)=-yg^{-1}.$$

Let $B:=B(n, \omega, \gamma)$. Using \cite[Lemma 2.6]{LWZ}, we get $\int_B^l=B/\langle y, x-1, g-\gamma^{-1}\rangle$. The corresponding homomorphism $\pi$ yields left and right winding automorphisms
\[{\Xi_{\pi}^l:}
\begin{cases}
x\longmapsto x, &\\
g\longmapsto \gamma^{-1}g, &\\
y\longmapsto y, &
\end{cases}
\textrm{and}\;\;\;\; \Xi_{\pi}^r:
\begin{cases}
x\longmapsto x, &\\
g\longmapsto \gamma^{-1}g, &\\
y\longmapsto \gamma^{-1}y. &
\end{cases}\]
Clearly these automorphisms have order $n$ and $G_\pi^l\cap
G_\pi^r=\{1\}$, whence $\io(B)=\im(B)=n$.

\noindent$\bullet$ \emph{The Hopf algebras $D(m,d,\gamma)$}. Let $m,d$ be two natural numbers satisfying that $(1+m)d$ is even and $\gamma$ a primitive $m$th root of $1$. Define
$$\omega:=md,\;\;\;\;\xi:=\sqrt{\gamma}.$$
As an algebra, $D=D(m, d, \gamma)$ is generated by $x^{\pm 1}, g^{\pm 1}, y, u_0, u_1, \cdots,
u_{m-1}$, subject to the following relations
\begin{align*}
&xx^{-1}=x^{-1}x=1,\quad\quad gg^{-1}=g^{-1}g=1,\quad\quad xg=gx,\\
&xy=yx,\quad\quad\quad\quad\quad\quad yg=\gamma gy,\ \quad\quad\quad\quad\quad y^m=1-x^\omega=1-g^m,\\
& xu_i=u_ix^{-1},\ \quad \quad\quad\quad yu_i=\phi_iu_{i+1}=\xi x^d u_i y,\quad u_i g=\gamma^i x^{-2d}gu_i,\\
& u_iu_j=\left \{
\begin{array}{lll} (-1)^{-j}\xi^{-j}\gamma^{\frac{j(j+1)}{2}}\frac{1}{m}x^{-\frac{1+m}{2}d}\phi_i\phi_{i+1}\cdots \phi_{m-2-j}y^{i+j}g, & i+j \leqslant m-2,\\
(-1)^{-j}\xi^{-j}\gamma^{\frac{j(j+1)}{2}}\frac{1}{m}x^{-\frac{1+m}{2}d}y^{i+j}g, & i+j=m-1,\\
(-1)^{-j}\xi^{-j}\gamma^{\frac{j(j+1)}{2}}\frac{1}{m}x^{-\frac{1+m}{2}d}\phi_i \cdots \phi_{m-1}\phi_0\cdots
\phi_{m-2-j}y^{i+j-m}g, &
\textrm{otherwise},
\end{array}\right.\end{align*}
where $\phi_i=1-\gamma^{-i-1}x^d$ and $0 \leqslant i, j \leqslant m-1$.

The coproduct $\D$, the counit $\epsilon$ and the antipode $S$ of $D(m,d,\gamma)$ are given by
\begin{align*}
&\D(x)=x\otimes x,\;\; \D(g)=g\otimes g, \;\;\D(y)=y\otimes g+1\otimes y,\\
&\D(u_i)=\sum_{j=0}^{m-1}\gamma^{j(i-j)}u_j\otimes x^{-jd}g^ju_{i-j};\\
&\epsilon(x)=\epsilon(g)=\epsilon(u_0)=1,\;\;\epsilon(y)=\epsilon(u_s)=0;\\
&S(x)=x^{-1},\;\; S(g)=g^{-1}, \;\;S(y)=-yg^{-1},\\
&S(u_i)=(-1)^i\xi^{-i}\gamma^{-\frac{i(i+1)}{2}}x^{id+\frac{3}{2}(1-m)d}g^{m-i-1}u_i,
\end{align*}
for $0\leq i\leq m-1$ and $1\leqslant s\leqslant m-1$. Direct computation shows that $\int_D^l=D/(y, x-1,
g-\gamma^{-1}, u_0-\xi^{-1}, u_1, u_2, \cdots, u_{m-1}),$  and the left and right winding automorphisms are:
\[{\Xi_{\pi}^l:}
\begin{cases}
x\longmapsto x, &\\
y\longmapsto y, &\\
g\longmapsto \gamma^{-1}g, &\\
u_i\longmapsto \xi^{-1}u_i, &
\end{cases}
\textrm{and}\;\;\;\; \Xi_{\pi}^r:
\begin{cases}
x\longmapsto x, &\\
y\longmapsto \gamma^{-1}y. &\\
g\longmapsto \gamma^{-1}g, &\\
u_i\longmapsto \xi^{-(2i+1)} u_i. &
\end{cases}\]
From these, we know that $\io(D)=2m$ and $\im(D)=m$.

\begin{remark} In \cite{WLD}, the authors used the notation $D(m,d,\xi)$ rather than $D(m,d,\gamma)$ used here. We will see that the notation $D(m,d,\gamma)$ is more convenient for us.
\end{remark}

Up to an isomorphism of Hopf algebras, all of above examples form a complete list of prime regular Hopf algebras of GK-dimension one (see \cite[Theorem 8.3.]{WLD}).

\begin{lemma}\label{l2.12} Let $H$ be a prime regular Hopf algebra of GK-dimension one, then it is isomorphic to one of Hopf algebras listed above.
\end{lemma}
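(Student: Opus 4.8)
This is precisely \cite[Theorem 8.3]{WLD}; we sketch the strategy of its proof, which combines \cite{LWZ}, \cite{BZ} and \cite{WLD}. By Lemma \ref{l2.1}, $H$ is AS-Gorenstein, so it possesses a left homological integral, hence a distinguished $1$-dimensional representation $\pi_H\colon H\to\k$. By \cite{SSW} the algebra $H$ is module-finite over its centre, so it is PI of finite PI-degree, and by \cite[Theorem 7.1]{LWZ} one has $\io(H)=\mathrm{PI.deg}(H)=:n$; moreover \cite[Theorem 2.5]{BZ} shows that the classical (invariant) components $H_{0,\pi_H}^l$ and $H_{0,\pi_H}^r$ are commutative Dedekind domains of GK-dimension one. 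Thus both (Hyp1)$'$ and (Hyp2) hold, and by Lemma \ref{l2.5} the algebra $H$ is strongly graded over the cyclic group $\widehat{G_{\pi_H}^l}$ of order $n$ (and likewise over $\widehat{G_{\pi_H}^r}$), so it is the bigraded algebra \eqref{eq2.3} assembled from the Dedekind domain $H_{0,\pi_H}^l$ and the two gradings.

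The classification then follows by reconstructing $H$ from this data, organised according to the divisor $m:=\im(H)$ of $n$. In the \emph{ideal case} $m=1$ or $m=n$, one argues as in \cite[Theorem 0.5]{BZ}: the strong grading over a Dedekind domain, together with the coalgebra compatibilities of Lemma \ref{l2.8} and the antipode constraints of Lemma \ref{l2.3}, force the existence of a group-like element $g$ generating the grading and of a skew-primitive element ($x$ when $m=1$, $y$ when $m=n$) whose relations with $H_{0,\pi_H}^l$ and with $g$ are completely determined; running through the possibilities one obtains exactly $\k[x]$, $\k[x^{\pm1}]$, $\k\mathbb{D}$ and $T(n,0,\xi)$ in the case $m=1$, and $T(n,1,\eta)$ and the generalized Liu algebras $B(n,\omega,\gamma)$ in the case $m=n$. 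In the \emph{remaining case} $1<m<n$, which is the content of \cite{WLD}, one first extracts a Hopf subalgebra $\widetilde{H}\subseteq H$ that lies in the ideal case --- generated by the invariant component, a group-like, and the $m$-th power of a skew-primitive element --- identifies it with a generalized Liu algebra, and then shows that $\widetilde H\subseteq H$ is a ``degree-two'' (square-root) extension controlled by finitely many scalars, which forces $H\cong D(m,d,\gamma)$ for suitable $m,d,\gamma$ with $(1+m)d$ even.

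The main obstacle is this last step: to pin down the extra generators $u_0,\dots,u_{m-1}$ interpolating between $H_{0,\pi_H}^l$ and a skew-primitive square root, and to prove that they satisfy exactly the quadratic relations defining $D(m,d,\gamma)$. This is carried out in \cite{WLD} by a careful PI-degree and Hilbert-series count combined with repeated use of the coproduct and antipode axioms to evaluate every structure constant, and it is precisely the point where \cite{WLD} goes beyond the partial classification of \cite{BZ}.
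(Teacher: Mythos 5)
Your proposal is correct and follows the same route as the paper: Lemma \ref{l2.12} is precisely \cite[Theorem 8.3]{WLD}, and the paper gives no argument beyond this citation, so leading with that citation (plus a sketch of the \cite{LWZ}--\cite{BZ}--\cite{WLD} strategy) matches the intended proof. One inaccuracy in your sketch of the remaining case $1<\im(H)<\io(H)$: there the Hopf subalgebra $\widetilde{H}$ may be additive as well as multiplicative, and the additive sub-case produces the infinite-dimensional Taft algebras $T(n,t,\xi)$ with $1<\gcd(n,t)<n$ (an extension of degree $n/m$, not necessarily two); only the multiplicative sub-case forces $n=2m$ and $H\cong D(m,d,\gamma)$, as is carried out in \cite{WLD}.
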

\subsection{Yetter-Drinfeld modules.}\label{ss2.4} This subsection is just a preparation for the question \eqref{1.1} and will not be used in the proof of our main result. Let $H$ be an arbitrary Hopf algebra. By definition, a \emph{left-left Yetter-Drinfeld module} $V$ over $H$ is a left $H$-module and a left $H$-comodule such that
$$\delta(h\cdot v)=h_1v_{(-1)}S(h_3)\otimes h_2\cdot v_{(0)}$$
for $h\in H, v\in V.$ The category of left-left Yetter-Drinfeld modules over $H$ is denoted by ${^{H}_{H}\mathcal{YD}}$. It is a braided tensor category. In particular, when $H=\k G$ a group algebra, we denote this category by ${^{G}_{G}\mathcal{YD}}.$

We briefly summarize results from \cite{Rad}, see also \cite{Maj}. Let $A$ be a Hopf algebra provided with Hopf algebra maps $\pi:\; A\to H.\;\iota: H\to A$, such that $\pi\iota=\id_{H}.$ Let $R=A^{coH}=\{a\in A|(\in\otimes \pi)\D(a)=a\otimes 1\}.$ Then $R$ is a braided Hopf algebra in ${^{H}_{H}\mathcal{YD}}$ through
 \begin{align*} &h\cdot r:= h_1rS(h_2),\\
 & r_{(-1)}\otimes r_{(0)}:=\pi(r_1)\otimes r_2,\\
 & r^1\otimes r^2:=\vartheta(r_1)\otimes r_2
 \end{align*} for $r\in R, \;h\in H$, $\D(r)=r^1\otimes r^2$ denote the coproduct of $r\in R$ in the category ${^{H}_{H}\mathcal{YD}}$ and $\vartheta(a):=a_1\iota\pi(S(a_2))$ for $a\in A.$

 Conversely, let $R$ be a Hopf algebra in ${^{H}_{H}\mathcal{YD}}$. A construction discovered by Radford, and interpreted in terms of braided tensor categories by Majid, produces a Hopf algebra $R\# H$ through: As a vector space $R\# H=R\otimes H;$ if $r\# h:=r\otimes h, \; r\in R, h\in H$, the multiplication and coproduct are given by
 \begin{align*} &(r\# h)(s\# f)=r(h_1\cdot s)\# h_2f,\\
 &\D(r\# h)= r^{1}\#(r^2)_{(-1)}h_{1}\otimes (r^2)_{(0)}\#h_2.
 \end{align*}
 The resulted Hopf algebra $R\# H$ is called a Radford's biproduct or Majid's bosonization.

Now go back to the situation of $\pi:\; A\to H.\;\iota: H\to A$ such that $\pi\iota=\id_H$. In such case we have $A\cong R\# H$ and
 \begin{equation}\label{eq2.5} r_{1}\otimes r_{2}=r^{1}(r^{2})_{(-1)}\otimes (r^{2})_{(0)}
 \end{equation} for $r\in R.$

With these preparations, we can set the question of \eqref{1.1} for smooth curves at first.

\begin{corollary}\label{c2.9} The affine line and $\k[x^{\pm 1}]$ are the only irreducible smooth curves which can be realized as Hopf algebras in ${^{\Z_n}_{\Z_n}{\mathcal{YD}}}$ for some $n$.
\end{corollary}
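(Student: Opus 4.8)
The plan is to argue that if an irreducible smooth curve $C$ with coordinate algebra $\mathcal{O}(C)$ is realized as a braided Hopf algebra $R$ in ${^{\Z_n}_{\Z_n}{\mathcal{YD}}}$, then the bosonization $A = R\#\k\Z_n$ is an ordinary Hopf algebra that is prime, regular, and of GK-dimension one, and hence is classified by Lemma \ref{l2.12}. First I would record the basic setup: the $\Z_n$-action on $C$ is assumed faithful (this is the harmless reduction explained in the introduction), so the crossed product $A = \mathcal{O}(C)\#\k\Z_n$ is prime by a standard argument about smash products with faithful finite group actions on prime rings, and it is affine noetherian. Since $\mathcal{O}(C)$ is a commutative domain of Krull dimension one it has finite global dimension (smoothness of $C$), and $\k\Z_n$ is semisimple, so $A$ has finite global dimension; thus $A$ is regular. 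Because $A$ is a finite module over $\mathcal{O}(C)$ and $\operatorname{GKdim}\mathcal{O}(C)=1$, we get $\operatorname{GKdim} A = 1$. Finally, $A$ carries the Hopf algebra structure of Radford's biproduct, with the Hopf projection $\pi\colon A\to\k\Z_n$ and inclusion $\iota\colon\k\Z_n\to A$ satisfying $\pi\iota=\id$, and $R = A^{co\,\k\Z_n}$ recovers $C$.

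Next I would apply Lemma \ref{l2.12}: $A$ must be one of the Hopf algebras on the list $\k[x]$, $\k[x^{\pm1}]$, $\k\mathbb{D}$, $T(n,t,\xi)$, $B(n,\omega,\gamma)$, $D(m,d,\gamma)$. I then need to determine, for each of these, whether it admits a Hopf projection onto some $\k\Z_n$ with $n>1$ (for $n=1$ the statement is trivial since then $R=A$ is a commutative Hopf algebra domain, forcing $C=\mathbb{A}^1$ or $\Gbar_m$ by the classification of one-dimensional connected algebraic groups), and if so, to identify the coinvariant subalgebra $R$ as a variety. For $\k[x]$ and $\k[x^{\pm1}]$ themselves the coinvariants under any nontrivial projection are again $\k[x]$ or $\k[x^{\pm1}]$, giving the two allowed curves. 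The point to rule out is that the genuinely noncommutative examples — the Taft algebras $T(n,t,\xi)$, the Liu algebras $B(n,\omega,\gamma)$, and the $D(m,d,\gamma)$ — could produce a \emph{singular} curve as coinvariants; but since we assumed $C$ is smooth, $\mathcal{O}(C) = R$ must be a regular commutative domain of GK-dimension one, hence again $\k[x]$ or $\k[x^{\pm1}]$ by the same one-dimensional algebraic group classification. So the only thing really left to check is that, in each noncommutative case, whenever a Hopf projection to $\k\Z_n$ exists, the coinvariant subalgebra is in fact \emph{not} smooth unless it degenerates to $\k[x]$ or $\k[x^{\pm1}]$ — equivalently, that any smooth coinvariant subalgebra is forced to be one of these two.

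Concretely, for $T(n,t,\xi)$ the natural projection sends $x\mapsto 0$, $g\mapsto g$, with coinvariant algebra $\k[x]$ (the affine line), which is exactly the motivating example; I would check that every Hopf projection onto a nontrivial group algebra quotient has coinvariants $\k[x]$, using that the only group-likes are powers of $g$ and the $(1,g^t)$-skew-primitive $x$ generates a polynomial subalgebra. For $B(n,\omega,\gamma)$ the relation $y^n = 1-x^\omega$ shows the relevant coinvariant subalgebra is $\k[y,x^{\pm1}]/(y^n-1+x^\omega)$-type, which is the one-dimensional curve $\Spec$ of a hypersurface that is singular for $n\ge 2$ — so this case contributes no smooth curve beyond the trivial ones; similarly for $D(m,d,\gamma)$, whose coinvariant subalgebra carries the relation $y^m = 1-x^\omega$ and is again singular. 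And $\k\mathbb{D}$ is not a domain, so its only commutative Hopf-subalgebra-type coinvariants are group algebras of cyclic or trivial groups, i.e. $\k[x^{\pm1}]$ or $\k$. I expect the main obstacle to be the bookkeeping in the last paragraph: one must be careful that the Radford biproduct decomposition $A\cong R\#\k\Z_n$ is exhaustive — i.e. that \emph{every} realization of $C$ as a braided Hopf algebra in ${^{\Z_n}_{\Z_n}{\mathcal{YD}}}$ arises from a projection-section pair on the associated ordinary Hopf algebra (this is the content of the Radford--Majid correspondence recalled before the statement, so it is available) — and then that the smoothness hypothesis on $C$ genuinely forces regularity of $A$, which relies on $\mathcal{O}(C)$ being a regular ring and $\k\Z_n$ being separable so that the smash product inherits finite global dimension.
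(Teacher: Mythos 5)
Your overall strategy --- bosonize to $A=\k[C]\#\k\Z_n$, show $A$ is affine, prime, regular and of GK-dimension one, and then run through the list of Lemma \ref{l2.12} --- is exactly the paper's (the paper obtains primeness from Lemma \ref{l2.10} via the strong $\widehat{\Z_n}$-grading rather than from the standard smash-product criterion you invoke, but both routes work). The problem lies in the case-by-case verification, which the paper compresses into ``checking it one by one'' and which you attempt to supply. First, your assertion that $\mathcal{O}(C)=R$ being a regular commutative domain of GK-dimension one forces $R\cong\k[x]$ or $\k[x^{\pm 1}]$ ``by the one-dimensional algebraic group classification'' is a non sequitur: that classification applies to ordinary commutative Hopf algebras, whereas $R$ is only a braided Hopf algebra in ${^{\Z_n}_{\Z_n}\mathcal{YD}}$, and there are many smooth irreducible affine curves that are not algebraic groups. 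You half-acknowledge this, which means the entire weight of the argument falls on the explicit check of each item on the list.

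Second, and more seriously, that check is carried out incorrectly for the multiplicative examples. The hypersurface $y^n=1-x^\omega$ with $x$ invertible is \emph{smooth}, not singular: the partial derivative of $y^n+x^\omega-1$ with respect to $x$ is $\omega x^{\omega-1}$, which never vanishes on the locus $x\neq 0$ (indeed the affine curve $y^n+x^\omega=1$ is smooth even in $\mathbb{A}^2$, since $x=y=0$ does not lie on it). So you cannot dismiss $B(n,\omega,\gamma)$ --- nor $D(m,d,\gamma)$, for which you make the same claim --- on the grounds that the relevant coinvariant curve is singular. What is actually required is to determine which Hopf algebra retracts $\k\Z_n\hookrightarrow B(n,\omega,\gamma)\twoheadrightarrow\k\Z_n$ exist in the first place (note that the obvious projection $B\to B/(y,x-1)\cong\k\Z_n$ admits no Hopf-algebra section, because $g$ has infinite order in $B$; any section must land in the torsion part of the group of group-likes, whose order divides $\gcd(n,\omega)$), and then to compute the coinvariants of those retracts that do exist and decide whether they can be smooth curves other than $\k[x]$ and $\k[x^{\pm 1}]$. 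This is the genuinely delicate part of the corollary, and your proposal does not carry it out; the argument as written does not close the multiplicative cases.
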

\begin{proof} Let $C$ be an irreducible smooth curve which can be realized as a Hopf algebra in ${^{\Z_n}_{\Z_n}{\mathcal{YD}}}$ for some $n$. There is no harm to assume that the action of $\Z_n$ on this curve (more precisely,  on the coordinate algebra $\k[C]$ of this curve) is faithful. Therefore, the Radford's biproduct $$A:=\k[C]\# \k \Z_n$$
constructed above is a Hopf algebra of GK-dimension one. We claim that it is  prime and regular. Primeness is gotten from Lemma \ref{l2.10}: Clearly
 $$A=\bigoplus_{i=0}^{n-1} \k[C]g^{i}.$$
 From this, $A$ is a strongly $\widehat{\Z_n}=\langle \chi|\chi^n=1 \rangle$-graded algebra through $\chi(ag^i)=\xi^{i}$ for any $a\in \k[C]$ and $0\leq i\leq n-1.$ Therefore, the conditions 1) and 2) of Lemma \ref{l2.10} are fulfilled. By part (b) of Lemma \ref{l2.10}, the action of $\widehat{\Z_n}$ is just the adjoint action of $\Z_{n}=\langle g|g^n=1\rangle$ on $\k[C]$ which by definition is faithful. Therefore, PI.deg$(A)=n$ by part (c) of Lemma \ref{l2.10}. In addition, the part (d) of Lemma \ref{l2.10} implies that $A$ is prime now. Regularity is clear since the smoothness of $C$ implies the regularity of $\k[C]$ and thus regularity of $A$. In one word, $A$ is a prime regular Hopf algebra of GK-dimension one.

   Therefore, the result is followed from above classification stated in Lemma \ref{l2.12} by checking it one by one.
\end{proof}
\subsection{Pivotal tensor categories.} The only purpose of this subsection is just to tell us that the representation categories of our new examples stated in Section 4 are quite delightful: they are pivotal. The readers can refer \cite[Section 4.7]{EGNO} for details of the following content of this subsection.

Recall that a tensor category $\mathcal{C}=(\mathcal{C},\otimes,\Phi,\unit, l,r)$ is called \emph{rigid} if every object in $\mathcal{C}$ has a left and a right dual. By definition, a left dual object of $V\in \mathcal{C}$ is a triple $(V^{*},\textrm{ev}_{V}, \textrm{coev}_{V})$ with an object $V^{*}\in \mathcal{C}$ and morphisms $\textrm{ev}_{V}:\; V^{*}\otimes V\to \unit$ and $\textrm{coev}_{V}:\;\unit\to V\otimes V^{*}$ such that the compositions
\begin{figure}[hbt]
\begin{picture}(300,60)(0,0)
\put(0,40){\makebox(0,0){$ V$}}
\put(10,40){\vector(1,0){60}}\put(40,50){\makebox(0,0){$ \textrm{coev}_V\otimes \id_V$}}
\put(105,40){\makebox(0,0){$(V\otimes V^{*})\otimes V$}} \put(140,40){\vector(1,0){20}}
\put(150,50){\makebox(0,0){$ \Phi$}}\put(200,40){\makebox(0,0){$ V\otimes(V^{*}\otimes V)$}}
\put(240,40){\vector(1,0){60}}\put(270,50){\makebox(0,0){$ \id_V \otimes \textrm{ev}_V$}}
\put(320,40){\makebox(0,0){$V$,}}

\put(0,0){\makebox(0,0){$ V^{*}$}}
\put(10,0){\vector(1,0){60}}\put(40,10){\makebox(0,0){$ \id_{V^{*}}\otimes \textrm{coev}_V $}}
\put(105,0){\makebox(0,0){$V^{*}\otimes (V\otimes V^{*})$}} \put(140,0){\vector(1,0){20}}
\put(150,10){\makebox(0,0){$ \Phi^{-1}$}}\put(200,0){\makebox(0,0){$ (V^{*}\otimes V)\otimes V^{*}$}}
\put(240,0){\vector(1,0){60}}\put(270,10){\makebox(0,0){$ \textrm{ev}_V\otimes \id_{V^{*}}$}}
\put(320,0){\makebox(0,0){$V^{*}$,}}\end{picture}
\end{figure}

are identities. The right dual can be defined similarly. Then we have the following functor $$(-)^{**}:\mathcal{C}\to \mathcal{C},\;\;V\mapsto V^{**}$$
which is a tensor autoequivalence of $\mathcal{C}.$
\begin{definition}\emph{ Let $\mathcal{C}$ be a rigid tensor category. A \emph{pivotal structure} on $\mathcal{C}$ is an isomorphism $j$ of tensor functors $j_{V}:V\mapsto V^{**}.$ A rigid tensor category $\mathcal{C}$ is said pivotal if it has a pivotal structure.}
\end{definition}

As nice properties of a pivotal tensor category, one can define categorical dimensions \cite[Section 4.7]{EGNO}, the Frobenius-Schur indicators \cite{NS}, semisimplifications \cite{EO} etc. The following result is well-known.

\begin{lemma}\label{l2.16} Let $H$ be a Hopf algebra. If $S^2(h)=ghg^{-1}$ for a group-like element $g\in H$ and any $h\in H$, then the representation category of $H$ is pivotal.
\end{lemma}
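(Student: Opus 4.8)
\textbf{Proof proposal for Lemma \ref{l2.16}.} The statement is that if $S^2(h)=ghg^{-1}$ for a fixed group-like $g\in H$ and all $h\in H$, then $\Rep(H)$ (finite-dimensional modules) is pivotal; that is, there is an isomorphism of tensor functors $j\colon \id \Rightarrow (-)^{**}$. The plan is to write down $j$ explicitly in terms of the action of $g$ and then check it is tensor-natural. First I would recall the rigid structure on $\Rep(H)$: for a finite-dimensional $H$-module $V$, the left dual $V^*$ is the linear dual with action $(h\cdot f)(v)=f(S(h)v)$, and $V^{**}$ is the linear double dual with action twisted by $S^2$; the canonical vector-space isomorphism $\mathrm{can}_V\colon V\to V^{**}$, $v\mapsto \mathrm{ev}_v$, is therefore \emph{not} $H$-linear in general, but intertwines the $H$-action on $V$ precomposed with $S^2$ and the $H$-action on $V^{**}$. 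Concretely, for $h\in H$ and $v\in V$ one has $\mathrm{can}_V(S^2(h)\cdot v)=h\cdot \mathrm{can}_V(v)$ in $V^{**}$.

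The key step is to correct $\mathrm{can}_V$ by the action of $g$: define $j_V\colon V\to V^{**}$ by $j_V(v):=\mathrm{can}_V(g\cdot v)$. Then for $h\in H$,
\[
j_V(h\cdot v)=\mathrm{can}_V(gh\cdot v)=\mathrm{can}_V\bigl(S^2(g^{-1}hg)\cdot (g\cdot v)\bigr)=g^{-1}hg\cdot \mathrm{can}_V(g\cdot v)\cdot\text{?}
\]
— let me redo this cleanly: since $ghg^{-1}=S^2(h)$, we have $gh = S^2(h)g$, so $j_V(h\cdot v)=\mathrm{can}_V(S^2(h)g\cdot v)=h\cdot \mathrm{can}_V(g\cdot v)=h\cdot j_V(v)$, using the intertwining property of $\mathrm{can}_V$ with $v$ replaced by $g\cdot v$. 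Hence $j_V$ is $H$-linear, and it is bijective because $\mathrm{can}_V$ is and $g$ acts invertibly (as $g$ is group-like, hence a unit with inverse $S(g)$). Naturality of $j$ in $V$ is immediate since $\mathrm{can}$ is natural and the $g$-action commutes with all module maps.

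It then remains to verify that $j$ is \emph{monoidal}, i.e. compatible with the tensor structure: under the canonical identification $(V\otimes W)^{**}\cong V^{**}\otimes W^{**}$ one needs $j_{V\otimes W}=j_V\otimes j_W$, and $j_{\unit}=\id_{\unit}$. The unit case is clear since $g$ acts by $\epsilon(g)=1$ on $\unit=\k$. For the tensor case, the point is that $g$ being group-like gives $g\cdot(v\otimes w)=(g\cdot v)\otimes(g\cdot w)$, and the canonical double-dual identification is multiplicative; combining these yields the required equality. I expect the main (mild) obstacle to be purely bookkeeping: tracking the various canonical isomorphisms for duals and double duals of tensor products, and making sure the compatibility diagram in the definition of a pivotal structure commutes on the nose rather than just up to the associativity/unit constraints $\Phi, l, r$. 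None of this is deep — it is the standard argument that a \emph{pivotal element} of a Hopf algebra (a group-like $g$ implementing $S^2$ by conjugation) induces a pivotal structure on its representation category — so the proof amounts to writing $j_V(v)=\mathrm{can}_V(gv)$ and checking the two routine compatibilities above.
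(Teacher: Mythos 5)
Your proposal is correct and is essentially the paper's argument written out in full: the paper simply identifies $V^{**}$ with $V$ and declares that $v\mapsto g\cdot v$ is the pivotal structure, while you make explicit the intertwining identity $\mathrm{can}_V(S^2(h)\cdot v)=h\cdot\mathrm{can}_V(v)$ and the monoidality check coming from $g$ being group-like. The computation $j_V(h\cdot v)=\mathrm{can}_V(S^2(h)g\cdot v)=h\cdot j_V(v)$ is exactly the right justification, so your write-up is a more careful version of the same standard pivotal-element argument.
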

\begin{proof} Let Rep$(H)$ be the tensor category of representations of $H$. Clearly, the map $$V\to V^{**}=V,\;v\mapsto g\cdot v, \;\;\;V\in \Rep(H), \;v\in V$$
gives us the desired pivotal structure on Rep$(H)$.
\end{proof}

 \section{Fractions of a number}

 As a necessary ingredient to define new examples, we give the definition of a fraction of a natural number firstly in this section. Then we use it to ``fracture" the Taft algebra and thus we get the fraction version of a Taft algebra. At last, some combinatorial identities are collected for the future analysis.

 \subsection{Fraction.}  Let $m$ be a natural number and $m_1,m_2,\ldots,m_\theta$ be $\theta$ number of natural numbers. For each $m_i\;(1\leq i\leq \theta)$, we have many natural numbers $a$ such that $m|am_i$. Among of them, we take the smallest one and denote it by $e_i$, that is, $e_i$ is the smallest natural number such that $m|e_im_i.$  Define
  $$A:=\{\underline{a}=(a_1,\ldots,a_\theta)|0\leq a_i< e_i, \;1\leq i\leq \theta\}.$$
  With these notations, we give the definition of a fraction as follows.
 \begin{definition} \label{d3.1} \emph{We call $m_1,\ldots,m_\theta$ is} a fraction of $m$ of length $\theta$ \emph{if the following conditions are satisfied:}
 \begin{itemize}\item[(1)] \emph{For each $1\leq i\leq \theta$, $e_i$ is coprime to $m_i$, i.e. $(e_i,m_i)=1$;}
 \item[(2)] \emph{For each pair $1\leq i\neq j\leq \theta$, $m|m_im_j$;}
 \item[(3)] \emph{The production of $e_i$ is equal to $m$, that is, $m=e_1e_2\cdots e_\theta$;}
 \item[(4)] \emph{For any two elements $\underline{a},\underline{b}\in A$, we have  $\sum_{i=1}^{\theta}a_im_i\not\equiv \sum_{i=1}^{\theta}b_im_i\;(\emph{mod}\; m)$ if $\underline{a}\neq \underline{b}$.}
 \end{itemize}
 \emph{The set of all fractions of $m$ of length $\theta$ is denoted by $F_{\theta}(m)$ and let $\mathcal{F}(m):=\bigcup_{\theta}F_{\theta}(m),\;\mathcal{F}=\bigcup_{m\in \mathbb{N}}\mathcal{F}(m).$}
 \end{definition}

 \begin{remark}\label{r3.2} \emph{(1) Conditions (3) and (4) in this definition is equivalent to say that up to modulo $m$, each number $0\leq j\leq m-1 $ can be represented \emph{uniquely} as a linear combination of $m_1,\ldots,m_\theta$ with coefficients in $A$. That is, under basis $m_1,\ldots,m_\theta$, $j$ has a coordinate and we denote this coordinate by $(j_1,\ldots,j_\theta)$, i.e.}
 $$j\equiv j_1m_1+j_2m_2+\ldots+j_\theta m_\theta\;(\emph{mod}\;m).$$
 \emph{Moreover, for any $j\in \Z$ it has a unique remainder $\overline{j}$ in $\Z_{m}$ and thus we can define the coordinate for any integer accordingly, that is, $j_i:=\overline{j}_i$ for $1\leq i\leq \theta$.} In the following of this paper, this expression will be used freely.

 \emph{(2) For each $1\leq i\leq \theta$, we call $e_i$ \emph{the exponent} of $m_i$ with respect to $m$. Intuitively, it seems more natural to call these exponents $e_1,\ldots,e_\theta$ a fraction of $m$ due to the condition (3). However, there are as least two reasons forbidding us to do it. The first one is that we will meet $m_i$'s rather than $e_i$'s in the following analysis. The second reason is that the exponents can not determine $m_i$'s uniquely. As an example, let $m=6$, we see that both $\{2,3\}$ and $\{4,3\}$ have the same set of exponents. }

 \emph{(3) It is not hard to see that $\theta=1$ if and only if $(m,m_1)=1$.}

 \emph{(4) Usually, we use the notation such as $\underline{m},\underline{m}'\cdots$ to denote a fraction of $m$, that is, $\underline{m},\underline{m}'\in \mathcal{F}(m)$.}

 \end{remark}

 \subsection{Fraction version of a Taft algebra.}
Now let $m_1,\ldots,m_\theta$ be a fraction of $m$, $m_0:=(m_1,\ldots,m_{\theta})$ biggest common divisor of $m_1,\ldots,m_\theta$ and fix a primitive $m$th root of unity $\xi$. We want to define a Hopf algebra $T(m_1,\ldots,m_\theta,\xi)$ as follows. As an algebra, it is generated by $g,y_{m_1},\ldots,y_{m_\theta}$ and subject to the following relations:
 \begin{equation}\label{eq3.1} g^m=1,\;\;y_{m_i}^{e_i}=0,\;\;y_{m_i}y_{m_j}=y_{m_j}y_{m_i},\;\;
 y_{m_i}g=\xi^{\frac{m_i}{m_0}}gy_{m_i},
 \end{equation}
 for $1\leq i,j\leq \theta.$  The coproduct $\D$, the counit $\epsilon$ and the antipode $S$ of $T(m_1,\ldots,m_\theta,\xi)$ are given by
 $$\D(g)=g\otimes g,\quad \D(y_{m_i})=1\otimes y_{m_i}+y_{m_i}\otimes g^{m_i},$$
 $$\e(g)=1,\quad\e(y_{m_i})=0,$$
 $$S(g)=g^{-1},\quad S(y_{m_i})=-y_{m_i}g^{-m_i}$$
 for $ 1\leq i\leq \theta.$

Since $(m_0,m)=1$, if we take $\xi':=\xi^{m_0}$ in the above definition then it is not hard to see that $\xi'$ is still a primitive $m$th root of unity. So in \eqref{eq3.1} we can substitute the relation $y_{m_i}g=\xi^{\frac{m_i}{m_0}}gy_{m_i}$ by a more convenient version
$$y_{m_i}g=\xi^{m_i}gy_{m_i},\;\;\;\;1\leq i\leq \theta.$$

 \begin{lemma}\label{l3.3} The algebra $T(m_1,\ldots,m_\theta,\xi)$ defined above is an $m^2$-dimensional Hopf algebra.
 \end{lemma}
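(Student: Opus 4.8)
The plan is to establish three things in sequence: first, that the algebra $T(m_1,\ldots,m_\theta,\xi)$ has dimension exactly $m^2$ as a $\k$-vector space; second, that the given formulas for $\D$, $\e$, $S$ extend consistently to algebra (anti)homomorphisms; and third, that together with the existing counit these make $T(m_1,\ldots,m_\theta,\xi)$ a Hopf algebra. For the dimension count I would first show that the monomials $g^j y_{m_1}^{a_1}\cdots y_{m_\theta}^{a_\theta}$ with $0\le j\le m-1$ and $0\le a_i<e_i$ span the algebra: the commutation relations in \eqref{eq3.1} let one move all $g$'s to the left past the $y_{m_i}$'s (each swap only introducing a root-of-unity scalar), the $y_{m_i}$'s commute among themselves, $y_{m_i}^{e_i}=0$ caps each exponent, and $g^m=1$ caps $j$. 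So $\dim T\le m\cdot\prod_{i=1}^\theta e_i=m\cdot m=m^2$ using condition (3) of Definition \ref{d3.1}. For the reverse inequality I would exhibit a module (or, equivalently, a PBW-type argument): realize $T$ acting on the $m^2$-dimensional space with basis indexed by the same monomials, letting $g$ act by the obvious permutation-with-scalars and each $y_{m_i}$ by the evident shift-by-one-in-the-$i$th-slot operator (sending the top power to $0$); one checks the defining relations hold for these operators and that the monomials act as linearly independent operators, forcing $\dim T\ge m^2$. The key input making the $y$-shift operators well defined and the relations consistent is condition (1), $(e_i,m_i)=1$, together with the compatibility $m\mid m_im_j$ from condition (2), which guarantees the scalars match up when reordering; this is where I expect the bookkeeping to be most delicate.

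Next I would verify the coalgebra structure. One checks on generators that $(\D\otimes\id)\D=(\id\otimes\D)\D$ and the counit axioms: $g$ is grouplike so there is nothing to do, and each $y_{m_i}$ is a $(1,g^{m_i})$-skew-primitive, for which coassociativity and counit are immediate. The real content is that $\D$ and $\e$ respect the defining relations of the algebra, i.e. that they factor through the ideal defining $T$. The relations $g^m=1$ and $\D(g)=g\otimes g$ are compatible since $g\otimes g$ has order dividing $m$. For $y_{m_i}g=\xi^{m_i}gy_{m_i}$ one computes $\D(y_{m_i})\D(g)$ and $\xi^{m_i}\D(g)\D(y_{m_i})$ and uses $g^{m_i}g=gg^{m_i}$ plus the scalar identity. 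The relation $y_{m_i}y_{m_j}=y_{m_j}y_{m_i}$ under $\D$ requires $g^{m_i}$ and $g^{m_j}$ to commute (clear) and also a cross term of the form $(y_{m_i}\otimes g^{m_i})(1\otimes y_{m_j}) = y_{m_i}\otimes g^{m_i}y_{m_j}$ versus its swap, so one needs $g^{m_i}y_{m_j}=\xi^{-m_im_j}y_{m_j}g^{m_i}$ — and $\xi^{m_im_j}=1$ precisely because $m\mid m_im_j$ by condition (2). Finally $\D(y_{m_i})^{e_i}=0$: expanding $(1\otimes y_{m_i}+y_{m_i}\otimes g^{m_i})^{e_i}$ by the $\xi^{m_i}$-twisted binomial theorem (here the relevant $q$-parameter is $\xi^{m_i}$, which is a primitive $e_i$-th root of unity by condition (1)), every Gaussian binomial coefficient $\binom{e_i}{k}_{\xi^{m_i}}$ with $0<k<e_i$ vanishes, and the two surviving terms are $y_{m_i}^{e_i}\otimes(\cdot)=0$ and $(\cdot)\otimes y_{m_i}^{e_i}=0$. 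This last step is the heart of the coalgebra check and is exactly the classical Taft-algebra argument applied coordinate-by-coordinate.

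Finally, for the antipode I would check that the proposed $S$ is a well-defined algebra anti-endomorphism — i.e. it respects the defining relations with the order of products reversed — and then verify $S(h_1)h_2 = \e(h)1 = h_1 S(h_2)$ on the generators; both computations are standard for grouplikes and skew-primitives (e.g. $S(y_{m_i}) = -y_{m_i}g^{-m_i}$ gives $S(y_{m_i})\cdot 1 + S(g^{m_i})\cdot y_{m_i}$ evaluated correctly, using $S(g^{m_i})=g^{-m_i}$ and the commutation relation). Bijectivity of $S$ is then automatic, for instance because $T$ is finite-dimensional. Assembling these, $T(m_1,\ldots,m_\theta,\xi)$ is a Hopf algebra of dimension $m^2$. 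The main obstacle, as indicated above, is the dimension lower bound together with ensuring internal consistency of the defining relations; once the faithful $m^2$-dimensional representation is in hand, every remaining verification is a routine but careful application of the twisted binomial theorem, with conditions (1) and (2) of Definition \ref{d3.1} doing precisely the work of making the scalars cooperate.
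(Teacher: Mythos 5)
Your proposal is correct and follows essentially the same route as the paper, which merely records that condition (1) of Definition \ref{d3.1} makes $y_{m_i}^{e_i}$ primitive (the twisted binomial argument) and condition (2) makes $y_{m_i}y_{m_j}-y_{m_j}y_{m_i}$ skew-primitive, leaving everything else as routine verification; your write-up additionally supplies the dimension count that the paper leaves implicit. One cosmetic slip: the twist parameter governing $\D(y_{m_i})^{e_i}$ is $\xi^{\pm m_i^2}$ (since $(y_{m_i}\otimes g^{m_i})(1\otimes y_{m_i})=\xi^{-m_i^2}(1\otimes y_{m_i})(y_{m_i}\otimes g^{m_i})$) rather than $\xi^{m_i}$, but both are primitive $e_i$th roots of unity under conditions (1)--(3), so the Gaussian binomials vanish either way.
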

 \begin{proof} This is clear. We just point out that: The condition (1) of Definition \ref{d3.1} ensures that each $y_{m_i}^{e_i}$ is a primitive element and the condition (2) of Definition \ref{d3.1} ensures that $y_{m_i}y_{m_j}-y_{m_j}y_{m_i}$ is a skew-primitive element for all $1\leq i,j\leq \theta.$
 \end{proof}

 \begin{proposition}\label{p3.4} Let $m'$ be another natural number and $\underline{m'}=\{m_1',\ldots,m'_{\theta'}\}$ be a fraction of $m'$. Then as Hopf algebras, $T(m_1,\ldots,m_\theta,\xi)\cong T(m_1',\ldots,m_\theta',\xi')$ if and only if $m=m',\;\theta=\theta'$ and there exists $x_0\in \N$ which is relatively prime to $m$ such
 that up to an order of $m_1,\ldots,m_\theta$ we have $m_i'\equiv m_ix_0$ $($\emph{mod} $m)$ and $\xi=\xi'^{x_0}.$
 \end{proposition}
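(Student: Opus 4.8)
The plan is to analyze how the Hopf-algebra structure of $T(m_1,\ldots,m_\theta,\xi)$ determines the data $(m;m_1,\ldots,m_\theta;\xi)$. First I would reconstruct $m$ intrinsically: since $g$ generates a grouplike subalgebra $\k\langle g\rangle\cong\k\Z_m$, and the $y_{m_i}$ are skew-primitive, one checks that the coradical of $T(m_1,\ldots,m_\theta,\xi)$ is exactly $\k\langle g\rangle$, so $m$ equals the order of the group of grouplikes; hence an isomorphism forces $m=m'$. Next, the dimension of the Hopf algebra is $m^2$ by Lemma~\ref{l3.3}, so no information beyond $m=m'$ comes from dimension alone, but the length $\theta$ should be recovered from the space of $(1,g)$-skew-primitive elements together with the coradical filtration: the number of ``independent new skew-primitive generators'' is $\theta$, giving $\theta=\theta'$. (More carefully, one looks at $P_{1,g^{k}}(T)$ for the various grouplikes $g^k$ and uses the relations \eqref{eq3.1} to see that the genuinely new generators number exactly $\theta$.)

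For the ``only if'' direction, suppose $\Psi:T(m_1,\ldots,m_\theta,\xi)\to T(m_1',\ldots,m_{\theta'}',\xi')$ is a Hopf isomorphism. Then $\Psi$ restricts to an isomorphism $\k\Z_m\to\k\Z_m$ of the coradicals, so $\Psi(g)=g'^{x_0}$ for some $x_0$ coprime to $m$. Each $\Psi(y_{m_i})$ is a $(1,g'^{m_ix_0})$-skew-primitive element; comparing with the classification of skew-primitives in the target (each $(1,g'^{k})$-skew-primitive space is spanned by $1-g'^{k}$ together with $y_{m_j'}$ when $k\equiv m_j'$, plus products when the group element is a power) forces, after reordering the indices, $m_i x_0\equiv m_i'\ (\mathrm{mod}\ m)$ for all $i$, modulo the grouplike part. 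The commutation relation $y_{m_i}g=\xi^{m_i}gy_{m_i}$ (in the convenient normalization) is carried by $\Psi$ to $\Psi(y_{m_i})\,g'^{x_0}=\xi^{m_i}g'^{x_0}\Psi(y_{m_i})$, while in the target $y_{m_i'}g'=\xi'^{m_i'}g'y_{m_i'}$ gives $y_{m_i'}g'^{x_0}=\xi'^{m_i'x_0}g'^{x_0}y_{m_i'}$; matching eigenvalues yields $\xi^{m_i}=\xi'^{m_i'x_0}=\xi'^{m_i x_0^{2}}$. Since $m=e_1\cdots e_\theta$ and the $m_i$ together with condition~(4) of Definition~\ref{d3.1} let the various $m_i$ span $\Z_m$ additively, the single relation needed is $\xi=\xi'^{x_0}$ (one extracts it by taking an appropriate integer combination of the exponents $m_i$ that is $\equiv 1\ (\mathrm{mod}\ m)$, using Remark~\ref{r3.2}(1)); from $\xi=\xi'^{x_0}$ the condition $m_i x_0\equiv m_i'$ is then exactly what makes the skew-primitive and commutation data match.

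For the ``if'' direction, given $x_0$ coprime to $m$ with $m_i'\equiv m_i x_0\ (\mathrm{mod}\ m)$ and $\xi=\xi'^{x_0}$, I would simply write down the map $g\mapsto g'^{x_0}$, $y_{m_i}\mapsto y_{m_i'}$ (after the matching reordering) and verify it respects the defining relations \eqref{eq3.1}: $g^m=1$ is clear since $(x_0,m)=1$; $y_{m_i}^{e_i}=0$ holds because $e_i$ depends only on $(m,m_i)=(m,m_ix_0)=(m,m_i')$, so the exponents agree; commutativity of the $y$'s is preserved; and $y_{m_i}g=\xi^{m_i}gy_{m_i}$ maps to $y_{m_i'}g'^{x_0}=\xi^{m_i}g'^{x_0}y_{m_i'}$, which matches $y_{m_i'}g'=\xi'^{m_i'}g'y_{m_i'}$ precisely because $\xi'^{m_i'x_0}=(\xi'^{x_0})^{m_i'}=\xi^{m_i'}$ and $m_i'\equiv m_ix_0$, $\xi^{m}=1$ give $\xi^{m_i'x_0^{-1}}=\xi^{m_i}$ after adjusting — one checks the exponents literally agree mod $m$. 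The coalgebra maps are clearly respected since $\D(y_{m_i'})=1\otimes y_{m_i'}+y_{m_i'}\otimes g'^{m_i'}$ and $g'^{m_i'}=(g'^{x_0})^{m_i x_0^{-1}\cdot\ldots}$; more cleanly $g'^{m_i'}=g'^{m_ix_0}=\Psi(g^{m_i})$, so $\D\circ\Psi=(\Psi\otimes\Psi)\circ\D$ on generators, and similarly for $\e$ and $S$. Bijectivity follows since the inverse is built from $x_0^{-1}\ (\mathrm{mod}\ m)$.

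The main obstacle I anticipate is the clean recovery of $\theta$ and of the multiset $\{m_i \bmod m\}$ from the abstract Hopf algebra: one must argue that the defining relations force the space of skew-primitive elements (modulo the coradical) to have a basis indexed exactly by $\{m_1,\ldots,m_\theta\}$, with no extra skew-primitives arising from the products $y_{m_i}y_{m_j}$ or from powers $y_{m_i}^k$ — this is where conditions (1), (2), (4) of Definition~\ref{d3.1} do real work, and handling it carefully (rather than waving at ``skew-primitive generators'') is the delicate point. Everything else reduces to bookkeeping with roots of unity and the coordinate representation of Remark~\ref{r3.2}(1).
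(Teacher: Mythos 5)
Your overall strategy (recover $m$ and $\theta$, note $\Psi(g)=g'^{x_0}$, match skew-primitive types to get $m_i'\equiv m_ix_0 \pmod m$, then compare the $g$-commutation eigenvalues) is the same as the paper's, but there is a genuine error in the eigenvalue bookkeeping, caused by working in the ``convenient'' normalization $y_{m_i}g=\xi^{m_i}gy_{m_i}$ while the proposition's $\xi$ is the one appearing in the original relation $y_{m_i}g=\xi^{m_i/m_0}gy_{m_i}$. In your normalization you correctly derive $\xi^{m_i}=\xi'^{m_i'x_0}=\xi'^{m_ix_0^{2}}$ for all $i$; since the $m_i$ generate $\Z_m$ additively, taking a combination $\sum c_im_i\equiv 1\pmod m$ yields $\xi=\xi'^{x_0^{2}}$, \emph{not} $\xi=\xi'^{x_0}$ as you assert. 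The two are not equivalent unless $m\mid x_0(x_0-1)$. The same slip resurfaces in your ``if'' direction: under your hypotheses you need $\xi^{m_i}=\xi'^{m_i'x_0}=\xi^{m_ix_0}$, i.e.\ $m\mid m_i(x_0-1)$, which is false in general, so the claim that ``the exponents literally agree mod $m$'' does not hold.

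The fix is exactly the point your proof skips: the gcd $m_0=(m_1,\ldots,m_\theta)$ transforms as $m_0'=m_0x_0$, so the exponent $m_i/m_0=m_i'/m_0'$ appearing in the original relation is \emph{invariant} under the isomorphism. The eigenvalue comparison then reads $\xi^{m_i/m_0}=\xi'^{(m_i/m_0)x_0}$, and since the integers $m_1/m_0,\ldots,m_\theta/m_0$ are coprime one applies B\'ezout over $\Z$ (not a combination mod $m$) to get $\xi=\xi'^{x_0}$; this is how the paper argues. Equivalently, if you insist on the convenient normalization you must first translate its parameter back to the proposition's $\xi$ via $\xi\mapsto\xi^{m_0}$, after which your $\xi=\xi'^{x_0^2}$ becomes the stated $\xi=\xi'^{x_0}$. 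Your remaining steps (recovering $m$ from the grouplikes rather than from $\dim=m^2$, recovering $\theta$ from skew-primitives, and the concern about spurious skew-primitives among products $y_{m_i}y_{m_j}$) are fine and consistent with the paper, which is equally brief on the $\theta=\theta'$ point.
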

 \begin{proof} We denote the generators and numbers of $T(m_1',\ldots,m_\theta',\xi')$ by adding the symbol $'$ to that of $T(m_1,\ldots,m_\theta,\xi)$ for convenience. The sufficiency of the proposition is clear. We only prove the necessity. Assume that we have an isomorphism of Hopf algebras
 $$\varphi:\;T(m_1,\ldots,m_\theta,\xi)\stackrel{\cong}{\longrightarrow} T(m_1',\ldots,m_\theta',\xi').$$
 By this isomorphism, they have the same dimension and thus $m=m'$ according to Lemma \ref{l3.3}.
Comparing the number of nontrivial skew primitive elements, we know that $\theta=\theta'.$ Up to an order of $m_1,\ldots,m_\theta$, there is no harm to assume that $\varphi(y_{m_i})=y_{m_i'}$ for $1\leq i\leq \theta.$ (More precisely, we should take $\varphi(y_{m_i})=y_{m_i'}+c(1-(g')^{m_i'})$ at first. But through the relation $y_{m_i}g=\xi^{m_i}gy_{m_i}$ we have $c=0$.) Since $\varphi(g)$ is a group-like and generates all group-likes, $\varphi(g)=g'^{x_0}$ for some $x_0\in \N$ and $(x_0,m)=1$. Due to
$$\D(\varphi(y_{m_i}))=\D(y_{m_i'})=1\otimes y_{m_i'}+y_{m_i'}\otimes (g')^{m_i'}$$
which equals to
$$(\varphi\otimes \varphi)(\D(y_{m_i}))=1\otimes  y_{m_i'}+y_{m_i'}\otimes (g')^{m_ix_0}.$$
Therefore, $m_i'\equiv m_ix_0$ (mod $m$). By this, we can assume that $(m_1',\ldots,m_\theta')=(m_1,\ldots,m_\theta)x_0$, that is, $m_0'=m_0x_0.$
So $\varphi(y_{m_i}g)=\varphi(\xi^{\frac{m_i}{m_0}}gy_{m_i})$ implies that
$$\xi'^{\frac{m_i'}{m'_0}x_0}(g')^{x_0}y_{m_i'}=\xi^{\frac{m_i}{m_0}}(g')^{x_0}y_{m_i'}$$
which implies that $\xi^{\frac{m_i}{m_0}}=\xi'^{x_0\frac{m_i}{m_0}}$ for all $1\leq i\leq \theta.$ Since by definition $(\frac{m_1}{m_0},\ldots,\frac{m_\theta}{m_0})=1$, there exist $c_1,\ldots,c_{\theta}$ such that $\sum_{i=1}^{\theta}c_i\frac{m_i}{m_0}=1$. Therefore,
$$\xi=\xi^{\sum_{i=1}^{\theta}c_i\frac{m_i}{m_0}}=
\xi'^{x_0\sum_{i=1}^{\theta}c_i\frac{m_i}{m_0}}=\xi'^{x_0}.$$
 \end{proof}

\subsection{Some combinatorial identities.} Firstly, we will rewrite some combinatorial identities appeared in \cite[Section 3]{WLD} in a suitable form for our purpose. Secondly, we prove some more identities which are not included in \cite[Section 3]{WLD}.  Let $m,d$ be two natural numbers. As before, let $\underline{m}=\{m_1,\ldots,m_{\theta}\}\in \mathcal{F}(m)$ be a fraction of $m$ and $e_i$ the exponent of $m_i$ with respect to $m$ for $1\leq i\leq \theta.$ Let $\gamma$ be a primitive $m$th root of unity. By definition, we know that $$\gamma_i:=\gamma^{-{m_i^2}}$$ is a primitive $e_i$th root of unity. For any $j\in \Z$, the polynomial $\phi_{m_i,j}$ is defined through
\begin{equation}\phi_{m_i,j}:=1-\gamma^{-{m_i}(m_i+j)}x^{m_id}
=1-\gamma^{-m_i^2(1+j_i)}x^{m_id}=1-\gamma_{i}^{(1+j_i)}x^{m_id}\end{equation}
for any $1\leq i\leq \theta$ and the second equality is due to the (2) of the definition of the fraction. In the following of this subsection, we fix an $1\leq i\leq \theta.$

Take $j$ to be an arbitrary integer, define $\bar{j}$ to be the unique element in $\{0,1,\ldots,e_i-1\}$ satisfying $\bar{j}\equiv j$ {(mod}\,$e_i$). Then we have $$\phi_{m_i,j}=\phi_{m_i,\bar{j}}$$
since $\gamma_i^{e_i}=1$.

With this observation, we can use $${]s,t[_{m_i}}$$ to denote the resulted polynomial by omitting all items \emph{from} $\phi_{m_i,\overline{s}m_i}$ \emph{to} $\phi_{m_i,\overline{t}m_i}$ in $$\phi_{m_i,0}\phi_{m_i,m_i}\cdots \phi_{m_i,(e_i-1)m_i},$$ that is

\begin{equation}\label{eqomit} {]s,t[_{m_i}}=\begin{cases}
\phi_{m_i,(\bar{t}+1)m_i}\cdots \phi_{m_i,(e_i-1)m_i}\phi_{m_i,0}\cdots\phi_{m_i,(\bar{s}-1)m_i}, & \textrm{if}\; \bar{t}\geqslant \bar{s}
\\1, & \textrm{if}\; \bar{s}=\overline{t}+1 \\
\phi_{m_i,(\bar{t}+1)m_i}\cdots \phi_{m_i,(\bar{s}-1)m_i}, & \textrm{if}\;
\overline{s}\geqslant \bar{t}+2.
\end{cases} \end{equation}

For example, ${]{-1},{-1}[_{m_i}}={]{e_i-1},{e_i-1}[}_{m_i}=\phi_{m_i,0}\phi_{m_i,m_i}\cdots
\phi_{m_i,(e_i-2)m_i}$.

In practice, in particular to formulate the multiplication of our new examples of Hopf algebras, the next notation is also useful for us, which can be considered as the resulted polynomial (except the case $\bar{s}=\bar{t}+1$) by preserving all items \emph{from} $\phi_{m_i,\overline{s}m_i}$
\emph{to} $\phi_{m_i,\overline{t}m_i}$ in $\phi_{m_i,0}\phi_{m_i,m_i}\cdots \phi_{m_i,(e_i-1)m_i}$.

\begin{equation}\label{eqpre}  {[s,t]_{m_i}}:=\begin{cases}
\phi_{m_i,\bar{s}m_i}\phi_{m_i,(\bar{s}+1)m_i}\cdots \phi_{m_i,\bar{t}m_i}, & \textrm{if}\; \bar{t}\geqslant \bar{s}
\\1, & \textrm{if}\; \bar{s}=\overline{t}+1 \\
\phi_{m_i,\bar{s}m_i}\cdots \phi_{m_i,(e_i-1)m_i}\phi_{m_i,0}\cdots \phi_{m_i,\bar{t}m_i}, & \textrm{if}\;
\overline{s}\geqslant \bar{t}+2.
\end{cases} \end{equation}

So, by definition, we have
\begin{equation}\label{eqrel}
{[i, m-2-j]_{m_i}}={]{-1-j},{i-1}[_{m_i}}.
\end{equation}

Due to the equality \eqref{eqrel}, we just study equations with omitting items. The following formulas already were proved or already implicated in \cite[Section 3]{WLD} in different forms. So we just state them in our forms without proofs.

\begin{lemma}\label{l3.4} With notions defined as above, we have
\begin{itemize}
\item[(1)]$\sum_{j=0}^{e_i-1}{]{j-1},{j-1}[_{m_i}}\;=e_i.$
\item[(2)] $\phi_{m_i,0}\phi_{m_i,m_i}\cdots \phi_{m_i,(e_i-1)m_i}=1-x^{e_im_id}.$
\item[(3)] $\sum_{j=0}^{e_i-1}\gamma_i^{j}\;{]{j-1},{j-1}[_{m_i}}\;=e_ix^{(e_i-1)m_id}.$
\item[(4)] $\sum_{j=0}^{e_i-1}\gamma_i^{j}\;{]{j-2},{j-1}[_{m_i}}\;=0.$
\item[(5)] Fix $k$ such that $1\leqslant k\leqslant e_i-1$ and let $1\leqslant i'\leqslant k$. Then
$$\sum_{j=0}^{e_i-1}\gamma_i^{i'j}\;{]{j-1-k},{j-1}[_{m_i}}=0.$$
\item[(6)] Let $0\leq t\leq j+l\leq e_i-1,\; 0\leq \alpha \leq e_i-1-j-l$. Then
\begin{align*}
&\quad (-1)^{\alpha+t}\gamma_i^{\frac{(\alpha+t)(\alpha+t+1)}{2}+t(j+l-t)}
\binom{e_i-1-t}{\alpha}_{\gamma_i}\binom{e_i-1+t-j-l}{\alpha+t}_{\gamma_i} \\
&=\binom{j+l}{t}_{\gamma_i}\binom{m-1-j-l}{\alpha}_{\gamma_i}.
\end{align*}
\end{itemize}
\end{lemma}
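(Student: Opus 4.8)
The statement to prove is Lemma \ref{l3.4}, which collects six combinatorial identities about the polynomials $\phi_{m_i,j}$ and the "omitting" notation ${]s,t[_{m_i}}$. Since the paper explicitly says these "already were proved or already implicated in \cite[Section 3]{WLD} in different forms" and "we just state them in our forms without proofs," the author's strategy is essentially a translation/reduction argument rather than a from-scratch computation. Let me plan how I would organize such a proof.

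The key observation is that everything takes place "inside the variable $x^{m_i d}$" for a fixed $i$. Concretely, set $z := x^{m_i d}$ and $q := \gamma_i = \gamma^{-m_i^2}$, a primitive $e_i$-th root of unity. Then $\phi_{m_i, j m_i} = 1 - q^{1+j} z$, so the whole family of polynomials ${]s,t[_{m_i}}$ and ${[s,t]_{m_i}}$ becomes a family of products of the factors $1 - q^{a} z$ for $a$ running over a cyclic interval in $\Z/e_i\Z$. This is exactly the combinatorial setup appearing in \cite[Section 3]{WLD} for the Hopf algebra $D(m,d,\gamma)$ with the single parameter $m$ there replaced by $e_i$ here and the single "degree" parameter there replaced by $m_i d$. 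So the plan is: first establish this dictionary precisely (Step 1), observing that the defining condition (1) of a fraction, $(e_i, m_i) = 1$, guarantees $q = \gamma^{-m_i^2}$ is a primitive $e_i$-th root of unity, and condition (2), $m \mid m_i m_j$, is what makes $\phi_{m_i, m_i + j} = 1 - \gamma_i^{1+j_i} x^{m_i d}$ depend only on $j_i = \bar j$; then (Step 2) invoke the corresponding identities from \cite[Section 3]{WLD} verbatim under this substitution.

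Having set up the dictionary, the individual items go as follows. Item (2) is immediate: $\prod_{j=0}^{e_i-1}(1 - q^{1+j} z) = 1 - z^{e_i}$ since $\{q^{1+j}\}$ runs over all $e_i$-th roots of unity and $\prod_{\zeta^{e_i}=1}(1 - \zeta z) = 1 - z^{e_i}$. Items (1), (3), (4), (5) are "telescoping/orthogonality" identities: each ${]{j-1-k},{j-1}[_{m_i}}$ is the product of all factors except a block of $k+1$ consecutive ones ending just before position $j$; summing against $q^{i' j}$ and using the geometric-series vanishing $\sum_j q^{i'j} = 0$ when $e_i \nmid i'$ is the mechanism. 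For (1) one expands the product-minus-one-block expression and sees all terms cancel except the constant, giving $e_i$; for (3) the same with the twist $q^j$ picks out the top-degree term $z^{e_i-1} = x^{(e_i-1)m_i d}$; (4) and (5) are pure vanishing statements. I would prove (1) directly from the definition by a short induction on $e_i$ (or by the partial-fractions identity $\sum_{j} \prod_{l \ne j}(1 - q^l z)/\text{stuff}$), then derive (3)--(5) as consequences of the same expansion, or simply cite them from \cite{WLD}.

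The main obstacle — and the only genuinely non-formal part — is item (6), which is a $q$-binomial identity with $q = \gamma_i$ and no $x$-variable at all. The plan there is: move everything to one side and recognize it as an instance of the $q$-Vandermonde / $q$-Chu identity together with the reflection formula $\binom{N}{k}_q = (-1)^k q^{\binom{k}{2} - k(N-1) \cdot(\dots)}\binom{k - 1 - N}{k}_q$ for $q$-binomials with "negative upper index" interpreted via the standard $\binom{a}{k}_q = \prod_{i=0}^{k-1}(1 - q^{a-i})/(1 - q^{i+1})$. One rewrites both $\binom{e_i - 1 - t}{\alpha}_{\gamma_i}$ and $\binom{e_i - 1 + t - j - l}{\alpha + t}_{\gamma_i}$ using $\gamma_i^{e_i} = 1$ to turn the "$e_i - 1 - (\cdots)$" upper indices into small negative ones, collect the resulting powers of $\gamma_i$ and signs, and check the exponent bookkeeping matches $\frac{(\alpha+t)(\alpha+t+1)}{2} + t(j+l-t)$. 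The ranges $0 \le t \le j+l \le e_i - 1$ and $0 \le \alpha \le e_i - 1 - j - l$ are exactly what is needed so that no denominator vanishes (no factor $1 - \gamma_i^{0}$ appears) and the reflection is valid. I would present (6) with the substitution made explicit and then either carry out the sign/exponent computation or cite the matching lemma in \cite[Section 3]{WLD}; the bookkeeping of the half-integer exponent $\frac{(\alpha+t)(\alpha+t+1)}{2}$ against the two reflected binomials is where all the care is required, so that is the step I expect to be the real work.
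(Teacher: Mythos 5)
Your proposal matches what the paper actually does: Lemma \ref{l3.4} is stated without proof, with the identities deferred to \cite[Section 3]{WLD}, and your ``dictionary'' (substituting $z=x^{m_id}$, $q=\gamma_i$ a primitive $e_i$th root of unity, so that the $\phi_{m_i,jm_i}$ become the factors $1-q^{1+j}z$ of the single-parameter setup in \cite{WLD}) is exactly the intended reduction. The direct arguments you sketch are also sound --- evaluation at the $e_i$ points $z=q^{-j_0}$ (or equivalently the $q$-binomial expansion plus the vanishing of $\sum_j q^{rj}$ for $e_i\nmid r$) handles (1)--(5), and the reflection of $q$-binomials with upper index $e_i-1-(\cdots)$ using $q^{e_i}=1$ is the right mechanism for (6) --- so no gap.
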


We still need two more observations which were not included in \cite[Section 3]{WLD}.

\begin{lemma}\label{l3.5} With notations as above. Then
\begin{itemize} \item[(1)] For any $e_i$th root of unity $\xi$, we have
$$\sum_{j=0}^{e_i-1}\xi^{j}\;{]{j-1},{j-1}[_{m_i}}\neq 0.$$
\item[(2)] Let $\xi$ be an $e_i$th root of unity. Then $\sum_{j=0}^{e_i-1}\xi^{j}\;{]{j-2},{j-1}[_{m_i}}=\;0$ if and only if $\xi=\gamma_i.$
\end{itemize}
\end{lemma}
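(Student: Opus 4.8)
The plan is to treat both parts by working inside the polynomial ring $\k[x]$ and exploiting the explicit factorization from Lemma \ref{l3.4}(2), namely $\phi_{m_i,0}\phi_{m_i,m_i}\cdots\phi_{m_i,(e_i-1)m_i}=1-x^{e_im_id}$, together with the telescoping structure of the ${]{j-1},{j-1}[_{m_i}}$ and ${]{j-2},{j-1}[_{m_i}}$ polynomials. The key observation is that the quantities in question are polynomials in the single variable $x^{m_id}$, so after the substitution $z:=x^{m_id}$ everything becomes a statement about the $e_i$ polynomials $\psi_k(z):=\prod_{\ell\neq k}(1-\gamma_i^{\ell+1}z)$ (a single omitted factor) and $\psi_{k,k+1}(z):=\prod_{\ell\neq k,k+1}(1-\gamma_i^{\ell+1}z)$ (two consecutive omitted factors), where $\ell$ ranges over $\{0,1,\dots,e_i-1\}$ and $\gamma_i$ is a primitive $e_i$th root of unity.

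For part (1): since $1-z^{e_i}=\prod_{\ell=0}^{e_i-1}(1-\gamma_i^{\ell+1}z)$ (as $\gamma_i$ is a primitive $e_i$th root of unity, $\{\gamma_i^{\ell+1}\}$ runs over all $e_i$th roots of unity), each $\psi_k(z)$ is $(1-z^{e_i})/(1-\gamma_i^{k+1}z)=\sum_{r=0}^{e_i-1}\gamma_i^{(k+1)r}z^r$. Hence for an $e_i$th root of unity $\xi$,
\[
\sum_{k=0}^{e_i-1}\xi^{k}\psi_k(z)=\sum_{r=0}^{e_i-1}\gamma_i^{r}z^r\sum_{k=0}^{e_i-1}(\xi\gamma_i^{r})^{k}.
\]
The inner sum is $e_i$ when $\xi\gamma_i^{r}=1$, i.e.\ $r=r_0$ where $\gamma_i^{r_0}=\xi^{-1}$ (a unique such $r_0\in\{0,\dots,e_i-1\}$ exists since $\gamma_i$ is primitive), and $0$ otherwise; so the sum equals $e_i\gamma_i^{r_0}z^{r_0}\neq 0$. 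This is a nonzero monomial, so in particular nonzero as a polynomial, which is exactly (1) after translating back via $z=x^{m_id}$ and ${]{j-1},{j-1}[_{m_i}}=\psi_{j-1}(x^{m_id})$ — note incidentally that this recovers and refines Lemma \ref{l3.4}(1),(3), which are the cases $\xi=1$ and $\xi=\gamma_i^{-1}$ wait $\gamma_i$; consistency with those known identities is a useful sanity check.

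For part (2): similarly ${]{j-2},{j-1}[_{m_i}}$ omits the two consecutive factors indexed by $j-2$ and $j-1$, so $\psi_{j-2,j-1}(z)=(1-z^{e_i})/\big((1-\gamma_i^{j-1}z)(1-\gamma_i^{j}z)\big)$. Using partial fractions, $\frac{1}{(1-\gamma_i^{j-1}z)(1-\gamma_i^{j}z)}=\frac{1}{1-\gamma_i^{-1}}\big(\frac{1}{1-\gamma_i^{j-1}z}-\frac{\gamma_i^{-1}}{1-\gamma_i^{j}z}\big)$ wait more carefully $=\frac{A}{1-\gamma_i^{j-1}z}+\frac{B}{1-\gamma_i^{j}z}$ with $A=\frac{1}{1-\gamma_i}$, $B=\frac{-\gamma_i}{1-\gamma_i}$; hence $\psi_{j-2,j-1}(z)=A\psi_{j-1}(z)+B\psi_{j-2}(z)$ where I index $\psi_k$ by the single omitted factor $1-\gamma_i^{k+1}z$ — so $\psi_{j-2,j-1}$ has single-omission pieces at $k=j-1$ and $k=j-2$. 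Reindexing the sum $\sum_j\xi^{j}\psi_{j-2,j-1}(z)$ and collecting by the single-omitted-factor pieces reduces (2) to the computation in part (1): the coefficient becomes a combination $C(\xi)\cdot e_i\gamma_i^{r_1}z^{r_1}$ for an appropriate $r_1$, where $C(\xi)$ is an explicit rational function of $\xi$ and $\gamma_i$ that vanishes precisely when $\xi=\gamma_i$. I expect the main technical obstacle to be bookkeeping the shifts of index (the cyclic wraparound in the definition \eqref{eqomit} of ${]s,t[_{m_i}}$, and the off-by-one between $j$, $j-1$, $j-2$ and the exponent $k+1$ in the factors), and verifying that no spurious cancellation or extra root of $C(\xi)$ among the $e_i$th roots of unity is introduced; once the partial-fraction reduction is set up correctly, the root-of-unity orthogonality in part (1) does all the real work.
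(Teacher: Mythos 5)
Your argument is correct in substance but takes a genuinely different, and in fact cleaner, route than the paper. For part (1) the paper argues by contradiction: assuming the sum vanishes, it multiplies by $\phi_{m_i,(\overline{j-1})m_i}$ to telescope against Lemma \ref{l3.4}(2), deduces that $\sum_j(\gamma_i^k\xi)^j\,{]{j-1},{j-1}[_{m_i}}=0$ for every $k$, and then hits the case $\gamma_i^k\xi=1$ where the sum is $e_i\neq 0$. Your direct computation via the geometric-series identity $\psi_k(z)=\sum_{r=0}^{e_i-1}\gamma_i^{(k+1)r}z^r$ and root-of-unity orthogonality not only proves nonvanishing but produces the exact value $e_i\gamma_i^{r_0}z^{r_0}$ (up to the harmless shift $\sum_j\xi^j\psi_{j-1}=\xi\sum_k\xi^k\psi_k$), which subsumes Lemma \ref{l3.4}(1) and (3) as you note. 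For part (2) the paper expands ${]{j-2},{j-1}[_{m_i}}$ by the $q$-binomial formula (\cite[Proposition IV.2.7]{Kas}), uses linear independence of the powers $x^{lm_id}$ and nonvanishing of the Gaussian binomials to force $\sum_j\xi^j\gamma_i^{lj}=0$ for all $0\le l\le e_i-2$, and only proves the direction ``$\Rightarrow$'', citing Lemma \ref{l3.4}(4) for the converse; your partial-fraction reduction to part (1) yields the closed form $\xi(A\xi+B)\,e_i\gamma_i^{r_0}z^{r_0}$ and hence both directions of the equivalence in one stroke.

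There is, however, exactly the off-by-one slip you warned yourself about, and it matters: since $\psi_k$ omits the factor $1-\gamma_i^{k+1}z$, the term $\tfrac{A}{1-\gamma_i^{j-1}z}$ of the partial fraction corresponds to $A\psi_{j-2}$, not $A\psi_{j-1}$, so the correct identity is $\psi_{j-2,j-1}=A\psi_{j-2}+B\psi_{j-1}$ with $A=\tfrac{1}{1-\gamma_i}$, $B=\tfrac{-\gamma_i}{1-\gamma_i}$. With your written assignment $A\psi_{j-1}+B\psi_{j-2}$ the sum becomes $\xi(A+B\xi)e_i\gamma_i^{r_0}z^{r_0}$, which vanishes at $\xi=\gamma_i^{-1}$, contradicting the statement; with the corrected assignment one gets $\xi(A\xi+B)e_i\gamma_i^{r_0}z^{r_0}$, vanishing precisely at $\xi=-B/A=\gamma_i$, as required and consistent with Lemma \ref{l3.4}(4). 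Once that single index is fixed, the proof is complete and needs no further verification of ``spurious roots,'' since $A\xi+B$ is linear in $\xi$.
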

\begin{proof} (1) Otherwise, we assume that $\sum_{j=0}^{e_i-1}\xi^{j}\;{]{j-1},{j-1}[_{m_i}}= 0.$ From this, we know that $\xi\neq 1$ by (3) of Lemma \ref{l3.4}. By the definition of $]{j-1},{j-1}[_{m_i}$, we know that
\begin{align*}
& \quad\sum_{j=0}^{e_i-1}\;\xi^{j}{]{j-1},{j-1}[_{m_i}}\;-
\sum_{j=0}^{e_i-1}\xi^{j}\gamma_i^{j}x^{m_id}\;{]{j-1},{j-1}[_{m_i}}\;\\
&=\sum_{j=0}^{e_i-1}\xi^{j}(1-\gamma_i^{j}x^{m_id})\;{]{j-1},{j-1}[_{m_i}}\;\\
&=\sum_{j=0}^{e_i-1}\xi^{j}\phi_{m_i,0}\phi_{m_i,m_i}\cdots\phi_{m_i,(e_i-1)m_i}\\
&=\sum_{j=0}^{e_i-1}\xi^{j}(1-x^{e_im_id})\\
&=0.
\end{align*}
where the third equality is due to (2) of Lemma \ref{l3.4} and the last equality follows from $\xi\neq 1$ being an $e_i$th root of unity. Therefore, $\sum_{j=0}^{e_i-1}\xi^{j}\gamma_i^{j}x^{m_id}\;{]{j-1},{j-1}[_{m_i}}=0$ and thus
$\sum_{j=0}^{e_i-1}(\gamma_i\xi)^{j}\;{]{j-1},{j-1}[_{m_i}}=0$. Repeat above process, we know that for any $k$
$$\sum_{j=0}^{e_i-1}(\gamma_i^k\xi)^{j}\;{]{j-1},{j-1}[_{m_i}}=0.$$
 Since $\xi$ is an $e_i$th root of unity while $\gamma_i$ is a primitive $e_i$th root of unity, there exists a $k$ such that $\gamma_i^k\xi=1$. But in this case $\sum_{j=0}^{e_i-1}(\gamma_i^k\xi)^{j}\;{]{j-1},{j-1}[_{m_i}}=e_i\neq 0.$
 That is a contradiction.

 (2) ``$\Leftarrow$" This is just the (4) of Lemma \ref{l3.4}.

 ``$\Rightarrow$" Before prove this part, we recall a formula (see \cite[Proposition IV.2.7]{Kas}) at first: $$(a-z)(a-qz)\cdots (a-q^{n-1}z)=\sum_{l=0}^{n}(-1)^l\binom{n}{l}_q q^{\tfrac{l(l-1)}{2}}a^{n-l}z^l,$$
 where $q$ is a nonzero element in $\k$ and any $a\in \k$. From this,
 \begin{align*}
{]{j-2},{j-1}[_{m_i}}&=(1-\gamma_i^{j+1}x^{m_id})(1-\gamma_i^{j+2}x^{m_id})\cdots(1-\gamma_i^{e_i+j-2}x^{m_id})\\
               &=\sum_{l=0}^{e_i-2}(-1)^l\binom{e_i-2}{l}_{\gamma_{i}}\gamma_i^{\tfrac{l(l-1)}{2}}(\gamma_i^{j+1}x^{m_id})^l\\
               &=\sum_{l=0}^{e_i-2}(-1)^l\binom{e_i-2}{l}_{\gamma_i}\gamma_i^{\tfrac{l(l+1)}{2}+lj}x^{lm_id}.
\end{align*}
So from this, we have
\begin{align*} \sum_{j=0}^{e_i-1}\xi^{j}\;{]{j-2},{j-1}[_{m_i}}&
=\sum_{l=0}^{e_i-2}(-1)^l\binom{e_i-2}{l}_{\gamma_i}\gamma_i^{\tfrac{l(l+1)}{2}}
\sum_{j=0}^{e_i-1}\xi^{j}\gamma_i^{lj}x^{lm_id}.
\end{align*}

Therefore assumption implies that
$$\sum_{j=0}^{e_i-1}\xi^{j}\gamma_i^{lj}=0$$
for all $0\leqslant l \leqslant e_i-2$. So we see that the only possibility is $\xi=\gamma_i$.
\end{proof}

\section{New examples}
In this section, we will introduce the fraction versions of infinite dimensional Taft algebras, generalized Liu algebras and the Hopf algebras $D(m,d,\gamma)$ respectively. Some properties of them are listed. Most of these Hopf algebras, as far as we know, are knew.
\subsection{Fraction of infinite dimensional Taft algebra $T(\underline{m},t,\xi)$. }\label{ss4.1}

 Let $m,t$ be two natural numbers and set $n=mt$. Let $\underline{m}=\{m_1,\ldots,m_\theta\}$ be a fraction of $m$ and $m_0=(m_1,\ldots,m_{\theta})$ the greatest common divisor. So it is not hard to see that $(m,m_0)=1$.  Now fix a primitive $n$th root of unity $\xi$ satisfying
   $$\xi^{e_1\frac{m_1}{m_0}}=\xi^{e_2\frac{m_2}{m_0}}=\cdots=\xi^{e_\theta \frac{m_\theta}{m_0}}.$$
 Note that such $\xi$ does not always exist (for example, taking $m=6,\;t=2$ and $\{4,3\}$ be a fraction of $6$, we find that we have no such $\xi$). If it exists, then we can define a Hopf algebra $T(\underline{m},t,\xi)$ as follows. As an algebra, it is generated by $g,y_{m_1},\ldots,y_{m_\theta}$ and subject to the following relations:
 \begin{equation} g^n=1,\;\;y_{m_i}^{e_i}=y_{m_j}^{e_j},\;\;y_{m_i}y_{m_j}=y_{m_j}y_{m_i},\;\;
 y_{m_i}g=\xi^{\frac{m_i}{m_0}}gy_{m_i},
 \end{equation}
 for $1\leq i,j\leq \theta.$  The coproduct $\D$, the counit $\epsilon$ and the antipode $S$ of $T(\underline{m},t,\xi)$ are given by
 $$\D(g)=g\otimes g,\quad \D(y_{m_i})=1\otimes y_{m_i}+y_{m_i}\otimes g^{tm_i},$$
 $$\e(g)=1,\quad\e(y_{m_i})=0,$$
 $$S(g)=g^{-1},\quad S(y_{m_i})=-y_{m_i}g^{-tm_i}$$
 for $ 1\leq i\leq \theta.$

 \begin{proposition}\label{p4.1} Let the $\k$-algebra $T=T(\{m_1,\ldots,m_\theta\}, t, \xi)$ be the algebra defined as above. Then \begin{itemize}
\item[(1)] The algebra $T$ is a Hopf algebra of GK-dimension one, with center $\k[y_{m_1}^{e_1t}]$.
\item[(2)] The algebra $T$ is prime and PI-deg $(T)=n$.
\item[(3)] The algebra $T$ has a $1$-dimensional representation whose order is $n$. \end{itemize}
\end{proposition}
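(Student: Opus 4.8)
The plan is to first pin down the algebra structure of $T=T(\{m_1,\dots,m_\theta\},t,\xi)$ by exhibiting a PBW-type basis, and then to deduce all three assertions from it. Set $z:=y_{m_1}^{e_1}$ and let $R$ be the subalgebra generated by $y_{m_1},\dots,y_{m_\theta}$. Since these generators commute and $y_{m_i}^{e_i}=z$ for every $i$, $R$ is a quotient of $\k[y_{m_1},\dots,y_{m_\theta}]/(y_{m_i}^{e_i}-y_{m_j}^{e_j})$ spanned by the monomials $y_{m_1}^{a_1}\cdots y_{m_\theta}^{a_\theta}z^{k}$ with $0\le a_i<e_i$, $k\ge0$. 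To see these are linearly independent, map $y_{m_i}\mapsto w^{m/e_i}$ into $\k[w]$; this is well defined because $y_{m_i}^{e_i}\mapsto w^{m}$ for all $i$, and the above monomial goes to $w^{\sum_i a_i(m/e_i)+km}$. Using $m/e_i=\gcd(m,m_i)$ and, after the bijective substitution $a_i\mapsto(a_ic_i\bmod e_i)$ with $c_i=m_ie_i/m$, condition (4) of Definition \ref{d3.1} says precisely that the $\sum_i a_i(m/e_i)$ are pairwise distinct modulo $m$; hence these monomials hit pairwise distinct powers of $w$, so $R$ is a commutative domain, free of rank $m$ over $\k[z]$, of GK-dimension one. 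The assignment $\sigma\colon y_{m_i}\mapsto\xi^{-m_i/m_0}y_{m_i}$ is an automorphism of $R$ with $\sigma^{n}=\mathrm{id}$, and the presentation exhibits $T$ as the skew group ring $R\rtimes_{\sigma}\langle g\rangle$; since $T$ is visibly spanned by the $y_{m_1}^{a_1}\cdots y_{m_\theta}^{a_\theta}z^{k}g^{l}$ ($0\le a_i<e_i$, $k\ge0$, $0\le l<n$), these form a $\k$-basis, and $T$ has GK-dimension one.

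Next I would verify that $\D$, $\epsilon$, $S$ respect the defining relations; granted that, the bialgebra and antipode axioms are routine on generators, as for the ordinary infinite Taft algebra. The only delicate relation is $y_{m_i}^{e_i}=y_{m_j}^{e_j}$. In $T\otimes T$ the elements $1\otimes y_{m_i}$ and $y_{m_i}\otimes g^{tm_i}$ $q$-commute with $q=\xi^{-tm_i^{2}/m_0}$; using conditions (1) and (2) of Definition \ref{d3.1} together with $\gcd(m,m_i)=m/e_i$ one checks $\gcd(m,m_i^{2}/m_0)=m/e_i$, so that $q$ is a \emph{primitive} $e_i$-th root of unity. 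The quantum binomial theorem then gives $\D(y_{m_i})^{e_i}=1\otimes y_{m_i}^{e_i}+y_{m_i}^{e_i}\otimes g^{te_im_i}=1\otimes z+z\otimes1$, since $m\mid e_im_i$ forces $g^{te_im_i}=1$; as this is independent of $i$ the relation is preserved (and $z$ is primitive, so $S(z)=-z$). The commutativity $y_{m_i}y_{m_j}=y_{m_j}y_{m_i}$ needs $m\mid m_im_j/m_0$, which follows from condition (2) and $\gcd(m,m_0)=1$; the relations $g^{n}=1$ and $y_{m_i}g=\xi^{m_i/m_0}gy_{m_i}$ are immediate, and the check for $S$ is analogous. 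This proves $T$ is a Hopf algebra.

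For primeness and the PI-degree I would apply Lemma \ref{l2.10} to the strong $\widehat{\Z_n}$-grading $T=\bigoplus_{l\in\Z_n}Rg^{l}$, whose degree-zero piece is the commutative domain $R$ and on which $\widehat{\Z_n}$ acts faithfully by $\psi\cdot(rg^l)=\psi(l)rg^l$. By Lemma \ref{l2.10}(b) the induced action $\triangleright$ of $\widehat{\widehat{\Z_n}}\cong\Z_n$ on $R$ is conjugation by powers of $g$, namely $g^{j}\triangleright y_{m_i}=\xi^{-jm_i/m_0}y_{m_i}$; since $\gcd(m_1/m_0,\dots,m_\theta/m_0)=1$ this is faithful, so parts (c), (d) of Lemma \ref{l2.10} give that the PI-degree of $T$ is $n$ and that $T$ is prime. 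For the center, from the PBW basis a monomial $y_{m_1}^{a_1}\cdots z^{k}g^{l}$ is central iff it commutes with each $y_{m_j}$ — which forces $l\equiv0\pmod n$, since $y_{m_j}$ and $g^{l}$ $q$-commute by $\xi^{lm_j/m_0}$ and $\gcd(m_j/m_0)=1$ — and with $g$ — which forces $\sum_i a_im_i\equiv0\pmod m$, hence $a_i=0$ for all $i$ by condition (4) of Definition \ref{d3.1}, and then $\xi^{ke_1m_1/m_0}=1$. Writing $e_im_i/m_0=mv_i$ for each $i$ (so $m\mid e_im_i/m_0$), the compatibility condition on $\xi$ amounts to $t\mid v_i-v_1$ for all $i$, and together with $\gcd(m_1/m_0,\dots,m_\theta/m_0)=1$ this forces $\gcd(t,v_1)=1$; hence $z^{k}$ is central iff $t\mid k$, i.e.\ $Z(T)=\k[z^{t}]=\k[y_{m_1}^{e_1t}]$. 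Finally, for (3), the map $\pi\colon T\to\k$ with $\pi(g)=\zeta$ (a primitive $n$-th root of $1$) and $\pi(y_{m_i})=0$ respects every relation, and its convolution powers satisfy $\pi^{*r}(g)=\zeta^{r}$, $\pi^{*r}(y_{m_i})=0$, so $\pi^{*r}=\epsilon$ exactly when $n\mid r$: thus $\pi$ is a $1$-dimensional representation of order $n$ (its left winding automorphism, $g\mapsto\zeta g$, $y_{m_i}\mapsto y_{m_i}$, has order $n$).

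The step I expect to be the main obstacle is the well-definedness of $\D$ on the relation $y_{m_i}^{e_i}=y_{m_j}^{e_j}$: everything rests on the $q$-commutation parameter occurring inside $\D(y_{m_i})$ being a \emph{primitive} $e_i$-th root of unity (so that the intermediate quantum binomial coefficients vanish) and on $g^{te_im_i}=1$ killing the surviving group-like, and this is exactly where conditions (1) and (2) of Definition \ref{d3.1}, the identity $\gcd(m,m_i)=m/e_i$, and the divisibility $m\mid e_im_i$ all come into play at once; a secondary subtlety is isolating the precise exponent $t$ in the description of $Z(T)$ from the compatibility condition on $\xi$, rather than merely a divisor of $t$.
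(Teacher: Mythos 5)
Your proof is correct and follows essentially the same route as the paper's: the strong $\widehat{\Z_n}$-grading $T=\bigoplus_i T_0g^i$ over the commutative domain $T_0$ together with Lemma \ref{l2.10} for primeness and the PI-degree, the finite-module argument for GK-dimension one, and the explicit character $g\mapsto\zeta$, $y_{m_i}\mapsto 0$ for part (3). The paper dismisses the Hopf-algebra verification and the center computation as ``routine''/``direct computation''; your quantum-binomial analysis showing the $q$-commutation parameter in $\D(y_{m_i})$ is a primitive $e_i$th root of unity, the embedding of the degree-zero component into $\k[w]$ via $y_{m_i}\mapsto w^{m/e_i}$, and the $\gcd(t,v_1)=1$ argument pinning down $Z(T)=\k[y_{m_1}^{e_1t}]$ correctly supply exactly the details the paper omits.
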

 \begin{proof} (1) Since the proof of $T(\underline{m},t,\xi)$ being a Hopf algebra is routine, we leave it to the readers. (In fact, since for each $1\leq i\leq \theta$ the subalgebra generated by $g, y_{m_i}$ is just a generalized infinite dimensional Taft algebra, one can reduce the proof to just considering the mixed relation $y_{m_i}y_{m_j}=y_{m_j}y_{m_i}$ and $y_{m_i}^{e_i}=y_{m_j}^{e_j}$ for $1\leq i,j\leq \theta$.)
 Through direct computations, one can see that the subalgebra $\k[y_{m_1}^{e_1t}]\cong \k[x]$ is the center of $T(\underline{m},t,\xi)$ and $T$ is finite module over $\k[y_{m_1}^{e_1t}]$. This means the GK-dimension of $T(\underline{m},t,\xi)$ is one.

 (2) We want to apply Lemma \ref{l2.10} to prove this result and we use similar argument developed in the proof of Corollary \ref{c2.9}. At first, let $T_0$ be the subalgebra generated by $y_{m_1},\ldots,y_{m_\theta}$. Then clearly
 $$T=\bigoplus_{i=0}^{n-1} T_{0}g^{i}.$$
 From this, $T$ is a strongly $\widehat{\Z_n}=\langle \chi|\chi^n=1 \rangle$-graded algebra through $\chi(ag^i)=\xi^{i}$ for any $a\in T_0$ and $0\leq i\leq n-1.$ Therefore, the conditions 1) and 2) of Lemma \ref{l2.10} are satisfied. By part (b) of Lemma \ref{l2.10}, the action of $\widehat{\Z_n}$ is just the adjoint action of $\Z_{n}=\langle g|g^n=1\rangle$ on $T_0$ which by definition is faithful. Therefore, PI.deg$(T)=n$ by part (c) of Lemma \ref{l2.10}. In addition, the part (d) of Lemma \ref{l2.10} implies that $T$ is prime now.

 (3)  By the definition of $T(\underline{m},t,\xi)$, it has a $1$-dimensional representation $$\pi:\;T(\underline{m},t,\xi)\to \k, \;\;y_{m_i}\mapsto 0,\;g\mapsto \xi\;\;(1\leq i\leq \theta).$$
 It's order is clear $n$.
 \end{proof}

 \begin{remark}\label{r4.3} \emph{
 We call the representation in Proposition \ref{p4.1} (c)  the \emph{canonical representation} of $T(\underline{m},t,\xi)$. Since $\ord(\pi)=n$ which is same as the PI-degree of $T(\underline{m},t,\xi)$, the Hopf algebra $T(\underline{m},t,\xi)$ satisfies the (Hyp1). At the same time, let $\{2,5\}$ be a fraction of $10$ and consider the example $T=T(\{2,5\},3,\xi)$ where $\xi$ is a primitive $30$th root of unity. Applying \cite[Lemma 2.6]{LWZ}, we find that the right module structure of the left homological integrals is given by
 $$\int_{T}^{l}=T/(y_{m_i}\;(1\leq i\leq \theta),g-\xi^{10-7}).$$
 Therefore $\io(T)=10$ which does \emph{not} equal the PI-degree of $T$, which is $30$. So, $T(\underline{m},t,\xi)$ only satisfies (Hyp1) rather than (Hyp1)$'$, that is, $\io(T)\neq$ PI.deg$(T)$ in general.}
 \end{remark}

 The canonical representation of $T=T(\underline{m},t,\xi)$ yields the corresponding left and right winding automorphisms
 \[{\Xi_{\pi}^l:}
\begin{cases}
y_{m_i}\longmapsto y_{m_i}, &\\
g\longmapsto \xi g, &
\end{cases} \textrm{and} \;\;\;\;\;
\Xi_{\pi}^r:
\begin{cases}
y_{m_i}\longmapsto \xi^{m_it}y_{m_i}, &\\
g\longmapsto \xi g, &
\end{cases}\]
for $1\leq i\leq \theta$.

Using above expression of $\Xi_{\pi}^{l}$ and $\Xi_{\pi}^{r}$, it is not difficult to find that
 \begin{equation} T_{i}^{l}=\k[y_{m_1},\ldots,y_{m_\theta}]g^{i}\;\;\;\;\textrm{and} \;\;\;\; T_{j}^{r}=\k[g^{-m_1t}y_{m_1},\ldots,g^{-m_{\theta}t}y_{m_\theta}]g^{j}
 \end{equation}
 for all $0\leq i,j\leq n-1.$ Thus we have
 \begin{equation} T_{00}=\k[y_{m_1}^{e_1}]\;\;\;\;\textrm{and} \;\;\;\;T_{i,i+jt}=\k[y_{m_1}^{e_1}]y_{j}g^{i}
 \end{equation}
 for all $0\leq i\leq n-1$, $0\leq j\leq m-1$ where $y_{j}=y_{m_1}^{j_1}\cdots y_{m_\theta}^{j_\theta}$ (see (1) of Remark \ref{r3.2}). Moreover, we can see that
 $$T_{ij}=0\;\;\textrm{if}\;\; i-j\not\equiv 0\;(\textrm{mod}\; t)$$
 for all $0\leq i,j\leq n-1.$

 As a concluding remark of this subsection, we want to discriminate these fractions of infinite dimensional Taft algebras.

 \begin{proposition}\label{p4.3} Keep above notations. Let $\underline{m'}=\{m_1',\ldots,m'_{\theta'}\}$ be a fraction of another integer $m'$. Then $T(\underline{m},t, \xi)\cong T(\underline{m'},t',\xi')$ if and only if $m=m',\;\theta=\theta',\;t=t'$ and there exists $x_0\in \N$ which is relatively prime to $n=mt$ such
 that up to an order of $m_1,\ldots,m_\theta$ we have $m_i'\equiv m_ix_0$ $($\emph{mod} $n)$ and $\xi=\xi'^{x_0}.$
 \end{proposition}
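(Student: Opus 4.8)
The plan is to mirror the structure of the proof of Proposition~\ref{p3.4}, adapting it to carry the extra datum $t$ (equivalently $n=mt$) and the modified relations $y_{m_i}^{e_i}=y_{m_j}^{e_j}$, $\D(y_{m_i})=1\otimes y_{m_i}+y_{m_i}\otimes g^{tm_i}$. The sufficiency direction is a direct verification that the prescribed substitution extends to a Hopf algebra isomorphism, so the work is in the necessity direction. Suppose $\varphi:T(\underline m,t,\xi)\xrightarrow{\cong}T(\underline{m'},t',\xi')$. First I would extract the invariants that are manifestly preserved by any Hopf algebra isomorphism: by Proposition~\ref{p4.1}(2) the PI-degree equals $n=mt$ (resp.\ $n'=m't'$), so $mt=m't'$; and $\varphi$ must carry the canonical $1$-dimensional representation's winding data appropriately, or more simply, $\varphi$ restricts to an isomorphism of the group of group-likes $\langle g\rangle\cong\Z_n$, giving $n=n'$, hence also (once we know $m=m'$) $t=t'$. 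To pin down $m$ itself, I would count the nontrivial skew-primitive elements modulo the group-likes: the space spanned by $\{y_{m_i}\}$ together with the group-likes is intrinsic, and its dimension picks out $\theta$, while the algebra generated modulo group-likes recovers $m$ via $\dim = m$ (analogously to Lemma~\ref{l3.3}); this forces $m=m'$ and $\theta=\theta'$.

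Next, as in Proposition~\ref{p3.4}, after reordering the $m_i'$ I may assume $\varphi(y_{m_i})=y_{m_i'}+c_i(1-(g')^{t'm_i'})$, and the relation $y_{m_i}g=\xi^{m_i/m_0}gy_{m_i}$ (using the normalization $\xi':=\xi^{m_0}$ form, so that the relation reads $y_{m_i}g=\xi^{m_i}gy_{m_i}$ after rescaling $\xi$) forces $c_i=0$; so $\varphi(y_{m_i})=y_{m_i'}$. Since $\varphi$ is determined on group-likes by $\varphi(g)=(g')^{x_0}$ for some $x_0$ with $(x_0,n)=1$, comparing
\[
\D(\varphi(y_{m_i}))=1\otimes y_{m_i'}+y_{m_i'}\otimes (g')^{t'm_i'}
\quad\text{with}\quad
(\varphi\otimes\varphi)\D(y_{m_i})=1\otimes y_{m_i'}+y_{m_i'}\otimes (g')^{x_0 t m_i}
\]
and using $t=t'$, $g'^{n}=1$ gives $t'm_i'\equiv x_0 t m_i$, i.e.\ $m_i'\equiv x_0 m_i\pmod m$ (cancelling $t$, which is legitimate modulo $n=mt$ on the level of the congruence $tm_i'\equiv tx_0m_i\pmod{mt}$). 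Finally the relation $y_{m_i}g=\xi^{m_i/m_0}gy_{m_i}$ transported through $\varphi$ yields $\xi^{m_i/m_0}=\xi'^{x_0 m_i/m_0}$ for all $i$; since $(\tfrac{m_1}{m_0},\dots,\tfrac{m_\theta}{m_0})=1$ one writes $1=\sum c_i\tfrac{m_i}{m_0}$ and concludes $\xi=\xi'^{x_0}$, exactly as in the last display of the proof of Proposition~\ref{p3.4}.

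The main obstacle I anticipate is the bookkeeping around the two different moduli $m$ and $n=mt$: the relations involving $g$ live modulo $n$ (through $g^n=1$ and the exponents $g^{tm_i}$), whereas the fraction structure and the congruences $m_i'\equiv x_0m_i$ one wants to conclude live modulo $m$. One must be careful that "$\xi$ is a primitive $n$th root of unity satisfying $\xi^{e_i m_i/m_0}$ independent of $i$" is the right replacement for "$\xi$ primitive $m$th root", and that the relation $y_{m_i}^{e_i}=y_{m_j}^{e_j}$ (rather than $y_{m_i}^{e_i}=0$) does not introduce any new constraint on $x_0$ beyond what the comultiplication and the $y$--$g$ commutation already give; checking that this relation is automatically preserved once $\varphi(y_{m_i})=y_{m_i'}$ and $\varphi(g)=(g')^{x_0}$ are fixed is the one genuinely new point compared to Proposition~\ref{p3.4}, and I would verify it by noting both sides become $y_{m_1'}^{e_1'}=\cdots$, which is forced since $e_i$ depends only on $(m,m_i)$ and $m_i'\equiv x_0m_i$ with $(x_0,m)=1$ implies $e_i'=e_i$.
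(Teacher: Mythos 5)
Your overall architecture matches the paper's: extract $n$, $\theta$, $m$, $t$ as invariants, normalize $\varphi(y_{m_i})=y_{m_i'}$ using the commutation with $g$ to kill the group-like correction, then read the congruence off the coproduct and $\xi=\xi'^{x_0}$ off the relation $y_{m_i}g=\xi^{m_i/m_0}gy_{m_i}$, exactly as in Proposition \ref{p3.4}. The step that does not go through as you state it is the identification $m=m'$. You propose to recover $m$ as the dimension of ``the algebra generated modulo group-likes,'' analogously to Lemma \ref{l3.3}; but that analogy fails precisely because in $T(\underline{m},t,\xi)$ the defining relation is $y_{m_i}^{e_i}=y_{m_j}^{e_j}$ rather than $y_{m_i}^{e_i}=0$, so the $y_{m_i}$ are not nilpotent and the subalgebra they generate contains the polynomial ring $\k[y_{m_1}^{e_1}]$ — it is infinite-dimensional, and no finite dimension equal to $m$ is available. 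This matters downstream: without $m=m'$ you only have $mt=m't'$, so you cannot conclude $t=t'$, and you cannot cancel $t$ against $t'$ in the congruence $t'm_i'\equiv x_0tm_i\ (\mathrm{mod}\ n)$ coming from the coproduct.

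The paper closes this gap more cheaply: once $\varphi(y_{m_i})=y_{m_i'}$ is arranged, it notes that $e_i$ is the least exponent for which $y_{m_i}^{e_i}$ is primitive, a property preserved under a Hopf isomorphism, so $e_i=e_i'$; condition (3) of Definition \ref{d3.1} then gives $m=e_1\cdots e_\theta=e_1'\cdots e_{\theta'}'=m'$ and hence $t=t'$. If you substitute this observation for the dimension count, the rest of your argument goes through, including your closing remark that $e_i'=e_i$ makes the relation $y_{m_i}^{e_i}=y_{m_j}^{e_j}$ compatible with the substitution in the sufficiency direction. One further caution: as you yourself note, cancelling $t$ in $tm_i'\equiv tx_0m_i\ (\mathrm{mod}\ mt)$ only yields the congruence modulo $m$; to obtain it modulo $n$ as stated in the proposition you must also feed in the relation $y_{m_i}g=\xi^{m_i/m_0}gy_{m_i}$ transported through $\varphi$, rather than rely on the coproduct alone.
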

 \begin{proof} We write the proof out for completeness. We denote the corresponding generators and numbers of $T(\underline{m'},t',\xi')$ by adding the symbol $'$ to that of $T(\underline{m},t, \xi)$.  The sufficiency is clear (for example, just take $\varphi:\;T(\underline{m},t, \xi)\to T(\underline{m'},t',\xi')$ through $g\mapsto g'^{x_0},\;\;y_{m_i}\mapsto y'_{m_i'}$ for $1\leq i\leq \theta$. Then one can $\varphi$ gives the desired isomorphism). We next prove the necessity. Assume that we have an isomorphism of Hopf algebras
 $$\varphi:\;T(\underline{m},t, \xi)\stackrel{\cong}{\longrightarrow} T(\underline{m'},t',\xi').$$
 By this isomorphism, they have the same number of group-likes which implies that $n=mt=m't'=n'$ and $\varphi(g)=(g')^{x_0}$ for some $x_0\in \N$ satisfying $x_0$ and $n$ are coprime.
Comparing the number of nontrivial skew primitive elements, we know that $\theta=\theta'.$ Up to an order of $m_1,\ldots,m_\theta$, there is no harm to assume that $\varphi(y_{m_i})=y_{m_i'}$ for $1\leq i\leq \theta.$ (Just as the case of a fraction of a Taft algebra, one should take $\varphi(y_{m_i})=y_{m_i'}+c_i(1-(g')^{m_i'})$ at the beginning for some $c_i\in \k$. Then through the relation $y_{m_i}g=\xi^{\frac{m_i}{m_0}}gy_{m_i}$ we can find that $c_i=0.$) Since both $y_{m_i}^{e_i}$ and $y_{m_i'}^{e_i'}$ are primitive, $e_i=e_i'.$ Therefore $m=e_1\cdots e_{\theta}=e'_1\cdots e'_{\theta}=m'$ and thus $t=t'.$ Then one can repeat the proof of Proposition \ref{p3.4} and get that $m_i'\equiv m_ix_0$ (mod $n$) and $\xi=\xi'^{x_0}.$
 \end{proof}

 \subsection{$T(\underline{m},t,\xi)$ vs the Brown-Goodearl-Zhang's example.}\label{sub4.2} In the paper of Goodeal and Zhang \cite[Section 2]{GZ}, they found a new kind of Hopf domains of GK-dimension two. From these Hopf domains, one can get some Hopf algebras of GK-dimension one through quotient method. In fact, through this way Brown and Zhang \cite[Example 7.3]{BZ} got the first example of a prime Hopf algebra of GK-dimension one which is not regular. Let's recall their construction at first.

 \begin{example}[Brown-Goodearl-Zhang's example] \emph{Let $n,p_0,p_1,\ldots,p_s$ be positive integers and $a\in \k^{\times}$ with the following properties:
 \begin{itemize}\item[(a)] $s\geq 2$ and $1<p_1<p_2<\cdots<p_s;$
 \item[(b)] $p_0|n$ and $p_0,p_1,\ldots,p_s$ are pairwise relatively prime;
 \item[(c)] $q$ is a primitive $l$th root of unity, where $l=(n/p_0)p_1p_2\cdots p_s.$
 \end{itemize}
 Set $m_i=p_i^{-1}\prod_{j=1}^{s}p_j$ for $i=1,\ldots,s$. Let $A$ be the subalgebra of $\k[y]$ generated by $y_i:=y^{m_i}$ for $i=1,\ldots,s$. The $\k$-algebra automorphism of $\k[y]$ sending $y\mapsto qy$ restricts to an algebra automorphism $\sigma$ of $A$. There is a unique Hopf algebra structure on the Laurent polynomial ring $B=A[x^{\pm 1};\sigma]$ such that $x$ is group-like and the $y_i$ are skew primitive, with
 $$\D(y_i)=1\otimes y_i+y_i\otimes x^{m_i n}$$
 for $i=1,\ldots,s$. It is a PI Hopf domain of GK-dimension two, and is denoted by $B(n,p_0,p_1,\ldots,p_s,q)$. Now let $$\overline{B}(n,p_0,p_1,\ldots,p_s,q):=B(n,p_0,p_1,\ldots,p_s,q)/(x^l-1).$$
 Then Brown-Zhang proved that the quotient Hopf algebra $\overline{B}(n,p_0,p_1,\ldots,p_s,q)$ is a prime Hopf algebra of GK-dimension one.}
 \end{example}

 There is a close relationship between the Brown-Goodearl-Zhang's example and the fractions of infinite dimensional Taft algebras.
 \begin{proposition} The Hopf algebra $\overline{B}(n,p_0,p_1,\ldots,p_s,q)$ is a fraction of an infinite dimensional Taft algebra, that is, $\overline{B}(n,p_0,p_1,\ldots,p_s,q)=T(\underline{m},t,\xi)$ for some $\underline{m}\in \mathcal{F},t\in \mathbb{N}$ and $\xi$ a root of unity.
 \end{proposition}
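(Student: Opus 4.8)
The plan is to write down an explicit isomorphism of Hopf algebras. Put $m:=p_1p_2\cdots p_s$, $\theta:=s$ and $t:=n/p_0$ (an integer, since $p_0\mid n$); with this bookkeeping the numbers of the Brown--Goodearl--Zhang example become $m_i=p_i^{-1}\prod_jp_j=m/p_i$, and the order parameter is $l:=mt=(n/p_0)p_1\cdots p_s$. First I would check that $\underline m=\{m_1,\dots,m_s\}$ is a fraction of $m$ of length $s$ in the sense of Definition \ref{d3.1}. As the $p_i$ are pairwise coprime, the smallest $e_i$ with $m\mid e_im_i$ is $e_i=p_i$, so $(e_i,m_i)=(p_i,m/p_i)=1$, $m\mid m_im_j=m^2/(p_ip_j)$ for $i\neq j$, and $\prod_ie_i=\prod_ip_i=m$; for the uniqueness condition, the Chinese Remainder Theorem identifies $\Z_m$ with $\prod_i\Z_{p_i}$, under which $m_i$ is invertible modulo $p_i$ and $\equiv 0$ modulo $p_j$ ($j\neq i$), so the $i$-th component of $\sum_ja_jm_j$ (with $0\le a_j<p_j$) is $a_im_i\bmod p_i$, recovering $a_i$. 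Note moreover that $m_0:=\gcd(m_1,\dots,m_s)=1$ (no prime divides every $m_i=\prod_{j\neq i}p_j$), hence $e_im_i=m$ for all $i$ and the compatibility requirement $\xi^{e_1m_1/m_0}=\cdots=\xi^{e_\theta m_\theta/m_0}$ holds trivially; thus $T(\underline m,t,\xi)$ is a legitimate object for \emph{every} primitive $l$-th root of unity $\xi$.

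Next I would determine the correct identification of the group-likes. In $\overline B$ the group-likes form the cyclic group $\langle x\rangle$ of order $l$, and in $T(\underline m,t,\xi)$ the element $g$ also has order $mt=l$ (Proposition \ref{p4.1}), so an isomorphism must send $g\mapsto x^c$ for some $c$ coprime to $l$. Comparing $\Delta(y_{m_i})=1\otimes y_{m_i}+y_{m_i}\otimes g^{tm_i}$ with $\Delta(y_i)=1\otimes y_i+y_i\otimes x^{m_in}$ forces $ctm_i\equiv m_in\pmod l$ for all $i$; dividing by $t=n/p_0$ and using $l=mt$, $m_i=m/p_i$, this is equivalent to $c\equiv p_0\pmod{p_i}$ for every $i$, i.e. $c\equiv p_0\pmod m$. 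Such a $c$ with $\gcd(c,l)=1$ exists: coprimality to $m$ is automatic because $(p_0,m)=1$, and coprimality to $n/p_0$ can be arranged inside the residue class $p_0+m\Z$ by a CRT argument over the primes dividing $n/p_0$ but not $m$ (or, more cheaply, by Dirichlet's theorem on primes in arithmetic progressions). I fix such a $c$ and set $\xi:=q^{-c}$, which is again a primitive $l$-th root of unity.

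Then I would check that $\varphi\colon T(\underline m,t,\xi)\to\overline B$ defined by $g\mapsto x^c$ and $y_{m_i}\mapsto y_i$ is a Hopf algebra isomorphism. It kills the defining relations: $g^{mt}\mapsto(x^l)^c=1$; $y_{m_i}^{e_i}=y_{m_j}^{e_j}\mapsto y_i^{p_i}=y^m=y_j^{p_j}$; the $y_{m_i}$ go to commuting elements of $\k[y]$; and from $xy_i=\sigma(y_i)x=q^{m_i}y_ix$ one gets $y_ix^c=q^{-cm_i}x^cy_i=\xi^{m_i}x^cy_i$, which matches $y_{m_i}g=\xi^{m_i}gy_{m_i}$. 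It is a bialgebra map by the choice of $c$ (for $\Delta$) together with $\epsilon(x^c)=1=\epsilon(g)$ and $\epsilon(y_i)=0=\epsilon(y_{m_i})$, hence also commutes with the antipode. It is surjective since $y_i=\varphi(y_{m_i})$ and $x\in\langle x^c\rangle\subseteq\im\varphi$ (as $\gcd(c,l)=1$); and it is injective because $T(\underline m,t,\xi)$ is prime PI of GK-dimension one (Proposition \ref{p4.1}), so any nonzero kernel contains a nonzero central element and makes the quotient module-finite over a finite-dimensional center, contradicting that $\overline B$ has GK-dimension one.

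I expect the only genuinely non-routine step to be the observation in the second paragraph that the isomorphism must twist the canonical group-like --- sending $g$ to $x^c$ with $c\equiv p_0\pmod m$ rather than to $x$ itself --- and that the parameter $\xi$ is then forced to be $q^{-c}$ (up to an equally valid reparametrization); once this twist is identified, the rest is bookkeeping dictated by the axioms of a fraction of a number.
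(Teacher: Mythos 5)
Your proposal is correct, and the overall strategy is the same as the paper's: write down an explicit Hopf algebra epimorphism from a fraction of an infinite dimensional Taft algebra onto $\overline{B}(n,p_0,p_1,\ldots,p_s,q)$ and then invoke primeness together with GK-dimension one to conclude it is an isomorphism. The difference is purely in the normalization of the fraction. The paper sets $m_i':=p_0m_i$, so that the fraction is $\{p_0m_1,\ldots,p_0m_s\}$ with greatest common divisor $m_0=p_0$; since the defining relation of $T(\underline{m},t,\xi)$ reads $y_{m_i}g=\xi^{m_i/m_0}gy_{m_i}$ and the coproduct involves $g^{tm_i'}=g^{nm_i}$, this choice makes the naive assignment $g\mapsto x$, $y_{m_i'}\mapsto y_i$, $\xi=q$ work on the nose, at the cost of having to check (the paper says ``it is tedious to show'') that the rescaled set is still a fraction of $m$. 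You instead keep the gcd-one fraction $\{m_1,\ldots,m_s\}$ and absorb $p_0$ into the group-like by sending $g\mapsto x^c$ with $c\equiv p_0\pmod m$ and $\gcd(c,l)=1$; this forces the extra CRT/Dirichlet existence argument for $c$ and the twist $\xi=q^{\pm c}$, but in exchange you verify the fraction axioms completely explicitly. By Proposition \ref{p4.3} the two parametrizations give isomorphic Hopf algebras, so both arguments prove the statement. (The sign discrepancy $\xi=q^{-c}$ versus $q^{c}$ only reflects the ambiguity in the convention for $A[x^{\pm1};\sigma]$ — the paper writes $y_ix=q^{m_i}xy_i$ while the standard convention gives $y_ix=q^{-m_i}xy_i$ — and is immaterial since the proposition only asserts the isomorphism for some root of unity $\xi$.)
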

 \begin{proof} By definition of $\overline{B}=\overline{B}(n,p_0,p_1,\ldots,p_s,q)$, we know that $y_i=y^{m_i}$ (we also use the same notation as $B(n,p_0,p_1,\ldots,p_s,q)$) and thus the following relation is satisfied
 $$y_i^{p_i}=y_j^{p_j}$$
 for all $1\leq i,j\leq s.$ At the same time, in $\overline{B}$ the group like element $x$ satisfying the following relations
 $$x^{l}=1, \;\;y_ix=q^{m_i}xy_i$$
 for $i=1,\ldots,s.$ By these observations, define
 $$m_i':=p_0m_i,\;\;\;\;1\leq i\leq s.$$
 Then it is tedious to show that $m_1',m_2',\ldots,m_s'$ is a fraction of $m:=\prod_{i=1}^{s} p_i$. Moreover, let $t:=n/p_0$. Now we see that the Hopf algebra $T(\{m_1',m_2',\ldots,m_s'\},t,q)$ is generated by $y_{m_1'},\ldots,y_{m_s'},\;g$ and satisfies the following relations
 $$g^l=1,\;\;y_{m_i'}^{p_i}=y_{m_j'}^{p_j},\;\;y_{m_i'}y_{m_j'}=y_{m_j'}y_{m_i'},\;\;
 y_{m_i'}g=q^{\frac{m_i'}{p_0}}gy_{m_i'}=q^{m_i}gy_{m_i'}.$$
 From this, there is an algebra epimorphism
 $$f:\;T(\{m_1',m_2',\ldots,m_s'\},n/p_0,q)\to \overline{B}(n,p_0,p_1,\ldots,p_s,q),\;\;
 y_{m_i'}\mapsto y_i, \;g\mapsto x$$
  which is clear a Hopf epimorphism. Since both of them are prime of GK-dimension one, $f$ must be an isomorphism.
 \end{proof}

 But not all fractions of infinite dimensional Taft algebras belong to the class of Brown-Goodearl-Zhang's examples.
 \begin{example} \emph{Let $5,12$ be a fraction of $30$ and $\xi$ a primitive $30$th root of unity. Then the corresponding $T(\{12,5\},1,\xi)$ is generated by $y_{5},y_{12},g$ satisfying
 $$y_{12}^{5}=y_{5}^{6},\;\;y_{12}y_{5}=y_{5}y_{12},\;\;y_{12}g=\xi^{12}gy_{12},
 \;\;y_{5}g=\xi^{5}gy_{5},\;\;g^{30}=1.$$
 If there is an isomorphism between this Hopf algebra and a Brown-Goodearl-Zhang's example $$f:\; T(\{12,5\},1,\xi)\stackrel{\cong}{\longrightarrow}\overline{B}(n,p_0,p_1,\ldots,p_s,q),$$ then clearly $s=2$ (by the number of non-trivial skew primitive elements) and $l=(n/{p_0})p_1p_2=30$ (due to they have the same group of group-likes). Therefore, $f(g)=x^{t}$ with $(t,30)=1$. By
 $$\D(y_5)=1\otimes y_5+g^{5}\otimes y_5,\;\;\;\;\D(y_{12})=1\otimes y_{12}+g^{12}\otimes y_{12},$$
 we know that $np_1\equiv 5t, np_2\equiv 12t$ (mod $30$). Since $p_1,p_2$ are factors of $30$ and $t$ is coprime to $30$, $p_1=5$ and thus $n\equiv t$ (mod $30$), $p_2=12$. This contradicts to $l=(n/{p_0})p_1p_2=30$.}
 \end{example}

This example also shows that not every fraction version of infinite dimensional Taft algebra can be realized as a quotient of a Hopf domain of GK-dimension two.

\subsection{Fraction of generalized Liu algebra $B(\underline{m},\omega,\gamma)$.}\label{ss4.3}
Let $m,\omega$ be positive integers and $m_1,\ldots,m_\theta$ a fraction of $m$. A fraction of a generalized Liu algebra, denoted by $B(\underline{m}, \omega, \gamma)=B(\{m_1,\ldots,m_\theta\}, \omega, \gamma)$, is generated by $x^{\pm 1}, g$ and $y_{m_1},\ldots,y_{m_{\theta}}$, subject to the relations
\begin{equation}
\begin{cases}
xx^{-1}=x^{-1}x=1,\quad  xg=gx,\quad  xy_{m_i}=y_{m_i}x, & \\
y_{m_i}g=\gamma^{m_i} gy_{m_i},\quad y_{m_i}y_{m_j}=y_{m_j}y_{m_i} & \\
y_{m_i}^{e_i}=1-x^{\omega\frac{e_im_i}{m}},\quad g^m=x^{\omega}, & \\
\end{cases}
\end{equation}
where  $\gamma$ is a primitive $m$th root of 1 and $1\leq i, j\leq \theta$. The comultiplication, counit and antipode of $B(\{m_1,\ldots,m_\theta\}, \omega, \gamma)$  are given by
$$\Delta(x)=x\otimes x,\quad  \Delta(g)=g\otimes g, \quad   \Delta(y_{m_i})=y_{m_i}\otimes g^{m_i}+1\otimes y_{m_i},$$
$$\epsilon(x)=1,\quad  \epsilon(g)=1,\quad    \epsilon(y_{m_i})=0,$$ and
$$S(x)=x^{-1},\quad  S(g)=g^{-1} \quad S(y_{m_i})=-y_{m_i}g^{-m_i},$$
for $1\leq i\leq \theta$.
\begin{proposition}\label{p4.6} Let the $\k$-algebra $B=B(\{m_1,\ldots,m_\theta\}, \omega, \gamma)$ be defined as above. Then \begin{itemize}
\item[(1)] The algebra $B$ is a Hopf algebra of GK-dimension one, with center $\k[x^{\pm 1}]$.
\item[(2)] The algebra $B$ is prime and PI-deg $(B)=m$.
\item[(3)] The algebra $B$ has a $1$-dimensional representation whose order is $m$.
\item[(4)] $\io(B)=m.$
 \end{itemize}
\end{proposition}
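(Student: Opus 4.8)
The plan is to follow the proof of Proposition~\ref{p4.1} almost verbatim, with the cyclic group $\Z_n$ there replaced by $\Z_m$ here; I indicate only the genuinely new points. Write $B=B(\{m_1,\dots,m_\theta\},\omega,\gamma)$ and put $f_i:=e_im_i/m\in\N$, which is an integer because $m\mid e_im_i$ by the definition of $e_i$. The one structural fact I would isolate at the outset is that the exponents $e_1,\dots,e_\theta$ are \emph{pairwise coprime}, so that $\mathrm{lcm}(e_1,\dots,e_\theta)=e_1\cdots e_\theta=m$. This is squeezed out of Definition~\ref{d3.1}: for each prime $p$, condition~(1) forces the $p$-adic valuation $v_p(m_i)$ to be either $0$ or at least $v_p(m)$; condition~(2) forces at most one index $i$ with $v_p(m_i)=0$; and $e_1\cdots e_\theta=m$ (condition~(3)) then forces exactly one such index, which is precisely the unique $i$ with $v_p(e_i)>0$. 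This fact is used several times below.

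\emph{Proof of (1).} As in Proposition~\ref{p4.1}, checking that $\D,\e,S$ are well defined reduces to the two ``new'' relations. For $y_{m_i}^{e_i}=1-x^{\omega f_i}$: $y_{m_i}$ is $(1,g^{m_i})$-skew-primitive and $y_{m_i}g^{m_i}=\gamma^{-m_i^2}g^{m_i}y_{m_i}$ with $\gamma^{-m_i^2}$ a primitive $e_i$-th root of unity, so $y_{m_i}^{e_i}$ is $(1,g^{e_im_i})$-skew-primitive; and $g^{e_im_i}=(g^m)^{f_i}=x^{\omega f_i}$, so $y_{m_i}^{e_i}-(1-x^{\omega f_i})$ is again $(1,x^{\omega f_i})$-skew-primitive, as required. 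Compatibility of $y_{m_i}y_{m_j}=y_{m_j}y_{m_i}$ uses condition~(2) of Definition~\ref{d3.1}, which makes $g^{m_i}$ and $y_{m_j}$ commute; and $S$ is bijective because $S^2$ is conjugation by a suitable power of $g$ (here the pairwise coprimality of the $e_i$ and CRT produce the exponent). A PBW-type basis then exhibits $B$ as a free $\k[x^{\pm1}]$-module of rank $m^2$ with basis the monomials $g^by_{m_1}^{c_1}\cdots y_{m_\theta}^{c_\theta}$ ($0\le b<m$, $0\le c_i<e_i$); since $\k[x^{\pm1}]$ is central, noetherian and of GK-dimension one, $B$ is noetherian affine of GK-dimension one. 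Finally, a central element, written in this basis, is forced (on conjugating by $g$, using condition~(4) of Definition~\ref{d3.1}) to be supported on $\underline c=\underline0$, and then (on multiplying by each $y_{m_i}$) to have $e_i\mid b$ for all $i$, hence $m\mid b$, hence $b=0$; thus $Z(B)=\k[x^{\pm1}]$.

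\emph{Proof of (2) and (3).} The assignment $x\mapsto1$, $g\mapsto\gamma^{-1}$, $y_{m_i}\mapsto0$ kills every defining relation, hence gives a $1$-dimensional representation $\pi\colon B\to\k$, visibly of order $m$; this is (3). Its left winding automorphism $\Xi_\pi^l$ fixes $x$ and each $y_{m_i}$ and sends $g\mapsto\gamma^{-1}g$, so $G_\pi^l=\langle\Xi_\pi^l\rangle\cong\Z_m$ acts faithfully on $B$, and Lemma~\ref{l2.5} provides a strong $\widehat{\Z_m}$-grading $B=\bigoplus_iB_i^l$ with invariant component $B_0^l=\k[x^{\pm1}][y_{m_1},\dots,y_{m_\theta}]\cong\k[x^{\pm1}][Y_1,\dots,Y_\theta]/\bigl(Y_i^{e_i}-(1-x^{\omega f_i})\bigr)_{1\le i\le\theta}$. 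The crux is that $B_0^l$ is a commutative domain: $1-x^{\omega f_i}$ is a nonzero squarefree element of $\k[x^{\pm1}]$, so $Y_i^{e_i}-(1-x^{\omega f_i})$ is irreducible over $\k(x)$ by the standard criterion for $T^n-a$; the resulting extensions $\k(x)(Y_i)/\k(x)$ have the pairwise coprime degrees $e_i$, hence are mutually linearly disjoint, so the tower obtained by adjoining $Y_1,\dots,Y_\theta$ is a field of degree $m$ over $\k(x)$ into which $B_0^l$ embeds. Now Lemma~\ref{l2.10} applies: part~(a) gives PI-deg$(B)\le m$, and by part~(c) equality holds iff the action $\triangleright$ of $\widehat{\Z_m}$ on $B_0^l$ is faithful; choosing $u_\chi=g$ in~\eqref{eq2.4} identifies $\triangleright$ with conjugation by $g$, namely $y_{m_i}\mapsto\gamma^{-m_i}y_{m_i}$, which is faithful exactly because $\mathrm{lcm}(e_i)=m$. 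Hence PI-deg$(B)=m$, and $B$ is prime by Lemma~\ref{l2.10}(d); this is (2).

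\emph{Proof of (4), and the main obstacle.} By Lemma~\ref{l2.1} $B$ is AS-Gorenstein of injective dimension one, and a computation parallel to the one recalled for $B(n,\omega,\gamma)$, using \cite[Lemma~2.6]{LWZ}, gives $\int_B^l=B/\langle y_{m_1},\dots,y_{m_\theta},\,x-1,\,g-\gamma^{-1}\rangle$; the winding automorphism of this $1$-dimensional module is exactly $\Xi_\pi^l$, of order $m$, so $\io(B)=m$. I expect the genuine obstacle to lie in the domain property of $B_0^l$ used in (2)--(3): the rest is bookkeeping lifted from the Taft-fraction case, but this step really needs both the irreducibility of $Y_i^{e_i}-(1-x^{\omega f_i})$ over $\k(x)$ and the pairwise coprimality of the $e_i$ (the small combinatorial lemma above). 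A secondary, purely computational, nuisance is carrying out the $\Ext^1$-computation behind $\int_B^l$ in~(4).
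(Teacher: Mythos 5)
Your treatment of (1)--(3) is correct and follows essentially the paper's route: exhibit $B$ as strongly $\widehat{\Z_m}$-graded over $B_0=\k[x^{\pm1}][y_{m_1},\dots,y_{m_\theta}]$ and invoke Lemma \ref{l2.10}, with the faithfulness of the adjoint action of $g$ coming from condition (4) of Definition \ref{d3.1}. In fact you do more than the paper at one point: the paper simply asserts that $B_0$ is ``clearly'' a domain, whereas you justify it by showing each $Y_i^{e_i}-(1-x^{\omega e_im_i/m})$ is irreducible over $\k(x)$ (squarefreeness of $1-x^{\omega e_im_i/m}$) and that the resulting extensions, having the pairwise coprime degrees $e_i$, are linearly disjoint. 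Your derivation of the pairwise coprimality of the $e_i$ from Definition \ref{d3.1} is also correct, and your computation of the center matches the paper's claim.

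The gap is in (4). The formula you assert for the integral is wrong in general: the computation via \cite[Lemma 2.6]{LWZ} gives
\[
\int_B^l=B/\bigl(x-1,\;y_{m_1},\dots,y_{m_\theta},\;g-\gamma^{-\sum_{i=1}^{\theta}m_i}\bigr),
\]
not $g-\gamma^{-1}$: each generator $y_{m_i}$ contributes a factor $\gamma^{-m_i}$ because $y_{m_i}g=\gamma^{m_i}gy_{m_i}$, and your formula agrees with the correct one only when $\theta=1$ and $m_1=1$, i.e.\ for the non-fraction Liu algebra from which you transported it. Consequently the winding automorphism attached to the integral is $(\Xi_\pi^l)^{\sum_i m_i}$ rather than $\Xi_\pi^l$ itself, and $\io(B)=m$ is equivalent to $\gamma^{-\sum_i m_i}$ being a \emph{primitive} $m$th root of unity, i.e.\ to $\gcd\bigl(\sum_{i=1}^{\theta}m_i,\,m\bigr)=1$. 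This coprimality is the real content of part (4) and does not come for free: the paper proves it by noting that a common prime factor $p$ of $m$ and $\sum_i m_i$ would divide some $e_i$, hence (since $e_i\mid m_j$ for all $j\neq i$) every $m_j$ with $j\neq i$, hence $m_i$ itself, contradicting $(m_i,e_i)=1$. Your proposal omits this step entirely, so as written part (4) is not established.
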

\begin{proof} (1) It is not hard to see that the center of $B$ is $\k[x^{\pm 1}]$ and $B$ is a free module over $\k[x^{\pm 1}]$ with finite rank. Actually, through a direct computation one can find that $\{y_jg^{i}|0\leq i,j\leq m-1\}$ is a basis of $B$ over $\k[x^{\pm 1}]$. Here recall that if $j\equiv j_1m_1+\ldots +j_{\theta}m_{\theta}$ (mod $m$) then $y_j=\prod_{i=1}^{\theta}y_{m_i}^{j_i}.$ Therefore, it has GK-dimension one. Similar to the case of $T(\underline{m},t,\xi)$, we leave the task to the readers to check that $B$ is a Hopf algebra. Actually, the same as the case of Taft algebras, since for each $1\leq i\leq \theta$ the subalgebra generated by $x^{\pm 1},g, y_{m_i}$ is just a similar kind of generalized Liu algebra which may be not prime now, one can reduce the proof to just considering the mixed relation $y_{m_i}y_{m_j}=y_{m_j}y_{m_i}$ and $y_{m_i}^{e_i}=y_{m_j}^{e_j}$ for $1\leq i,j\leq \theta$.

(2)  As the case of $T(\underline{m},t,\xi)$, we want to apply Lemma \ref{l2.10} to prove that $B$ is prime with PI-degree $m$. At first, let $B_0$ be the subalgebra generated by $y_{m_1},\ldots,y_{m_\theta}$ and $x^{\pm 1}$. Clearly, $B_0$ is a domain and
 $$B=\bigoplus_{i=0}^{m-1} B_{0}g^{i}.$$
 From this, $B$ is a strongly $\widehat{\Z_m}=\langle \chi|\chi^m=1 \rangle$-graded algebra through $\chi(ag^i)=\gamma^{i}$ for any $a\in B_0$ and $0\leq i\leq m-1.$ Therefore, the conditions 1) and 2) of Lemma \ref{l2.10} are fulfilled. By part (b) of Lemma \ref{l2.10}, the action of $\widehat{\Z_m}$ is just the adjoint action of $\Z_{m}=\langle g|g^m=1\rangle$ on $B_0$ which by definition of a fraction of $m$ is faithful. Therefore, PI.deg$(B)=m$ by part (c) of Lemma \ref{l2.10}. In addition, the part (d) of Lemma \ref{l2.10} implies that $B$ is prime now.

 (3)  By the definition of $B$, it has a $1$-dimensional representation $$\pi:\;B\to \k, \;\;x\mapsto 1,\;y_{m_i}\mapsto 0,\;g\mapsto \gamma\;\;(1\leq i\leq \theta).$$
 It's order is clear $m$.

 (4) Using \cite[Lemma 2.6]{LWZ}, we have the right module structure of the left integrals is
 $$\int_{B}^{l}=B/(x-1,\;y_{m_i},\;g-\gamma^{-\sum_{i=1}^{\theta}m_{i}},\;1\leq i\leq \theta).$$
 Next, we want to show that $\sum_{i=1}^{\theta}m_{i}$ is coprime to $m$. Recall that in the definition of a fraction (see Definition \ref{d3.1}), we ask that $(m_i,e_i)=1$ and $m|m_im_j$ for all $1\leq i,j\leq \theta$. Thus $$(e_i,e_j)=1,\;\;\;\;e_i|m_j$$
 for all $1\leq i\neq j\leq \theta$. By (3) of Definition \ref{d3.1}, $m=e_1\cdots e_{\theta}$. On the contrary, assume that $(\sum_{i=1}^{\theta}m_{i},m)\neq 1$. Then there exists $1\leq i\leq \theta$ and a prime factor $p_i| e_i$ such that $p_i|m$ and $p_i|\sum_{i=1}^{\theta}m_{i}.$
 Since $e_i|m_j$ for all $j\neq i$, $p_i|m_j$ for all $j\neq i$. Therefore, $p_i|m_i$ which is impossible since $(m_i,e_i)=1$.

 Therefore, we know that $(\sum_{i=1}^{\theta}m_{i},m)= 1$ and thus $\gamma^{-\sum_{i=1}^{\theta}m_{i}}$ is still a primitive $m$th root of unity which implies that $\io(B)=m.$
\end{proof}

We also call the $1$-dimensional representation stated in (3) of Proposition \ref{p4.6} the \emph{canonical representation} of $B=B(\{m_1,\ldots,m_\theta\}, \omega, \gamma)$. This canonical representation of $B$ yields the corresponding left and right winding automorphisms
 \[{\Xi_{\pi}^l:}
\begin{cases}x\longmapsto x,&\\
y_{m_i}\longmapsto y_{m_i}, &\\
g\longmapsto \gamma g, &
\end{cases} \textrm{and} \;\;\;\;\;
\Xi_{\pi}^r:
\begin{cases}x\longmapsto x,&\\
y_{m_i}\longmapsto \gamma^{m_i}y_{m_i}, &\\
g\longmapsto \gamma g, &
\end{cases}\]
for $1\leq i\leq \theta$.

Using above expression of $\Xi_{\pi}^{l}$ and $\Xi_{\pi}^{r}$, it is not difficult to find that
 \begin{equation} B_{i}^{l}=\k[x^{\pm 1},y_{m_1},\ldots,y_{m_\theta}]g^{i}\;\;\;\;\textrm{and} \;\;\;\; B_{j}^{r}=\k[x^{\pm 1},g^{-m_1}y_{m_1},\ldots,g^{-m_{\theta}}y_{m_\theta}]g^{j}
 \end{equation}
 for all $0\leq i,j\leq m-1.$ Thus we have
 \begin{equation}\label{eq4.5} B_{00}=\k[x^{\pm 1}]\;\;\;\;\textrm{and} \;\;\;\;B_{i,i+j}=\k[x^{\pm 1}]y_{j}g^{i}
 \end{equation}
 for all $0\leq i,j\leq m-1$ where $y_{j}=y_{m_1}^{j_1}\cdots y_{m_\theta}^{j_\theta}$ (see (1) of Remark \ref{r3.2}).

 At the end of this subsection, we also want to consider when two fractions of generalized Liu algebras are the same. To do that, let $m'\in \N$ and $\{m_1',\ldots,m'_{\theta'}\}$ a fraction of $m'$. As before, we denote the corresponding generators and numbers of $B(\underline{m'},\omega',\gamma')$ by adding the symbol $'$ to that of $B(\underline{m},\omega,\gamma).$
\begin{proposition}\label{pliu} As Hopf algebras, if $B(\underline{m},\omega,\gamma)\cong B(\underline{m'},\omega',\gamma')$, then $m=m', \theta=\theta'$ and up to an order of $m_i$'s, $\omega m_i=\omega'm_i'$ for all $1\leq i\leq \theta.$
\end{proposition}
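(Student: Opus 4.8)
The plan is to mimic the proofs of Propositions \ref{p3.4} and \ref{p4.3} — reading off invariants of a hypothetical Hopf isomorphism $\varphi\colon B(\underline m,\omega,\gamma)\stackrel{\cong}{\longrightarrow}B(\underline{m'},\omega',\gamma')$ — but now with the center as an extra tool. Since $\varphi$ is in particular an algebra isomorphism, the two algebras have equal PI-degree, so Proposition \ref{p4.6}(2) gives $m=m'$ at once. Next I would count nontrivial skew-primitive elements. The fraction axioms force the $m_i$ to be pairwise incongruent modulo $m$ (otherwise Definition \ref{d3.1}(4) fails for the coordinate vectors supported at two indices), hence the group-likes $g^{m_i}$ are distinct; using pointedness of $B(\underline m,\omega,\gamma)$ together with the explicit $\k[x^{\pm1}]$-basis of Proposition \ref{p4.6}(1), one checks that $\bigoplus_i\k y_{m_i}$ exhausts the nontrivial skew primitives, exactly one landing in each $P_{1,g^{m_i}}/\k(1-g^{m_i})$. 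Thus $\theta=\theta'$, and after reordering the $m'_j$ we may arrange that $\varphi(y_{m_i})$ is a nontrivial $(\varphi(g^{m_i}),1)$-skew primitive whose type must be one of the special group-likes $g'^{m'_j}$; labelling so that this type is $g'^{m'_i}$, we obtain $\varphi(g)^{m_i}=\varphi(g^{m_i})=g'^{m'_i}$ and $\varphi(y_{m_i})=c_iy'_{m'_i}+d_i(1-g'^{m'_i})$ with $c_i\neq0$.

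Two reductions come next. First, $\varphi$ carries the center $\k[x^{\pm1}]$ of $B(\underline m,\omega,\gamma)$ (Proposition \ref{p4.6}(1)) isomorphically onto $\k[x'^{\pm1}]$, and $x$ is an invertible group-like, so $\varphi(x)$ is an invertible group-like of $\k[x'^{\pm1}]$ which generates it as an algebra; hence $\varphi(x)=x'^{\pm1}$. Secondly, I would eliminate the trivial tail $d_i(1-g'^{m'_i})$: apply $\varphi$ to $y_{m_i}g=\gamma^{m_i}gy_{m_i}$, use that $\varphi(g)$ is a group-like (so it commutes with $1-g'^{m'_i}$, the group of group-likes of $B(\underline{m'},\omega',\gamma')$ being abelian), and separate the genuine skew-primitive part from the group-algebra part in the coradical filtration; this isolates $(1-\gamma^{m_i})\,d_i\,(1-g'^{m'_i})\varphi(g)=0$, and since $\gamma^{m_i}\neq1$ (as $0<m_i<m$) while $(1-g'^{m'_i})\varphi(g)\neq0$, we conclude $d_i=0$, i.e. $\varphi(y_{m_i})=c_iy'_{m'_i}$.

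Finally, I would feed these into the defining relation $y_{m_i}^{e_i}=1-x^{\omega e_im_i/m}$ (the exponent being an integer by the definition of $e_i$). Applying $\varphi$, the left side becomes $c_i^{e_i}(y'_{m'_i})^{e_i}$, a central element of the form $1-\text{(group-like)}$. Comparing this with the fact that the least positive power of $y'_{m'_i}$ lying in the center of $B(\underline{m'},\omega',\gamma')$ is $e'_i$ — which follows from $(y'_{m'_i})^{k}g'=\gamma'^{\,km'_i}g'(y'_{m'_i})^{k}$ and $m=m'$ — and running the symmetric argument with $\varphi^{-1}$, one gets $e_i=e'_i$. Then $c_i^{e_i}\bigl(1-(x')^{\omega'e_im'_i/m}\bigr)=1-(x')^{\pm\omega e_im_i/m}$; since $x'$ has infinite order the two monomials on each side are linearly independent, forcing $c_i^{e_i}=1$ and $\pm\omega e_im_i=\omega'e_im'_i$. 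As the right-hand side is positive the sign must be $+$ (so in fact $\varphi(x)=x'$), and dividing by $e_i$ yields $\omega m_i=\omega'm'_i$, which is the assertion after the chosen reordering.

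I expect the main obstacle to be the skew-primitive bookkeeping of the first paragraph: proving that $B(\underline m,\omega,\gamma)$ has no nontrivial skew primitives outside $\bigoplus_i\k y_{m_i}$ and tracking the trivial tails through $\varphi$. This is where the present setting differs from the Taft case of Proposition \ref{p4.3} — the relations $y_{m_i}^{e_i}=1-x^{\omega e_im_i/m}$ and $g^m=x^\omega$ obstruct a purely combinatorial comparison of coproducts, and it is precisely the description of the center in Proposition \ref{p4.6}(1) that lets one pin down $\varphi(x)$ and recover the numerical data.
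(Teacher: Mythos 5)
Your proposal is correct and follows essentially the same route as the paper's proof: equality of PI-degrees gives $m=m'$, the center pins down $\varphi(x)=x'^{\pm1}$, comparison of nontrivial skew primitives gives $\theta=\theta'$ and the matching of the $y_{m_i}$, and the relation $y_{m_i}^{e_i}=1-x^{\omega e_im_i/m}$ yields $e_i=e_i'$, the sign $\varphi(x)=x'$, and $\omega m_i=\omega'm_i'$. You simply supply details the paper leaves implicit (killing the tail $d_i(1-g'^{m_i'})$ via the commutation relation, deriving $e_i=e_i'$ from centrality, and normalizing the scalar $c_i$), all consistent with the paper's argument.
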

\begin{proof} Since they have the same PI-degrees, $m=m'$. We know the the center of $B(\underline{m},\omega,\gamma)$ is $\k [x^{\pm 1}]$ and thus $\varphi(x)=x'$ or $\varphi(x)=(x')^{-1}$. Also, as before, through comparing the nontrivial skew primitive elements, $\theta=\theta'$ and after a reordering the generators we can assume that $\varphi(y_{m_i})=y'_{m_i'}.$ The relation $y_{m_i}^{e_i}=1-x^{\omega\frac{e_im_i}{m}}$ implies that $e_i=e_i'$ and $\varphi(x)=x'$ since by assumption all $e_i,m_i$ and $m$ are positive. From which one has $$\omega\frac{e_im_i}{m}=\omega'\frac{e_i'm_i'}{m'}.$$
Since $m=m'$ and $e_i=e_i'$, $\omega m_i=\omega'm_i'$ for all $1\leq i\leq \theta.$
\end{proof}

It is a pity that the conditions in above proposition is only a necessary condition for $B(\underline{m},\omega,\gamma)\cong B(\underline{m'},\omega',\gamma')$. To get a sufficient one, or an equivalent condition, we need the following observation.

\begin{lemma}\label{l4.9} Any fraction of generalized Liu algebra $B(\underline{m},\omega,\gamma)$ is isomorphic to a \emph{unique} $B(\underline{m'},\omega',\gamma')$ satisfying $(m_1',\ldots,m'_{\theta'})=1$.
\end{lemma}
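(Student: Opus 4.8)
\emph{Proof proposal.} The plan is to treat the two assertions separately: existence of a model with coprime $m_i$'s by an explicit ``rescaling'' isomorphism that divides out the common factor of the $m_i$, and uniqueness by combining Proposition~\ref{pliu} with a tracking of the isomorphism on group-like and skew-primitive elements, just as in the proofs of Propositions~\ref{p3.4} and~\ref{p4.3}.

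For existence, put $m_0=(m_1,\dots,m_\theta)$; if $m_0=1$ there is nothing to prove, so assume $m_0>1$, and note that, exactly as for Taft fractions, the axioms of Definition~\ref{d3.1} force $(m_0,m)=1$. Set $n_i:=m_i/m_0$ for $1\le i\le\theta$, $\omega':=\omega m_0$ and $\gamma':=\gamma^{m_0^{2}}$. First I would check that $\{n_1,\dots,n_\theta\}$ is again a fraction of $m$: since $m_0$ is a unit modulo $m$, one has $m\mid an_i\iff m\mid am_i$, so $n_i$ has the same exponent $e_i$ as $m_i$, and conditions (1)--(4) of Definition~\ref{d3.1} pass to the $n_i$ after cancelling $m_0$ (for (4), $\sum a_in_i\equiv\sum b_in_i\pmod m\iff\sum a_im_i\equiv\sum b_im_i\pmod m$); moreover $(n_1,\dots,n_\theta)=1$ and $\gamma^{m_0^2}$ is again a primitive $m$-th root of $1$. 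Then I would write down
\[
\Psi\colon B(\{n_1,\dots,n_\theta\},\omega',\gamma')\to B(\{m_1,\dots,m_\theta\},\omega,\gamma),\quad x'\mapsto x,\ g'\mapsto g^{m_0},\ y'_{n_i}\mapsto y_{m_i},
\]
and verify that it respects all the defining relations and the coalgebra/antipode structure; the substantive checks are
\[
y_{m_i}g^{m_0}=\gamma^{m_0m_i}g^{m_0}y_{m_i}=\gamma'^{\,n_i}g^{m_0}y_{m_i},\qquad y_{m_i}^{e_i}=1-x^{\omega e_im_i/m}=1-x^{\omega' e_in_i/m},
\]
together with $(g^{m_0})^{m}=(g^{m})^{m_0}=x^{\omega m_0}=x^{\omega'}$ and $\Delta(y_{m_i})=y_{m_i}\otimes g^{m_i}+1\otimes y_{m_i}=y_{m_i}\otimes(g^{m_0})^{n_i}+1\otimes y_{m_i}$. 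Finally $\Psi$ is onto, because its image contains $x$, $g^{m_0}$ and the $y_{m_i}$, and writing $1=\alpha m_0+\beta m$ gives $g=(g^{m_0})^{\alpha}(g^{m})^{\beta}=(g^{m_0})^{\alpha}x^{\omega\beta}$ in the image as well; a nonzero kernel would force the quotient, which is $B(\underline m,\omega,\gamma)$ of GK-dimension one, to be finite-dimensional, since a nonzero ideal of a prime affine PI algebra finite over its center meets that center. Hence $\Psi$ is an isomorphism.

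For uniqueness, suppose $B(\underline{m'},\omega',\gamma')\cong B(\underline{m''},\omega'',\gamma'')$ with $(m_1',\dots,m'_{\theta'})=(m_1'',\dots,m''_{\theta''})=1$. Proposition~\ref{pliu} gives $m'=m''$, $\theta'=\theta''$ and, after reordering, $\omega'm_i'=\omega''m_i''$ for all $i$; taking the greatest common divisor over $i$ and using coprimality yields $\omega'=\omega''$, hence $m_i'=m_i''$. It remains to show $\gamma'=\gamma''$, and for this I would trace an isomorphism $\varphi$: it carries the center $\k[x'^{\pm1}]$ onto $\k[x''^{\pm1}]$ and, from the relation $y'^{e_i'}_{m_i'}=1-x'^{\omega'e_i'm_i'/m'}$ whose right-hand side involves a positive power of $x'$, must send $x'\mapsto cx''$; it sends group-likes to group-likes, so $\varphi(g')=(g'')^{a}(x'')^{b}$, and since $\varphi(y'_{m_i'})$ is a nontrivial $\bigl(1,\varphi(g')^{m_i'}\bigr)$-skew-primitive one gets $\varphi(g')^{m_i'}=(g'')^{m_i'}$. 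Working in the group of group-likes $\langle g'',x''\mid(g'')^{m'}=(x'')^{\omega'}\rangle$, and using that the exponents $e_1',\dots,e_{\theta'}'$ are pairwise coprime with product $m'$ so that $\operatorname{lcm}_i e_i'=m'$, one deduces $a\equiv1\pmod{m'}$. Then applying $\varphi$ to $y'_{m_i'}g'=\gamma'^{\,m_i'}g'y'_{m_i'}$ and comparing with $y''_{m_i'}g''=\gamma''^{\,m_i'}g''y''_{m_i'}$ gives $\gamma'^{\,m_i'}=(\gamma'')^{am_i'}=\gamma''^{\,m_i'}$ for every $i$; since $(m_1',\dots,m'_{\theta'})=1$ we may write $1=\sum_ic_im_i'$ and conclude $\gamma'=\gamma''$.

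I expect the main obstacle to be the group-like bookkeeping in the uniqueness step: because $g'$ has infinite order and the only relation it satisfies with the center is $(g')^{m'}=(x')^{\omega'}$, an isomorphism could a priori carry $g'$ to $(g'')^{a}(x'')^{b}$ for many pairs $(a,b)$, and pinning $a$ down to $1$ modulo $m'$ is exactly where the pairwise coprimality of the exponents must be invoked; by contrast, the existence part is a careful but essentially routine verification once the correct triple $(m_i/m_0,\ \omega m_0,\ \gamma^{m_0^{2}})$ has been guessed.
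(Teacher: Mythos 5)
Your proposal is correct and follows essentially the same route as the paper: existence via the explicit rescaling to the triple $(m_i/m_0,\ \omega m_0,\ \gamma^{m_0^2})$ (you write the isomorphism in the opposite direction, $g'\mapsto g^{m_0}$, and supply the bijectivity argument the paper leaves implicit), and uniqueness via Proposition~\ref{pliu} together with tracking the isomorphism on group-likes and skew-primitives. The only cosmetic difference is that the paper pins down $\varphi(g')=g''$ directly from $\varphi(g')^{m_i'}=(g'')^{m_i'}$ and $\sum_i a_im_i'=1$, whereas you take a small detour through the structure of the group of group-likes to get $a\equiv 1\pmod{m'}$; both work.
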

\begin{proof} We prove the existence at first and then prove the uniqueness. Take an arbitrary $B(\underline{m},\omega,\gamma)$. Let $m_0=(m_1,\ldots,m_{\theta})$. Above proposition suggests us to construct the following algebra
$$B(\{\frac{m_1}{m_0},\ldots, \frac{m_\theta}{m_0}\}, \omega m_0,\gamma^{m^2_0}).$$
Clearly, $\{\frac{m_1}{m_0},\ldots, \frac{m_\theta}{m_0}\}$ is a fraction of $m$ with length $\theta$ and $(\frac{m_1}{m_0},\ldots, \frac{m_\theta}{m_0})=1$.

\emph{Claim 1: As Hopf algebras, $B(\underline{m},\omega,\gamma)\cong B(\{\frac{m_1}{m_0},\ldots, \frac{m_\theta}{m_0}\}, \omega m_0,\gamma^{m^2_0}).$}

\emph{Proof of the claim 1.}  Since $(m_0, m)=1$, there exist $a\in \N, b\in \Z$ such that $am_0+bm=1$. Define the following map
\begin{align*}\varphi:\;& B(\underline{m},\omega,\gamma)\longrightarrow B(\{\frac{m_1}{m_0},\ldots, \frac{m_\theta}{m_0}\}, \omega m_0,\gamma^{m^2_0}),\\
&\;\;x\mapsto x',\;g\mapsto (g')^{a}(x')^{b\omega},\;y_{m_i}\mapsto y'_{\frac{m_i}{m_0}},\;\;(1\leq i\leq \theta).\end{align*}
Since
    \begin{align*}\varphi(g^{m_i})&=\varphi(g)^{m_i}=((g')^{a}(x')^{b\omega})^{m_i}
    =(g')^{am_0\frac{m_i}{m_0}}(x')^{b\omega' \frac{m_i}{m_0}}\\
    &=(g')^{am_0\frac{m_i}{m_0}}(g')^{bm \frac{m_i}{m_0}}=(g')^{(am_0+bm)\frac{m_i}{m_0}}\\
    &=(g')^{\frac{m_i}{m_0}}
    \end{align*}
    and
    \begin{align*}\varphi(y_{m_i}g)&=\varphi(y_{m_i})\varphi(g)
    =y'_{\frac{m_i}{m_0}}(g')^{a}(x')^{b\omega}\\
    &=\gamma^{am^2_0\frac{m_i}{m_0}}(g')^{a}(x')^{b\omega}y'_{\frac{m_i}{m_0}}
    =\gamma^{m_i}\varphi(g)\varphi(y_{m_i})\\
    &=\varphi(\gamma^{m_i}gy_{m_i}),
    \end{align*} for all $1\leq i\leq \theta$, it is not hard to prove that $\varphi$ gives the desired isomorphism.

    Next, let's show that uniqueness. To prove it, it is enough to built the following statement.

    \emph{Claim 2: Let $\{m_1,\ldots,m_\theta\}$ and $\{m'_1,\ldots,m'_\theta\}$ be two fractions of $m$ with length $\theta$ satisfying $(m_1,\ldots,m_\theta)=(m'_1,\ldots,m'_\theta)=1$. If $B(\underline{m},\omega,\gamma)$ is isomorphic to $B(\underline{m'},\omega',\gamma')$, then up to an order of $m_i$'s we have $m_i=m_i'$, $\omega=\omega'$ and $\gamma=\gamma'$ for $1\leq i\leq \theta$.}

    \emph{Proof of Claim 2.} By Proposition \ref{pliu}, $\omega m_i=\omega' m_i'$. Since $$(m_1,\ldots,m_\theta)=(m'_1,\ldots,m'_\theta)=1,$$ $\omega|\omega'$ and $\omega'|\omega$. Therefore $\omega=\omega'$ and thus $m_i=m_i'$ for all $1\leq i\leq \theta.$ From this, we know the isomorphism given in the proof of Proposition \ref{pliu} must sent $g^{m_i}$ to $(g')^{m_i}$, i.e., keeping the notations used in the proof of Proposition \ref{pliu}, we have $\varphi(g^{m_i})=(g')^{m_i}$ for all $1\leq i\leq\theta$. Since $(m_1,\ldots, m_\theta)=1$, there exist $a_i\in \Z$ such that $\sum_{i=1}^{\theta}a_im_i=1$.
    Thus $$\varphi(g)=\varphi(g^{\sum_{i=1}^{\theta}a_im_i})=(g')^{\sum_{i=1}^{\theta}a_im_i}=g'.$$
    This implies that $$\gamma^{m_i}=(\gamma')^{m_i}$$
    through using the relation $y_{m_i}g=\gamma^{m_i}gy_{m_i}$. So,
    $$\gamma=\gamma^{\sum_{i=1}^{\theta}a_im_i}=(\gamma')^{\sum_{i=1}^{\theta}a_im_i}=\gamma'.$$
\end{proof}

\begin{definition}\label{d4.10} \emph{We call the Hopf algebra $B(\{\frac{m_1}{m_0},\ldots, \frac{m_\theta}{m_0}\}, \omega m_0,\gamma^{m^2_0})$ the \emph{basic form} of $B(\underline{m},\omega,\gamma)$.}
\end{definition}
By this lemma, we can tell when two fractions of generalized Liu algebras are isomorphic now. Keeping notations before, let $m,m'\in \N$ and $\{m_1,\ldots,m_{\theta}\},\;$ $\{m_1',\ldots,m'_{\theta'}\}$ be fractions of $m$ and $m'$ respectively. Let $m_0:=(m_1,\ldots,m_\theta)$ and $m_0':=(m_1',\ldots,m'_{\theta'}).$

\begin{proposition}\label{iliu} Retain above notations. As Hopf algebras, $B(\underline{m},\omega,\gamma)\cong B(\underline{m'},\omega',\gamma')$ if and only if $m=m', \theta=\theta',\; \omega m_0=\omega'm_0'$ and $\gamma^{m^2_0}=\gamma'^{(m_0')^2}.$
\end{proposition}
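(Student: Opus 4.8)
\emph{Strategy.} The plan is to funnel everything through the normal form of Lemma \ref{l4.9}: every fraction of a generalized Liu algebra $B(\underline m,\omega,\gamma)$ is isomorphic, as a Hopf algebra, to its basic form $B(\{m_1/m_0,\dots,m_\theta/m_0\},\,\omega m_0,\,\gamma^{m_0^2})$ via the explicit map of Claim 1 in the proof of Lemma \ref{l4.9}, and that basic form is the \emph{unique} fraction in its isomorphism class whose defining integers have greatest common divisor $1$ (Definition \ref{d4.10}). Hence $B(\underline m,\omega,\gamma)\cong B(\underline{m'},\omega',\gamma')$ holds precisely when the two basic forms coincide as presented Hopf algebras, and the whole problem reduces to reading off when that happens, in terms of $m$, $\theta$, $\omega m_0$, $\gamma^{m_0^2}$ and the reduced tuples $\{m_i/m_0\}$.

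\emph{Necessity.} Suppose $\Phi\colon B(\underline m,\omega,\gamma)\to B(\underline{m'},\omega',\gamma')$ is an isomorphism of Hopf algebras. Pre- and post-composing $\Phi$ with the two basic-form isomorphisms of Lemma \ref{l4.9} yields an isomorphism between the basic forms $B(\{m_i/m_0\},\omega m_0,\gamma^{m_0^2})$ and $B(\{m_i'/m_0'\},\omega'm_0',(\gamma')^{(m_0')^2})$. Both sides are prime Hopf algebras of GK-dimension one with PI-degrees $m$ and $m'$ by Proposition \ref{p4.6}(2), so $m=m'$; comparing the number of nontrivial skew-primitive generators forces $\theta=\theta'$; and now these are two fractions of the same $m$, of the same length, with globally coprime data, so Claim 2 inside the proof of Lemma \ref{l4.9} applies verbatim and returns, after a reindexing, $m_i/m_0=m_i'/m_0'$ for all $i$, together with $\omega m_0=\omega'm_0'$ and $\gamma^{m_0^2}=(\gamma')^{(m_0')^2}$. (Alternatively $m=m'$, $\theta=\theta'$ and $\omega m_i=\omega'm_i'$ drop straight out of Proposition \ref{pliu}, and taking greatest common divisors over $i$ gives $\omega m_0=\omega'm_0'$.)

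\emph{Sufficiency, and the main obstacle.} Conversely, once one knows that the reduced tuples $\{m_i/m_0\}$ and $\{m_i'/m_0'\}$ agree up to order, the equalities $m=m'$, $\theta=\theta'$, $\omega m_0=\omega'm_0'$ and $\gamma^{m_0^2}=(\gamma')^{(m_0')^2}$ make the two basic forms the \emph{same} presented Hopf algebra, and composing with the basic-form isomorphisms of Lemma \ref{l4.9} completes the proof that $B(\underline m,\omega,\gamma)\cong B(\underline{m'},\omega',\gamma')$. The substantive step, the one I expect to be hardest, is therefore to verify that the reduced tuples must coincide. The handle I would use is that each exponent $e_i$ is an isomorphism invariant — because $y_{m_i}^{e_i}$ is primitive while no smaller power of $y_{m_i}$ is, and the set of group-likes together with the skew-primitives of each type is preserved — and that the basic-form relation $y_{n_i}^{e_i}=1-x^{\omega m_0\,e_i n_i/m}$ then determines, for each $i$, the power of $x$ attached to $n_i=m_i/m_0$; the $\widehat{G_\pi^l}$-grading coming from the winding automorphisms of the canonical representation (Lemma \ref{l2.5}, equation \eqref{eq4.5}) is the natural bookkeeping device for making this matching precise.
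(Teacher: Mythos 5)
Your overall route is the same as the paper's: reduce both sides to their basic forms via Lemma \ref{l4.9} and compare. The necessity half of your argument is fine (and rather more detailed than the paper's two-line version). The problem is the sufficiency half. You correctly isolate the missing step — that the reduced tuples $\{m_i/m_0\}$ and $\{m_i'/m_0'\}$ must agree up to order before the two basic forms can be identified — but the tools you propose for establishing it (invariance of the exponents $e_i$, of the group-likes and skew-primitives, of the winding-automorphism grading) are all devices for extracting data \emph{from} a given isomorphism. In the sufficiency direction no isomorphism is given: you are handed only the four numerical equalities and must produce one. So the argument is circular at exactly the point you yourself flag as the hardest.

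Moreover the step cannot be completed from the stated hypotheses, because the reduced tuple is genuinely not determined by $m$, $\theta$, $\omega m_0$ and $\gamma^{m_0^2}$. Take $m=m'=6$, $\underline{m}=\{2,3\}$ and $\underline{m'}=\{4,3\}$ — both are fractions of $6$ of length $2$ with greatest common divisor $1$, and the paper itself exhibits this pair in Remark \ref{r3.2}(2) — together with the same $\omega$ and $\gamma$ on both sides. All four conditions of Proposition \ref{iliu} hold, yet by Proposition \ref{pliu} an isomorphism would force $\{2\omega,3\omega\}=\{4\omega,3\omega\}$ up to order, which is impossible for $\omega>0$. So the ``if'' direction fails unless one adds the hypothesis that the reduced tuples coincide up to order (equivalently, that $\omega m_i=\omega' m_i'$ up to order, as in Proposition \ref{pliu}). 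In fairness, the paper's own proof has the same defect: it asserts that the listed conditions are ``clearly equivalent'' to equality of the basic forms, which is precisely the point at issue. Your write-up at least makes the gap visible instead of hiding it, but as it stands it does not close it.
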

\begin{proof} Note that $B(\underline{m},\omega,\gamma)\cong B(\underline{m'},\omega',\gamma')$ if and only if they have the same basic forms by above lemma. Now the condition listed in the proposition is clearly equivalent to say that the basic forms of them are same.
\end{proof}

\subsection{Fraction of the Hopf algebra $D(\underline{m},d,\gamma)$.}\label{ss4.4}
 Let $m,d$ be two natural numbers, $m_1,\ldots,m_\theta$ a fraction of $m$ satisfying the following two conditions:
 \begin{equation}\label{eq4.7} 2|\sum_{i=1}^{\theta}(m_i-1)(e_i-1)\quad \textrm{and}\quad 2|\sum_{i=1}^{\theta}(e_i-1)m_id.
  \end{equation}
   Let $\gamma$ a primitive $m$th root of unity and define
 \begin{equation}\xi_{m_i}:=\sqrt{\gamma^{m_i}},\;\;\;\;1\leq i\leq \theta.\end{equation}
  That is, $\xi_{m_i}$ is a primitive square root of $\gamma^{m_i}$. Therefore in particular, one has \begin{equation}\xi_{m_i}^{e_i}=-1\end{equation}
  for all $1\leq i\leq \theta$.

 In order to give the definition of the Hopf algebra $D(\underline{m},d,\gamma)$, we still need recall two notations introduced in Section 3:
\begin{equation} {]s,t[_{m_i}}=\begin{cases}
\phi_{m_i,(\bar{t}+1)m_i}\cdots \phi_{m_i,(e_i-1)m_i}\phi_{m_i,0}\cdots\phi_{m_i,(\bar{s}-1)m_i}, & \textrm{if}\; \bar{t}\geqslant \bar{s}
\\1, & \textrm{if}\; \bar{s}=\overline{t}+1 \\
\phi_{m_i,(\bar{t}+1)m_i}\cdots \phi_{m_i,(\bar{s}-1)m_i}, & \textrm{if}\;
\overline{s}\geqslant \bar{t}+2.
\end{cases} \end{equation}
and
\begin{equation} {[s,t]_{m_i}}:=\begin{cases}
\phi_{m_i,\bar{s}m_i}\phi_{m_i,(\bar{s}+1)m_i}\cdots \phi_{m_i,\bar{t}m_i}, & \textrm{if}\; \bar{t}\geqslant \bar{s}
\\1, & \textrm{if}\; \bar{s}=\overline{t}+1 \\
\phi_{m_i,\bar{s}m_i}\cdots \phi_{m_i,(e_i-1)m_i}\phi_{m_i,0}\cdots \phi_{m_i,\bar{t}m_i}, & \textrm{if}\;
\overline{s}\geqslant \bar{t}+2.
\end{cases} \end{equation}
where $\phi_{m_i,j}=1-\gamma^{-m_i^2(j_i+1)}x^{m_id}$ for all $1\leq i\leq \theta$. See \eqref{eqomit} and \eqref{eqpre} for details. Now we are in the position to give the definition of $D(\underline{m},d,\gamma)$.

$\bullet$ As an algebra, $D=D(\underline{m},d,\gamma)$ is generated by $x^{\pm 1}, g^{\pm 1}, y_{m_1},\ldots, y_{m_\theta}, u_0, u_1, \cdots,
u_{m-1}$, subject to the following relations
\begin{eqnarray}
&&xx^{-1}=x^{-1}x=1,\quad gg^{-1}=g^{-1}g=1,\quad xg=gx,\quad xy_{m_i}=y_{m_i}x\\
&&y_{m_i}y_{m_k}=y_{m_k}y_{m_i},\quad y_{m_i}g=\gamma^{m_i} gy_{m_i},\quad y_{m_i}^{e_i}=1-x^{e_im_id},\quad g^{m}=x^{md},\\
&&xu_j=u_jx^{-1},\quad y_{m_i}u_j=\phi_{m_i,j}u_{j+m_i}=\xi_{m_i}x^{m_id}u_jy_{m_i}\quad u_j g=\gamma^j x^{-2d}gu_j,\\
\label{eq4.14}&&u_ju_l=(-1)^{\sum_{i=1}^{\theta}l_i}\gamma^{\sum_{i=1}^{\theta}m_i^2\frac{l_i(l_i+1)}{2}}
\frac{1}{m}x^{-\frac{2+\sum_{i=1}^{\theta}(e_i-1)m_i}{2}d}
\\
\notag &&\quad \quad\quad\prod_{i=1}^{\theta}\xi_{m_i}^{-l_i}[j_i,e_i-2-l_i]_{m_i}y_{\overline{j+l}}g
\end{eqnarray}
for $1\leq i,k\leq \theta,$ and $0\leq j,l\leq m-1$ and here for any integer $n$,  $\overline{n}$ means remainder of division of $n$ by $m$ and as before $n\equiv\sum_{i=1}^{\theta} n_im_i$ (mod $m$) by Remark \ref{r3.2}.

$\bullet$ The coproduct $\D$, the counit $\epsilon$ and the antipode $S$ of $D(\underline{m},d,\gamma)$ are given by
\begin{eqnarray}
&&\D(x)=x\otimes x,\;\; \D(g)=g\otimes g, \;\;\D(y_{m_i})=y_{m_i}\otimes g^{m_i}+1\otimes y_{m_i},\\
&&\D(u_j)=\sum_{k=0}^{m-1}\gamma^{k(j-k)}u_k\otimes x^{-kd}g^ku_{j-k};\\
&&\epsilon(x)=\epsilon(g)=\epsilon(u_0)=1,\;\;\epsilon(y_{m_i})=\epsilon(u_s)=0;\\
&&S(x)=x^{-1},\;\; S(g)=g^{-1}, \;\;S(y_{m_i})=-y_{m_i}g^{-m_i},\\
&&\label{eq4.19} S(u_j)=(-1)^{\sum_{i=1}^{\theta}j_i}\gamma^{-\sum_{i=1}^{\theta}
m_i^2\frac{j_i(j_i+1)}{2}}
x^{b+\sum_{i=1}^{\theta}j_im_id}
g^{m-1-(\sum_{i=1}^{\theta}j_im_i)}\prod_{i=1}^{\theta}\xi_{m_i}^{-j_i}u_j,
\end{eqnarray}

for $1\leq i\leq \theta,\;1\leq s\leq m-1\;,0\leq j\leq m-1$ and $b=(1-m)d-\frac{\sum_{i=1}^{\theta}(e_i-1)m_i}{2}d$.

Before we prove that $D(\underline{m},d,\gamma)$ is a Hopf algebra, which is highly nontrivial, we want to express the formula \eqref{eq4.14} and \eqref{eq4.19} in a more convenient way.

On one hand, we find that
\begin{equation}\label{eq4.20}(-1)^{-ke_i-j_i}\xi_{m_i}^{-ke_{i}-j_{i}}
\gamma^{m_i^{2}\tfrac{(ke_i+j_i)(ke_i+j_i+1)}{2}}=(-1)^{-j_i}\xi_{m_i}^{-j_i}
\gamma^{m_i^{2}\tfrac{j_i(j_i+1)}{2}}\end{equation}
for any $k\in \mathbbm{Z}$. Therefore, if we define
$$u_{s}:=u_{\overline{s}},$$
where $\overline{s}$ means the remainder of $s$ modulo $m$, then the relation \eqref{eq4.14} can be replaced by
\begin{align}
\notag u_ju_l&=(-1)^{\sum_{i=1}^{\theta}l_i}\gamma^{\sum_{i=1}^{\theta}m_i^2\frac{l_i(l_i+1)}{2}}
\frac{1}{m}x^{-\frac{2+\sum_{i=1}^{\theta}(e_i-1)m_i}{2}d}
\\
\notag &\quad\quad\quad\quad\quad\quad\quad\quad\quad\quad\quad\quad\quad\quad\quad\quad
\prod_{i=1}^{\theta}\xi_{m_i}^{-l_i}[j_i,e_i-2-l_i]_{m_i}y_{\overline{j+l}}g\\
   \notag  &=(-1)^{\sum_{i=1}^{\theta}l_i}\gamma^{\sum_{i=1}^{\theta}m_i^2\frac{l_i(l_i+1)}{2}}
\frac{1}{m}x^{-\frac{2+\sum_{i=1}^{\theta}(e_i-1)m_i}{2}d}
\\
\notag&\quad\quad\quad\quad\quad\quad\quad\quad\quad\quad\quad\quad\quad\quad\quad\quad
\prod_{i=1}^{\theta}\xi_{m_i}^{-l_i}]-1-l_i,j_i-1[_{m_i}y_{\overline{j+l}}g\\
\label{eq4.21} &=\frac{1}{m}x^{-\frac{2+\sum_{i=1}^{\theta}(e_i-1)m_i}{2}d} \prod_{i=1}^{\theta}
(-1)^{l_i}\xi_{m_i}^{-l_i}\gamma^{m_i^2\frac{l_i(l_i+1)}{2}}]-1-l_i,j_i-1[_{m_i}y_{\overline{j+l}}g\\
\notag &=\frac{1}{m}x^{-\frac{2+\sum_{i=1}^{\theta}(e_i-1)m_i}{2}d} \prod_{i=1}^{\theta}
(-1)^{l_i}\xi_{m_i}^{-l_i}\gamma^{m_i^2\frac{l_i(l_i+1)}{2}}[j_i,e_i-2-l_i]_{m_i}y_{\overline{j+l}}g
\end{align} for all $j, l\in \mathbbm{Z}$, that is, we need not always ask that $0\leq j,l\leq m-1$.

On other hand, since $g^m=x^{md}$ and \eqref{eq4.20} , the definition about $S(u_{j})$ still holds for any integer $j$, that is, \eqref{eq4.19}
can be replaced in the following way:
\begin{align}\notag S(u_j)&=(-1)^{\sum_{i=1}^{\theta}j_i}\prod_{i=1}^{\theta}\xi_{m_i}^{-j_i}\gamma^{-\sum_{i=1}^{\theta}
m_i^2\frac{j_i(j_i+1)}{2}}
x^{\sum_{i=1}^{\theta}j_im_id}x^{b}
g^{m-1-(\sum_{i=1}^{\theta}j_im_i)}u_j\\
&= x^{b}g^{m-1}\prod_{i=1}^{\theta}(-1)^{j_i}\xi_{m_i}^{-j_i}\gamma^{-
m_i^2\frac{j_i(j_i+1)}{2}}
x^{j_im_id}
g^{-j_im_i}u_j
\end{align}
for all $j\in \mathbbm{Z}$.

We also need to give a bigrading on this algebra for the proof. Let $\xi:=\sqrt{\gamma}$ and define the following two algebra automorphisms of $D(\underline{m},d,\gamma)$:
\[{\Xi_{\pi}^l:}
\begin{cases}
x\longmapsto x, &\\
y_{m_i}\longmapsto y_{m_i}, &\\
g\longmapsto \gamma g, &\\
u_i\longmapsto \xi u_i, &
\end{cases}
\textrm{and}\;\;\;\; \Xi_{\pi}^r:
\begin{cases}
x\longmapsto x, &\\
y_{m_i}\longmapsto \gamma^{m_i}y_{m_i}, &\\
g\longmapsto \gamma g, &\\
u_j\longmapsto \xi^{2j+1} u_j,&
\end{cases}\]
for $1\leq i\leq \theta$ and $0\leq j\leq m-1.$ It is straightforward to show that $\Xi_{\pi}^l$ and $\Xi_{\pi}^r$ are indeed algebra automorphisms of $D(\underline{m},d,\gamma)$ and these automorphisms have order $2m$ by noting that $\xi$ is a primitive $2m$th root of 1. Define
\[{D_i^l=}
\begin{cases}
\k [x^{\pm 1}, y_{m_1},\ldots,y_{m_\theta}] g^{\tfrac{i}{2}}, & i=\textrm{even},\\
\sum_{s=0}^{m-1}\k[ x^{\pm 1}] g^{\tfrac{i-1}{2}}u_s,
& i=\textrm{odd},
\end{cases}\]
and \[ D_j^r=
\begin{cases}
\k[ x^{\pm 1}, y_{m_1}g^{-m_1},\dots,y_{m_\theta}g^{-m_\theta}] g^{\tfrac{j}{2}}, & j=\textrm{even},\\
\sum_{s=0}^{m-1}\k[x^{\pm 1}]
g^su_{\tfrac{j-1}{2}-s}, & j=\textrm{odd}.
\end{cases}\]
Therefore \begin{equation}\label{eqD} D_{ij}:=D_i^l\cap D_j^r=
\begin{cases}
\k[x^{\pm 1}]y_{\overline{\frac{j-i}{2}}}g^{\frac{i}{2}}, & i, j=\textrm{even},\\
\k[x^{\pm 1}]g^{\tfrac{i-1}{2}}u_{{\tfrac{j-i}{2}}}, & i, j=\textrm{odd},\\
0, & \textrm{otherwise}.
\end{cases}\end{equation}
Since $\sum_{i,j}D_{ij}=D(\underline{m},d,\gamma)$, we have
\begin{equation}\label{eq4.24}D(\underline{m},d,\gamma)=\bigoplus_{i,j=0}^{2m-1}D_{ij}\end{equation}
 which is a bigrading on $D(\underline{m},d,\gamma)$ automatically.

 Let $D:= D(\underline{m},d,\gamma)$, then $D\otimes D$ is graded naturally by inheriting the grading defined above. In particular,
 for any $h\in D\otimes D$,  we use
 $$h_{(s_1,t_1)\otimes (s_2,t_2)}$$
 to denote the homogeneous part of $h$ in $D_{s_{1},t_{1}}\otimes D_{s_{2},t_{2}} $. This notion will be used freely  in the proof of the following desired proposition.

\begin{proposition}\label{p4.7} The algebra $D(\underline{m},d,\gamma)$ defined above is a Hopf algebra.
\end{proposition}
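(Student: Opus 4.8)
The plan is to establish the three kinds of assertion that make up ``being a Hopf algebra'': that $\D$ and $\epsilon$ extend to algebra homomorphisms $D\to D\otimes D$ and $D\to\k$, that $S$ extends to an algebra anti-endomorphism of $D$, and that coassociativity, the counit axiom and the antipode axiom hold. Since the first two families of maps are being defined on a presented algebra, the first two points amount to checking that $\D$ (resp.\ $\epsilon$, resp.\ $S$) sends each defining relation to an identity already holding in $D\otimes D$ (resp.\ $\k$, resp.\ $D^{op}$); the last point, being a family of algebra-map identities, need only be verified on the generators $x^{\pm1},g^{\pm1},y_{m_1},\dots,y_{m_\theta},u_0,\dots,u_{m-1}$. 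A first observation that removes a large part of the bookkeeping: the relations among $x^{\pm1},g^{\pm1}$ and the $y_{m_i}$, and the coproduct, counit and antipode on these generators, are exactly those of the fraction $B(\underline m,md,\gamma)$ of a generalized Liu algebra of Subsection~\ref{ss4.3} (compare the presentations, noting $\omega=md$), which is a Hopf algebra by Proposition~\ref{p4.6}. Hence every relation and every axiom not involving a $u_j$ is already known, and what remains is the interaction of the $u_j$ with everything else — this is where the combinatorial preparation of Section~3 is needed.

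For $\D$ I would dispose of the mixed relations $xu_j=u_jx^{-1}$ and $u_jg=\gamma^jx^{-2d}gu_j$ by direct substitution of $\D(x),\D(g),\D(u_j)$ and reindexing, and of the relations $y_{m_i}u_j=\phi_{m_i,j}u_{j+m_i}=\xi_{m_i}x^{m_id}u_jy_{m_i}$ by expanding $\D(y_{m_i})\D(u_j)$, collecting coefficients, and recognizing the recursions defining the polynomials $]s,t[_{m_i}$ and $[s,t]_{m_i}$ together with Lemma~\ref{l3.4}(1)--(5); here the convention $u_s:=u_{\overline s}$ is consistent thanks to \eqref{eq4.20}, already checked just before the statement. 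The substantial point is that $\D$ respects the multiplication law \eqref{eq4.21}. For this I would use the bigrading \eqref{eq4.24} on $D$ and the induced bigrading on $D\otimes D$: both $\D(u_j)\D(u_l)$ and $\D$ of the right-hand side of \eqref{eq4.21} are elements of the bigraded algebra $D\otimes D$, and one matches their homogeneous components one at a time. Each component, after collecting the scalar factors, the powers of $x$, the square roots $\xi_{m_i}$ and the polynomials $[\,\cdot\,,\,\cdot\,]_{m_i}$, reduces to one of the telescoping identities of Lemma~\ref{l3.4} — most crucially the $\gamma_i$-binomial identity Lemma~\ref{l3.4}(6) — or to Lemma~\ref{l3.5}; the normalizing constant $1/m=1/\prod_i e_i$ appears precisely so as to cancel the sums $\sum_k {]k-1,k-1[}_{m_i}=e_i$ of Lemma~\ref{l3.4}(1).

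Checking that $\epsilon$ kills every relation is routine, following from $\epsilon(x)=\epsilon(g)=\epsilon(u_0)=1$, $\epsilon(y_{m_i})=0=\epsilon(u_s)$ for $s\not\equiv 0\ (\mathrm{mod}\ m)$, together with the fact that $\gamma_i=\gamma^{-m_i^2}$ is a primitive $e_i$-th root of unity (so that, e.g., $\epsilon(\phi_{m_i,j})$ vanishes exactly when it must and $\epsilon([0,e_i-2]_{m_i})=e_i$). Checking that $S$ reverses every relation is again routine for the relations among $x,g,y_{m_i}$ and for $xu_j=u_jx^{-1}$, $u_jg=\gamma^jx^{-2d}gu_j$, $y_{m_i}u_j=\phi_{m_i,j}u_{j+m_i}$; the remaining case, that $S$ reverses \eqref{eq4.21}, is parallel to the $\D$-verification, uses the same identities, and relies on the extension of the formula for $S(u_j)$ to all $j\in\Z$ recorded before the statement. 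Finally, coassociativity on $u_j$ is the reindexing identity $(r+s)(j-r-s)+rs=r(j-r)+s(j-r-s)$ for the exponents of $\gamma$ together with $x^{-(r+s)d}g^{r+s}=x^{-rd}g^r\cdot x^{-sd}g^s$; the counit axiom on $u_j$ survives only through the $k=0$ (resp.\ $k=j$) summand of $\D(u_j)$; and the antipode axiom $\sum S((u_j)_1)(u_j)_2=\epsilon(u_j)\unit=\sum (u_j)_1S((u_j)_2)$ follows after inserting the explicit $S(u_k)$ and the product $u_ku_{j-k}$ and collapsing the resulting sum by Lemma~\ref{l3.4}(1) and $\prod_i e_i=m$.

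The main obstacle, I expect, is the verification that $\D$ preserves the multiplication law \eqref{eq4.21}, with the analogous check for $S$ a close second: this is the step where the ``fraction'' bookkeeping $j\equiv\sum_i j_im_i\ (\mathrm{mod}\ m)$, the square roots $\xi_{m_i}$ with $\xi_{m_i}^{e_i}=-1$, and the parity hypotheses \eqref{eq4.7} all have to conspire, and it is exactly the reason the combinatorial identities of Section~3 — in particular the newly proved Lemma~\ref{l3.5} and the $\gamma_i$-binomial identity Lemma~\ref{l3.4}(6) — were set up. The bigrading \eqref{eq4.24} on $D$, and the one it induces on $D\otimes D$, is the organizing device that keeps this otherwise unwieldy computation finite and checkable component by component; the explicit description \eqref{eqD} of the homogeneous pieces as rank-one free modules over $\k[x^{\pm1}]$ also makes it transparent that $D$ is nonzero, so no degeneracy issue interferes with the bialgebra axioms.
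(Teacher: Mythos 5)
Your plan is correct and follows essentially the same route as the paper's proof: reduce to the relations involving the $u_j$ by recognizing the subalgebra on $x^{\pm1},g,y_{m_1},\dots,y_{m_\theta}$ as the fraction Liu algebra $B(\underline m,md,\gamma)$ of Proposition~\ref{p4.6}, organize the check that $\D$ preserves \eqref{eq4.21} by matching homogeneous components in the bigrading \eqref{eq4.24} of $D\otimes D$ and reducing to the $\gamma_i$-binomial identity of Lemma~\ref{l3.4}(6), and handle the antipode axiom with Lemma~\ref{l3.4}(1),(3),(5). The only minor discrepancy is that Lemma~\ref{l3.5} is not actually needed in this verification (it enters later, in Proposition~\ref{p6.11}); otherwise your identification of the critical steps and tools matches the paper's Steps 1--4.
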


\emph{Proof:} The proof is standard but not easy. We are aware that one can not apply the fact that the non-fraction version $D(m,d,\gamma)$ (see Subsection 2.3) is already a Hopf algebra to simply the proof although we can do this in the proofs of Proposition \ref{p4.6} and \ref{p4.1}. The reason is that if we consider the subalgebra generated by $x^{\pm 1},g,u_{0},\ldots, u_{m-1}$ together with a single $y_{m_i}$ (this is the case of $D(m,d,\gamma)$) then we can find that the other $y_{m_j}$'s will be created naturally. So, one has to prove it step by step. Since the subalgebra generated by $x^{\pm 1}, y_{m_1},\dots,y_{m_{\theta}}, g$ is just a fraction version of generalized Liu algebra $B(\underline{m}, \omega, \gamma)$, which is a Hopf algebra already (by Proposition \ref{p4.6}), we only need to verify the related relations in $D(\underline{m},d,\gamma)$ where $u_j$ are involved.

\noindent $\bullet$ \emph{Step} 1 ($\D$ and $\epsilon$ are algebra homomorphisms).

First of all, it is clear that $\epsilon$ is an algebra
homomorphism. Since $x$ and $g$ are group-like elements, the
verifications of $\D(x)\D(u_i)=\D(u_i)\D(x^{-1})$ and
$\D(u_i)\D(g)=\gamma^{i}\D(x^{-2d})\D(g)\D(u_i)$ are simple and so they are omitted.

\noindent (1) \emph{The proof of} $\D(\phi_{m_i,j})\D(u_{m_i+j})=\D(y_{m_i})\D(u_j)=\xi_{m_i}\D(x^{m_id})\D(u_j)\D(y_{m_i})$.

Define $$\gamma_i:=\gamma^{-m_i^2}$$ for all $1\leq i\leq \theta.$

By definition $\D(u_j)=\sum_{k=0}^{m-1}\gamma^{k(k-j)}u_k\otimes
x^{-kd}g^ku_{j-k}$ for all $0 \leqslant j \leqslant m-1$, we have
\begin{align*}
\D(\phi_{m_i,j})\D(u_{m_i+j})&=(1\otimes 1 - \gamma_i^{1+j_i}x^{m_id} \otimes x^{m_id})\sum_{k=0}^{m-1}\gamma^{k(j+m_i-k)}u_k\otimes x^{-kd}g^ku_{j+m_i-k}\\
                 &=\sum_{k=0}^{m-1}\gamma^{k(j+m_i-k)}u_k\otimes x^{-kd}g^ku_{j+m_i-k}
                 \\
                 &\quad -\sum_{k=0}^{m-1}\gamma^{-m_i^2(1+j_i)+k(j+m_i-k)}x^{m_id}u_k\otimes x^{m_id-kd}g^ku_{j+m_i-k}.
\end{align*}
And
\begin{align*}
\D(y_{m_i})\D(u_j)&=(1\otimes y_{m_i}+y_{m_i}\otimes g^{m_i})(\sum_{k=0}^{m-1}\gamma^{k(k-j)}u_k\otimes
x^{-kd}g^ku_{j-k})\\
&=\sum_{k=0}^{m-1}\gamma^{k(j-k)}u_k\otimes x^{-kd}g^{k}\gamma^{km_i}\phi_{m_i,j-k}u_{j+m_i-k}\\
& \quad + \sum_{k=0}^{m-1}\gamma^{k(j-k)}\phi_{m_i,k}u_{m_i+k}\otimes x^{-kd}g^{m_i+k}u_{j-k}\\
&=\sum_{k=0}^{m-1}\gamma^{k(j-k)+km_i}u_{k}\otimes x^{-kd}g^{k}u_{j+m_i-k}\\
&\quad - \sum_{k=0}^{m-1}\gamma^{k(j-k)}u_{k}\otimes\gamma^{-m_i^2(j_i+1-2k_i)}
            x^{(m_i-k)d}g^{k}u_{j+m_i-k}\\
&\quad +\sum_{k=0}^{m-1}\gamma^{k(j-k)}u_{m_i+k}\otimes x^{-kd}g^{m_i+k}u_{j-k}\\
&\quad - \sum_{k=0}^{m-1}\gamma^{k(j-k)-m_i^2(1+k_i)}x^{m_id}u_{m_i+k}\otimes
            x^{-kd}g^{m_i+k}u_{j-k}\\
 &=\sum_{k=0}^{m-1}\gamma^{k(j-k)+km_i}u_{k}\otimes x^{-kd}g^{k}u_{j+m_i-k}\\
 &\quad - \sum_{k=0}^{m-1}\gamma^{k(j-k)-m_i^2(j_i+1-2k_i)}u_{k}\otimes
            x^{(m_i-k)d}g^{k}u_{j+m_i-k}\\
 &\quad +\sum_{k=0}^{m-1} \gamma^{(k-m_i)(j-k+m_i)}u_{k}\otimes x^{-(k-m_i)d}g^ku_{j+m_i-k}\\
 &\quad -\sum_{k=0}^{m-1}\gamma^{(k-m_i)(j+m_i-k)-m_i^2k_i}x^{m_id}u_k\otimes x^{-(k-m_i)d}g^{k}u_{j+m_i-k}\\
 &= \sum_{k=0}^{m-1}\gamma^{k(j+m_i-k)}u_k\otimes x^{-kd}g^ku_{j+m_i-k}
                 \\
                 &\quad -\sum_{k=0}^{m-1}\gamma^{-m_i^2(1+j_i)+k(j+m_i-k)}x^{m_id}u_k\otimes x^{m_id-kd}g^ku_{j+m_i-k}.
            \end{align*}
Here we use the following equalities
$$\gamma^{(k-m_i)(j-k+m_i)}=\gamma^{k(j-k)+km_i-m_i(j-k)-m_{i}^2}=\gamma^{k(j-k)+2k_im_i^2-m_i^2(1+j_i)},$$
and
$$\gamma^{(k-m_i)(j+m_i-k)-m_i^2k_i}=\gamma^{-m_i^2(1+j_i)+k(j+m_i-k)}.$$
 Hence $\D(\phi_{m_i,j})\D(u_{m_i+j})=\D(y_{m_i})\D(u_j)$. Similarly,

 $\xi_{m_i}\D(x^{m_id})\D(u_j)\D(y_{m_i})$
\begin{align*}
&=\xi_{m_i}(x^{m_id}\otimes x^{m_id})(\sum_{k=0}^{m-1}\gamma^{k(j-k)}u_k\otimes x^{-kd}g^ku_{j-k})(1\otimes y_{m_i} +y_{m_i}\otimes g^{m_i})\\
&=\sum_{k=0}^{m-1}\xi_{m_i}\gamma^{k(j-k)}x^{m_id}u_k\otimes x^{(m_i-k)d}g^{k}u_{j-k}y_{m_i}\\
&\quad + \sum_{k=0}^{m-1}\xi_{m_i}\gamma^{k(j-k)}x^{m_id}u_ky_{m_i}\otimes x^{(m_i-k)d}g^ku_{j-k}g^{m_i}\\
&=\sum_{k=0}^{m-1}\gamma^{k(j-k)}x^{m_id}u_k\otimes x^{-kd}g^{k}\phi_{m_i,j-k}u_{j+m_i-k}\\
&\quad +\sum_{k=0}^{m-1}\gamma^{k(j-k)}\phi_{m_i,k}u_{k+m_i}\otimes \gamma^{(j-k)m_i}x^{(-m_i-k)d}g^{k+m_i}u_{j-k}\\
&=\sum_{k=0}^{m-1}\gamma^{k(j-k)}x^{m_id}u_k\otimes x^{-kd}g^{k}u_{j+m_i-k}\\
&\quad - \sum_{k=0}^{m-1}\gamma^{k(j-k)-m_i^2(1+j_i-k_i)}x^{m_id}u_k\otimes x^{(-k+m_i)d}g^{k}u_{j+m_i-k}\\
&\quad + \sum_{k=0}^{m-1}\gamma^{(k-m_i)(j-k+m_i)}(1-\gamma^{-m_i^2k_i}x^{m_i}d)u_{k}\otimes \gamma^{(j-k+m_i)m_i}x^{-kd}g^{k}u_{j+m_i-k}\\
&=\sum_{k=0}^{m-1}\gamma^{k(j-k)}x^{m_id}u_k\otimes x^{-kd}g^{k}\phi_{m_i,j-k}u_{j+m_i-k}\\
&\quad +\sum_{k=0}^{m-1}\gamma^{k(j-k)}\phi_{m_i,k}u_{k+m_i}\otimes \gamma^{(j-k)m_i}x^{(-m_i-k)d}g^{k+m_i}u_{j-k}\\
&=\sum_{k=0}^{m-1}\gamma^{k(j-k)}x^{m_id}u_k\otimes x^{-kd}g^{k}u_{j+m_i-k}\\
&\quad - \sum_{k=0}^{m-1}\gamma^{k(j-k)-m_i^2(1+j_i-k_i)}x^{m_id}u_k\otimes x^{(-k+m_i)d}g^{k}u_{j+m_i-k}\\
&\quad + \sum_{k=0}^{m-1}\gamma^{k(j-k+m_i)}u_{k}\otimes x^{-kd}g^{k}u_{j+m_i-k}\\
&-\sum_{k=0}^{m-1}\gamma^{k(j-k)}x^{m_id}u_k\otimes x^{-kd}g^{k}u_{j+m_i-k}\\
&= \sum_{k=0}^{m-1}\gamma^{k(j-k+m_i)}u_k\otimes x^{-kd}g^{k}u_{j+m_i-k}\\
&\quad -\sum_{k=0}^{m-1}\gamma^{k(j-k)-m_i^2(1+j_i-k_i)}x^{m_id}u_k\otimes x^{m_id-kd}g^ku_{j+m_i-k}\\
&= \D(\phi_{m_i,j})\D(u_{m_i+j}).
\end{align*}

\noindent (2)\emph{ The proof of} $\D(u_ju_l)=\D(u_j)\D(u_l)$.

Direct computation shows that
\begin{align*}
\D(u_j)\D(u_l)&=\sum_{s=0}^{m-1}\gamma^{s(j-s)}u_s\otimes x^{-sd}g^su_{j-s} \sum_{t=0}^{m-1}\gamma^{t(l-t)}u_t\otimes x^{-td}g^tu_{l-t}\\
              &=\sum_{t=0}^{m-1}\sum_{s=0}^{m-1}\gamma^{s(j-s)}u_s\gamma^{(t-s)(l-t+s)}u_{t-s}
              \otimes x^{-sd}g^su_{j-s}x^{-(t-s)d}g^{t-s}u_{l-t+s}\\
              &=\sum_{t=0}^{m-1}\sum_{s=0}^{m-1}\gamma^{(t-s)(l-t+s)+(j-s)t}u_su_{t-s}\otimes x^{-td}g^tu_{j-s}u_{l-t+s}.
\end{align*}
By the bigrading given in \eqref{eq4.24}, we can find that for each $0\leqslant t \leqslant m-1$,
$$\sum_{s=0}^{m-1}\gamma^{(t-s)(l-t+s)+(j-s)t}u_su_{t-s}\otimes
x^{-td}g^tu_{j-s}u_{l-t+s}\in D_{2, 2+2t}\otimes D_{2+2t,
2+2(j+l)},$$ where the suffixes in $ D_{2, 2+2t}\otimes D_{2+2t,
2+2(j+l)}$ are interpreted mod $2m$.

Using equation \eqref{eq4.21}, we get that
$$u_su_{t-s}=\frac{1}{m}x^{a}\prod_{i=1}^{\theta}(-1)^{(t-s)_i}\xi_{m_i}^{-(t-s)_i}\gamma^{m_i^2
\frac{(t-s)_i((t-s)_i+1)}{2}}\;{[s_i, e_i-2-(t-s)_i]_{m_i}}\; y_{t}g$$
 and
\begin{eqnarray*}
u_{j-s}u_{l-t+s}&=&\frac{1}{m}x^{a}\prod_{i=1}^{\theta}
(-1)^{(l-t+s)_i}\xi_{m_i}^{-(l-t+s)_i}\gamma^{m_i^{2}\frac{(l-t+s)_i[(j-t+s)_i+1]}{2}}\\
&&{[(j-s)_i, e_i-2-(l-t+s)_i]_{m_i}}\;y_{\overline{j+l-t}}g\end{eqnarray*}
here and the following of this proof $a=-\frac{2+\sum_{i=1}^{\theta}(e_i-1)m_i}{2}d$.

Using \cite[Proposition IV.2.7]{Kas}, for each $1\leq i\leq \theta$
\begin{align*}
\;{[s_i, e_i-2-(t-s)_i]_{m_i}}&=(1-\gamma_i^{s+1}x^{m_id})(1-\gamma_i^{s+2}x^{m_id})\cdots (1-\gamma_i^{(e_i-1-t_i+s_i)}x^{m_id})\\
           &=\sum_{\alpha_i=0}^{e_i-1-t_i}(-1)^{\alpha_i}\binom{e_i-1-t_i}{\alpha_i}_{\gamma_i}
           \gamma_i^{\tfrac{\alpha_i(\alpha_i-1)}{2}}(\gamma_i^{s+1}x^{m_id})^{\alpha_i}\\
           &=\sum_{\alpha_i=0}^{e_i-1-t_i}(-1)^{\alpha_i}\binom{e_i-1-t_i}{\alpha_i}_{\gamma_i}
           \gamma_i^{\tfrac{\alpha_i(\alpha_i+1)}{2}+s_i\alpha_i}x^{m_id\alpha_i},
\end{align*} and
\begin{eqnarray*}
\;&&{[(j-s)_i, e_i-2-(l-t+s)_i]_{m_i}}\\
&=&(1-\gamma_i^{j_i-s_i+1}x^{m_id})(1-\gamma_i^{j_i-s_i+2}x^{m_id})\cdots (1-\gamma_i^{j_i-s_i+e_i-1-\overline{(j_i+l_i-t_i)}}x^{m_id})\\
&=&\sum_{\beta_i=0}^{e_i-1-\overline{(j_i+l_i-t_i)}}
(-1)^{\beta_i}\binom{e_i-1-\overline{(j_i+l_i-t_i)}}{\beta_i}_{\gamma_i}
\gamma_i^{\tfrac{\beta_i(\beta_i-1)}{2}}(\gamma_i^{j_i-s_i+1}x^{m_id})^{\beta_i}\\
&=&\sum_{\beta_i=0}^{e_i-1-\overline{(j_i+l_i-t_i)}}
(-1)^{\beta_i}\binom{e_i-1-\overline{(j_i+l_i-t_i)}}{\beta_i}_{\gamma_i}
\gamma_i^{\tfrac{\beta_i(\beta_i+1)}{2}+(j_i-s_i)\beta_i}x^{m_id\beta_i},
\end{eqnarray*}
where $\overline{(j_i+l_i-t_i)}$ is the remainder of $j_i+l_i-t_i$ divided by $e_i$.

Then for each $0\leqslant t \leqslant m-1$,
\begin{eqnarray}
&& \D(u_j)\D(u_l)_{{(2, 2+2t)}\otimes {(2+2t,2+2(j+l))}}\\
\notag &=&\sum_{s=0}^{m-1}\gamma^{(t-s)(l-t+s)+(j-s)t}u_su_{t-s}\otimes x^{-td}g^tu_{j-s}u_{l-t+s}\\
\notag &=&\sum_{s=0}^{m-1}\gamma^{(t-s)(l-t+s)+(j-s)t}\frac{1}{m}x^{a}\prod_{i=1}^{\theta}(-1)^{(t-s)_i}\xi_{m_i}^{-(t-s)_i}\gamma^{m_i^2
\frac{(t-s)_i((t-s)_i+1)}{2}}\\
\notag&&{[s_i, e_i-2-(t-s)_i]_{m_i}} y_{t}g\\
\notag&&\otimes x^{-td}g^t\frac{1}{m}x^{a}\prod_{i=1}^{\theta}
(-1)^{(l-t+s)_i}\xi_{m_i}^{-(l-t+s)_i}\gamma^{m_i^{2}\frac{(l-t+s)_i[(j-t+s)_i+1]}{2}}\\
\notag&& {[(j-s)_i, e_i-2-(l-t+s)_i]_{m_i}}\;y_{\overline{j+l-t}}g\\
\notag &=&[\sum_{s=0}^{m-1}\gamma^{(j-s)t-t(j+l-t)}\frac{1}{m^2}\prod_{i=1}^{\theta}
(-1)^{l_i}\xi_{m_i}^{-l_i}\gamma^{m_i^2
\frac{l_i(l_i+1)}{2}}[s_i, e_i-2-(t-s)_i]_{m_i}\\
\notag&& \otimes x^{-td}\prod_{i=1}^{\theta}[(j-s)_i, e_i-2-(l-t+s)_i]_{m_i}](x^{a}y_t g\otimes x^a y_{\overline{j+l-t}}g^{t+1})\\
\notag &=& [\sum_{s=0}^{m-1}\gamma^{(j-s)t-t(j+l-t)}\frac{1}{m^2}\prod_{i=1}^{\theta}
(-1)^{l_i}\xi_{m_i}^{-l_i}\gamma^{m_i^2
\frac{l_i(l_i+1)}{2}}\\
\notag&&\sum_{\alpha_i=0}^{e_i-1-t_i}(-1)^{\alpha_i}\binom{e_i-1-t_i}{\alpha_i}_{\gamma_i}
           \gamma_i^{\tfrac{\alpha_i(\alpha_i+1)}{2}+s_i\alpha_i}\\
           \notag&\otimes&\prod_{k=1}^{\theta} \sum_{\beta_k=0}^{e_k-1-\overline{(j_k+l_k-t_k)}}
(-1)^{\beta_k}\binom{e_k-1-\overline{(j_k+l_k-t_k)}}{\beta_k}_{\gamma_k}\\
\notag&&\gamma_k^{\tfrac{\beta_k(\beta_k+1)}{2}+(j_k-s_k)\beta_k}
(x^{m_id\alpha_i}\otimes x^{m_kd\beta_k-td})](x^{a}y_t g\otimes x^a y_{\overline{j+l-t}}g^{t+1})\\
\notag&=& \frac{1}{m^2}\prod_{i=1}^{\theta}
(-1)^{l_i}\xi_{m_i}^{-l_i}\gamma^{m_i^2
\frac{l_i(l_i+1)}{2}} \prod_{i,k=1}^{\theta}\\
\notag&& [\sum_{\alpha_i=0}^{e_i-1-t_i}\sum_{\beta_{k}=1}^{e_k-1-\overline{j_k+l_k-t_k}}
(-1)^{\alpha_i+\beta_k}\binom{e_i-1-t_i}{\alpha_i}_{\gamma_i}
\binom{e_k-1-\overline{(j_k+l_k-t_k)}}{\beta_k}_{\gamma_k}\\
\notag \label{eq4.26}&& \gamma_i^{\tfrac{\alpha_i(\alpha_i+1)}{2}} \gamma_k^{\tfrac{\beta_k(\beta_k+1)}{2}+j_k\beta_k}(x^{m_id\alpha_i}\otimes x^{m_kd\beta_k-td})\\
&& \gamma^{t(t-l)}\sum_{s=0}^{m-1}\gamma^{-ts}\gamma^{-m_i^2s_i\alpha_i+m_k^2s_k\beta_k}](x^{a}y_t g\otimes x^a y_{\overline{j+l-t}}g^{t+1}).
\end{eqnarray}

Meanwhile,
$u_ju_l=\frac{1}{m}x^{a}\prod_{i=1}^{\theta}
(-1)^{l_i}\xi_{m_i}^{-l_i}\gamma^{m_i^2\frac{l_i(l_i+1)}{2}}\frac{1}{m}{[j_i, e_i-2-l_i]_{m_i}}\;y_{\overline{j+l}}g$. By definition,
$$y_{\overline{j+l}}=y_{m_1}^{\overline{j_1+l_1}}y_{m_2}^{\overline{j_2+l_2}}\cdots y_{m_\theta}^{\overline{j_\theta+l_\theta}}$$
where $\overline{j_i+l_i}$ is the remainder of $j_i+l_i$ divided by $e_i$ for $1\leq i\leq \theta.$ Therefore,
\begin{align*}
\D(y_{\overline{j+l}})&=\prod_{i=1}^{\theta}(1\otimes y_{m_i}+ y_{m_i}\otimes g^{m_i})^{\overline{j_i+l_i}}\\
&=\prod_{i=1}^{\theta}\sum_{t_i=0}^{\overline{j_i+l_i}}
\binom{\overline{j_i+l_i}}{t_i}_{\gamma_i}(1\otimes y_{m_i})^{\overline{j_i+l_i}-t_i} (y_{m_i}\otimes g^{m_i})^{t_i}\\
&=\prod_{i=1}^{\theta}\sum_{t_i=0}^{\overline{j_i+l_i}}
\binom{\overline{j_i+l_i}}{t_i}_{\gamma_i} y_{m_i}^{t_i}\otimes y_{m_i}^{\overline{j_i+l_i}-t_i}g^{m_it_i}.
\end{align*}
and
\begin{eqnarray*}
&&\D({[j_i, e_i-2-l_i]_{m_i}})\\
&=&(1\otimes 1 - \gamma_i^{j_i+1}x^{m_id}\otimes x^{m_id})
\cdots (1\otimes 1 - \gamma_i^{e_i-1+j_i-\overline{j_i+l_i}}x^{m_id}\otimes x^{m_id})\\
&=&\sum_{\alpha_i=0}^{e_i-1-\overline{j_i+l_i}}(-1)^{\alpha_i}
\binom{e_i-1-\overline{j_i+l_i}}{\alpha_i}_{\gamma_i}\gamma_{i}^{\tfrac{\alpha_i(\alpha_i-1)}{2}}
(\gamma_i^{j_i+1}x^{m_id}\otimes x^{m_id})^{\alpha_{i}}\\
&=&\sum_{\alpha_i=0}^{e_i-1-\overline{j_i+l_i}}(-1)^{\alpha_i}
\binom{e_i-1-\overline{j_i+l_i}}{\alpha_i}_{\gamma_i}\gamma_{i}^{\tfrac{\alpha_i(\alpha_i+1)}{2}+j_i\alpha_i}
(x^{m_id\alpha_i}\otimes x^{m_id\alpha_i}),
\end{eqnarray*}
we get
\begin{eqnarray*}
\D(u_ju_l)&=&\frac{1}{m}\D(x^a)\prod_{i=1}^{\theta}
(-1)^{l_i}\xi_{m_i}^{-l_i}\gamma^{m_i^2\frac{l_i(l_i+1)}{2}}\D({[j_i, e_i-2-l_i]_{m_i}})
\D(y_{\overline{j+l}})\D(g)\\
&=&\frac{1}{m} \prod_{i=1}^{\theta}[(-1)^{l_i}\xi_{m_i}^{-l_i}\gamma^{m_i^2\frac{l_i(l_i+1)}{2}}
\sum_{\alpha_i=0}^{e_i-1-\overline{j_i+l_i}}(-1)^{\alpha_i}
\binom{e_i-1-\overline{j_i+l_i}}{\alpha_i}_{\gamma_i}
\gamma_{i}^{\tfrac{\alpha_i(\alpha_i+1)}{2}+j_i\alpha_i}\\
&&\sum_{t_i=0}^{\overline{j_i+l_i}}
\binom{\overline{j_i+l_i}}{t_i}_{\gamma_i}(x^a\otimes x^{a})(x^{m_id\alpha_i}\otimes x^{m_id\alpha_i})(y_{m_i}^{t_i}\otimes y_{m_i}^{\overline{j_i+l_i}-t_i}g^{m_it_i})](g\otimes g)\\
&=& \frac{1}{m} \prod_{i=1}^{\theta}[(-1)^{l_i}\xi_{m_i}^{-l_i}\gamma^{m_i^2\frac{l_i(l_i+1)}{2}}
\sum_{t_i=0}^{\overline{j_i+l_i}}\sum_{\alpha_i=0}^{e_i-1-\overline{j_i+l_i}}(-1)^{\alpha_i}
\binom{e_i-1-\overline{j_i+l_i}}{\alpha_i}_{\gamma_i}\binom{\overline{j_i+l_i}}{t_i}_{\gamma_i}\\
&& \gamma_{i}^{\tfrac{\alpha_i(\alpha_i+1)}{2}+j_i\alpha_i}(x^{m_id\alpha_i}\otimes x^{m_id\alpha_i})(x^ay_{m_i}^{t_i}g\otimes x^ay_{m_i}^{\overline{j_i+l_i}-t_i}g^{m_it_i+1})].
\end{eqnarray*}

Clearly, for each $t$ satisfying $0\leq t_i\leq \overline{j_i+l_i}$,
\begin{eqnarray}
\label{eq5.26}&&\D(u_ju_l)_{(2,2+2t)\otimes (2+2t,2+2(j+l))}\\
\notag&=&  \frac{1}{m} \prod_{i=1}^{\theta}[(-1)^{l_i}\xi_{m_i}^{-l_i}\gamma^{m_i^2\frac{l_i(l_i+1)}{2}}
\sum_{\alpha_i=0}^{e_i-1-\overline{j_i+l_i}}(-1)^{\alpha_i}
\binom{e_i-1-\overline{j_i+l_i}}{\alpha_i}_{\gamma_i}\binom{\overline{j_i+l_i}}{t_i}_{\gamma_i}\\
\notag&& \gamma_{i}^{\tfrac{\alpha_i(\alpha_i+1)}{2}+j_i\alpha_i}(x^{m_id\alpha_i}\otimes x^{m_id\alpha_i})(x^ay_{m_i}^{t_i}\otimes x^ay_{m_i}^{\overline{j_i+l_i}-t_i}g^{m_it_i})](g\otimes g).
\end{eqnarray}

By the graded structure of $D\otimes D$,
$\D(u_i)\D(u_j)=\D(u_iu_j)$ if and only if
\begin{equation}\label{eq4.27}\D(u_i)\D(u_j)_{{(2, 2+2t)}\otimes {(2+2t,2+2(j+l))}}=0\end{equation} for all $t$ satisfying there is an $1\leq i\leq \theta$ such that $\overline{j_i+l_i}+1\leqslant t_i \leqslant e_i-1$ and
\begin{equation}\label{eq4.28}\D(u_iu_j)_{{(2, 2+2t)}\otimes
{(2+2t,2+2(j+l))}}=\D(u_i)\D(u_j)_{{(2, 2+2t)}\otimes
{(2+2t,2+2(j+l))}} \end{equation} for all $t$ satisfying $0\leqslant t_i \leqslant
\overline{j_i+l_i}$ for all $1\leq i\leq \theta$.

Now let's go back to equation \eqref{eq4.26} in which there is an item
\begin{eqnarray}\label{eq4.29}&&\sum_{s=0}^{m-1} \gamma^{-ts}\gamma^{-m_i^2s_i\alpha_i+m_k^2s_k\beta_k}\\
\notag&=&\prod_{z=1}^{\theta}\sum_{s_z=0}^{e_z-1} \gamma^{-t_zs_zm_z^2}\gamma^{-m_i^2s_i\alpha_i+m_k^2s_k\beta_k}\\
\notag&=&\left \{
\begin{array}{ll} \sum_{s_i=0}^{e_i-1} \gamma^{-s_im_i^2(\alpha_i+t_i)}\sum_{s_k=0}^{e_k-1}\gamma^{-s_km_m^2(\beta_k-t_k)}
\prod_{z\neq i,k}\sum_{s_z=0}^{e_z-1} \gamma^{-t_zs_zm_z^2} & \;\;\;\;i\neq k\\
\sum_{s_i=0}^{e_i-1}\gamma^{-m_i^2s_i(t_i+\alpha_i-\beta_i)}\prod_{z\neq i}\sum_{s_z=0}^{e_z-1} \gamma^{-t_zs_zm_z^2} &
\;\;\;\;i=k
\end{array}\right.
\end{eqnarray}

Therefore, in order to make this equality \eqref{eq4.29} not zero, we must have
$$\left \{
\begin{array}{ll} \alpha_i=-t_i,\;\;\beta_k=t_k & \;\;\;\;i\neq k
\\ \beta_i=\alpha_i+t_i &
\;\;\;\;i=k
\end{array}\right.$$
But in the expression of equality \eqref{eq4.26} one always have $0\leq \alpha_i\leq e_i-1-t_i$ which implies that $\alpha_i\neq -t_i$. Thus, as a conclusion, in the equality \eqref{eq4.26} we can assume that
$$i=k,\;\;\;\;\beta_i=\alpha_i+t_i,\;\;(1\leq i\leq \theta).$$
So, the equality can be simplified as

\begin{eqnarray}
\notag &&\frac{1}{m^2}\prod_{i=1}^{\theta}
(-1)^{l_i}\xi_{m_i}^{-l_i}\gamma^{m_i^2
\frac{l_i(l_i+1)}{2}} \prod_{i=1}^{\theta}
\sum_{\alpha_i=0}^{e_i-1-t_i}\sum_{\beta_{i}=0}^{e_i-1-\overline{j_i+l_i-t_i}}
(-1)^{\alpha_i+\beta_i}\binom{e_i-1-t_i}{\alpha_i}_{\gamma_i}\\
\notag &&
\binom{e_i-1-\overline{(j_i+l_i-t_i)}}{\beta_i}_{\gamma_i}\gamma_i^{\tfrac{\alpha_i(\alpha_i+1)}{2}
+\tfrac{\beta_i(\beta_i+1)}{2}+j_i\beta_i} (x^{m_id\alpha_i}\otimes x^{m_id\beta_i-t_im_id})\\ \notag&&\gamma^{t(t-l)}\sum_{s=0}^{m-1}\gamma^{-ts}
\gamma^{-m_i^2s_i(\alpha_i-\beta_i)}(x^{a}\prod_{i=1}^{\theta}y_{m_i}^{t_i} \otimes x^a \prod_{i=1}^{\theta}y_{m_i}^{\overline{j_i+l_i}-t_i}g^{m_it_i})(g\otimes g).
\end{eqnarray}

From this, we find the following fact: if $t_i\geq \overline{j_i+l_i}+1$ for some $i$, then $e_i-1-\overline{j_i+l_i-t_i}=t_i-1-\overline{j_i+l_i}$. So, $0\leq \beta_i\leq t_i-1-\overline{j_i+l_i}$ and thus $1-e_i\leq \beta_i-\alpha_i-t_i\leq -1-\overline{j_i+l_i}$ which contradicts to $\beta_i=\alpha_i+t_i$. So the equation \eqref{eq4.27} is proved. Under $\beta_i=\alpha_i+t_i$, we know that $$\prod_{i=1}^{\theta}\sum_{s=0}^{m-1} \gamma^{-ts}\gamma^{-m_i^2s_i\alpha_i+m_k^2s_i\beta_i}=e_1e_2\cdots e_{\theta}=m$$
and \eqref{eq4.26} can be simplified further
\begin{eqnarray}
\notag &&\frac{1}{m}\prod_{i=1}^{\theta}
(-1)^{l_i}\xi_{m_i}^{-l_i}\gamma^{m_i^2
\frac{l_i(l_i+1)}{2}} \prod_{i=1}^{\theta}
\sum_{\alpha_i=0}^{e_i-1-\overline{j_i+l_i}}
(-1)^{t_i}\binom{e_i-1-t_i}{\alpha_i}_{\gamma_i}\\
\notag &&
\binom{e_i-1-\overline{(j_i+l_i-t_i)}}{\alpha_i+t_i}_{\gamma_i}
\gamma_i^{\tfrac{\alpha_i(\alpha_i+1)}{2}+\tfrac{(\alpha_i+t_i)(\alpha_i+t_i+1)}{2}
+j_i(\alpha_i+t_i)+t_i(l_i-t_i)}\\ \notag&&(x^{m_id\alpha_i}\otimes x^{m_id\alpha_i}) (x^a y_{m_i}^{t_i} \otimes x^a y_{m_i}^{\overline{j_i+l_i}-t_i}g^{m_it_i})(g\otimes g).
\end{eqnarray}

Comparing with equation \eqref{eq5.26}, to prove the desired equation \eqref{eq4.28} it is enough to show the following combinatorial identity
\begin{eqnarray*} &&(-1)^{t_i+\alpha_i}\gamma_i^{\tfrac{(\alpha_i+t_i)(\alpha_i+t_i+1)}{2}
+t_i(j_i+l_i-t_i)}\binom{e_i-1-t_i}{\alpha_i}_{\gamma_i}\binom{e_i-1-\overline{(j_i+l_i-t_i)}}{\alpha_i+t_i}_{\gamma_i}\\
&=& \binom{e_i-1-\overline{j_i+l_i}}{\alpha_i}_{\gamma_i}
\binom{\overline{j_i+l_i}}{t_i}_{\gamma_i}
\end{eqnarray*}
which is true by (6) of Lemma \ref{l3.4}.\\

\noindent{$\bullet$ \emph{Step} 2 (Coassociative and couint).

Indeed, for each $0\leqslant j\leqslant m-1$
\begin{align*}
(\D\otimes \Id)\D(u_j)&=(\D\otimes \Id)(\sum_{k=0}^{m-1}\gamma^{k(j-k)}u_k\otimes x^{-kd}g^ku_{j-k})\\
                     &=\sum_{k=0}^{m-1}\gamma^{k(j-k)}(\sum_{s=0}^{m-1}\gamma^{s(k-s)}u_s\otimes x^{-sd}g^su_{k-s})\otimes x^{-kd}g^ku_{j-k}\\
                     &=\sum_{k,s=0}^{m-1}\gamma^{k(j-k)+s(k-s)}u_s\otimes x^{-sd}g^su_{k-s}\otimes x^{-kd}g^ku_{j-k},
\end{align*}
and
\begin{align*}
(\Id\otimes \D)\D(u_j)&=(\Id\otimes \D)(\sum_{s=0}^{m-1}\gamma^{s(j-s)}u_s\otimes x^{-sd}g^su_{j-s})\\
                     &=\sum_{s=0}^{m-1}\gamma^{s(j-s)}
                     u_s\otimes (\sum_{t=0}^{m-1}\gamma^{t(j-s-t)}x^{-sd}g^su_{t}\otimes x^{-sd}g^s x^{-td}g^{t}u_{j-s-t})\\
                     &=\sum_{s,t=0}^{m-1}\gamma^{s(j-s)+t(j-s-t)}u_s\otimes x^{-sd}g^su_{t}\otimes x^{-(s+t)d}g^{(s+t)}u_{j-s-t}.
\end{align*}

It is not hard to see that $(\D\otimes \Id)\D(u_j)=(\Id\otimes
\D)\D(u_j)$ for all $0\leqslant j \leqslant m-1$.  The verification of $(\epsilon\otimes \Id)\D(u_j)=(\Id\otimes
\epsilon)\D(u_j)=u_j$ is easy and it is omitted.\\

\noindent $\bullet$ \emph{Step} 3 (Antipode is an algebra anti-homomorphism).

Because $x$ and $g$ are group-like elements, we only check
$$S(u_{j+m_i})S(\phi_{m_i,j})=S(u_j)S(y_{m_i})=\xi_{m_i} S(y_{m_i})S(u_j)S(x^{m_id})$$ and
$$S(u_ju_l)=S(u_l)S(u_j)$$
for $1\leq i\leq \theta$ and $1\leq j,l\leq m-1$ here.

\noindent (1) \emph{The proof of} $S(u_{j+m_i})S(\phi_{m_i,j})=S(u_j)S(y_{m_i})=\xi_{m_i} S(y_{m_i})S(u_j)S(x^{m_id}).$

Clearly $u_jS(\phi_{m_i,j})=\phi_{m_i,j}u_j$ for all $i, j$ and thus
\begin{eqnarray*}&&S(u_{j+m_i})S(\phi_{m_i,j})\\
&=&x^{b}g^{m-1}\prod_{i=1}^{\theta}(-1)^{j_i}\xi_{m_i}^{-j_i}\gamma^{-
m_i^2\frac{j_i(j_i+1)}{2}}
x^{j_im_id}
g^{-j_im_i}u_j\\
&=&\phi_{m_i,j}S(u_{j+m_i})\end{eqnarray*}
here and the following of this proof $b=(1-m)d-\frac{\sum_{i=1}^{\theta}(e_i-1)m_i}{2}d.$

Through direct calculation, we have
\begin{align*}
&S(u_j)S(y_{m_i})\\
&=x^{b}g^{m-1}\prod_{i=1}^{\theta}[(-1)^{j_i}\xi_{m_i}^{-j_i}\gamma^{-
m_i^2\frac{j_i(j_i+1)}{2}}x^{j_im_id}g^{-j_im_i}]u_j\cdot (-y_{m_i}g^{-m_i})\\
&=-x^{b}g^{m-1}\prod_{i=1}^{\theta}[(-1)^{j_i}\xi_{m_i}^{-j_i}\gamma^{-
m_i^2\frac{j_i(j_i+1)}{2}}x^{j_im_id}g^{-j_im_i}](\xi_{m_i}^{-1}\gamma^{-jm_i}x^{m_id}y_{m_i}g^{-m_i}u_j)\\
&=-x^{b}g^{m-1}\prod_{i=1}^{\theta}[(-1)^{j_i}\xi_{m_i}^{-j_i}\gamma^{-
m_i^2\frac{j_i(j_i+1)}{2}}x^{j_im_id}g^{-j_im_i}]
(\xi_{m_i}^{-1}\gamma^{-m_i^2(j_i+1)}x^{m_id}g^{-m_i}y_{m_i}u_j)\\
&=x^{b}g^{m-1}(-1)^{j_1+\cdots+(j_i+1)+\cdots+j_{\theta}}\xi_{m_1}^{-j_1}\cdots \xi_{m_i}^{-(j_i+1)}\cdots \xi_{m_\theta}^{-j_\theta}\gamma_{1}^{\frac{j_1(j_1+1)}{2}}
\cdots \gamma_{i}^{\frac{(j_i+1)(j_1+2)}{2}}\cdots \gamma_{\theta}^{\frac{j_\theta(j_\theta+1)}{2}}\\
& \quad x^{j_1m_1d}\cdots x^{(j_i+1)m_id}\cdots x^{j_\theta m_\theta d}
g^{-j_1m_1}\cdots g^{-(j_i+1)m_i}\cdots g^{-j_\theta m_\theta}\phi_{m_i,j}u_{j+m_i}\\
&=\phi_{m_i,j}S(u_{j+m_i})
\end{align*}
and
\begin{align*}
&\xi_{m_i}S(y_{m_i})S(u_j)S(x^{m_id})\\
&= \xi_{m_i}(-y_{m_i}g^{-m_i})g^{m-1}x^b\prod_{i=1}^{\theta}[(-1)^{j_i}\xi_{m_i}^{-j_i}\gamma^{-
m_i^2\frac{j_i(j_i+1)}{2}}x^{j_im_id}g^{-j_im_i}]u_jx^{-m_id}\\
&=x^{b}g^{m-1}(-1)^{j_1+\cdots+(j_i+1)+\cdots+j_{\theta}}\xi_{m_1}^{-j_1}\cdots \xi_{m_i}^{-(j_i+1)}\cdots \xi_{m_\theta}^{-j_\theta}\gamma_{1}^{\frac{j_1(j_1+1)}{2}}
\cdots \gamma_{i}^{\frac{(j_i+1)(j_1+2)}{2}}\cdots \gamma_{\theta}^{\frac{j_\theta(j_\theta+1)}{2}}\\
& \quad x^{j_1m_1d}\cdots x^{(j_i+1)m_id}\cdots x^{j_\theta m_\theta d}
g^{-j_1m_1}\cdots g^{-(j_i+1)m_i}\cdots g^{-j_\theta m_\theta}\phi_{m_i,j}u_{j+m_i}\\
&=\phi_{m_i,j}S(u_{j+m_i}).
\end{align*}

\noindent (2) \emph{The proof of}  $S(u_ju_l)=S(u_l)S(u_j)$.

Define $\overline{\phi_{m_i,s}}:=1-\gamma_i^{s_i+1}x^{-m_id}$ for all $s\in \mathbb{Z}$. Using this notion,
\begin{align*}
x^{m_id}\overline{\phi_{m_i,s}}&=x^{m_id}(1-\gamma_i^{s_i+1}x^{-m_id})\\
&=-\gamma_i^{s_i+1}(1-\gamma_i^{(e_i-s_i-2)+1}x^{m_id})\\
&=-\gamma_i^{s_i+1}\phi_{m_i,e_i-s_i-2}.\end{align*}
And so

\begin{align*}
S(u_ju_l)&=S(\frac{1}{m}x^{a} \prod_{i=1}^{\theta}
(-1)^{l_i}\xi_{m_i}^{-l_i}\gamma^{m_i^2\frac{l_i(l_i+1)}{2}}[j_i,e_i-2-l_i]_{m_i}y_{\overline{j+l}}g)\\
&= \frac{1}{m}g^{-1}x^{-a}\prod_{i=1}^{\theta}
[(-1)^{l_i}\xi_{m_i}^{-l_i}\gamma^{m_i^2\frac{l_i(l_i+1)}{2}}
(-y_{m_i}g^{-m_i})^{\overline{j_i+l_i}}S([j_i,e_i-2-l_i]_{m_i})]\\
&=\frac{1}{m}g^{-1}x^{-a}\prod_{i=1}^{\theta}[(-1)^{l_i}\xi_{m_i}^{-l_i}
\gamma^{m_i^2\frac{l_i(l_i+1)}{2}}
(-1)^{\overline{j_i+l_i}}\gamma^{m_i^2\frac{\overline{j_i+l_i}(\overline{j_i+l_i}-1)}{2}}\\
&\quad S([j_i,e_i-2-l_i]_{m_i}) y_{m_i}^{\overline{j_i+l_i}}g^{-m_i\overline{j_i+l_i}}]\\
&=\frac{1}{m}x^{-a}\gamma^{j+l}\prod_{i=1}^{\theta}[(-1)^{l_i}\xi_{m_i}^{-l_i}
\gamma^{m_i^2\frac{l_i(l_i+1)}{2}}
(-1)^{\overline{j_i+l_i}}\gamma^{m_i^2\frac{\overline{j_i+l_i}(\overline{j_i+l_i}-1)}{2}}\\
&\quad S([j_i,e_i-2-l_i]_{m_i})] y_{\overline{j+l}}g^{-\overline{j+l}-1}\\
&=\frac{1}{m}x^{-a}\gamma^{j+l}\prod_{i=1}^{\theta}[(-1)^{l_i}\xi_{m_i}^{-l_i}
\gamma^{m_i^2\frac{l_i(l_i+1)}{2}}
(-1)^{\overline{j_i+l_i}}\gamma^{m_i^2\frac{\overline{j_i+l_i}(\overline{j_i+l_i}-1)}{2}}\\
&\quad (-1)^{e_i-1-\overline{j_i+l_i}}
\gamma^{m_i^2\frac{(e_i-1-\overline{j_i+l_i})(\overline{j_i+l_i}-2j_i-e_i)}{2}}
x^{-(e_i-1-\overline{j_i+l_i})m_id}[l_i,e_i-2-j_i]_{m_i}] y_{\overline{j+l}}g^{-\overline{j+l}-1}\\
&=\frac{1}{m}x^{-a}\gamma^{j+l}\prod_{i=1}^{\theta}[(-1)^{l_i}\xi_{m_i}^{-l_i}
\gamma^{m_i^2\frac{l_i(l_i+1)}{2}}\gamma^{m_i^2(j_i^2+j_il_i-l_i)}\\
&\quad x^{-(e_i-1-\overline{j_i+l_i})m_id}[l_i,e_i-2-j_i]_{m_i}] y_{\overline{j+l}}g^{-\overline{j+l}-1}.
\end{align*}
Here the last equality follows from
\begin{align*} &(-1)^{e_i-1}\gamma^{m_i^2\frac{\overline{j_i+l_i}(\overline{j_i+l_i}-1)}{2}}
\gamma^{m_i^2\frac{(e_i-1-\overline{j_i+l_i})(\overline{j_i+l_i}-2j_i-e_i)}{2}}\\
&=\quad \gamma^{m_i^2(j_i^2+j_il_i-l_i)}.
\end{align*}
Now let's compute the other side.
\begin{align*}
S(u_l)S(u_j)&=g^{m-1}x^b\prod_{i=1}^{\theta}[(-1)^{l_i}\xi_{m_i}^{-l_i}\gamma^{-
m_i^2\frac{l_i(l_i+1)}{2}}x^{l_im_id}g^{-l_im_i}]u_l\\
&\quad g^{m-1}x^b\prod_{i=1}^{\theta}[(-1)^{j_i}\xi_{m_i}^{-j_i}\gamma^{-
m_i^2\frac{j_i(j_i+1)}{2}}x^{j_im_id}g^{-j_im_i}]u_j\\
&= g^{m-1}\prod_{i=1}^{\theta}[(-1)^{l_i+j_i}\xi_{m_i}^{-l_i-j_i}\gamma^{-
m_i^2[\frac{l_i(l_i+1)}{2}+\frac{j_i(j_i+1)}{2}]}x^{(l_i-j_i)m_id}g^{-l_im_i}]\\
&\quad u_lg^{m-1-\sum_{i=1}^{\theta}j_im_i}u_j\\
&= \gamma^{-l-lj}\prod_{i=1}^{\theta}[(-1)^{l_i+j_i}\xi_{m_i}^{-l_i-j_i}\gamma^{-
m_i^2[\frac{l_i(l_i+1)}{2}+\frac{j_i(j_i+1)}{2}]}x^{(l_i+j_i)m_id}g^{-l_im_i-j_im_i}]\\
&\quad g^{-2}x^{2d}u_lu_j\\
&=\gamma^{-l-lj}\prod_{i=1}^{\theta}[(-1)^{l_i+j_i}\xi_{m_i}^{-l_i-j_i}\gamma^{-
m_i^2[\frac{l_i(l_i+1)}{2}+\frac{j_i(j_i+1)}{2}]}x^{(l_i+j_i)m_id}g^{-l_im_i-j_im_i}]\\
&\quad g^{-2}x^{2d}\frac{1}{m}x^{a} \prod_{i=1}^{\theta}
(-1)^{j_i}\xi_{m_i}^{-j_i}\gamma^{m_i^2\frac{j_i(j_i+1)}{2}}[l_i,e_i-2-j_i]_{m_i}y_{\overline{j+l}}g\\
&=\gamma^{-l-lj}\frac{1}{m}\prod_{i=1}^{\theta}[(-1)^{l_i}\xi_{m_i}^{-l_i-2j_i}\gamma^{-
m_i^2[\frac{l_i(l_i+1)}{2}]}[l_i,e_i-2-j_i]_{m_i}
x^{(e_i-1-\overline{(l_i+j_i)})m_id}g^{-\overline{l_i+j_i}m_i}]\\
&\quad g^{-2}\frac{1}{m}x^{-a} y_{\overline{j+l}}g\\
&=\frac{1}{m}\prod_{i=1}^{\theta}[(-1)^{l_i}\xi_{m_i}^{-l_i-2j_i}\gamma^{-
m_i^2(\frac{l_i(l_i+1)}{2})-l_im_i-l_ij_im_i^2+m_i^2(l_i+j_i)^2
+2(l_i+j_i)m_i}\\
&\quad[l_i,e_i-2-j_i]_{m_i}
x^{(e_i-1-\overline{(l_i+j_i)})m_id}]x^{-a} y_{\overline{j+l}}g^{-(\overline{j+l}+1)}\\
&= \frac{1}{m}x^{-a}\gamma^{j+l}\prod_{i=1}^{\theta}[(-1)^{l_i}\xi_{m_i}^{-l_i}
\gamma^{m_i^2\frac{l_i(l_i+1)}{2}}\gamma^{m_i^2(j_i^2+j_il_i-l_i)}\\
&\quad x^{-(e_i-1-\overline{j_i+l_i})m_id}[l_i,e_i-2-j_i]_{m_i}] y_{\overline{j+l}}g^{-\overline{j+l}-1}.
\end{align*}
where the fifth equality follows from $$x^{a+2d}=x^{-\frac{2+\sum_{i=1}^{\theta}(e_i-1)m_i}{2}d+2d}=x^{-a-\sum_{i=1}^{\theta}(e_i-1)m_id}$$ and the last equality is followed by
\begin{align*} &\xi_{m_i}^{-2j_i}\gamma^{-
m_i^2(\frac{l_i(l_i+1)}{2})-l_im_i-l_ij_im_i^2+m_i^2(l_i+j_i)^2
+2(l_i+j_i)m_i}\\
&=\gamma^{
m_i^2(\frac{l_i(l_i+1)}{2})-m_ij_i-m_i^2l_i(l_i+1)-l_im_i-l_ij_im_i^2+m_i^2(l_i+j_i)^2
+2(l_i+j_i)m_i}\\
&= \gamma^{m_i^2\frac{l_i(l_i+1)}{2}+m_i^2(j_i^2+j_il_i-l_i)+j_im_i+l_im_i}.
\end{align*}

The proof is done.\\

\noindent $\bullet$ \emph{Step} 4
($(S*\Id)(u_j)=(\Id*S)(u_j)=\epsilon(u_j)$).

In fact,

\begin{align*}
(S*\Id)(u_0)&=\sum_{j=0}^{m-1}S(\gamma^{-j^2}u_j)x^{-jd}g^ju_{-j} \\
&=\sum_{j=0}^{m-1}\gamma^{-j^2}g^{m-1}x^{b}\prod_{i=1}^{\theta}[(-1)^{j_i}\xi_{m_i}^{-j_i}\gamma^{-
m_i^2\frac{j_i(j_i+1)}{2}}x^{j_im_id}g^{-j_im_i}]u_jx^{-jd}g^ju_{-j} \\
&=\sum_{j=0}^{m-1}g^{m-1}x^{b}\prod_{i=1}^{\theta}[(-1)^{j_i}\xi_{m_i}^{-j_i}\gamma^{-
m_i^2\frac{j_i(j_i+1)}{2}}]u_ju_{-j} \\
           &=\sum_{j=0}^{m-1}g^{m-1}x^{b}\prod_{i=1}^{\theta}[(-1)^{j_i}\xi_{m_i}^{-j_i}\gamma^{-
m_i^2\frac{j_i(j_i+1)}{2}}]\\
&\quad \frac{1}{m}x^{a} \prod_{i=1}^{\theta}
(-1)^{-j_i}\xi_{m_i}^{j_i}\gamma^{m_i^2\frac{-j_i(-j_i+1)}{2}}[j_i,e_i-2-j_i]_{m_i}g\\
           &=\frac{1}{m}x^{a+b}g^m
           \prod_{i=1}^{\theta}[\sum_{j_i=0}^{e_i-1}\gamma_i^{j_i}[j_i,e_i-2-j_i]_{m_i}]\\
           &=\frac{1}{m}x^{-\sum_{i=0}^{\theta}(e_i-1)m_id} \prod_{i=1}^{\theta}[\sum_{j_i=0}^{e_i-1}\gamma_i^{j_i}]j_i-1,j_i-1[_{m_i}] \\
           &=\frac{1}{m}x^{-\sum_{i=0}^{\theta}(e_i-1)m_id}\prod_{i=1}^{\theta}e_ix^{(e_i-1)m_id} \quad\quad (\textrm{Lemma} \;\ref{l3.4}\;(3)) \\
           &=1\\
           &=\epsilon(u_{0}).
\end{align*}
And,
\begin{align*}
(\Id*S)(u_0)&=\sum_{j=0}^{m-1}\gamma^{-j^2}u_j S(x^{-jd}g^ju_{-j}) \\
           &=\sum_{j=0}^{m-1}\gamma^{-j^2}u_j S(u_{-j})S(g^j)x^{jd} \\
           &=\sum_{j=0}^{m-1}\gamma^{-j^2}u_j  g^{m-1}x^{b}\prod_{i=1}^{\theta}[(-1)^{-j_i}\xi_{m_i}^{j_i}\gamma^{-
m_i^2\frac{-j_i(-j_i+1)}{2}}x^{-j_im_id}g^{j_im_i}]u_{-j}g^{-j}x^{jd}\\
           &=\sum_{j=0}^{m-1} x^{(1-m)d+\frac{\sum_{i=1}^{\theta}(e_i-1)m_i}{2}d}\prod_{i=1}^{\theta}[(-1)^{-j_i}\xi_{m_i}^{j_i}\gamma^{-
m_i^2\frac{-j_i(-j_i+1)}{2}}\gamma^{-j_im_i}]g^{m-1}u_ju_{-j}\\
&=\sum_{j=0}^{m-1} x^{(1-m)d+\frac{\sum_{i=1}^{\theta}(e_i-1)m_i}{2}d}\prod_{i=1}^{\theta}[(-1)^{-j_i}\xi_{m_i}^{j_i}\gamma^{-
m_i^2\frac{-j_i(-j_i+1)}{2}}\gamma^{-j_im_i}]g^{m-1}\\
&\quad \frac{1}{m}x^{a} \prod_{i=1}^{\theta}
(-1)^{-j_i}\xi_{m_i}^{j_i}\gamma^{m_i^2\frac{-j_i(-j_i+1)}{2}}[j_i,e_i-2-j_i]_{m_i}g\\
&=\sum_{j=0}^{m-1}\frac{1}{m}\prod_{i=1}^{\theta}
\xi_{m_i}^{2j_i}\gamma^{-j_im_i}]j_i-1,j_i-1[_{m_i}\\
&=\frac{1}{m}\prod_{i=1}^{\theta}\sum_{j_i=0}^{e_i-1}
]j_i-1,j_i-1[_{m_i}\\
&=\frac{1}{m}\prod_{i=1}^{\theta}e_i \quad\quad (\textrm{Lemma}\; \ref{l3.4}\;(1))\\
           &=1 \\
           &=\epsilon(u_{0}).
\end{align*}

For $1\leqslant j \leqslant m-1$,
\begin{align*}
(S*\Id)(u_{j})&=\sum_{k=0}^{m-1}\gamma^{k(j-k)}S(u_k)x^{-kd}g^ku_{j-k}\\
               &=\sum_{j=0}^{m-1}\gamma^{k(j-k)} g^{m-1}x^{b}\prod_{i=1}^{\theta}[(-1)^{k_i}\xi_{m_i}^{-k_i}\gamma^{-
m_i^2\frac{k_i(k_i+1)}{2}}x^{k_im_id}g^{-k_im_i}]u_kx^{-kd}g^ku_{j-k}\\
&=\sum_{k=0}^{m-1}\gamma^{k(j-k)}g^{m-1}x^{b}\prod_{i=1}^{\theta}[(-1)^{k_i}\xi_{m_i}^{-k_i}\gamma^{-
m_i^2\frac{k_i(k_i+1)}{2}}\gamma^{k_i^2m_i^2}]u_ku_{j-k}\\
&=\sum_{k=0}^{m-1}\gamma^{k(j-k)}g^{m-1}x^{b}\prod_{i=1}^{\theta}[(-1)^{k_i}\xi_{m_i}^{-k_i}\gamma^{-
m_i^2\frac{k_i(k_i+1)}{2}}\gamma^{k_i^2m_i^2}]\\
& \quad  \frac{1}{m}x^{a} \prod_{i=1}^{\theta}[
(-1)^{j_i-k_i}\xi_{m_i}^{-j_i+k_i}\gamma^{m_i^2\frac{(j_i-k_i)(j_i-k_i+1)}{2}}
[k_i,e_i-2-j_i+k_i]_{m_i}]y_jg\\
&=\frac{1}{m}x^{a+b}\sum_{k=0}^{m-1}\prod_{i=1}^{\theta}[
(-1)^{j_i}\xi_{m_i}^{-j_i}\gamma^{m_i^2\frac{j_i^2+j_i}{2}+j_im_i-k_im_i^2}
[k_i,e_i-2-j_i+k_i]_{m_i}]y_j\\
&=\frac{1}{m}x^{a+b}\prod_{i=1}^{\theta}[
(-1)^{j_i}\xi_{m_i}^{-j_i}\gamma^{m_i^2\frac{j_i^2+j_i}{2}+j_im_i}]y_j\\
&\quad \prod_{i=1}^{\theta}[\sum_{k_i=0}^{e_i-1}\gamma_i^{k_i}]k_i-1-j_i,k_i-1[_{m_i}]\\
&=0\quad\quad (\textrm{Lemma}\; \ref{l3.4}\;(5))\\
&=\e(u_j)
\end{align*}
\begin{align*}
(\Id*S)(u_{j})&=\sum_{k=0}^{m-1}\gamma^{k(j-k)}u_kS(u_{j-k})g^{-k}x^{kd}\\
               &=\sum_{k=0}^{m-1}\gamma^{k(j-k)}u_k g^{m-1}x^{b}\prod_{i=1}^{\theta}[(-1)^{j_i-k_i}\xi_{m_i}^{k_i-j_i}\gamma^{-
m_i^2\frac{(j_i-k_i)(j_i-k_i+1)}{2}}\\
&\quad \quad x^{(j_i-k_i)m_id}g^{-(j_i-k_i)m_i}]u_{j-k}g^{-k}x^{kd}\\
&=\sum_{k=0}^{m-1}u_k g^{m-1}x^{b}\prod_{i=1}^{\theta}[(-1)^{j_i-k_i}\xi_{m_i}^{k_i-j_i}\gamma^{-
m_i^2\frac{(j_i-k_i)(j_i-k_i+1)}{2}}\\
&\quad \quad x^{j_im_id}g^{-j_im_i}]u_{j-k}\\
&=\sum_{k=0}^{m-1} \gamma^{-k}g^{m-1}x^{(1-m)d+\frac{\sum_{i=1}^{\theta}(e_i-1)m_i}{2}d}
\prod_{i=1}^{\theta}[(-1)^{j_i-k_i}\xi_{m_i}^{k_i-j_i}\gamma^{-
m_i^2\frac{(j_i-k_i)(j_i-k_i+1)}{2}}\\
&\quad \quad x^{j_im_id}\gamma^{-kj_im_i}g^{-j_im_i}]u_ku_{j-k}\\
&=\sum_{k=0}^{m-1} \gamma^{-k}g^{m-1}x^{(1-m)d+\frac{\sum_{i=1}^{\theta}(e_i-1)m_i}{2}d}
\prod_{i=1}^{\theta}[(-1)^{j_i-k_i}\xi_{m_i}^{k_i-j_i}\gamma^{-
m_i^2\frac{(j_i-k_i)(j_i-k_i+1)}{2}}\\
&\quad\quad x^{j_im_id}\gamma^{-kj_im_i}g^{-j_im_i}]\\
&\quad\quad \frac{1}{m}x^{a} \prod_{i=1}^{\theta}[
(-1)^{j_i-k_i}\xi_{m_i}^{-j_i+k_i}\gamma^{m_i^2\frac{(j_i-k_i)(j_i-k_i+1)}{2}}
[k_i,e_i-2-j_i+k_i]_{m_i}]y_jg\\
&=\frac{1}{m}x^{-md}\sum_{k=0}^{m-1} \gamma^{-k}
\prod_{i=1}^{\theta}[\xi_{m_i}^{2(k_i-j_i)}\gamma^{-kj_im_i+j_im_i}
x^{j_im_id}g^{-j_im_i}[k_i,e_i-2-j_i+k_i]_{m_i}]g^{m}y_j\\
&=\frac{1}{m}
\prod_{i=1}^{\theta}[\xi_{m_i}^{-2j_i}\gamma^{j_im_i}
x^{j_im_id}g^{-j_im_i}]\prod_{i=1}^{\theta}
[\sum_{k_i=0}^{e_i-1}\gamma_i^{k_ij_i}]k_i-1-j_i,k_i-1]_{m_i}]y_j\\
&=0\quad\quad (\textrm{Lemma}\; \ref{l3.4}\;(5))\\
&=\e(u_j).
\end{align*}

By steps 1, 2, 3, 4, $D(\underline{m}, d, \gamma)$ is a Hopf algebra.\qed

\begin{proposition}\label{p4.8} Under above notations, the Hopf algebra $D(\underline{m}, d, \gamma)$ has the following properties.
\begin{itemize} \item[(1)] The Hopf algebra $D(\underline{m}, d, \gamma)$ is prime with PI-degree $2m$.
\item[(2)] The Hopf algebra $D(\underline{m}, d, \gamma)$ has a $1$-dimensional representation whose order is $2m$.
\item[(3)] The Hopf algebra $D(\underline{m}, d, \gamma)$ is not pointed and its coradical is not a Hopf subalgebra if $m>1$.
\item[(4)] The Hopf algebra $D(\underline{m}, d, \gamma)$ is pivotal, that is, its representation category is a pivotal tensor category.
\end{itemize}
\end{proposition}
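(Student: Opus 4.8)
\emph{Strategy for (1) and (2).} For part (1) the plan is to reproduce, in the present setting, the argument used for $T(\underline m,t,\xi)$ and $B(\underline m,\omega,\gamma)$ (Propositions \ref{p4.1} and \ref{p4.6}, following Corollary \ref{c2.9}). The automorphism $\Xi_\pi^l$ displayed just before Proposition \ref{p4.7} realises $D=D(\underline m,d,\gamma)$ as a $\widehat{\Z_{2m}}$-graded algebra $D=\bigoplus_{i=0}^{2m-1}D_i^l$ whose degree-zero component $D_0^l=\k[x^{\pm1},y_{m_1},\dots,y_{m_\theta}]$ is a commutative domain (a fraction-of-Liu algebra; cf. the proof of Proposition \ref{p4.6}). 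First I would check this grading is strong: the even pieces contain the unit $g^{i/2}$, and for the odd ones it reduces to $1\in D_1^l D_{-1}^l$, which follows from the antipode axiom $(\Id*S)(u_0)=1$ established inside the proof of Proposition \ref{p4.7}, since the $u_s$ all lie in $D_1^l$ and, by the antipode formula, $S(x^{-jd}g^ju_{-j})\in D_{-1}^l$, so every summand there sits in $D_1^l\cdot D_{-1}^l$. Lemma \ref{l2.10} then yields $\mathrm{PI.deg}(D)\le 2m$ together with an action $\triangleright$ of $\widehat{\Z_{2m}}$ on $D_0^l$, with $\mathrm{PI.deg}(D)=2m$ if and only if $\triangleright$ is faithful, and primeness in that case. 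Taking $u_0\in D_1^l$ as the homogeneous unit in \eqref{eq2.4}, the relations $u_0x=x^{-1}u_0$ and $y_{m_i}u_0=\xi_{m_i}x^{m_id}u_0y_{m_i}$ give, for the generator $\chi$, that $\chi\triangleright x=x^{-1}$ and $\chi\triangleright y_{m_i}=\xi_{m_i}^{-1}x^{-m_id}y_{m_i}$, hence $\chi^{2\ell}\triangleright x=x$ and $\chi^{2\ell}\triangleright y_{m_i}=\gamma^{-\ell m_i}y_{m_i}$ (using $\xi_{m_i}^2=\gamma^{m_i}$); since the $e_i$ are pairwise coprime with $\prod_i e_i=m$ (Definition \ref{d3.1}), the only $\chi^k$ acting trivially on $D_0^l$ are those with $2m\mid k$, so $\triangleright$ is faithful and $\mathrm{PI.deg}(D)=2m$, $D$ prime. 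For (2), the character $\pi\colon D\to\k$ with $x\mapsto1$, $g\mapsto\gamma$, $y_{m_i}\mapsto0$, $u_0\mapsto\sqrt\gamma$, $u_s\mapsto0$ $(s\ge1)$ — whose winding automorphisms are exactly those displayed before Proposition \ref{p4.7} — has order $\ord(\pi)=|G_\pi^l|=2m$, because $\Xi_\pi^l$ scales $g$ by $\gamma$ and every $u_i$ by the primitive $2m$-th root $\sqrt\gamma$.

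\emph{Strategy for (3).} From $\D(u_j)=\sum_k\gamma^{k(j-k)}u_k\otimes x^{-kd}g^ku_{j-k}$ and $g^m=x^{md}$ one sees that $C:=\sum_{0\le k,l\le m-1}\k\,x^{-kd}g^ku_l$ is a subcoalgebra of dimension $m^2$, and that the rescaled basis $\widetilde w_{ab}:=\gamma^{a(b-a)}x^{-ad}g^au_{b-a}$ ($a,b\in\Z_m$) satisfies $\D(\widetilde w_{ab})=\sum_c\widetilde w_{ac}\otimes\widetilde w_{cb}$ and $\epsilon(\widetilde w_{ab})=\delta_{ab}$; thus $C$ is a cosimple subcoalgebra isomorphic to the comatrix coalgebra $M_m(\k)^*$, so $C$ lies in the coradical $D_0$ and, for $m>1$, $D$ has a simple comodule of dimension $m>1$ and is not pointed. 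For the last claim, let $B=B(\underline m,\omega,\gamma)\subseteq D$ be the Hopf subalgebra generated by $x^{\pm1},g,y_{m_1},\dots,y_{m_\theta}$; it is pointed with coradical $B_0=\k[\langle x,g\rangle]=\bigoplus_{b=0}^{m-1}\k[x^{\pm1}]g^b$, and since $D_0$ is cosemisimple, $D_0\cap B$ is a cosemisimple subcoalgebra of $B$, whence $D_0\cap B=B_0$. Now $u_0,u_j\in C\subseteq D_0$ for $1\le j\le m-1$, but by \eqref{eq4.14} the product $u_0u_j$ equals a nonzero Laurent polynomial in $x$ times $y_jg$, a nonzero element of $B$ not lying in $B_0$ (because $y_j$ is a nontrivial monomial in the $y_{m_i}$, and $B$ is free over $\k[x^{\pm1}]$ on the $y_kg^i$). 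Hence $u_0u_j\notin D_0$, so $D_0$ is not closed under multiplication, a fortiori not a Hopf subalgebra.

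\emph{Strategy for (4).} I would compute $S^2$ on the generators and invoke Lemma \ref{l2.16}. One has $S^2(x)=x$, $S^2(g)=g$ and $S^2(y_{m_i})=g^{m_i}y_{m_i}g^{-m_i}=\gamma^{-m_i^2}y_{m_i}$. Writing the rewritten antipode formula as $S(u_j)=\lambda_j\,x^{e_j}g^{f_j}u_j$ with $\lambda_j=\prod_i(-1)^{j_i}\xi_{m_i}^{-j_i}\gamma^{-m_i^2j_i(j_i+1)/2}$, $e_j=b+\sum_ij_im_id$, $f_j=m-1-\sum_ij_im_i$, anti-multiplicativity gives $S^2(u_j)=\lambda_j^2\,x^{e_j}g^{f_j}u_jg^{-f_j}x^{-e_j}$; pushing $u_j$ back to the right with $u_jg^{-1}=\gamma^{-j}x^{2d}g^{-1}u_j$ and $u_jx=x^{-1}u_j$, and simplifying the resulting scalar with $\xi_{m_i}^2=\gamma^{m_i}$, $m\mid m_im_k$ for $i\ne k$, and $2e_j+2df_j=-d\sum_i(e_i-1)m_i$, collapses this to $S^2(u_j)=\gamma^{-\sum_im_i^2j_i}\,x^{-d\sum_i(e_i-1)m_i}\,u_j$. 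Let $t$ be the Chinese-Remainder solution of $t\equiv m_i\ (\mathrm{mod}\ e_i)$ for all $i$ (it exists since the $e_i$ are pairwise coprime) and set $s=\tfrac12\bigl(-d\sum_i(e_i-1)m_i\bigr)-dt$, an integer by the second condition of \eqref{eq4.7}. Then $g_0:=g^tx^s$ is group-like, and a term-by-term check of $g_0hg_0^{-1}$ on the generators (using $t\equiv m_i\ (\mathrm{mod}\ e_i)$ throughout) gives $S^2(h)=g_0hg_0^{-1}$ for all $h\in D$; Lemma \ref{l2.16} then shows $\Rep(D)$ is pivotal.

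\emph{Main obstacle.} The formal parts — the existence of $\pi$, the identification of $C$, the cosemisimplicity argument for $D_0\cap B$, and the reduction to Lemma \ref{l2.16} — are straightforward once the preliminaries are in hand. The real work is the bigraded bookkeeping: verifying that the $\widehat{\Z_{2m}}$-grading is \emph{strong} (not merely a grading), and carrying out the $S^2$ computation on the $u_j$ while tracking the fraction coordinates $j_i$ and the square roots $\xi_{m_i}=\sqrt{\gamma^{m_i}}$. Both rest on the combinatorial identities of Lemma \ref{l3.4} together with the divisibility $m\mid m_im_k$ and the pairwise coprimality of the $e_i$ built into Definition \ref{d3.1}; these are exactly the places where the ``fraction'' hypotheses are used in an essential way.
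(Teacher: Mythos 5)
Your proposal is correct, and its overall architecture coincides with the paper's: primeness and PI-degree $2m$ via the strong $\widehat{\Z_{2m}}$-grading with commutative-domain degree-zero part and Lemma \ref{l2.10}; non-pointedness via the $m^2$-dimensional simple subcoalgebra spanned by the $(x^{-d}g)^ku_l$; and pivotality via a conjugation formula for $S^2$ fed into Lemma \ref{l2.16}. The differences are local, and each is a legitimate alternative. For (2) the paper computes the right module structure of $\int_D^l$ and reads off that its order is $2m$, whereas you exhibit the character $x\mapsto1$, $g\mapsto\gamma$, $y_{m_i}\mapsto0$, $u_0\mapsto\sqrt{\gamma}$, $u_s\mapsto0$ directly; this avoids the integral computation but obliges you to check the character respects the relations (it does: with $\gamma_i=\gamma^{-m_i^2}$ one gets $u_0^2\mapsto\frac1m\prod_i\prod_{k=1}^{e_i-1}(1-\gamma_i^{k})\cdot\gamma=\gamma$, and $u_ju_{m-j}\mapsto0$ for $j\neq0$ because the specialization of $[j_i,j_i-2]_{m_i}$ at $x=1$ contains the vanishing factor $1-\gamma_i^{e_i}$). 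For (4) the paper conjugates by $g^{\sum_im_i}x^{c}$ with $c=-\frac{\sum_i(e_i+1)m_id}{2}$, while you use $g^tx^s$ with $t$ a Chinese-Remainder lift of the $m_i$ modulo the $e_i$; both work because the only constraint on the exponent $T$ of $g$ is $Tm_i\equiv m_i^2\ (\mathrm{mod}\ m)$ for all $i$, which $\sum_km_k$ satisfies via $m\mid m_im_k$ and your $t$ satisfies via $m\mid e_im_i$. You also supply several details the paper leaves implicit — the strong-grading verification through $(\Id*S)(u_0)=1$, the comatrix-coalgebra basis $\widetilde w_{ab}$ identifying $C$ with $M_m(\k)^*$, and the explicit failure of closure of the coradical under multiplication via $u_0u_j\in\k[x^{\pm1}]^{\times}y_jg+0$ — all of which check out.
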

\begin{proof} (1) Recall that the Hopf algebra $D=D(\underline{m}, d, \gamma)=\bigoplus_{i=0}^{2m}D_{i}^{l}$ is strongly $\Z_{2m}$-graded with
 \[{D_i^l=}
\begin{cases}
\k [x^{\pm 1}, y_{m_1},\ldots,y_{m_\theta}] g^{\tfrac{i}{2}}, & i=\textrm{even},\\
\sum_{s=0}^{m-1}\k[ x^{\pm 1}] g^{\tfrac{i-1}{2}}u_s,
& i=\textrm{odd}.
\end{cases}\]
So the algebra $D$ meets the the initial condition of Lemma \ref{l2.10}. Using the notation given in the Lemma \ref{l2.10}, we find that
$$\chi\triangleright y_{m_i}=\xi_{m_i}^{-1}x^{-m_id}y_{m_i}$$
for all $1\leq i\leq \theta.$ This indeed implies the action of $\Z_{2m}$ on $D^{l}_{0}=\k[x^{\pm 1},y_{m_1},\ldots,y_{m_\theta}]$ is faithful. Therefore, by (c) and (d) of Lemma \ref{l2.10}, $D$ is prime with PI-degree $2m$.

(2) This 1-dimensional representation can be given through left homological integrals. In fact, the direct computation shows that the right module structure of left homological integrals is given by:
$$\int_{D}^{l}=D/(x-1,y_{m_1},\ldots,y_{m_\theta},u_1,\ldots,u_{m-1},
u_0-\prod_{i=1}^{\theta}\xi_{m_i}^{(e_i-1)},g-\prod_{i=1}^{\theta}\gamma^{-m_i}).$$
Through the relation that $\xi_{m_i}=\sqrt{\gamma^{m_i}}$ it is not hard to see that the $\io(D)=2m$.

(3) Through direct computations, we find that the subspace $C_{m}(d)$ spanned by $\{(x^{-d}g)^{i}u_j|0\leq i,j\leq m-1\}$ is a simple coalgebra (see Proposition \ref{p7.6} for a detailed proof of this fact) and the coradical of $D$ equals to
$$\bigoplus_{i\in \Z,\;0\leq j\leq m-1}x^{i}g^{j}\oplus(\bigoplus_{i\in \Z,\;0\leq j\leq m-1}x^{i}g^{j}C_{m}(d)).$$
Since $m>1$, it has a simple subcoalgebra $C_{m}(d)$ with dimension $m^2>1$. Therefore, $D$ is not pointed.  Its coradical is not a Hopf subalgebra since it is clear it is not closed under multiplication.

(4) See the proof of (3) of Proposition \ref{p7.14} where we built the result through proving that $D$ being pivotal.
\end{proof}

\begin{remark}\label{r4.9} \begin{itemize}
\item[(1)] \emph{As a special case, through takeing $m=1$ one is not hard to see that the Hopf algebra $D$ constructed above is just the infinite dihedral group algebra $\k \mathbb{D}$. This justifies the choice of the notation ``D".}
\item[(2)]\emph{It is not hard to see the other new examples, i.e., $T(\underline{m},t,\xi),\;B(\underline{m},\omega,\gamma)$, are pivotal since they are pointed and thus the proof of this fact become easier. In fact, keep the notations above, we have $$S^2(h)=(\prod_{i=1}^{\theta} g^{tm_i})h(\prod_{i=1}^{\theta} g^{tm_i})^{-1}$$ for $h\in T(\underline{m},t,\xi)$ and $$S^2(h)=(\prod_{i=1}^{\theta} g^{m_i})h(\prod_{i=1}^{\theta} g^{m_i})^{-1}$$ for $h\in B(\underline{m},\omega,\gamma)$. Through applying Lemma \ref{l2.16}, we get the result.}
\end{itemize}
\end{remark}

Now let $m'\in \N$ and $\{m_1',\ldots,m'_{\theta'}\}$ a fraction of $m'$. As before, we need to compare different fractions of Hopf algebras $D(m,d,\gamma)$. Also, we denote the greatest common divisors of $\{m_1,\ldots,m_\theta\}$ and $\{m_1',\ldots,m'_{\theta'}\}$ by $m_0$ and $m_0'$ respectively. Parallel to case of generalized Liu algebras, we have the following observation.
\begin{proposition} As Hopf algebras,  $D(\underline{m},d,\gamma)\cong D(\underline{m'},d',\gamma')$ if and only if $m=m', \;\theta=\theta',\; d m_0=d' m_0'$ and $\gamma^{m_0^2}=(\gamma')^{(m_0')^2}$.\end{proposition}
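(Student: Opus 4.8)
The plan is to follow the route used for the generalized Liu algebras (Lemma~\ref{l4.9} and Proposition~\ref{iliu}): first record the elementary invariants forced by an isomorphism, and then show that every $D(\underline{m},d,\gamma)$ is isomorphic to a unique \emph{basic form} in which the defining tuple is coprime, so that two such algebras are isomorphic exactly when their basic forms coincide.

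\emph{Necessity.} Suppose $\varphi\colon D(\underline{m},d,\gamma)\xrightarrow{\ \cong\ }D(\underline{m'},d',\gamma')$. Since $\mathrm{PI.deg}$ is an invariant, Proposition~\ref{p4.8}(1) gives $2m=2m'$, i.e. $m=m'$. Counting nontrivial skew-primitive elements (as in Propositions~\ref{p3.4} and \ref{p4.3}) yields $\theta=\theta'$, and after reordering we may assume $\varphi(y_{m_i})=y'_{m_i'}$ (the possible trivial skew-primitive correction is killed by the $q$-commutation with $g$, exactly as in Proposition~\ref{p4.3}); comparing coproducts then gives $\varphi(g)^{m_i}=(g')^{m_i'}$. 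Because $x^{\pm1}$ spans the relevant central units, $\varphi(x)=(x')^{\pm1}$, and the relation $y_{m_i}^{e_i}=1-x^{e_im_id}$ forces $e_i=e_i'$, $\varphi(x)=x'$ and $m_id=m_i'd'$ for all $i$; taking greatest common divisors gives $m_0d=m_0'd'$. Writing $m_0=\sum_ia_im_i$ and using $m_i'd'=m_id$ one gets $\varphi(g)^{m_0}=(g')^{\sum_ia_im_i'}=(g')^{m_0'}$; expressing $\varphi(g)=(x')^{p}(g')^{q}$ in the group of grouplikes of $D(\underline{m'},d',\gamma')$ (which, using $g^m=x^{md}$, is $\cong\Z\times\Z_m$) this yields $qm_0\equiv m_0'\ (\mathrm{mod}\ m)$. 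Finally the relation $y_{m_i}g=\gamma^{m_i}gy_{m_i}$ gives $\gamma^{m_i}=(\gamma')^{qm_i'}$, hence $\gamma^{m_0}=(\gamma')^{qm_0'}$ and therefore $\gamma^{m_0^2}=(\gamma')^{qm_0m_0'}=(\gamma')^{(m_0')^2}$, since $\gamma'$ has order $m$ and $qm_0\equiv m_0'\ (\mathrm{mod}\ m)$.

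\emph{The basic form lemma.} Here lies the main work. Put $m_0=(m_1,\dots,m_\theta)$ and choose $a,b\in\Z$ with $am_0+bm=1$. One shows $D(\underline{m},d,\gamma)\cong D(\{m_1/m_0,\dots,m_\theta/m_0\},\,dm_0,\,\gamma^{m_0^2})$ by the map $x\mapsto x'$, $g\mapsto (g')^{a}(x')^{bmd}$, $y_{m_i}\mapsto y'_{m_i/m_0}$ — this part is handled exactly as in the proof of Lemma~\ref{l4.9}, since the sub-Hopf-algebra generated by $x^{\pm1},g^{\pm1},y_{m_i}$ is the generalized Liu algebra $B(\underline{m},md,\gamma)$ — together with $u_j\mapsto c_j\,(x')^{r_j}u'_{\overline{m_0^{-1}j}}$, where $m_0^{-1}$ is the inverse of $m_0$ modulo $m$ (so that the $\underline{m}$-coordinates of $j$ coincide with the $\{m_i/m_0\}$-coordinates of $\overline{m_0^{-1}j}$, cf. Remark~\ref{r3.2}) and $c_j\in\k^\times$, $r_j\in\Z$ are chosen to make the relations match. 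Checking that this respects $xu_j=u_jx^{-1}$, $y_{m_i}u_j=\phi_{m_i,j}u_{j+m_i}=\xi_{m_i}x^{m_id}u_jy_{m_i}$, $u_jg=\gamma^jx^{-2d}gu_j$, the product formula \eqref{eq4.14}, and the coproduct $\D(u_j)=\sum_k\gamma^{k(j-k)}u_k\otimes x^{-kd}g^ku_{j-k}$ is a bookkeeping computation identical in spirit to Step~1 of the proof of Proposition~\ref{p4.7}: it uses $g^m=x^{md}$, the coprimality $(m_0,m)=1$, and the $q$-binomial identities of Lemma~\ref{l3.4}. Uniqueness of the basic form is the analogue of Claim~2 in Lemma~\ref{l4.9}: if both tuples are coprime and the algebras are isomorphic, the necessity computation gives $m_id=m_i'd'$, whence $d=d'$ and $m_i=m_i'$ after reordering; then $\varphi(g)^{m_i}=(g')^{m_i}$ together with $(m_1,\dots,m_\theta)=1$ forces $\varphi(g)=g'$, and $y_{m_i}g=\gamma^{m_i}gy_{m_i}$ then gives $\gamma^{m_i}=(\gamma')^{m_i}$, hence $\gamma=\gamma'$.

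\emph{Conclusion.} By the basic form lemma, $D(\underline{m},d,\gamma)\cong D(\underline{m'},d',\gamma')$ iff their basic forms are isomorphic, and by uniqueness this happens iff the basic forms are the same Hopf algebra, i.e. iff $m=m'$, $\theta=\theta'$, $dm_0=d'm_0'$ and $\gamma^{m_0^2}=(\gamma')^{(m_0')^2}$ — the latter two being precisely the defining parameters of the basic form (one also checks along the way that the basic form again satisfies the parity constraints \eqref{eq4.7}, the constraint on $\sum_i(e_i-1)(m_i/m_0)(dm_0)=\sum_i(e_i-1)m_id$ being clear). The main obstacle is the existence half of the basic form lemma: verifying that the proposed map intertwines the coproduct and the multiplication \eqref{eq4.14} of the $u_j$'s. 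This is routine but intricate and is the only place a genuine computation (with the identities of Section~3) is required; everything else reduces to the arguments already used for $T(\underline{m},t,\xi)$ and $B(\underline{m},\omega,\gamma)$.
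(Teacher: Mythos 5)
Your necessity argument is essentially sound, with one gloss to repair: $x$ is central only in the Hopf subalgebra $B(\underline{m},md,\gamma)\subseteq D(\underline{m},d,\gamma)$, not in $D$ itself (since $xu_j=u_jx^{-1}$), so to get $\varphi(x)=(x')^{\pm1}$ you should first note that $\varphi$ restricts to an isomorphism between these sub-Hopf-algebras — they are generated by all grouplikes and all nontrivial skew-primitives — and then argue inside $B$ as in Proposition \ref{pliu}. The real problem is the sufficiency half, which you reduce to an unproved ``basic form lemma'' whose key formula is wrong. In $D(\underline{m},d,\gamma)$ one has $y_{m_i}u_j=\xi_{m_i}x^{m_id}u_jy_{m_i}$ with $\xi_{m_i}^2=\gamma^{m_i}$, while in your target $D(\{m_1/m_0,\dots,m_\theta/m_0\},dm_0,\gamma^{m_0^2})$ the corresponding relation reads $y'_{n_i}u'_k=\xi'_{n_i}(x')^{n_i(dm_0)}u'_ky'_{n_i}$ with $n_i=m_i/m_0$ and $(\xi'_{n_i})^2=(\gamma^{m_0^2})^{n_i}=\gamma^{m_0m_i}$. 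Since $x'$ commutes with $y'_{n_i}$, a map of the form $u_j\mapsto c_j(x')^{r_j}u'_{\overline{m_0^{-1}j}}$ can only intertwine this relation if $\xi_{m_i}=\xi'_{n_i}$, hence $\gamma^{m_i}=\gamma^{m_0m_i}$ for all $i$, which (using that the $e_i$ are pairwise coprime with product $m$) forces $m_0\equiv1\ (\mathrm{mod}\ m)$. Already $m=3$, $\underline{m}=\{2\}$, $m_0=2$ breaks your Ansatz. The correction must involve $g'$: one needs $u_j\mapsto c_j(x')^{r_j}(g')^{q}u'_{\overline{m_0^{-1}j}}$ with $m_0(2q+1)\equiv 1\ (\mathrm{mod}\ m)$, and even then compatibility with the product formula \eqref{eq4.14}, the coproduct of the $u_j$'s and the antipode is a computation of the same magnitude as Steps 1--4 of the proof of Proposition \ref{p4.7}; it cannot be waved away as bookkeeping, and you have not carried it out.

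The paper sidesteps this entirely. An isomorphism $D(\underline{m},d,\gamma)\cong D(\underline{m'},d',\gamma')$ restricts to one between $B(\underline{m},md,\gamma)$ and $B(\underline{m'},m'd',\gamma')$; conversely, Proposition \ref{p6.11} says that a prime Hopf algebra $H$ of GK-dimension one in the case $\ord(\pi)=2\mi(\pi)>2$ is determined up to isomorphism by $\widetilde H$, and $\widetilde{H}=B(\underline{m},md,\gamma)$ when $H=D(\underline{m},d,\gamma)$, so an isomorphism of the $B$'s already yields one of the $D$'s. The stated conditions are then exactly Proposition \ref{iliu} with $\omega=md$ and $\omega'=m'd'$. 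To make your argument complete you must either construct the basic-form isomorphism on the $u_j$'s in full (with the $(g')^{q}$ correction above and the attendant sign analysis for the square roots $\xi_{m_i}$), or invoke Proposition \ref{p6.11} as the paper does.
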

\begin{proof} By Proposition \ref{iliu}, it it enough to show that $D(\underline{m},d,\gamma)\cong D(\underline{m'},d',\gamma')$ if and only if their Hopf subalgebras $B(\underline{m},md,\gamma)$ and $B(\underline{m'},m'd',\gamma')$ are isomorphic. It is clear the isomorphism of $D(\underline{m},d,\gamma)$ and $D(\underline{m'},d',\gamma')$ will imply the isomorphism between $B(\underline{m},md,\gamma)$ and $B(\underline{m'},m'd',\gamma')$. Conversely, assume that $B(\underline{m},md,\gamma)\cong B(\underline{m'},m'd',\gamma')$. By Proposition \ref{p6.11}, $D(\underline{m},d,\gamma)$ is determined by $B(\underline{m},md,\gamma)$ entirely. Therefore, $D(\underline{m},d,\gamma)\cong D(\underline{m'},d',\gamma')$ too.
\end{proof}

At last, we point out the examples we constructed until now are not the same.
\begin{proposition} If $m>1$, the Hopf algebras $T(\underline{m'},t,\xi), \;B(\underline{m''},\omega,\gamma'')$ and $D(\underline{m},d,\gamma)$ are not isomorphic to each other.
\end{proposition}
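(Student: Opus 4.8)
The plan is to exhibit Hopf-algebra isomorphism invariants that separate the three families, so that the whole statement reduces to reading off structural facts already established. Two invariants will suffice: whether the Hopf algebra is pointed, and the group of group-like elements (equivalently, for the latter, the isomorphism type of the centre).

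First I would dispose of $D(\underline{m},d,\gamma)$ against the other two. By Proposition~\ref{p4.8}(3), for $m>1$ the coradical of $D(\underline{m},d,\gamma)$ contains a simple subcoalgebra of dimension $m^{2}>1$, so $D(\underline{m},d,\gamma)$ is not pointed. On the other hand, $T(\underline{m'},t,\xi)$ is generated as an algebra by the group-like $g$ together with the skew-primitive elements $y_{m_i}$, and $B(\underline{m''},\omega,\gamma'')$ is generated by the group-likes $x,g$ together with the skew-primitive elements $y_{m_i}$; hence both are pointed, by the well-known fact that a Hopf algebra generated by group-like and skew-primitive elements has coradical equal to the group algebra of its group-likes. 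Since pointedness is an isomorphism invariant, $D(\underline{m},d,\gamma)$ is isomorphic to neither $T(\underline{m'},t,\xi)$ nor $B(\underline{m''},\omega,\gamma'')$.

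It then remains to separate $T(\underline{m'},t,\xi)$ from $B(\underline{m''},\omega,\gamma'')$. Here I would compare centres: by Proposition~\ref{p4.1}(1) the centre of $T(\underline{m'},t,\xi)$ is $\k[y_{m_1}^{e_1t}]\cong\k[x]$, whereas by Proposition~\ref{p4.6}(1) the centre of $B(\underline{m''},\omega,\gamma'')$ is $\k[x^{\pm1}]$. An algebra isomorphism carries the centre onto the centre, and $\k[x]\not\cong\k[x^{\pm1}]$ as $\k$-algebras (the unit group of the former is $\k^{\times}$, that of the latter is strictly larger), so $T(\underline{m'},t,\xi)\not\cong B(\underline{m''},\omega,\gamma'')$. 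As an alternative one can argue directly with group-likes: $G(T(\underline{m'},t,\xi))$ is the finite cyclic group $\langle g\mid g^{m't}=1\rangle$, while $G(B(\underline{m''},\omega,\gamma''))$ is infinite since it contains the central unit $x$ of infinite order; this simultaneously re-proves that $T(\underline{m'},t,\xi)\not\cong D(\underline{m},d,\gamma)$.

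I do not expect a serious obstacle: the argument is essentially bookkeeping on top of Propositions~\ref{p4.1}, \ref{p4.6} and \ref{p4.8}. The one point that genuinely needs a word of justification — and hence the closest thing to a ``hard part'' — is the pointedness of $T(\underline{m'},t,\xi)$ and $B(\underline{m''},\omega,\gamma'')$, which I would either cite as standard or note follows at once from their being generated by group-likes and skew-primitives; everything else is a direct application of invariance of the centre and of the group of group-likes under Hopf isomorphism.
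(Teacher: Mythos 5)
Your proposal is correct and follows essentially the same route as the paper: non-pointedness of $D(\underline{m},d,\gamma)$ (via Proposition \ref{p4.8}(3)) versus pointedness of the other two, and then an invariant separating $T(\underline{m'},t,\xi)$ from $B(\underline{m''},\omega,\gamma'')$ — the paper compares the group-like elements, which is exactly your alternative argument, while your primary centre comparison is an equally valid piece of bookkeeping.
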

\begin{proof} Since $m>1$, $D(\underline{m},d,\gamma)$ is not pointed by Proposition \ref{p4.8} (3) while $T(\underline{m'},t,\xi)$ and $B(\underline{m''},\omega,\gamma'')$ are pointed. Therefore, $D(\underline{m},d,\gamma)\not\cong T(\underline{m'},t,\xi)$ and $D(\underline{m},d,\gamma)\not\cong B(\underline{m''},\omega,\gamma'').$ Comparing the number of group-likes, we know that $T(\underline{m'},t,\xi)\not\cong B(\underline{m''},\omega,\gamma'')$ either.
\end{proof}

\section{Ideal cases}
In this section, we always assume that $H$ is a prime Hopf algebra of GK-dimension one satisfying (Hyp1) and (Hyp2). So by (Hyp1), $H$ has a $1$-dimensional representation
$$\pi:H\longrightarrow \k$$
whose order equals to PI-deg$(H).$ Recall that in the Subsection \ref{subs2.2}, we already gave the definition of $\pi$-order $\ord (\pi)$ and $\pi$-minor $\mi(\pi)$. The aim of this section is to classify $H$ in the following two ideal cases:
\begin{center} $\mi (\pi)=1$ or $\ord (\pi)=\mi (\pi)$.\end{center}
If moreover assume that $H$ is regular, then the main result of \cite{BZ} is to classify $H$ in ideal cases. Here we apply similar program to classify prime Hopf algebras which may be not regular.

\subsection{Ideal case one: $ \textrm{min}(\pi)=1$.} In this subsection, $H$ is a prime Hopf algebra of GK-dimension one satisfying (Hyp1), (Hyp2) and $ \mi(\pi)=1$.  Let PI.deg$(H)=n>1$ (if $=1$, then it is clear that $H$ is commutative and thus $H$ is the coordinate algebra of connected algebraic group of dimension one). Recall that by the equation \eqref{eq2.3}, $H$ is an $\Z_{n}$-bigraded algebra
$$H=\bigoplus_{i,j=0}^{n-1}H_{ij,\pi}.$$
Here and the following we write $H_{ij,\pi}$ just as $H_{ij}$ for simple.
\begin{lemma} Under above notations, the subalgebra $H_{00}$ is a Hopf subalgebra which is isomorphic to either $\k[x]$ or $\k[x^{\pm 1}].$
\end{lemma}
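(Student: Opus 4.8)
The plan is to first show that $H_{00}$ is a Hopf subalgebra, then identify it using the classification of one-dimensional connected algebraic groups. For the first part, recall that by Lemma~\ref{l2.8}(1) each $H_{i,\pi}^l$ is a right coideal and each $H_{j,\pi}^r$ is a left coideal of $H$, so $\Delta(H_{00}) \subseteq \Delta(H_{0,\pi}^l) \cap \Delta(H_{0,\pi}^r) \subseteq (H_{0,\pi}^l \otimes H) \cap (H \otimes H_{0,\pi}^r)$. To get $\Delta(H_{00}) \subseteq H_{00} \otimes H_{00}$ I would apply the compatible bigrading: since $\Xi_\pi^l$ and $\Xi_\pi^r$ act on $H \otimes H$ diagonally (winding automorphisms are compatible with $\Delta$ in the sense that $\Delta \circ \Xi_\pi^l = (\Xi_\pi^l \otimes \mathrm{id})\Delta$ and $\Delta \circ \Xi_\pi^r = (\mathrm{id} \otimes \Xi_\pi^r)\Delta$, from the definitions and coassociativity), an element fixed by both must have its coproduct land in the $(0,0)$-bidegree of $H \otimes H$ under the appropriate grading, and then one checks this forces both tensor legs into $H_{00}$. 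Here is where $\mi(\pi)=1$ enters crucially: $\mi(\pi) = |G_\pi^l/(G_\pi^l \cap G_\pi^r)| = 1$ means $G_\pi^l = G_\pi^r$, so the two gradings coincide and $H_{00} = H_{0,\pi}^l = H_{0,\pi}^r = H_{0,\pi}$; then $\Delta(H_{0,\pi}^l) \subseteq H_{0,\pi}^l \otimes H$ combined with $\Delta(H_{0,\pi}^r) \subseteq H \otimes H_{0,\pi}^r$ and the fact that these are the \emph{same} subspace gives $\Delta(H_{00}) \subseteq H_{00} \otimes H$ and $\subseteq H \otimes H_{00}$ simultaneously, hence $\subseteq H_{00} \otimes H_{00}$. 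Stability under the antipode follows from Lemma~\ref{l2.8}(3): $S(H_{ij,\pi}) = H_{-j,-i,\pi}$, so $S(H_{00}) = H_{00}$. Counit-closure is automatic. Thus $H_{00}$ is a Hopf subalgebra.

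Next I would determine the structure of $H_{00}$. By Lemma~\ref{l2.5}, $H = \bigoplus_i H_{i,\pi}^l$ is strongly graded over $\widehat{G_\pi^l} \cong \Z_n$, so $H$ is a finitely generated projective (in fact free, since $H_{00}$ is a domain hence has trivial Picard group in GK-dimension one) module over $H_{00}$ of rank $n$; since $H$ is affine noetherian of GK-dimension one, so is $H_{00}$. By (Hyp2), $H_{00} = H_{0,\pi}$ is a domain. A commutative affine domain of GK-dimension one which is also a Hopf algebra is the coordinate ring of a connected one-dimensional algebraic group—but I must first check $H_{00}$ is commutative: this is exactly Lemma~\ref{l2.7}(2), which says that if either invariant component is a domain then both are commutative domains. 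So $H_{00}$ is a commutative affine Hopf domain of GK-dimension one, hence the coordinate ring of a connected algebraic group of dimension one over $\k$, and by \cite[Theorem 20.5]{Hu} (as recalled in Subsection~\ref{ss2.3}) the only such are $\mathbb{A}^1$ and $\mathbb{G}_m$, i.e.\ $H_{00} \cong \k[x]$ or $\k[x^{\pm 1}]$.

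The main obstacle I anticipate is the very first step: carefully justifying $\Delta(H_{00}) \subseteq H_{00} \otimes H_{00}$ rather than merely $\Delta(H_{0,\pi}^l) \subseteq H_{0,\pi}^l \otimes H$. The one-sided coideal statements of Lemma~\ref{l2.8}(1) do not by themselves close up the tensor on both sides; what saves the argument is precisely the hypothesis $\mi(\pi)=1$, which collapses the left and right gradings to one, so that the single subspace $H_{00}$ inherits \emph{both} coideal conditions. Without this hypothesis $H_{00}$ need not be a Hopf subalgebra, which is why this lemma is stated only in the ideal case. A secondary (but routine) point is confirming that the rank-$n$ module structure forces $H_{00}$ to remain affine and noetherian of GK-dimension one, which follows from strong gradedness together with the theorem of Small–Stafford–Warfield cited earlier.
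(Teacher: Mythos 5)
Your proposal is correct and takes essentially the same route as the paper: $\mi(\pi)=1$ forces $G_\pi^l=G_\pi^r$, hence $H_{0,\pi}^l=H_{0,\pi}^r=H_{00}$, so Lemma \ref{l2.8} (1) and (3) give closure under $\D$ and $S$, Lemma \ref{l2.7} (with (Hyp2)) gives that $H_{00}$ is a commutative domain of GK-dimension one, and the classification of connected one-dimensional algebraic groups yields $\k[x]$ or $\k[x^{\pm 1}]$. The parenthetical claim that $H$ is free over $H_{00}$ because a domain of GK-dimension one has trivial Picard group is dubious and in any case unnecessary, since only the transfer of GK-dimension and noetherian/affine properties through the strong grading is needed, exactly as in the paper.
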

\begin{proof} Since $\textrm{min}(\pi)=1$, $H_{0}^{l}=H_{0}^{r}=H_{00}$. By (1) and (3) of Lemma \ref{l2.8}, $H_{00}$ is stable under the operations $\D$ and $S$. This implies that $H_{00}$ is a Hopf subalgebra. By Lemma \ref{l2.7} and its proof, we know that $H_{00}$ is a commutative domain of GK-dimension one. So $H_{00}$ is the coordinate algebra of connected algebraic group of dimension one. Thus it is isomorphic to either $\k[x]$ or $\k[x^{\pm 1}].$
\end{proof}

Therefore, we have a dichotomy on the structure of $H$ now.
\begin{definition} \emph{Let  $H$ be a prime Hopf algebra of GK-dimension one satisfying (Hyp1), (Hyp2) and $ \textrm{min}(\pi)=1$.}
\begin{itemize} \item[(a)] \emph{We call }$H$ additive \emph{if $H_{00}$ is the coordinate algebra of the additive group, that is, $H_{00}=\k[x]$.}
\item[(b)] \emph{We call} $H$ multiplicative \emph{if $H_{00}$ is the coordinate algebra of the multiplicative group, that is, $H_{00}=\k[x^{\pm 1}]$.}
\end{itemize}
\end{definition}

\begin{remark} \emph{In both \cite{BZ} and \cite{WLD}, the additive $H$ was called} primitive \emph{while the multiplicative $H$ was called} group-like. \emph{Here we used a slightly different terminology for intuition.}
\end{remark}

If we check the proof of the \cite[Propositions 4.2, 4.3]{BZ} carefully, then one can find that these propositions are still valid even we remove the requirement about regularity. So we state the following result, the same as \cite[Propositions 4.2, 4.3]{BZ}, without proof.

\begin{proposition}\label{p5.4} Let  $H$ be a prime Hopf algebra of GK-dimension one with PI-deg$(H)=n>1$ and satisfies (Hyp1), (Hyp2) and $\textrm{min}(\pi)=1$. Then
\begin{itemize} \item[(a)] If $H$ is additive, then $H\cong T(n,0,\xi)$ of Subsection 2.3.
\item[(b)] If $H$ is multiplicative, then $H\cong \k \mathbb{D}$ of Subsection 2.3.
\end{itemize}
In particular, such $H$ must be regular.
\end{proposition}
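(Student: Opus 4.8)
The plan is to upgrade the conclusion of the preceding lemma --- $H_0:=H_{00}$ is a Hopf subalgebra isomorphic to $\k[x]$ or $\k[x^{\pm1}]$, and $H$ is strongly $\Z_n$-graded over $H_0$ --- into an explicit presentation of $H$. First I would record two refinements available once $\mi(\pi)=1$, so that $G_\pi^l=G_\pi^r$. Because the counit cannot vanish on a nonzero right coideal, Lemma~\ref{l2.8}(1) shows $\epsilon$ is nonzero on each left-graded piece $H_i^l$, and combined with Lemma~\ref{l2.8}(4)(5) this forces the left and right gradings to coincide; hence $H=\bigoplus_{i\in\Z_n}H_i$ is simultaneously a strong algebra grading and satisfies $\Delta(H_i)\subseteq(H_i\otimes H)\cap(H\otimes H_i)=H_i\otimes H_i$. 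Second, $H$ is module-finite over its centre \cite{SSW}, the centre is module-finite over its $G_\pi$-invariants (which lie in $H_0$), so $H$ is module-finite over $H_0$; thus each $H_i$ is a finitely generated torsion-free, hence free of rank one, module over the principal ideal domain $H_0$, and it is invertible by strong grading.

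The heart of the matter is to show that $H_1$ is generated by a group-like element. Choose a free generator $t$ of $H_1$; then $\epsilon(t)\neq0$ (else $\epsilon(H_1)=\epsilon(H_0)\epsilon(t)=0$), so $g:=\epsilon(t)^{-1}t$ is a generator with $\epsilon(g)=1$, and one may write $\Delta(g)=\omega\,(g\otimes g)$ for a unique $\omega\in H_0\otimes H_0$, since $g\otimes g$ freely generates $H_1\otimes H_1$ over $H_0\otimes H_0$. Expanding the antipode identity $\sum g_1S(g_2)=\epsilon(g)1=1$ and using that $H_0$ is commutative, one gets that $gS(g)$ is a unit of $H_0$; since units of $\k[x]$ and of $\k[x^{\pm1}]$ are units of $H$, this makes $g$ invertible in $H$. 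Then both $\omega=\Delta(g)(g^{-1}\otimes g^{-1})$ and $\omega^{-1}=(g\otimes g)\Delta(g^{-1})$ lie in $H_0\otimes H_0$ for grading reasons, so $\omega$ is a unit of $H_0\otimes H_0$. But the units of $H_0\otimes H_0$ --- which is $\k[x,y]$, respectively $\k[x^{\pm1},y^{\pm1}]$ --- are scalars, respectively scalar monomials, and the counit conditions $(\epsilon\otimes\id)\omega=(\id\otimes\epsilon)\omega=1$ then force $\omega=1\otimes1$, i.e.\ $\Delta(g)=g\otimes g$. Hence $H=\bigoplus_iH_0g^i$ is generated by $H_0$ together with the group-like $g$.

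Finally I would read off the relations. Conjugation by the group-like $g$ is an automorphism of the Hopf algebra $H_0$, and $g^n\in H_0$ is group-like. In the additive case it sends the essentially unique primitive $x$ to $\lambda x$ and $g^n$ to $1$ (as $G(\k[x])=\{1\}$); primeness together with the PI-degree being $n$ forces $\lambda$ to have order exactly $n$, and comparison with the presentations of Subsection~\ref{ss2.3} yields $H\cong T(n,0,\xi)$ with $\xi:=\lambda^{-1}$. In the multiplicative case it sends $x$ to $x^{\pm1}$; the choice $x\mapsto x$ makes $H$ commutative, contradicting $n>1$, so $gxg^{-1}=x^{-1}$, whence $x^m=gx^mg^{-1}=x^{-m}$ from $g^n=x^m$, so $x^{2m}=1$ and, $x$ being of infinite order, $m=0$ and $g^n=1$; then $n$ must be even (else $x^2=1$), and primeness excludes the finite normal subgroup $\langle g^2\rangle$ unless $n=2$, giving $H\cong\k\mathbb{D}$. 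Both $T(n,0,\xi)$ (a bosonization $\k[x]\#\k\Z_n$) and $\k\mathbb{D}$ (the group algebra of a virtually free group) have finite global dimension, which proves the regularity clause. I expect the step needing the most care to be the group-likeness of the normalized generator --- in particular seeing that it is a \emph{unit} of $H$, which is what allows the twist element $\omega$ to be pinned down through the small unit group of $H_0\otimes H_0$ --- and, in the multiplicative case, the bookkeeping forcing $n=2$.
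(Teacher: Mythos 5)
Your argument is correct and is essentially the proof the paper omits: the paper simply asserts that the proofs of \cite[Propositions 4.2, 4.3]{BZ} remain valid once regularity is replaced by (Hyp2), and your reconstruction---coincidence of the left and right gradings from $\mi(\pi)=1$ and Lemma \ref{l2.8}(4)(5), rank-one freeness of each component over the PID $H_{00}$, normalization of a generator of $H_1$ to a group-like by computing the unit group of $H_{00}\otimes H_{00}$, and then pinning down $g^n$ and the conjugation action by primeness---is that same line of reasoning carried out using only (Hyp1), (Hyp2) and primeness. I see no gaps.
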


\subsection{Ideal case two: $\textrm{ord}(\pi)=\textrm{min}(\pi)$.} In this subsection, $H$ is a prime Hopf algebra of GK-dimension one satisfying (Hyp1), (Hyp2) and $ n:=\ord(\pi)=\textrm{min}(\pi)>1$ (if $=1$, then clearly $H$ commutative by our (Hyp2)).
Recall that we have the following bigrading
$$H=\bigoplus_{i,j=0}^{n-1}H_{ij}.$$
The following is some parts of \cite[Proposition 5.2, Theorem 5.2]{BZ}, which are proved without the hypothesis on regularity and thus they are true in our case.
\begin{lemma}\label{l5.5} Retain the notations above. Then
\begin{itemize}\item[(a)] The center of $H$ equals to $H_{0}:=H_{00}$.
\item[(b)] The center of $H$ is a Hopf subalgebra.
\end{itemize}
\end{lemma}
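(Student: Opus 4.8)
The plan is to obtain both statements from the strongly graded structure together with Lemma~\ref{l2.10}, exactly as in \cite[Proposition 5.2, Theorem 5.2]{BZ}; the point to bear in mind is that regularity of $H$ is never used there. First I would record the only feature of ideal case two that enters: since $\mi(\pi)=|G_\pi^l/(G_\pi^l\cap G_\pi^r)|$ and $\ord(\pi)=|G_\pi^l|=n$, the hypothesis $\ord(\pi)=\mi(\pi)$ says precisely that $G_\pi^l\cap G_\pi^r=\{1\}$. Then I would prove (a) in two steps, $Z(H)\subseteq H_{00}$ and $H_{00}\subseteq Z(H)$, and read off (b) from the resulting description of the center.

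For $Z(H)\subseteq H_{00}$: by \eqref{eq2.3} the bigrading makes $H$ a $\Z_n\times\Z_n$-graded algebra, so comparing bidegrees in $zu=uz$ for bihomogeneous $u$ shows that every bihomogeneous component of a central element is again central; since nonzero homogeneous elements are regular by Lemma~\ref{l2.10}(a), it suffices to exclude a nonzero central $z\in H_{ij}$ with $(i,j)\neq(0,0)$. For such $z$ and any $a\in H_{0,\pi}^l$, Lemma~\ref{l2.10}(b) gives $(\chi^i\triangleright a)z=za=az$, hence $\chi^i\triangleright a=a$ because $z$ is regular; as the action $\triangleright$ of $\widehat{G_\pi^l}$ on $H_{0,\pi}^l$ is faithful (Lemma~\ref{l2.10}(c), using PI-deg$(H)=n$), this forces $i=0$, and the symmetric argument with the right grading forces $j=0$. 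The same computation shows more generally that an element of $H_{00}$ is central if and only if it is fixed by the restriction to $H_{00}$ of the $\triangleright$-action of $\widehat{G_\pi^l}$; thus $Z(H)$ is the invariant ring of a finite (cyclic) group action on $H_{00}$.

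For $H_{00}\subseteq Z(H)$ I would compare module ranks. On one side $H$ is a finite module over $Z(H)$ of rank $\big(\textrm{PI-deg}(H)\big)^2=n^2$. On the other side $H=\bigoplus_i H_{i,\pi}^l$ is strongly $\widehat{G_\pi^l}$-graded over the commutative domain $H_{0,\pi}^l$ (Lemmas~\ref{l2.5} and~\ref{l2.7}), so each $H_{i,\pi}^l$ is an invertible $H_{0,\pi}^l$-module and $\operatorname{rank}_{H_{0,\pi}^l}H=n$; moreover $H_{00}$ is the invariant ring of the finite cyclic group generated by $\Xi_\pi^r|_{H_{0,\pi}^l}$, so $\operatorname{rank}_{H_{00}}H_{0,\pi}^l=d$, where $d$ is the order of that automorphism, whence $\operatorname{rank}_{H_{00}}H=nd$. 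The crux is the identity $d=n$: if $d<n$ then, since $\Xi_\pi^r$ acts on the eigenspace $H_{0j}$ by $\zeta^{j}$, the set $\{\,j:H_{0j}\neq0\,\}$ is a proper subgroup of $\Z_n$, and a short bookkeeping with the bigrading shows that $(\Xi_\pi^r)^{d}$, which is trivial on $H_{0,\pi}^l$, acts on each $H_{i,\pi}^l$ by a scalar depending linearly on $i$, hence coincides with a power of $\Xi_\pi^l$; as $d<n$ this is a nontrivial element of $G_\pi^l\cap G_\pi^r$, contradicting $\ord(\pi)=\mi(\pi)$. Therefore $\operatorname{rank}_{H_{00}}H=n^2=\operatorname{rank}_{Z(H)}H$, so the inclusion $Z(H)\subseteq H_{00}$ is an equality of fraction fields; since $Z(H)$ is the invariant ring of a finite group action on $H_{00}$, equality of the fraction fields forces that action to be trivial, i.e.\ $Z(H)=H_{00}$, proving (a).

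Finally, for (b): $\e$ restricts to an algebra map on the subalgebra $H_{00}$, and $S(H_{00})=H_{00}$ by Lemma~\ref{l2.8}(3) (equivalently, $S$ carries the center to the center). For $\D$, the coideal properties of $H_{0,\pi}^l$ and $H_{0,\pi}^r$ in Lemma~\ref{l2.8}(1), together with injectivity of $\D$, give $\D(H_{00})=\D(H_{0,\pi}^l)\cap\D(H_{0,\pi}^r)\subseteq H_{0,\pi}^l\otimes H_{0,\pi}^r$, and one then upgrades this to $H_{00}\otimes H_{00}$ using that $H_{00}=Z(H)$ is stable under every algebra automorphism of $H$, in particular under the winding automorphisms applied to either tensor factor; this last refinement is the argument of \cite[Theorem 5.2]{BZ}. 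The hard part of the whole proof is the rank computation, specifically the identity $d=n$, which is the one place where $\ord(\pi)=\mi(\pi)$ is genuinely used; the passage from $\D(H_{00})\subseteq H_{0,\pi}^l\otimes H_{0,\pi}^r$ to $H_{00}\otimes H_{00}$ is the only other step that requires care.
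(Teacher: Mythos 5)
The paper offers no proof of this lemma at all---it is quoted from \cite[Proposition 5.2, Theorem 5.2]{BZ} with the remark that regularity is not used there---so your reconstruction is being judged on its own merits. Part (a) is correct and complete in outline: reducing to bihomogeneous central elements via the bigrading, using regularity of homogeneous elements (Lemma \ref{l2.10}(a)) and faithfulness of $\triangleright$ (Lemma \ref{l2.10}(c)) to get $Z(H)\subseteq H_{00}$, and then the rank comparison $\operatorname{rank}_{Z(H)}H=n^2$ versus $\operatorname{rank}_{H_{00}}H=nd$ together with the argument that $d<n$ would produce a nontrivial element $(\Xi_\pi^r)^d=(\Xi_\pi^l)^e$ of $G_\pi^l\cap G_\pi^r$. (In that last step you should say explicitly that the support $\{j:H_{0j}\neq 0\}$ is a subgroup $S$ of $\Z_n$ and that $\{j:H_{ij}\neq 0\}$ is a single coset of $S$ --- both follow from regularity of nonzero homogeneous elements --- since this is what makes $(\Xi_\pi^r)^d$ act on $H_i^l$ by a single scalar multiplicative in $i$.) This matches what one expects the Brown--Zhang argument to be.

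Part (b), however, has a genuine gap at exactly the step you flag as needing care. The compatibilities between $\D$ and the winding automorphisms are $(\Xi_\pi^l\otimes\id)\D=\D\,\Xi_\pi^l$, $(\id\otimes\Xi_\pi^r)\D=\D\,\Xi_\pi^r$, and the crossed identity $(\Xi_\pi^r\otimes\id)\D=(\id\otimes\Xi_\pi^l)\D$ (both sides equal $a_1\pi(a_2)\otimes a_3$). For $a\in H_{00}$ the first two give $\D(a)\in H_{0}^l\otimes H_{0}^r=\bigoplus_{i,j}H_{0i}\otimes H_{j0}$, and the crossed identity, applied to the component $c_{ij}\in H_{0i}\otimes H_{j0}$ on which it acts by $\zeta^i$ and $\zeta^j$ respectively, kills only the terms with $i\neq j$. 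So everything you can extract from ``winding automorphisms applied to either tensor factor'' is
$$\D(a)\in\bigoplus_{i=0}^{n-1}H_{0i}\otimes H_{i0},$$
and the real content of (b) is the vanishing of the diagonal components with $i\neq 0$. Stability of $H_{00}=Z(H)$ under algebra automorphisms does not help here (it only reproduces the containments already listed), and centrality of $a$ does not formally force $\D(a)$ to centralize $H\otimes 1$ and $1\otimes H$, which is what a ``$Z(H\otimes H)=Z(H)\otimes Z(H)$'' argument would need. You must either reproduce the actual argument of \cite[Theorem 5.2]{BZ} for this step or supply one --- for instance by exploiting that each $H_{0i}$, $H_{i0}$ has rank one over $H_{00}=Z(H)$ and passing to the central simple algebra $Q(H_{00})\otimes_{H_{00}}H$, or by first establishing the freeness and the form of $\D(u_{0i})$ as is done later in Section 5. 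As written, the deduction $\D(H_{00})\subseteq H_{00}\otimes H_{00}$ is asserted rather than proved.
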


The statement $(b)$ in this lemma also imply that we are in the same situation as ideal case one now: $H$ is either additive or multiplicative. No matter what kind of $H$ is, $H_{ij}$ is a free $H_0$-module of rank one (see the analysis given in \cite[Page 287]{BZ}), that is
$$H=\bigoplus_{i,j=0}^{n-1}H_{ij}=\bigoplus_{i,j=0}^{n-1}H_{0}u_{ij}
=\bigoplus_{i,j=0}^{n-1}u_{ij}H_{0},$$
and the action of winding automorphism (relative to $\pi$) is given by
$$\Xi_{\pi}^{l}(u_{ij}a)=\xi^{i}u_{ij}a,\quad\quad \textrm{and} \quad\quad
\Xi_{\pi}^{r}(u_{ij}a)=\xi^{j}u_{ij}a$$
for $a\in H_0$ and $\xi$ a primitive $n$th root of unity. Due to \cite[Proposition 6.2]{BZ}, all these elements $u_{ij}\;(0\leq i,j\leq n-1)$ are normal. Moreover, by \cite[Lemma 6.2]{BZ}, they satisfy the following relation:
\begin{equation}\label{eq5.1} u_{ij}u_{i'j'}=\xi^{i'j-ij'}u_{i'j'}u_{ij}.
\end{equation}

By Lemma \ref{l5.5}, $H_{00}$ is a normal Hopf subalgebra of $H$ which implies that there is an exact sequence of Hopf algebras
\begin{equation}\label{eq5.2} \k\longrightarrow H_{00}\longrightarrow H\longrightarrow \overline{H}\longrightarrow \k,
\end{equation}
where $\overline{H}=H/HH_{00}^{+}$ and by definition $H_{00}^{+}=H_{00}\bigcap \ker \e.$ As one of basic observations of this paper, we have the following result.
\begin{lemma}\label{l5.6} As a Hopf algebra, $\overline{H}$ is isomorphic to a fraction version of a Taft algebra $T(n_1,\ldots,n_\theta,\xi)$ for $n_1,\ldots,n_{\theta}$ a fraction of $n$.
\end{lemma}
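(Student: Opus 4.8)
The plan is to identify $\overline H = H/HH_{00}^{+}$ explicitly, by exhibiting generators and relations and then matching dimensions with a finite‑dimensional fraction Taft algebra.

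First I would record the coarse structure of $\overline H$. Since $H_{00}$ is a central normal Hopf subalgebra (Lemma \ref{l5.5}) and $H=\bigoplus_{0\le i,j\le n-1}H_{00}u_{ij}$ is free of rank $n^{2}$ over it, $\overline H=H\otimes_{H_{00}}\k$ is a Hopf algebra with $\dim_{\k}\overline H=n^{2}$, inheriting the $\Z_{n}\times\Z_{n}$‑bigrading with one‑dimensional components $\overline H_{ij}=\k\,\overline{u_{ij}}$, $\overline{u_{00}}=1$, and the commutation rule \eqref{eq5.1} descends verbatim. By Lemma \ref{l5.5}(b) and the discussion following it, $H$ is either additive or multiplicative; I treat the additive case ($H_{00}=\k[x]$, $x$ primitive, $\overline H=H/(x)$), the multiplicative case ($H_{00}=\k[x^{\pm1}]$, $\overline H=H/(x-1)$) being entirely parallel. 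Using Lemma \ref{l2.8}(1),(4),(5) one computes the ``corner'' coproducts $\Delta(u_{ii})$, and passing to $\overline H$ and invoking its coradical filtration shows that $\overline g:=\overline{u_{11}}$ is a group‑like element. Since $\overline g^{\,i}$ is a nonzero scalar multiple of $\overline{u_{ii}}$ and these lie in distinct homogeneous components, $\overline g$ has order exactly $n$, $G(\overline H)=\langle\overline g\rangle$, the coradical is $\k\langle\overline g\rangle$, and in particular $\overline H$ is pointed.

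The heart of the proof is the analysis of the left invariant component $H_{0}^{l}=\bigoplus_{j}H_{0j}$: it is a commutative affine domain of GK‑dimension one with $(H_{0}^{l})_{0}=\k[x]$, each $(H_{0}^{l})_{j}=\k[x]u_{0j}$ free of rank one, and fibre $\overline H_{0}^{l}=H_{0}^{l}/(x)=\bigoplus_{j}\k\,\overline{u_{0j}}$ an $n$‑dimensional commutative algebra. I would show that, after rescaling the $u_{0j}$, there is a subset $m_{1}<\dots<m_{\theta}$ of $\{1,\dots,n-1\}$ which is a fraction of $n$ in the sense of Definition \ref{d3.1}, with $u_{0,m_{i}}^{e_{i}}=p_{i}(x)$ where $p_{i}(0)=0$ (indeed $p_i(x)=\lambda_i x$), with $u_{0,m_{i}}u_{0,m_{j}}=u_{0,m_{i}+m_{j}}$, and such that the monomials $u_{0,m_{1}}^{a_{1}}\cdots u_{0,m_{\theta}}^{a_{\theta}}$ ($0\le a_{i}<e_{i}$) form a $\k[x]$‑basis of $H_{0}^{l}$. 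Concretely: pick $m_{1}$ minimal with $u_{0,m_{1}}\notin\k[x]$; then $n\mid e_{1}m_{1}$, minimality gives $(e_{1},m_{1})=1$, and $\k[x][u_{0,m_{1}}]\cong\k[x][z]/(z^{e_{1}}-p_{1}(x))$ is a domain; compatibility of this relation with $\Delta$ together with the requirement that $\overline{u_{0,m_{1}}}$ be skew‑primitive in the pointed $\overline H$ (hence nilpotent) forces $p_{1}(0)=0$; passing to the quotient grading and iterating produces $m_{2},\dots,m_{\theta}$, while $n\mid m_{i}m_{j}$, $n=e_{1}\cdots e_{\theta}$ and the uniqueness condition (4) of Definition \ref{d3.1} come out of degree bookkeeping and the rank‑$n$ count. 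Consequently $\overline H_{0}^{l}\cong\k[y_{m_{1}},\dots,y_{m_{\theta}}]/(y_{m_{i}}^{e_{i}})$. Finally, since $H$ is generated over $H_{00}$ by $u_{11}$ and the $u_{0,m_{i}}$, the algebra $\overline H$ is generated by $\overline g$ and $\overline y_{m_{i}}:=\overline{u_{0,m_{i}}}$; the coproduct analysis (applied to $u_{0,m_{i}}\in H_{0,m_{i}}$, with $\overline{u_{m_{i},m_{i}}}=\overline g^{\,m_{i}}$ up to scalar and the higher terms killed by the coradical filtration) shows each $\overline y_{m_{i}}$ is $(1,\overline g^{\,m_{i}})$‑skew primitive; \eqref{eq5.1} yields $\overline y_{m_{i}}\overline g=\xi^{\pm m_{i}}\overline g\,\overline y_{m_{i}}$; and the previous step gives $\overline y_{m_{i}}^{e_{i}}=0$, $\overline y_{m_{i}}\overline y_{m_{j}}=\overline y_{m_{j}}\overline y_{m_{i}}$. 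These are exactly the defining relations of $T(m_{1},\dots,m_{\theta},\xi)$ (Subsection 3.2, with the normalisation $y_{m_{i}}g=\xi^{m_{i}}gy_{m_{i}}$), so there is a surjective Hopf algebra map $T(m_{1},\dots,m_{\theta},\xi)\to\overline H$; as both sides have dimension $n^{2}$ by Lemma \ref{l3.3}, it is an isomorphism.

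The step I expect to be the genuine obstacle is the middle one: proving that the invariant component $H_{0}^{l}$ — which here is only assumed to be a domain, not a Dedekind domain as in the regular case of \cite{BZ} — is forced to have ``fraction form''. This is precisely the difficulty for which the notion of a fraction of a natural number was introduced, and it is where the real work lies; by comparison, the coalgebra manipulations needed to pin down the group‑likes and skew‑primitive generators of $\overline H$, though fiddly, are routine once $\overline H$ is known to be pointed with cyclic coradical.
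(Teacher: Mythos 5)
Your overall architecture matches the paper's: show $\overline H$ is an $n^2$-dimensional pointed Hopf algebra whose group-likes form the cyclic group generated by $\overline{u_{11}}$, locate skew-primitive generators of $\overline H_0^l$ whose indices form a fraction of $n$, and finish with a surjection from $T(n_1,\dots,n_\theta,\xi)$ plus a dimension count. However, the step you yourself flag as the obstacle is exactly where your argument has a genuine gap, and the fix you sketch does not work. ``Pick $m_1$ minimal with $u_{0,m_1}\notin\k[x]$'' is ill-posed: every $u_{0,j}$ with $j\neq 0$ lies in $H_{0,j}\neq H_{00}$, so the minimal choice is always $m_1=1$; and in a genuine fraction example such as $T(\{2,3\},1,\xi)$ (where $1\equiv 2\cdot 2+1\cdot 3 \ (\mathrm{mod}\ 6)$, so $u_{0,1}$ is a scalar multiple of $y_{2}^{2}y_{3}$) the element $u_{0,1}$ is neither skew-primitive nor does $\k[x][u_{0,1}]$ contain $y_2$ or $y_3$ (one only gets $x^{2}y_{2}=(y_2^2y_3)^2$), so the greedy iteration never reaches the true generators. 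More fundamentally, nothing in your argument guarantees that $\overline H_0^l$ is generated by skew-primitive elements at all, nor that the cross terms $\sum_{s\neq 0,j}c^{0j}_{ss}v_{0s}\otimes v_{sj}$ in $\Delta(v_{0j})$ vanish for whatever indices you select. This is precisely the nontrivial input the paper supplies: it first proves $v_{ij}v_{kl}=0$ whenever $i\neq j$ and $i+k\equiv j+l$, which yields a Hopf algebra projection $\overline H\to \k G(\overline H)$ and hence a Radford biproduct decomposition $\overline H=\overline H_0^l\#\k G(\overline H)$, and then invokes Angiono's generation-in-degree-one theorem \cite[Theorem 2]{Ang} to conclude that $\overline H_0^l$ is generated by skew-primitives, each of which is forced by \eqref{eq5.1} to be a scalar multiple of some $v_{0,n_i}$.

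Two further assertions also replace real arguments. The coprimality $(e_i,n_i)=1$ does not follow from any minimality; the paper derives it from the quantum binomial theorem, producing otherwise a nonzero skew-primitive $v_{0n_i}^{e_i/d_i}$ that would generate an infinite-dimensional Taft-type Hopf subalgebra inside the finite-dimensional $\overline H$. Likewise, conditions (3) and (4) of Definition \ref{d3.1} do not ``come out of degree bookkeeping'': the paper proves nonvanishing and linear independence of the monomials $v_{0n_1}^{l_1}\cdots v_{0n_\theta}^{l_\theta}$ by an induction on a lexicographic order, applying $\Delta$ to a putative minimal dependence relation. Without a substitute for the generation-by-skew-primitives result and these two verifications, your proof does not close.
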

\begin{proof} Denote the image of $u_{ij}$ in $\overline{H}$ by $v_{ij}$ for $0\leq i,j\leq n-1$.
Due to $H$ is bigraded, $$\overline{H}=\bigoplus_{i,j=0}^{n-1}\overline{H}_{ij}=\bigoplus_{i,j=0}^{n-1}\k v_{ij}.$$
Let $g=v_{11}$. Then by (a), (b) and (e) of \cite[Proposition 6.6]{BZ}, which are still true even $H$ is not regular, these elements $v_{ij}$ can be chosen to satisfy $$g^{n}=1,\;\;\;\;v_{ii}=g^{i},\;\;(0\leq i\leq n-1),\;\;\;\;v_{ij}=g^{i}v_{0(j-i)},\;\;(0\leq i\neq j\leq n-1) $$
and $$v_{ij}^{n}=0,\;\;\;\;(0\leq i\neq j\leq n-1).$$
Moreover, one can use (1), (4) and (5) of Lemma \ref{l2.8} and the axioms for a coproduct to show that $g$ is group-like and
$$\D(v_{ij})=v_{ii}\otimes v_{ij}+v_{ij}\otimes v_{jj}+\sum_{s\neq i,j}c_{ss}^{ij}v_{is}\otimes v_{sj}=g^{i}\otimes v_{ij}+v_{ij}\otimes g^{j}+\sum_{s\neq i,j}c_{ss}^{ij}v_{is}\otimes v_{sj}$$
for some $c_{ss}^{ij}\in \k$ and $0\leq i\neq j\leq n-1$ (see also \cite[Lemma 6.5]{BZ} for a explicit proof). Using this formula for coproduct, it is not hard to see that $\overline{H}$ is a pointed Hopf algebra with $G(\overline{H})=\{g^{i}|0\leq i\leq n-1\}.$

Let $\overline{H}_{i}^{l}:=\bigoplus _{j=0}^{n-1} \overline{H}_{ij}$ and then through inheriting the strongly graded property of $H$, we know that $\overline{H}=\bigoplus_{i=0}^{n-1}\overline{H}_{i}^{l}$ is strongly graded. We want to consider the subalgebra $\overline{H}_{0}^{l}=\bigoplus_{j=0} \k v_{0j}$. For this, we take the following linear map
$$\pi':\overline{H}\longrightarrow \k G(\overline{H}),\;\;v_{ij}\longmapsto \delta_{ij}v_{ij}. $$
At first, we prove that $\pi'$ is an algebraic map. For this, it is enough to show that $$v_{ij}v_{kl}=0 $$
for all $i\neq j$ with $i+k\equiv j+l$ (mod $n$). Assume that this is not true, then $v_{ij}v_{kl}=av_{i+k,j+l}$ for some $0\neq a\in \k$, which is invertible by $v_{ii}=g^{i}$ for all $0\leq i\leq n-1$. But this is impossible since $v_{ij}$ is nilpotent. So, $\pi'$ is an algebraic map. In addition, the formula for the coproduct implies that $\pi'$ is also a coalgebra map. Therefore, $\pi'$ is a Hopf projection.  Using the classical Radford's biproduct (see Subsection \ref{ss2.4}), we have the following decomposition
$$\overline{H}=\overline{H}_{0}^{l}\#\k G(\overline{H}).$$
By \cite[Theorem 2]{Ang}, $\overline{H}_{0}^{l}$ is generated by skew primitive elements, say $x_1,\dots, x_{\theta}$ (we ask that $\theta$ is as small as possible). Moreover, by the proof of \cite[Theorem 2]{Ang} we know that $gx_ig^{-1}\in \k x_i$ for $(1\leq i\leq \theta)$. So, equation \eqref{eq5.1} implies that up to a nonzero scalar $x_i$ equals to a $v_{0j}$ for some $j$. In one word, we prove that the subalgebra $\overline{H}_{0}^{l}$ is generated by $v_{0n_1},\ldots,v_{0n_\theta}$ which are skew primitive elements.

\emph{Claim: $n_1,\ldots,n_\theta$ is a fraction of $n$.}

\emph{Proof of the claim:} Let $e_i$ be the exponent of $n_i$ for $1\leq i\leq \theta.$ We find that $e_i$ is the smallest number such $v_{0n_i}^{e_i}=0$. Indeed, on one hand it is not hard to see that $v_{0n_i}^{e_i}=0$ since by definition $v_{0n_i}^{e_i}\in \overline{H}_{00}=\k$ and $v_{0n_i}$ is nilpotent. On the other hand, assume that there is $l<e_i$ which is smallest such that $v_{0n_i}^{l}=0$. Then
 $$0=\D(v_{0n_i})^{l}=(1\otimes v_{0n_i}+v_{0n_i}\otimes g^{n_i})^{l}=\sum_{k=0}^{l}\left (
\begin{array}{c} l\\k
\end{array}\right)_{\xi^{n_i^2}}v_{0n_i}^{k}\otimes g^{n_i(l-k)}v_{0n_i}^{l-k}$$
 which implies that $\left (
\begin{array}{c} l\\k
\end{array}\right)_{\xi^{n_i^2}}=0$ for all $1\leq k\leq l-1$ and thus $\xi^{n_i^2}$ must be a primitive $l$th root of unity. Now we consider the element $v_{0,ln_i}$ which is not $1$ by the definition of $l$ (explicitly, $n\nmid ln_i$ since $l< e_i$). Thus the elements $ g':=g^{ln_i}, x:=v_{0,ln_i}$ generate a Hopf subalgebra satisfying
   $$g'x=xg',\;\;\;\;\D(x)=1\otimes x+x\otimes g'.$$
 (We need prove these two relations. The relation $g'x=xg'$ is clear. The proof of $\D(x)=1\otimes x+x\otimes g'$ is given as follows: Lifting these $v_{0j}$ to $H$, we get the corresponding elements $u_{0j}$ for $0\leq j\leq n-1$. Due to \cite[Propostion 6.2]{BZ}, they are normal and thus $u_{0n_i}^{l}=f(x)u_{0,ln_i}$ for some $0\neq f(x)\in H_{00}$. By the claim in the proof of the next proposition, that is, Proposition \ref{p5.7}, $u_{0n_i}$ is a skew primitive element. Using the fact that $\xi^{n_i^2}$ is a primitive $l$th root of unity, $u_{0n_i}^{l}$ is still a skew primitive element. This implies that $\D(f(x)u_{0,ln_i})$ and thus $\D(u_{0,ln_i})\in H_{00}\otimes H_{0,ln_i}+H_{0,ln_i}\otimes H_{ln_i,ln_i}$. Therefore, $v_{0,ln_i}$ has to be skew-primitive.)

 It is well known that a Hopf algebra satisfying above relations must be infinite dimensional (in fact, a infinite dimensional Taft algebra) which is a contradiction.   Thus, $e_i$ is the smallest number such $v_{0n_i}^{e_i}=0$.

   Now, we want to show that $(e_i,n_i)=1$. Otherwise, let $d_i=(e_i,n_i)>1$. Therefore,  we consider
$$\D(v_{0n_i})^{\frac{e_i}{d_i}}=(1\otimes v_{0n_i}+v_{0n_i}\otimes g^{n_i} )^{\frac{e_i}{d_i}}.$$
By definition, $e_i/d_i$ is coprime to $n_i$ thus coprime to $n_i^2$. This implies that  $\xi^{n_i^2}$ is a primitive $e_i/d_i$th root of unity. Therefore,
$$\D(v_{0n_i})^{\frac{e_i}{d_i}}=1\otimes v_{0n_i}^{\frac{e_i}{d_i}}+ g^{n_ie_i/d_i}\otimes v_{0n_i}^{\frac{e_i}{d_i}}.$$
Since $e_i$ is the smallest number such $v_{0n_i}^{e_i}=0$, $v_{0n_i}^{\frac{e_i}{d_i}}\neq 0$. This means that we go into the following situation again: Let $g'=g^{n_ie_i/d_i},\;x=v_{0n_i}^{e_i/d_i}$, then the Hopf subalgebra generated by $g',x$ is infinite dimensional. This is impossible.

Next, we want to show that $n|n_in_j$ for all $1\leq i\neq j\leq \theta.$ Through computation,
$$\D(v_{0n_i}v_{0n_j})=1\otimes v_{0n_i}v_{0n_j}+v_{0n_i}\otimes g^{n_i}v_{0n_j}+v_{0n_j}\otimes v_{0n_i}g^{n_j}+v_{0n_i}v_{0n_j}\otimes g^{n_i+n_j}$$ and
$$\D(v_{0n_j}v_{0n_i})=1\otimes v_{0n_j}v_{0n_i}+v_{0n_j}\otimes g^{n_j}v_{0n_i}+v_{0n_i}\otimes v_{0n_j}g^{n_i}+v_{0n_j}v_{0n_i}\otimes g^{n_i+n_j}.$$
By equation \eqref{eq5.1}, one has $v_{0n_i}v_{0n_j}=v_{0n_j}v_{0n_i}$. This implies that $g^{n_j}v_{0n_i}=v_{0n_i}g^{n_j}=\xi^{n_in_j}g^{n_j}v_{0n_i}$. Therefore, $\xi^{n_in_j}=1$ and thus $n|n_in_j$.

At last, we need to prove the conditions (3) and (4) of a fraction (see Definition \ref{d3.1}). Clearly, conditions (3) and (4) is equivalent to say that every $v_{0t}$ can be expressed as a product of $v_{0n_1},\ldots,v_{0,n_{\theta}}$ \emph{uniquely} (up to the order of these $v_{0,n_i}$'s due to the community of them) for all $0\leq t\leq n-1$. Since we already know that $v_{0n_1},\ldots,v_{0n_\theta}$ generate the whole algebra $\overline{H}_{0}^{l}$, it is enough to prove the following two conclusion: 1) $v_{0n_1}^{l_1}\cdots v_{0n_\theta}^{l_\theta}\neq 0$ for all $0\leq l_1\leq e_1-1,\ldots,0\leq l_\theta\leq e_\theta-1;$ 2) the elements in the set $\{v_{0n_1}^{l_1}\cdots v_{0n_\theta}^{l_\theta}|0\leq l_1\leq e_1-1,\ldots,0\leq l_\theta\leq e_\theta-1\}$ are linear independent. Of course, 1) is just a necessary part of 2). However,  we find that they help each other. To show them, we introduce the lexicographical order on $A=\{(l_1,\ldots,l_{\theta})|0\leq l_1\leq e_1-1,\ldots,0\leq l_\theta\leq e_\theta-1\}$ through
$$(l_1,\ldots,l_{\theta})<(l'_1,\ldots,l'_{\theta})\Leftrightarrow \textrm{exsits}\; 1\leq i\leq \theta\; \textrm{s.t.}\; l_j=l_j' \;\textrm{for} j<i\; \textrm{and} \; l_i<l_i'.$$
Now let $S=\{(s_1,\ldots,s_\theta)\in A|v_{0n_1}^{s_1}\cdots v_{0n_\theta}^{s_\theta}\neq 0\}$. Clearly, $S$ is nonempty due to $v_{0n_i}\neq 0$ for all $1\leq i\leq \theta.$ We prove that all elements $\{v_{0n_1}^{s_1}\cdots v_{0n_\theta}^{s_\theta}|(s_1,\ldots,s_\theta)\in S\}$ are linear independent firstly and then show that $S=A$. From this, 1) and 2) are proved clearly. In fact, assume we have a linear dependent relation among the elements in $\{v_{0n_1}^{s_1}\cdots v_{0n_\theta}^{s_\theta}|(s_1,\ldots,s_\theta)\in S\}$. Then there exists a linear combination
 $$a_{l_1,\ldots,l_\theta}v_{0n_1}^{l_1}v_{0n_2}^{l_2}\cdots v_{0n_\theta}^{l_\theta}+\cdots=0$$ with $a_{l_1,\ldots,l_\theta}\neq 0$ and $(l_1,\ldots,l_\theta)$ is as small as possible.  Takeing the coproduct to the above equality and  one can get a smaller item involving in a linear dependent equation. That is a contradiction. Next, let's show that $S=A$. Otherwise, there exists $v_{0n_1}^{l_1}\cdots v_{0n_\theta}^{l_\theta}=0$ for some $(l_1,\ldots,l_{\theta})\in A$. Then take $(l_1,\ldots,l_{\theta})$ as small as possible under above lexicographical order. Without loss generality, we can assume that $l_1>0$. Then take a $k_1$ such $0\leq k_1< l_1$. In the expression of $\D(v_{0n_1}^{l_1}\cdots v_{0n_\theta}^{l_\theta})$ on can find the coefficient of the item $v_{0n_1}^{l_1-k1}\otimes g^{k_1n_1}v_{0n_1}^{k_1}v_{0n_2}^{l_2}\cdots v_{0n_\theta}^{l_\theta}$ is
$$\left (
\begin{array}{c} l_1\\k_1
\end{array}\right)_{\xi^{n_1^2}}$$
which is not zero since we already know that $\xi^{n_1^2}$ is a primitive $e_1$th root of unity. This implies that either $v_{0n_1}^{l_1-k_1}=0$ or $v_{0n_1}^{k_1}v_{0n_2}^{l_2}\cdots v_{0n_\theta}^{l_\theta}=0$ by the linear independent relation we proved. But both of them are not possible. Therefore, $S=A$. So 1) and 2) are proved. The proof of the claim is done.

Let's go back to prove this lemma. Until now, we have proved that the Hopf algebra $\overline{H}$ is generated by $v_{0n_1},\ldots,v_{0n_\theta}$ and $g$ such that $n_1,\ldots,n_\theta$ is a fraction of $n$ and $$g^{n}=1, \;\; v_{0n_i}g=\xi^{n_i}gv_{0n_i},\;\;v_{0n_i}v_{0n_j}=v_{0n_j}v_{0n_i},\;\;v_{0n_i}^{e_i}=0$$
and $g$ is group-like, $v_{0n_i}$ is a $(1,g^{n_i})$-skew primitive element for all $1\leq i,j\leq \theta$. Therefore, we have a Hopf surjection
$$T(n_1,\ldots,n_\theta,\xi)\longrightarrow \overline{H},\;\;y_{n_i}\mapsto v_{0n_i},\;g\mapsto g,\;\;1\leq i\leq \theta.$$
Comparing the dimension of them, we know that this surjection is a bijection.
\end{proof}

With help of this lemma, we are in the position to give the main result of this subsection now.
\begin{proposition}\label{p5.7} Let $H$ be a prime Hopf algebra of GK-dimension one satisfying (Hyp1), (Hyp2) and $ n:=\ord(\pi)=\mi(\pi)>1$. Retain all above notations, then
\begin{itemize} \item[(1)] If $H$ is additive, then it is isomorphic to a fraction version of a infinite dimensional Taft algebra $T(\underline{n},1,\xi)$ of Subsection \ref{ss4.1}.
\item[(2)] If $H$ is multiplicative, then it is isomorphic to a fraction version of a generalized Liu algebra $B(\underline{n},\omega,\gamma)$ of Subsection \ref{ss4.4}.
\end{itemize}
\end{proposition}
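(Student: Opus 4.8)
The plan is to transfer the structure of the quotient back to $H$. By Lemma \ref{l5.6} the quotient Hopf algebra $\overline H=H/HH_{00}^{+}$ is isomorphic to a finite-dimensional fraction Taft algebra $T(n_1,\dots,n_\theta,\xi)$ for some fraction $\underline n=\{n_1,\dots,n_\theta\}$ of $n$, with group-like $\bar g$ and skew-primitive generators $\bar y_{n_i}$ satisfying $\D(\bar y_{n_i})=1\otimes\bar y_{n_i}+\bar y_{n_i}\otimes\bar g^{n_i}$ and $\bar y_{n_i}^{e_i}=0$; and by Lemma \ref{l5.5} the center $H_{00}$ is a central Hopf subalgebra, hence, being a commutative affine Hopf domain of GK-dimension one, equal to $\k[x]$ in the additive case and to $\k[x^{\pm1}]$ in the multiplicative case. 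The first step is to lift $\bar g$ and $\bar y_{n_i}$ along $H\twoheadrightarrow\overline H$ to a group-like $g\in H_{11}$ and to $(1,g^{n_i})$-skew-primitive elements $y_{n_i}\in H_{0,n_i}$ of $H$ itself, which I will discuss last. Granting this, Lemma \ref{l2.8} gives $\e(g)=1$ and $\e(y_{n_i})=0$; the relations $y_{n_i}y_{n_j}=y_{n_j}y_{n_i}$ and $y_{n_i}g=\xi^{n_i}gy_{n_i}$ follow from \eqref{eq5.1} together with the centrality of $H_{00}$; and since the lift supplied by the claim below is a scalar multiple of $u_{0,n_i}$, after rescaling each $y_{n_i}$ is a free $H_{00}$-basis of $H_{0,n_i}$.

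The second step identifies the central elements $g^{n}$ and $y_{n_i}^{e_i}$. As $\bar g^{n}=1$ we have $g^{n}\in H_{00}$, a group-like of $H_{00}$, so $g^{n}=1$ in the additive case, and $g^{n}=x^{\omega}$ with $\omega\ge0$ in the multiplicative case (replacing $x$ by $x^{-1}$ if necessary). Since $\xi^{n_i^{2}}$ is a primitive $e_i$-th root of unity — an elementary consequence of the fraction axioms, already used in proving Lemma \ref{l5.6} — the $q$-binomial theorem applied to $\D(y_{n_i})$ gives $\D(y_{n_i}^{e_i})=1\otimes y_{n_i}^{e_i}+y_{n_i}^{e_i}\otimes g^{e_in_i}$, and $e_in_i\equiv 0\pmod n$ forces $g^{e_in_i}=1$ in the additive case and $g^{e_in_i}=x^{\omega_i}$ with $\omega_i=\omega e_in_i/n>0$ in the multiplicative case. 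In the additive case this identity says $y_{n_i}^{e_i}$ is a primitive element of the Hopf algebra $\k[x]$, hence lies in $\k x$, and after rescaling $y_{n_i}^{e_i}=x$ for all $i$. In the multiplicative case the same identity forces $y_{n_i}^{e_i}\in\k\,(1-x^{\omega_i})$, and after rescaling $y_{n_i}^{e_i}=1-x^{\omega_i}$; in particular $\omega\ne0$, since $\omega=0$ would make $y_{n_i}^{e_i}$ a nonzero primitive of $\k[x^{\pm1}]$, which is impossible. Hence $g$ and the $y_{n_i}$ (together with $x^{\pm1}$ in the multiplicative case) satisfy exactly the defining relations of $T(\underline n,1,\xi)$ of Subsection \ref{ss4.1} (additive case), respectively of $B(\underline n,\omega,\xi)$ of Subsection \ref{ss4.3} (multiplicative case), so there is a Hopf algebra homomorphism $\Phi$ from that fraction algebra onto the subalgebra of $H$ generated by these elements.

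The third step is to see that $\Phi$ is onto $H$ and then that it is an isomorphism. Surjectivity amounts to checking that for each residue $b$ with coordinates $(b_1,\dots,b_\theta)$ (Remark \ref{r3.2}) the element $P_b:=y_{n_1}^{b_1}\cdots y_{n_\theta}^{b_\theta}\in H_{0,b}$ is again a free $H_{00}$-basis of $H_{0,b}$: writing $P_b=r(x)u_{0,b}$, one evaluates a suitable power $P_b^{\,e}\in H_{00}$ using $y_{n_k}^{e_k}=x$ (resp.\ $1-x^{\omega_k}$), so that by unique factorization in $H_{00}$ the factor $r$ is severely restricted, and one then rules out any nontrivial factor of $r$ by comparing $P_b$ with its image in $\overline H$, which is the \emph{nonzero} product $\bar y_{n_1}^{b_1}\cdots\bar y_{n_\theta}^{b_\theta}$ (its nonvanishing being precisely the fraction condition proved inside Lemma \ref{l5.6}); multiplication by the invertible group-like $g^{a}$ then identifies $H_{0,b}$ with $H_{a,a+b}$, so the image of $\Phi$ meets every bigraded piece of $H$. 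Once $\Phi$ is onto $H$, its kernel is a prime ideal of a prime affine PI algebra of GK-dimension one; since $H$ also has GK-dimension one, the kernel must vanish and $\Phi$ is an isomorphism, which is the assertion.

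The hard part is the lifting step, i.e.\ showing $\bar g$ and the $\bar y_{n_i}$ lift to a genuine group-like and to genuine skew-primitive elements of $H$, not merely modulo $HH_{00}^{+}$. By Lemma \ref{l2.8}(1), $\D(u_{0,n_i})\in\bigoplus_{s}H_{0,s}\otimes H_{s,n_i}$, and the counit (Lemma \ref{l2.8}(4),(5)) forces only the components with $s\in\{0,n_i\}$ to be present of necessity; the claim — which is also the point invoked inside the proof of Lemma \ref{l5.6} — is that, after adjusting the $H_{00}$-scalar (which stays a unit), all other components vanish and the surviving two become $1\otimes u_{0,n_i}$ and $u_{0,n_i}\otimes g^{n_i}$, and that likewise $v_{11}$ lifts to an honest group-like. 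Establishing this uses coassociativity, the normality of the $u_{ij}$ from \cite[Proposition 6.2]{BZ}, the commutation rule \eqref{eq5.1}, and the nilpotence of $u_{0,n_i}$ modulo $HH_{00}^{+}$; it is the relative, non-regular analogue of the corresponding arguments of \cite[Section 6]{BZ}, and I expect it to be the main obstacle, the extra subtlety coming from the several interacting skew-primitive generators attached to the fraction $\underline n$ instead of a single one.
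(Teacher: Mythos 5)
Your route is the same as the paper's: identify $\overline{H}=H/HH_{00}^{+}$ with a finite fraction Taft algebra via Lemma \ref{l5.6}, lift $\bar g$ to $u_{11}$ and $\bar y_{n_i}$ to $u_{0,n_i}$, pin down the central elements $g^{n}$ and $u_{0,n_i}^{e_i}$ inside $H_{00}$ exactly as you do (including the primeness argument forcing $c_i\neq 0$, resp.\ $\omega\neq 0$), invoke \eqref{eq5.1} for commutativity, and finish with a Hopf surjection from $T(\underline n,1,\xi)$ or $B(\underline n,\omega,\xi)$ that must be injective because both sides are prime of GK-dimension one. All of that matches the paper step for step.

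The genuine gap is that the pivotal claim --- that $u_{0,n_i}$ is honestly $(1,g^{n_i})$-skew primitive in $H$, not just modulo $HH_{00}^{+}$ --- is only announced, with a list of plausible ingredients, and this is where essentially the entire content of the paper's proof lies. Saying that coassociativity, normality and nilpotence ``should'' kill the cross terms is not yet an argument: a priori $\D(u_{0,n_i})=\sum_{s}C_{ss}^{0n_i}\,u_{0s}\otimes u_{s,n_i}$ with coefficients $C_{ss}^{0n_i}\in H_{00}\otimes H_{00}$ that are genuine two-variable polynomials, and nothing about the counit or the grading forces them to vanish for $s\neq 0,n_i$. The paper's mechanism is: (i) diagonality of the coproduct coefficients (imported from \cite[Proposition 6.7]{BZ}); (ii) the full coassociativity identity applied to $u_{0,n_i}$, which yields the system \eqref{eq5.4}--\eqref{eq5.5} together with three identities forcing $C_{00}^{0n_i}=C_{n_in_i}^{0n_i}=1$; (iii) the input from Lemma \ref{l5.6} that the image of $u_{0,n_i}$ in $\overline{H}$ is skew primitive, which kills the constant term of each cross coefficient $C_{ss}^{0n_i}$; and (iv) the simplified identities $(1\otimes C_{ss}^{0n_i})=(\D\otimes\id)(C_{ss}^{0n_i})$ and its mirror, which then annihilate all remaining monomials of $C_{ss}^{0n_i}$. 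Step (iii) is the one your ingredient list misses --- it is the skew-primitivity of the image in $\overline{H}$, not the nilpotence of $u_{0,n_i}$ modulo $HH_{00}^{+}$, that gets the induction off the ground --- and without carrying out (ii)--(iv) the lifting, and hence the whole proposition, is not established. (Your surjectivity discussion via the products $y_{n_1}^{b_1}\cdots y_{n_\theta}^{b_\theta}$ is a reasonable, slightly more explicit substitute for the paper's citation of \cite[Theorem 6.7]{BZ}, and the rest of your argument would go through once the lifting claim is in place.)
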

\begin{proof} Before we prove (1) and (2), we want to recall some basic facts, which are still valid in our case, on the coproduct from \cite[Proposition 6.7]{BZ}. The first fact is that $g:=u_{11}$ is a group-like element and $u_{ii}$ can defined as $u_{ii}:=u_{11}^{i}$ (see (a) of \cite[Proposition 6.7]{BZ}).  By (1) of Lemma \ref{l2.8}, in general one has
$$\D(u_{ij})=\sum_{s,t}C_{st}^{ij}(u_{is}\otimes u_{tj})$$
for $C_{st}^{ij}\in H_{00}\otimes H_{00}$ and $0\leq i,j,s,t\leq n-1.$
The second fact is $C_{st}^{ij}=0$ when $s\neq t$ (see (6.7.5) in the proof of \cite[Proposition 6.7]{BZ}). Therefore, the coproduct for $u_{ij}$ can be written as
\begin{equation} \D(u_{ij})= C_{ii}^{ij}g^{i}\otimes u_{ij}+C_{jj}^{ij} u_{ij}\otimes g^{j}+\sum_{s\neq i,j} C_{ss}^{ij} u_{is}\otimes u_{sj}
\end{equation}
for all $0\leq i,j\leq n-1.$
Now by Lemma \ref{l5.6} we can assume that $\overline{H}=T(n_1,\ldots,n_{\theta},\xi)$. Then we get the following observation.

\emph{Claim. For all $1\leq i\leq \theta$, the element $u_{0n_i}$ is a $(1,g^{n_i})$-skew primitive element.}

\emph{Proof of the claim.} By direct computation,
\begin{eqnarray*}&&(\id\otimes \D)\D(u_{0n_i})\\
&=&(\id\otimes \D)(C_{00}^{0n_i} 1\otimes u_{0n_i}+C_{n_in_i}^{0n_i}u_{0n_i}\otimes g^{n_i}+\sum_{s\neq 0,n_i}C_{ss}^{0n_i}u_{0s}\otimes u_{sn_i})\\
&=& (\id\otimes \D)(C_{00}^{0n_i})1\otimes (C_{00}^{0n_i} 1\otimes u_{0n_i}+C_{n_in_i}^{0n_i}u_{0n_i}\otimes g^{n_i}+\sum_{s\neq 0,n_i}C_{ss}^{0n_i}u_{0s}\otimes u_{sn_i})\\
&&+ (\id\otimes \D)(C_{n_in_i}^{0n_i})u_{0n_i}\otimes g^{n_i}\times g^{n_i}+\sum_{s\neq 0,n_i}(\id\otimes \D)(C_{ss}^{0n_i})u_{0s}\otimes\\
&& \quad [C_{ss}^{sn_i}g^s\otimes u_{sn_i}+C_{n_in_i}^{sn_i}u_{sn_i}\otimes g^{n_i}+\sum_{t\neq s,n_i}C_{tt}^{sn_i}u_{st}\otimes u_{tn_i}]
\end{eqnarray*}
and
\begin{eqnarray*}
&&(\D\otimes \id)\D(u_{0n_i})\\
&=&(\D\otimes \id)(C_{00}^{0n_i} 1\otimes u_{0n_i}+C_{n_in_i}^{0n_i}u_{0n_i}\otimes g^{n_i}+\sum_{s\neq 0,n_i}C_{ss}^{0n_i}u_{0s}\otimes u_{sn_i})\\
&=& (\D\otimes \id)(C_{00}^{0n_i})1\otimes 1\otimes u_{0n_i}+(\D\otimes \id)(C_{n_in_i}^{0n_i})[
C_{00}^{0n_i}1\otimes u_{0n_i}\\
&&+C_{n_in_i}^{0n_i}u_{0n_i}\otimes g^{n_i}+\sum_{s\neq 0,n_i}u_{0s}\otimes u_{sn_i}]\otimes g^{n_i}\\
&&+ \sum_{s\neq 0,n_i}(\D\otimes \id)(C_{ss}^{0n_i})[C_{00}^{0s}1\otimes u_{0s}+C_{ss}^{0s}u_{0s}\otimes g^s+\sum_{t\neq 0,s}C_{tt}^{0s}u_{0t}\otimes u_{ts}]\otimes u_{sn_i}.
\end{eqnarray*}
By associativity, we get the following identities:
\begin{eqnarray}
\notag&&(\id\otimes \D)(C_{00}^{0n_i})(1\otimes C_{00}^{0n_i})=(\D\otimes \id)(C_{00}^{0n_i})\\
\notag&&(\id\otimes \D)(C_{00}^{0n_i})(1\otimes C_{n_in_i}^{0n_i})=(\D\otimes \id)(C_{n_in_i}^{0n_i})\\
\notag&&(\id\otimes \D)(C_{n_in_i}^{0n_i})=(\D\otimes \id)(C_{n_in_i}^{0n_i})(C_{n_in_i}^{0n_i}\otimes 1)\\
\label{eq5.4}&&(\id\otimes \D)(C_{00}^{0n_i})(1\otimes C_{ss}^{0n_i})=(\D\otimes \id)(C_{ss}^{0n_i})(C_{00}^{0s}\otimes 1)\\
\label{eq5.5}&&(\id\otimes \D)(C_{ss}^{0n_i})(1\otimes C_{ss}^{sn_i})=(\D\otimes \id)(C_{ss}^{0n_i})(C_{ss}^{0s}\otimes 1)
\end{eqnarray}
for $s\neq 0,n_i.$ From the first three identities, we find that $C_{00}^{0n_i}=C_{n_in_i}^{0n_i}=1$ by using the same method given in \cite[Page 297]{BZ}. This indeed implies that  $$C_{00}^{0t}=C_{tt}^{0t}=1$$
for all $0\leq t\leq n-1$ since we have the same first three identities just through replacing $n_i$ by $t$.

Recall again the dichotomy of $H_{00}$: either $H_{00}=\k[x]$ or $H_{00}=\k[x^{\pm 1}]$. From this we know that $C_{ss}^{0n_i}=\sum_{k,l} a^{s,0,n_i}_{kl}x^k\otimes x^{l}$ for $s\neq 0,n_i$ and $a^{s,0,n_i}_{kl}\in \k.$ We just prove our claim in the case $H_{00}=\k[x]$ since the other case can be proved similarly. By the image of $u_{0n_i}$ in $\overline{H}$ is a skew primitive element, $$a^{s,0,n_i}_{00}=0.$$
Since $C_{00}^{0t}=C_{tt}^{0t}=1$ for all $0\leq t\leq n-1$, the equation \eqref{eq5.4} is simplified into
$$(1\otimes C_{ss}^{0n_i})=(\D\otimes \id)(C_{ss}^{0n_i})$$
which implies that  $a^{s,0,n_i}_{kl}=0$ if $k\neq 0$. Similarly, the equation \eqref{eq5.5} implies that $a^{s,0,n_i}_{0l}=0$ if $l\neq 0$. Thus, $C_{ss}^{0n_i}=0$ for $s\neq 0,n_i$ and $u_{0n_i}$ is a $(1,g^{n_i})$-skew primitive element for $1\leq i\leq \theta.$ Moreover, we point out that through the same way given in \cite[Theorem 6.7]{BZ} one can show that as an algebra the Hopf algebra $H$ is generated by $H_{00}, g=u_{11} $ and $u_{0n_i}$ for $1\leq i\leq \theta.$

(1) Now $H$ is additive with $H_{00}=\k[x]$. We already know that $g=u_{11}$ is group-like and thus $g^{n}$ is a group-like in $H_{00}$ by the bigrading property. But the only group-like in $H_{00}$ is $1$ and thus $$g^{n}=1.$$  Consider the element $u_{0n_i}$ for $1\leq i\leq \theta.$ Through the quantum binomial theorem, $u_{0n_i}^{e_i}$ is a primitive element now. This means  there exists $c_i\in \k$ such that $u_{0n_i}^{e_i}=c_ix$. Since $H$ is prime, $c_i\neq 0$. Therefore, through multiplying $u_{0n_i}$ by a suitable scalar one can assume that
$$u_{0n_i}^{e_i}=x$$
for all $1\leq i\leq \theta.$ By equation \eqref{eq5.1}, $u_{0n_i}u_{0n_j}=u_{0n_j}u_{0n_i}$ for all $1\leq i,j\leq \theta.$  Therefore, we have a Hopf surjection
$$\phi:\;T(\underline{n},1,\xi)\longrightarrow H,\;\;x\mapsto x,\;\;y_{n_i}\mapsto u_{0n_i},\;\;g\mapsto g,$$
where $\underline{n}=\{n_1,\dots,n_\theta\}.$ Since both of them are prime of GK-dimension one, $\phi$ is an isomorphism.

(2) Now $H$ is multiplicative with $H_{00}=\k[x^{\pm 1}]$. We already know that $g=u_{11}$ is group-like and thus $g^{n}$ is a group-like element in $H_{00}$ by the bigrading property. Since $\{x^i|i\in \Z\}$ are all the group-likes in $H_{00}$,  $$g^{n}=x^{\omega}$$
  for some $\omega\geq 0$ (noting that we can replace $x$ by $x^{-1}$ if $\omega$ is negative). We claim that $\omega\neq 0$. If not, then as the proof of (1) we know that $u_{0n_i}^{e_i}$ is primitive in $H_{00}$. Hence $u_{0n_i}^{e_i}=0$ which is impossible since $H$ is prime.

   Consider the element $u_{0n_i}$ for $1\leq i\leq \theta.$ Through the quantum binomial theorem, $u_{0n_i}^{e_i}$ is a $(1,g^{e_in_i})=(1,x^{\omega\frac{e_in_i}{n}})$-skew primitive element in $H_{00}$. Therefore, after dividing if necessary by  non-zero scalar,
    $$u_{0n_i}^{e_i}=1-x^{\omega\frac{e_in_i}{n}}$$
    for all $1\leq i\leq \theta.$ Also by equation \eqref{eq5.1}, $u_{0n_i}u_{0n_j}=u_{0n_j}u_{0n_i}$ for all $1\leq i,j\leq \theta.$  Therefore, we have a Hopf surjection
$$\phi:\;B(\underline{n},\omega,\xi)\longrightarrow H,\;\;x\mapsto x,\;\;y_{n_i}\mapsto u_{0n_i},\;\;g\mapsto g,$$
where $\underline{n}=\{n_1,\dots,n_\theta\}.$ Since both of them are prime of GK-dimension one, $\phi$ is an isomorphism.

\end{proof}

\section{Remaining case}
In the previous section, we already dealt with the ideal cases: the case $\mi (\pi)=1$ and the case $\ord(\pi)=\mi(\pi)>1.$ In this section, we want to deal with the remaining case: $\ord{(\pi)}>\mi(\pi)>1.$ The main aim of this section is to classify  prime Hopf algebras of GK-dimension one $H$ in this remaining case. To realize this aim, we apply the similar idea used in \cite{WLD}, that is, we first construct a special Hopf subalgebra $\widetilde{H}$, which can be classified by previous results, and then we show that $\widetilde{H}$ determines the structure of $H$ entirely.

In this section, $H$  is a prime Hopf algebra of GK-dimension one satisfying (Hyp1), (Hyp2) and $n:=\ord{(\pi)}>m:=\mi(\pi)>1$ unless stated otherwise. And as before, the $1$-dimensional representation in (Hyp1) is denoted by $\pi.$ Recall that $$H=\bigoplus_{i,j\in \Z_{n}}H_{ij}$$
is $\Z_n$-bigraded by \eqref{eq2.3}.

\subsection{The Hopf subalgebra $\widetilde{H}$.} By definition, we know that $m|n$ and thus let $t:=\frac{n}{m}.$ We define the following subalgebra
$$\widetilde{H}:=\bigoplus_{0\leq i,j\leq m-1} H_{it,jt}.$$
The following result is a collection of \cite[Proposition 5.4, Lemma 5.5]{WLD}, which were proved in \cite{WLD} without  using the condition of regularity.
\begin{lemma}\label{l6.1} Retain above notations.
\begin{itemize}\item[(1)] For every $i, j$ with $1\leqslant i, j\leqslant n-1$, $H_{ij}\neq 0$ if and only if $i-j\equiv 0$ \emph{(mod} $t$\emph{)} for all $0\leqslant i, j\leqslant n-1 $ .
\item[(2)] The algebra $\widetilde{H}$ is a Hopf subalgebra of $H$.
\end{itemize}
\end{lemma}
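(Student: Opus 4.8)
The statement to prove is Lemma \ref{l6.1}, which is attributed to \cite[Proposition 5.4, Lemma 5.5]{WLD} and asserted to hold without the regularity hypothesis. So the plan is really to reconstruct the arguments of \cite{WLD} in the present, slightly more general, setting.

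\textbf{Plan for part (1).} The grading \eqref{eq2.3} is a bigrading by $\widehat{G_\pi^l}\times\widehat{G_\pi^r}$, and the compatibility established before Lemma \ref{l2.8} shows that $H_{ij}=H_{i,\pi}^l\cap H_{j,\pi}^r$. First I would identify which homogeneous pieces are forced to vanish. Since $m=\mi(\pi)=|G_\pi^l/(G_\pi^l\cap G_\pi^r)|$ and $\ord(\pi)=n=|G_\pi^l|=|G_\pi^r|$, the subgroup $G_\pi^l\cap G_\pi^r$ of the cyclic group $G_\pi^l\cong\Z_n$ has index $m$, hence equals $\langle(\Xi_\pi^l)^m\rangle$ and has order $t=n/m$. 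Dually, under the identifications $\widehat{G_\pi^l}\cong\Z_n\cong\widehat{G_\pi^r}$ the characters $\chi^i$ and $\eta^j$ restrict to $G_\pi^l\cap G_\pi^r$ compatibly, and the key point is that the two gradings agree on the common subgroup: an element of $H_{i,\pi}^l$ lies in $H_{j,\pi}^r$ only if $\chi^i$ and $\eta^j$ have the same restriction to $G_\pi^l\cap G_\pi^r$, which translates exactly into $i\equiv j\pmod t$. This gives the "only if" direction: $H_{ij}=0$ whenever $t\nmid i-j$. For the "if" direction, that $H_{ij}\neq0$ whenever $t\mid i-j$, I would use that the $\widehat{G_\pi^l}$-grading is \emph{strong} (Lemma \ref{l2.5}): $H_{i,\pi}^lH_{i',\pi}^l=H_{i+i',\pi}^l$, so multiplying nonzero homogeneous elements lets us move between bidegrees; combined with Lemma \ref{l2.8}(3) ($S(H_{ij})=H_{-j,-i}$), $\e(H_{ii})\neq0$ (Lemma \ref{l2.8}(5)), and the fact that $H_{0,\pi}\neq0$, one produces a nonzero element in each allowed bidegree by taking suitable products of elements whose bidegrees generate the relevant subgroup of $\Z_n\times\Z_n$. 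The honest content is a bit of bookkeeping with cyclic groups and the strong-grading property.

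\textbf{Plan for part (2).} To show $\widetilde H=\bigoplus_{0\le i,j\le m-1}H_{it,jt}$ is a Hopf subalgebra, I would check closure under multiplication, $\D$, $\e$, and $S$. Multiplicative closure: by part (1) the only nonzero summands of $H$ sit in bidegrees $(i,j)$ with $i\equiv j\pmod t$, and $\widetilde H$ collects precisely those with $i\equiv j\equiv0\pmod t$; since the bigrading is by an abelian group, $H_{it,jt}H_{i't,j't}\subseteq H_{(i+i')t,(j+j')t}\subseteq\widetilde H$. Counit: $\e$ kills $H_{ij}$ for $i\neq j$ by Lemma \ref{l2.8}(4) and $\e(H_{ii})\neq0$, so $\e$ restricts nontrivially to $\widetilde H$. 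Antipode: $S(H_{it,jt})=H_{-jt,-it}\subseteq\widetilde H$ by Lemma \ref{l2.8}(3). Comultiplicative closure is the main point: by Lemma \ref{l2.8}(1), $\D(H_{i,\pi}^l)\subseteq H_{i,\pi}^l\otimes H$ and $\D(H_{j,\pi}^r)\subseteq H\otimes H_{j,\pi}^r$, hence $\D(H_{ij})\subseteq\sum_{k}H_{ik}\otimes H_{kj}$; when $i=it_0$ and $j=jt_0$ are divisible by $t$, part (1) forces each nonzero term $H_{it,k}\otimes H_{k,jt}$ to have $t\mid k$, so $\D(H_{it,jt})\subseteq\sum_{k}H_{it,kt}\otimes H_{kt,jt}\subseteq\widetilde H\otimes\widetilde H$. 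Thus $\widetilde H$ is closed under all structure maps and is a Hopf subalgebra.

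\textbf{Expected main obstacle.} Part (2) is essentially formal once part (1) is in hand, so the real work is part (1), and within it the "if" direction — establishing that \emph{every} bidegree with $t\mid i-j$ actually occurs. The inclusion half ($t\nmid i-j\Rightarrow H_{ij}=0$) is a clean computation with restrictions of characters, but the surjectivity half requires genuinely exploiting the strong-grading property (Lemma \ref{l2.5}) together with primeness of $H$ to rule out accidental vanishing; this is where I would be most careful to follow the template of \cite[Proposition 5.4]{WLD} and confirm that no step there secretly used regularity — the relevant inputs (Lemmas \ref{l2.5}, \ref{l2.8}, \ref{l2.10}) are all available here without it.
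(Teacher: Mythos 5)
Your part (2) and the ``only if'' half of part (1) are essentially right (for the latter, note you do need the small extra observation that the generator of $G_\pi^l\cap G_\pi^r$ can be written as $(\Xi_\pi^l)^m=(\Xi_\pi^r)^m$ with the \emph{same} exponent; this follows by applying $\e$ to an equality of a left and a right winding automorphism, which forces the two underlying characters to coincide). The genuine gap is in the ``if'' half of part (1), which you yourself identify as the crux but whose mechanism you never actually supply. The only bidegrees you can certify nonzero from your listed inputs are the diagonal ones: $\e(H_{ii})\neq 0$ gives $(i,i)$, and since every nonzero bihomogeneous element is regular (Lemma \ref{l2.10}(a) applied to the left grading), the support $K=\{(i,j):H_{ij}\neq 0\}$ is a subgroup of $\Z_n\times\Z_n$; but products of diagonal elements stay diagonal, $S(H_{ij})=H_{-j,-i}$ preserves the diagonal, and strong grading of either one-sided grading only says each $H_i^l$, $H_j^r$ is nonzero, which the diagonal already accounts for. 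So nothing in ``strong grading $+$ primeness $+$ bookkeeping'' rules out $K$ being exactly the diagonal --- which really happens when $G_\pi^l=G_\pi^r$ --- and you never produce a single nonzero $H_{ij}$ with $i\neq j$.

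The missing ingredient is the \emph{exact} value of the minor, i.e.\ $|G_\pi^l\cap G_\pi^r|=t$ and no larger, used through the pairing between the support $K$ and the group generated by the winding automorphisms. Concretely: $(\Xi_\pi^l)^a(\Xi_\pi^r)^b$ acts on $H_{ij}$ by $\xi^{ai+bj}$, so it is the identity iff $(a,b)$ annihilates $K$; hence $|K|=|G_\pi^lG_\pi^r|=n^2/t=nm$, which equals the order of $\{(i,j):i\equiv j\ (\mathrm{mod}\ t)\}$, and combined with your ``only if'' inclusion this forces equality. Equivalently, argue by contraposition: if $H_{i,i+s}=0$ for all $i$ and all $s$ outside $t'\Z_n$ with $t\mid t'$, $t'>t$, then $(\Xi_\pi^l)^{n/t'}(\Xi_\pi^r)^{-n/t'}$ acts as the identity on every nonzero component, so $(\Xi_\pi^l)^{n/t'}=(\Xi_\pi^r)^{n/t'}\in G_\pi^l\cap G_\pi^r$ has order $t'>t$, contradicting $\mi(\pi)=m$. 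Note that primeness is not the operative lever here: what you need is (Hyp2) plus strong grading (to make $K$ a subgroup of regular-element bidegrees) together with the equality $\mi(\pi)=m$, and your proposal never brings the latter into play for this direction.
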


The key observation of \cite{WLD} and here is that  Hopf subalgebra $\widetilde{H}$ lives in an ideal case.

\begin{proposition}\label{p6.2} For the  Hopf algebra $\widetilde{H}$, we have the following results.
\begin{itemize}\item[(1)] It is prime of GK-dimension one.
\item[(2)] It satisfies (Hyp1) and (Hyp2) through the restriction $\pi|_{\widetilde{H}}$ of $\pi$ to $\widetilde{H}$.
\item[(3)] $\ord(\pi|_{\widetilde{H}})=\mi(\pi|_{\widetilde{H}})=m.$
\end{itemize}
\end{proposition}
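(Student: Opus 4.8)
The plan is to establish the three assertions in order, exploiting the bigrading $H=\bigoplus_{i,j\in\Z_n}H_{ij}$ together with Lemma \ref{l6.1}, which already tells us that $H_{ij}\neq 0$ only when $t\mid i-j$, so that $\widetilde H=\bigoplus_{0\le i,j\le m-1}H_{it,jt}$ is genuinely a direct summand of $H$ capturing ``all'' the nonzero bigraded pieces after rescaling indices by $t$. First I would record that $\widetilde H$ is a Hopf subalgebra (Lemma \ref{l6.1}(2)) and observe that, since $H$ is a finite module over its centre by Small--Stafford--Warfield and $\widetilde H$ is a subalgebra of the noetherian affine algebra $H$, $\widetilde H$ is itself noetherian affine; its GK-dimension is at most that of $H$, namely one, and it is not finite-dimensional because $H_{00}=\widetilde H_{00}$ already has GK-dimension one (it is a commutative domain of GK-dimension one by Lemma \ref{l2.7} and its proof applied to the strong grading of $H$). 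Hence $\mathrm{GKdim}(\widetilde H)=1$.

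For primeness and (3) I would package things through Lemma \ref{l2.10}. Restrict $\pi$ to $\widetilde H$ and call the restriction $\widetilde\pi$. The left winding automorphism $\Xi^l_\pi$ has order $n$ on $H$, but on $\widetilde H$ it acts on the homogeneous piece $H_{it,jt}$ by $\zeta^{it}$ where $\zeta$ is a primitive $n$th root of unity; since $\zeta^t$ is a primitive $m$th root of unity and $i$ ranges over $0,\dots,m-1$, the group generated by $\Xi^l_{\widetilde\pi}$ has order exactly $m$, so $\widetilde H$ is strongly $\widehat{\Z_m}$-graded via the left grading $\widetilde H=\bigoplus_{i=0}^{m-1}\bigl(\bigoplus_j H_{it,jt}\bigr)$ — strongness is inherited because the strong grading $H=\bigoplus_i H^l_{i,\pi}$ restricts compatibly (the product of a degree-$it$ piece and a degree-$(-it)$ piece already contains $1$ inside $\widetilde H$, using $t\mid i-j$ to stay in $\widetilde H$). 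The invariant component of this $\Z_m$-grading is $\widetilde H_{0}=H_{00}$, a commutative domain, so (Hyp2) holds for $\widetilde\pi$. Now apply Lemma \ref{l2.10}: by part (b) there is an action $\triangleright$ of $\widehat{\Z_m}$ on $H_{00}$, and this action is precisely the restriction of the corresponding $\widehat{\Z_n}$-action on $H_{00}$ coming from $H$; since PI.deg$(H)=n$ forces that $\widehat{\Z_n}$-action to be faithful (Lemma \ref{l2.10}(c)), its restriction to the index-$t$ subgroup is still faithful on $H_{00}$ — here I would check faithfulness directly: the generator of the $\Z_m$ part acts as conjugation by $u_{tt}=g^t$ in the quotient picture, and $g^t$ has order $m$ in $\overline H$, so no nontrivial power acts trivially. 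Therefore PI.deg$(\widetilde H)=m$ by Lemma \ref{l2.10}(c), $\widetilde H$ is prime by Lemma \ref{l2.10}(d), and $\ord(\widetilde\pi)=m=$ PI.deg$(\widetilde H)$, giving (Hyp1) for $\widetilde\pi$.

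It remains to compute $\mi(\widetilde\pi)$. By definition $\mi(\widetilde\pi)=|G^l_{\widetilde\pi}/(G^l_{\widetilde\pi}\cap G^r_{\widetilde\pi})|$; I would show $G^l_{\widetilde\pi}\cap G^r_{\widetilde\pi}=\{1\}$, which forces $\mi(\widetilde\pi)=|G^l_{\widetilde\pi}|=\ord(\widetilde\pi)=m$, settling (3). The point is that on $\widetilde H$ the left winding automorphism acts on $H_{it,jt}$ by $\zeta^{it}$ and the right one by $\zeta^{jt}$; an element of the intersection must act by a common scalar on every nonzero $H_{it,jt}$, i.e.\ $(\Xi^l_{\widetilde\pi})^a=(\Xi^r_{\widetilde\pi})^b$ means $\zeta^{ait}=\zeta^{bjt}$ for all $i,j$ with $H_{it,jt}\neq 0$; taking $i=0$, $j$ arbitrary with $H_{0,jt}\neq 0$ (such $j\neq 0$ exist because $\ord(\pi)>\mi(\pi)$ means $H^l_{0,\pi}\cap H^r_{j,\pi}\neq0$ for some $j\not\equiv 0$, and these $j$ are multiples of $t$ by Lemma \ref{l6.1}(1)) forces $\zeta^{bjt}=1$ hence $m\mid b$, and symmetrically $m\mid a$, so the intersection is trivial.

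The main obstacle I expect is the careful verification that the relevant group actions remain \emph{faithful} and the gradings remain \emph{strong} upon restriction to $\widetilde H$ — in particular pinning down that the $\widehat{\Z_m}$-action on $H_{00}$ furnished by Lemma \ref{l2.10} for $\widetilde H$ is the honest restriction of the $\widehat{\Z_n}$-action for $H$ and deducing its faithfulness from PI.deg$(H)=n$, and dually that $G^l_{\widetilde\pi}\cap G^r_{\widetilde\pi}$ is trivial using the hypothesis $\ord(\pi)>\mi(\pi)$ to guarantee enough nonzero off-diagonal components $H_{0,jt}$. Everything else is bookkeeping with the bigrading and direct appeals to Lemma \ref{l2.10}, Lemma \ref{l2.7} and Lemma \ref{l6.1}.
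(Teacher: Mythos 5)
Your strategy is the paper's own: use Lemma \ref{l6.1} to see that $\widetilde{H}$ is the sum of the full left-graded components $H^l_{it}$, deduce a strong grading by the order-$m$ subgroup of $\widehat{G^l_\pi}$, feed this into Lemma \ref{l2.10} to get primeness and PI-degree $m$, observe that the restricted winding automorphisms have order exactly $m$, and kill $G^l_{\pi|_{\widetilde{H}}}\cap G^r_{\pi|_{\widetilde{H}}}$ by evaluating on explicit bihomogeneous components. However, two concrete points in your write-up need repair. First, you identify the degree-zero part of the left $\Z_m$-grading of $\widetilde{H}$ (equivalently, the left invariant component of $\widetilde{H}$ with respect to $\pi|_{\widetilde{H}}$) with $H_{00}$. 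It is in fact $\widetilde{H}^l_0=H^l_{0,\pi}=\bigoplus_{j}H_{0,jt}$, which strictly contains $H_{00}$ here since $m>1$ (e.g. $H_{0,t}\neq 0$ by Lemma \ref{l6.1}(1)). The same conflation occurs when you cite Lemma \ref{l2.7} for ``$H_{00}$ has GK-dimension one'': that lemma concerns $H^l_{0,\pi}$. Everything is repaired by replacing $H_{00}$ with $H^l_{0,\pi}$ throughout: it is a commutative domain of GK-dimension one by (Hyp2) and Lemma \ref{l2.7}, it is the invariant component entering Lemma \ref{l2.10} for $\widetilde{H}$, and your restriction-of-a-faithful-action argument then works (indeed Lemma \ref{l2.10}(e), applied to the order-$m$ subgroup, gives primeness and PI-degree $m$ in one stroke, which is how the paper reads). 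Your auxiliary ``direct check'' of faithfulness via conjugation by $u_{tt}=g^t$ in $\overline{H}$ should be dropped: no such elements or quotient have been constructed at this stage of the argument, and it is not needed.

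Second, in part (3) the deduction ``$\zeta^{bjt}=1$ for some $j\neq 0$ with $H_{0,jt}\neq 0$, hence $m\mid b$'' is not valid for an arbitrary such $j$: it only yields $m\mid bj$, which is weaker when $\gcd(j,m)>1$. You need a $j$ coprime to $m$, and Lemma \ref{l6.1}(1) (an if-and-only-if) supplies $j=1$, i.e. $H_{0,t}\neq 0$; similarly $H_{t,0}\neq 0$ (or $H_{tt}\neq 0$) handles the symmetric step for $a$. This is exactly what the paper does: it evaluates a hypothetical coincidence $(\Gamma^l)^i=(\Gamma^r)^j$ on $H_{tt}$ to force $i=j$ and then on $H_{0,t}$ to force $j=0$. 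Your appeal to $\ord(\pi)>\mi(\pi)$ to produce nonzero off-diagonal components is both insufficient (it gives no control on $\gcd(j,m)$) and unnecessary, since Lemma \ref{l6.1}(1) already guarantees that every $H_{it,jt}$ is nonzero. With these two fixes your proof coincides with the paper's.
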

\begin{proof} (1) For each $0\leq i\leq m-1$, let $\widetilde{H}^{l}_{it}:=\bigoplus_{0\leq j\leq m-1}H_{it,jt}.$ By Lemma \ref{l6.1}, we know that $\widetilde{H}_{it}^{l}=H_{it}^{l}$. Therefore $\widetilde{H}=\bigoplus_{0\leq i\leq m-1}\widetilde{H}^{l}_{it}$ is strongly graded and $\widetilde{H}^{l}_{0}$ is a commutative domain. Thus the Lemma \ref{l2.10} is applied. As consequences, $\widetilde{H}$ is prime with PI-degree $m$. Since $\widetilde{H}^{l}_{0}=H^{l}_{0}$ is of GK-dimension one and $\widetilde{H}$ is $\Z_{m}$-strongly graded, $\widetilde{H}$ is of GK-dimension one.

(2) Denote the restriction
of the actions of $\Xi_\pi^l$ and $\Xi_\pi^r$ to $\widetilde{H}$ by
$\Gamma^l$ and $\Gamma^r$, respectively. Since
$\widetilde{H}=\bigoplus_{0\leqslant i\leqslant m-1 } H_{it}^l$, we
can see that for each $0\leqslant i\leqslant m-1$ and any $0\neq x
\in H_{it}^l$,
$$(\Gamma^l)^m(x)=\xi^{itm}x=x$$
for $\xi$ a primitive $n$th root of unity.
This implies that the group $\langle\Gamma^l\rangle$ has order $m$ and thus $\pi|_{\widetilde{H}}$ is of order $m$. We already know that PI-deg$(\widetilde{H})=m$ and the invariant component $\widetilde{H}^{l}_{0}=H^{l}_{0}$ is a domain. So $\widetilde{H}$ satisfies (Hyp1) and (Hyp2).

(3) Similarly, $|\langle\Gamma^r\rangle|=m$. We claim that
$$\langle\Gamma^l\rangle\cap \langle\Gamma^r\rangle={1}.$$
In fact, if $(\Gamma^l)^i=(\Gamma^r)^j$ for some $0\leqslant i,
j\leqslant m-1$. Choose $0\neq x\in H_{tt}$, we find
$$\xi^{ti}x=(\Gamma^l)^i(x)=(\Gamma^r)^j(x)=\xi^{tj}x$$ which implies
$i=j$. Let $0\neq y\in H_{0,t}$, then
$$y=(\Gamma^l)^i(y)=(\Gamma^r)^j(y)=\xi^{tj}y$$ forces
$j=0$. Thus we get $i=j=0$, i.e., $\langle\Gamma^l\rangle\cap
\langle\Gamma^r\rangle={1}$. This implies that $\mi(\pi|_{\widetilde{H}})=m$.
\end{proof}

\begin{corollary}\label{c6.3} As a Hopf algebra $\widetilde{H}$ is isomorphic to either a faction version of infinite dimensional Taft algebra $ T(\underline{m},1,\xi)$ or a fraction version of generalized Liu algebra  $B(\underline{m},\omega,\gamma)$.
\end{corollary}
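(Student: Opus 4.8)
The plan is to read this off directly from the two preceding results, since $\widetilde{H}$ has been built precisely so as to land in the second ideal case of Section 5. By Proposition \ref{p6.2}, the Hopf subalgebra $\widetilde{H}$ is prime of GK-dimension one, satisfies (Hyp1) and (Hyp2) with respect to the restricted representation $\pi|_{\widetilde{H}}$, and satisfies $\ord(\pi|_{\widetilde{H}})=\mi(\pi|_{\widetilde{H}})=m$. Since throughout this section we are in the remaining case $n>m>1$, in particular $m>1$, so the pair $(\widetilde{H},\pi|_{\widetilde{H}})$ meets exactly the hypotheses of Proposition \ref{p5.7}, the integer denoted $n$ there now being played by $m$.

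Then I would simply invoke Proposition \ref{p5.7}. By Lemma \ref{l5.5}(b) applied to $\widetilde{H}$, its center (which equals $\widetilde{H}_{00}=H_{00}$) is a Hopf subalgebra, so $\widetilde{H}$ is either additive or multiplicative, and this dichotomy is in any case already incorporated into the statement of Proposition \ref{p5.7}. In the additive case Proposition \ref{p5.7}(1) gives an isomorphism of Hopf algebras $\widetilde{H}\cong T(\underline{m},1,\xi)$ for some fraction $\underline{m}\in\mathcal{F}(m)$ and a suitable root of unity $\xi$; in the multiplicative case Proposition \ref{p5.7}(2) gives $\widetilde{H}\cong B(\underline{m},\omega,\gamma)$ for some fraction $\underline{m}\in\mathcal{F}(m)$, a positive integer $\omega$ and a primitive $m$th root of unity $\gamma$. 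This exhausts the possibilities and proves the corollary.

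There is essentially no obstacle at this stage: the substance has all been absorbed into Propositions \ref{p6.2} and \ref{p5.7}. The one point worth emphasizing is that the reason $\widetilde{H}$ genuinely sits in the ideal case $\ord=\mi$ — rather than merely having $\mi(\pi|_{\widetilde{H}})=m$ — is the computation $\langle\Gamma^l\rangle\cap\langle\Gamma^r\rangle=\{1\}$ carried out in the proof of Proposition \ref{p6.2}(3); it is this that forces $\ord(\pi|_{\widetilde{H}})=\mi(\pi|_{\widetilde{H}})$ and hence delivers $\widetilde{H}$ to the classification of Proposition \ref{p5.7} instead of leaving it in the genuinely harder remaining case.
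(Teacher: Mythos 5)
Your proposal is correct and follows exactly the paper's route: the corollary is obtained as a direct consequence of Proposition \ref{p6.2} (which places $\widetilde{H}$, with $m>1$, in the ideal case $\ord(\pi|_{\widetilde{H}})=\mi(\pi|_{\widetilde{H}})$) and Proposition \ref{p5.7}. Your additional remark on why the intersection $\langle\Gamma^l\rangle\cap\langle\Gamma^r\rangle=\{1\}$ is the decisive point is accurate but is already contained in the proof of Proposition \ref{p6.2}(3).
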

\begin{proof} This is a direct consequence of Propositions \ref{p5.7} and \ref{p6.2}.
\end{proof}

This corollary implies that either $H_{00}=\k[x]$ (i.e. $H\cong T(\underline{m},1,\xi)$) or $H_{00}=\k[x^{\pm1}]$ (i.e. $H\cong B(\underline{m},\omega,\gamma)$) again. That is, we go back to a familiar situation that we have a dichotomy on $H$ now.
\begin{definition} \emph{We call $H$ is} additive \emph{(resp.} multiplicative\emph{) if $H_{00}=\k[x]$} \emph{(resp.} $H_{00}=\k[x^{\pm 1}])$.
\end{definition}

We realize that the \cite[Proposition 6.6]{WLD} is also true in our case and we recall it as follows.

\begin{lemma}\label{l6.5} Every homogeneous component $H_{i,i+jt}$ of $H$ is a free $H_{00}$-module of rank one on both sides for all $0\leq i\leq n-1$ and $0\leq j\leq m-1.$
\end{lemma}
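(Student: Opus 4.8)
The plan is to exploit the strongly graded structure already established for $H$ together with the structural results on $\widetilde{H}$. Recall from Lemma \ref{l6.1}(1) that $H_{i,i+jt}$ is the only possibly nonzero component in its ``row'', so that $H=\bigoplus_{0\le i\le n-1,\,0\le j\le m-1}H_{i,i+jt}$ and the $\widehat{G_\pi^l}$-grading $H=\bigoplus_{0\le i\le n-1}H_i^l$ is strong by Lemma \ref{l2.5}. First I would observe that, since the grading is strong, each graded piece $H_i^l$ is an invertible $H_0^l$-bimodule; in particular, picking any nonzero homogeneous $u_i\in H_{i,i}$ (which exists, e.g. a power of the group-like $g=u_{11}$ coming from $\widetilde{H}$ via Corollary \ref{c6.3}, at least when $t\mid i$; in general one uses that $H_i^lH_{-i}^l=H_0^l$ to produce a unit) one gets $H_i^l=u_iH_0^l=H_0^lu_i$, and $u_i$ is a regular element by Lemma \ref{l2.10}(a).

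Next I would pin down the finer bigraded pieces. The key point is that within a fixed $H_i^l$ the $\widehat{G_\pi^r}$-grading decomposes it as $H_i^l=\bigoplus_{0\le j\le m-1}H_{i,i+jt}$ (only these survive, by Lemma \ref{l6.1}(1)), and this is again a strong grading of $H_i^l$ as a module-graded object over the group $\Z_m$ acting through $\Xi_\pi^r$. Concretely, I would first treat the diagonal case $H_{i,i}$: since $\widetilde{H}$ is a fraction version of a Taft or generalized Liu algebra (Corollary \ref{c6.3}) whose diagonal components $H_{it,it}=H_{00}g^i$ are free rank-one $H_{00}$-modules, and since $H_{i,i}$ for general $i$ is handled by multiplying by an appropriate generator $u_{0n_k}$ built in the proof of Proposition \ref{p5.7} whose powers move between rows, one sees $H_{i,i}=H_{00}w_i=w_iH_{00}$ for a regular homogeneous element $w_i$. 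Then for arbitrary $j$ one writes $H_{i,i+jt}=H_{i,i}\cdot H_{0,jt}$ using the strong-grading identity for the right grading, and reduces to showing $H_{0,jt}$ is free of rank one over $H_{00}$ — but $H_{0,jt}\subseteq \widetilde{H}$ (it is $H_{0t\cdot 0,jt}$ in the notation of the previous section, up to reindexing), and freeness there is exactly \eqref{eq4.5} resp. the analogous displayed formula for $T(\underline m,1,\xi)$, namely $\widetilde{H}_{0,j}=\k[x^{\pm1}]y_j$ or $\k[x]y_j$.

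The one-sided freeness then follows symmetrically, or directly from Lemma \ref{l2.7}(1)–(2): $H_{00}$ is a commutative domain, so left-free of rank one over $H_{00}$ forces right-free of rank one over $H_{00}$ because the relevant $u_{ij}$ are \emph{normal} (by \cite[Proposition 6.2]{BZ}, cited in the previous section, these homogeneous generators are normal elements), whence $H_{00}u_{ij}=u_{ij}H_{00}$. I expect the main obstacle to be the bookkeeping needed to produce, uniformly in $(i,j)$, a single regular homogeneous generator of $H_{i,i+jt}$ over $H_{00}$: one must combine the generator $g$ and the skew-primitive generators $u_{0n_k}$ from the proof of Proposition \ref{p5.7} in the right powers to land in the prescribed bidegree, and check that the resulting monomial is nonzero and $H_{00}$-torsion-free — nonzeroness is where primeness of $H$ and the regularity of homogeneous elements from Lemma \ref{l2.10}(a) are essential, and torsion-freeness follows since $H_{00}$ is the center (Lemma \ref{l5.5}) of a domain-like localization, or more simply because $H$ is a finite module over its center with no zero divisors among homogeneous elements. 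Everything else is routine manipulation of the strong grading.
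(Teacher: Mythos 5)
You should first be aware that the paper does not reprove this statement at all: it is imported from \cite[Proposition 6.6]{WLD}, with the remark that the proof there never uses regularity. So your sketch has to stand on its own, and as written it does not. The decisive gap is the very first step. Strong grading gives $H_i^lH_{-i}^l=H_0^l$, i.e.\ an identity $1=\sum_k a_kb_k$ with $a_k\in H_i^l$, $b_k\in H_{-i}^l$, and it makes $H_i^l$ an invertible $H_0^l$-bimodule; it does \emph{not} produce a unit in $H_i^l$, nor a single element $u_i\in H_{i,i}$ with $H_i^l=u_iH_0^l=H_0^lu_i$. An invertible module over the commutative domain $H_0^l$ is rank-one projective, not necessarily free, and even freeness would not hand you a homogeneous generator sitting in the bigraded piece $H_{i,i}$. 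In fact the existence of such cyclic generators is essentially the content of the present lemma together with Lemma \ref{l6.6}, and the paper's proof of Lemma \ref{l6.6} (that $H_1^l=H_0^lu_{11}$ with $u_{11}$ invertible) explicitly \emph{uses} Lemma \ref{l6.5} as input; so your route is circular. The auxiliary moves also fail: the skew-primitive elements $u_{0n_k}$ from the proof of Proposition \ref{p5.7} lie in $\widetilde{H}$, in bidegrees $(0,n_kt)$, and together with powers of $g\in H_{t,t}$ they only reach bidegrees of the form $(it,jt)$ -- they cannot ``move between rows'' and give a generator of $H_{i,i}$ when $t\nmid i$, which is exactly the nontrivial range. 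The identity $H_{i,i+jt}=H_{i,i}\cdot H_{0,jt}$ is not what strong grading gives (that concerns the coarse components $H_j^r$, not the bigraded pieces). Finally, the normality of the $u_{ij}$ via \cite[Proposition 6.2]{BZ} and the equality $C(H)=H_{00}$ of Lemma \ref{l5.5} are facts about the ideal case $\ord(\pi)=\mi(\pi)$, hence about $\widetilde{H}$; for $H$ in the remaining case $H_{00}$ is in general not central (in $D(\underline{m},d,\gamma)$ one has $xu_j=u_jx^{-1}$), so these appeals are not available.

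What does survive from your sketch is the torsion-freeness (every nonzero homogeneous element of $H$ is regular by Lemma \ref{l2.10}(a), since the $\widehat{G_\pi^l}$-grading is strong and $H_0^l$ is a commutative domain) and the observation that $H_{00}\cong\k[x]$ or $\k[x^{\pm 1}]$ is a principal ideal domain, so that a finitely generated torsion-free $H_{00}$-module of rank one is automatically free of rank one. What is missing, and what the argument recorded in \cite[Proposition 6.6]{WLD} actually supplies, is the rank statement: one must show that each of the $m$ nonzero summands $H_{i,i+jt}$ of $H_i^l$ (nonzero by Lemma \ref{l6.1}) is finitely generated of rank exactly one over $H_{00}$, for instance by a counting argument comparing ranks over $H_0^l=\widetilde{H}_0^l$ and over $H_{00}$, rather than by exhibiting explicit cyclic generators -- those generators are a consequence of the lemma, not a means to prove it.
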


From this lemma, there is a generating set $\{u_{i,i+jt}|0\leq i\leq n-1,\;0\leq j\leq m-1\}$ satisfying
$$u_{00}=1\;\;\;\;\textrm{and}\;\;\;\;H_{i, i+jt}=u_{i, i+jt}H_{00}=H_{00}u_{i, i+jt}.$$
So, $H$ can be written as
\begin{equation}\label{eq6.1} H=\underset{0\leqslant j\leqslant m-1}{\bigoplus_{0\leqslant i\leqslant n-1}}H_{00}u_{i,
i+jt}=\underset{0\leqslant j\leqslant m-1}{\bigoplus_{0\leqslant
i\leqslant n-1}}u_{i, i+jt}H_{00}.\end{equation}

\subsection{Additive case.}

If $H$ is additive,  $\widetilde{H}=T(\underline{m}, 1, \xi)$. Recall that $n$ is the $\pi$-order and $n=mt$. We will prove $H$ is isomorphic as a Hopf algebra to $T(\underline{m}, t, \zeta)$, for $\zeta$ some primitive $n$th root of 1. Recall that
\begin{eqnarray*}\widetilde{H}=T(\underline{m}, 1, \xi)&=&\k\langle g, y_{m_1},\ldots,y_{m_\theta}|g^m=1,y_{m_i}g=\xi^{m_i} gy_{m_i},
 y_{m_i}y_{m_j}=y_{m_j}y_{m_i},\\ && \quad y_{m_i}^{e_i}=y_{m_j}^{e_j},\,1\leq i,j\leq \theta\rangle,\end{eqnarray*}
here by Proposition \ref{p4.3} we assume that $(m_1,\ldots,m_{\theta})=1$ without loss of generality.  Note that
$H=\bigoplus_{0\leqslant i\leqslant n-1, 0\leqslant j\leqslant m-1}
H_{i, i+jt}$, $\widetilde{H}=\bigoplus_{0\leqslant i, j\leqslant
m-1} H_{it, jt}$ and $H_{it, jt}=\k[y_{m_1}^{e_1}]y_{j-i}g^{i}$ (the index $j-i$
is interpreted mod $m$). In particular, $H_{00}=\k[y_{m_1}^{e_1}]$, $H_{0,
jt}=\k[y_{m_1}^{e_1}]y_{j}$ and $H_{tt}=\k[y_{m_1}^{e_1}]g$.

By Lemma \ref{l2.8} (5), $\epsilon(u_{11})\neq 0$. Multiplied with a suitable scalar, we can assume that $\epsilon(u_{11})=1$ throughout  this subsection. The following results are parallel to \cite[Lemma 7.1, Propositions 7.2, 7.3]{WLD}. Since the situation is changed, we write the details out.

\begin{lemma}\label{l6.6} Let $u:=u_{11}$. Then $H_1^l=H_0^lu$, $H=\bigoplus_{0\leqslant k\leqslant t-1}\widetilde{H}u^k$ and $u$ is invertible.
\end{lemma}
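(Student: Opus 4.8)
The plan is to use the bigrading of $H$ together with the structure of $\widetilde{H}$ already pinned down by Corollary \ref{c6.3}, and the freeness statement of Lemma \ref{l6.5}. First I would show $H_1^l = H_0^l u$. By Lemma \ref{l6.5} each homogeneous component $H_{i,i+jt}$ is free of rank one over $H_{00}$; in particular $H_{11} = u H_{00} = H_{00} u$. Summing over the right-grading index, $H_1^l = \bigoplus_{0\leq j\leq m-1} H_{1,1+jt}$, and I claim each $H_{1,1+jt}$ equals $H_{0,jt}\,u$. The containment $H_{0,jt}\,u \subseteq H_{1,1+jt}$ follows because multiplication shifts bidegrees additively (the left winding automorphism acts on a product as the product of its actions, and similarly on the right). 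For the reverse, note $H_{0,jt}\,u$ is a free rank-one $H_{00}$-submodule of the free rank-one $H_{00}$-module $H_{1,1+jt}$, so it suffices to check the generators differ by a unit of $H_{00}=\k[y_{m_1}^{e_1}]$; since $H_{00}$ has no nontrivial units, I instead argue that $u$ is a non-zero-divisor (it is homogeneous, hence regular by Lemma \ref{l2.10}(a)) and that $H_{1,1+jt} = H_{0,jt}\cdot H_{11}$, which holds because the grading is strong (Lemma \ref{l2.5}) — strong grading forces $H_{i}^l H_{j}^l = H_{i+j}^l$, and refining with the compatible right grading gives $H_{0,jt}H_{11}=H_{1,1+jt}$. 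Hence $H_1^l = H_0^l u$.

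Next I would establish $H = \bigoplus_{0\leq k\leq t-1} \widetilde{H} u^k$. Iterating the previous step, $H_k^l = H_0^l u^k$ and more precisely $H_{k,k+jt} = H_{0,jt} u^k$ for all $k$, reading indices mod $n$; this uses strong grading again to climb the left-grading ladder. Collecting the left-degrees $k \equiv i \pmod t$ into residue classes mod $t$: for each fixed $k_0 \in \{0,\dots,t-1\}$, the sum $\bigoplus_{i \equiv k_0 (t)} H_i^l$ is exactly $\bigoplus_{i \equiv k_0(t)} H_0^l u^i = \big(\bigoplus_{i\equiv k_0(t)} H_0^l u^{i-k_0}\big) u^{k_0}$, and $\bigoplus_{i\equiv k_0(t)} H_0^l u^{i-k_0}$ runs over the left-degrees that are multiples of $t$, which (combined with Lemma \ref{l6.1}(1) controlling which bidegrees are non-zero) is precisely $\widetilde H$. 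So $H = \bigoplus_{k_0=0}^{t-1}\widetilde H u^{k_0}$; the directness of the sum is inherited from the directness of the $\Z_n$-grading on $H$.

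Finally, invertibility of $u = u_{11}$. Here I would use that $u$ is group-like modulo $H_{00}$ in the quotient, or more directly: $u^t \in H_t^l \cap H_t^r$, and in the additive case the argument of Proposition \ref{p5.7}(1) (or rather its analogue for $\widetilde H = T(\underline m,1,\xi)$, where $g^m = 1$) shows $u^t$ lies in a one-dimensional bihomogeneous component and, after normalizing via $\epsilon(u)=1$, equals a nonzero scalar times $g^{?}$ — in any case an invertible element of $\widetilde H$ (recall $g$ is a group-like, hence a unit). Since $u^t$ is invertible and $u$ is central-up-to-grading normal (by \cite[Proposition 6.2]{BZ}, the $u_{ij}$ are normal), $u$ itself is invertible: from $u^t = v$ invertible we get $u\cdot(u^{t-1}v^{-1}) = 1 = (v^{-1}u^{t-1})\cdot u$. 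The main obstacle I anticipate is the bookkeeping at the very first step — verifying cleanly that $H_{1,1+jt} = H_{0,jt}\,u$ for every $j$ and not merely $H_1^l \supseteq H_0^l u$ with equality only after a dimension/rank count; making the strong-grading refinement precise across the two compatible gradings is where care is needed, but it is routine given Lemmas \ref{l2.5}, \ref{l2.8}, \ref{l6.1} and \ref{l6.5}.
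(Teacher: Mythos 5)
There is a genuine gap at the crux of your first step. You assert that strong grading ``refines with the compatible right grading'' to give $H_{0,jt}H_{11}=H_{1,1+jt}$, but strong grading only yields $H_0^lH_1^l=H_1^l$, and decomposing this into bihomogeneous pieces gives
$H_{1,1+st}=\sum_{j+k\equiv s\ (\mathrm{mod}\ m)}H_{0,jt}H_{1,1+kt}$,
an identity that is already satisfied by the $j=0$ term $H_{00}H_{1,1+st}$ and hence carries no information about the single product $H_{0,st}H_{11}$. Indeed, a priori $H_{0,m_it}H_{11}$ could be the proper submodule $y_{m_i}^{e_i}H_{1,1+m_it}$, and ruling this out is exactly the nontrivial content of the lemma. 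The paper's proof does this by noting that $H_{0,m_it}H_{0,(e_i-1)m_it}H_{1,1+m_it}=y_{m_i}^{e_i}H_{1,1+m_it}$ is a \emph{maximal} $H_{00}$-submodule of the free rank-one module $H_{1,1+m_it}$ (Lemma \ref{l6.5}), so that $H_{0,m_it}H_{11}$ is either $y_{m_i}^{e_i}H_{1,1+m_it}$ or all of $H_{1,1+m_it}$, and then excluding the first alternative: it would force $u_{11}=y_{m_i}^{e_i-1}\alpha(y_{m_i}^{e_i})u_{1,1+m_it}$ (using torsion-freeness), contradicting the normalization $\epsilon(u_{11})=1$ against $\epsilon(y_{m_i})=0$. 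Your proposal never uses $\epsilon(u_{11})=1$ at this point, and regularity of $u$ (your non-zero-divisor remark) only gives injectivity of multiplication by $u$, not surjectivity onto $H_{1,1+jt}$; so the equality $H_1^l=H_0^lu$ is not established by your argument.

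Your invertibility step is also not sound as written, and is in fact circular relative to the paper's order of results. The component $H_{tt}=\k[y_{m_1}^{e_1}]g$ is a free rank-one $H_{00}$-module, not one-dimensional, so a priori $u^t=f(y_{m_1}^{e_1})g$ with $f$ a non-unit, and nothing at this stage forces $f$ to be a scalar: the identification $u^t=g$ is Lemma \ref{l6.7}, whose proof \emph{uses} the invertibility of $u$ (group-likeness of $u$ comes even later, in Lemma \ref{l6.8}). The paper instead deduces invertibility directly from $H_1^l=H_0^lu$ together with the strong grading: $1\in H_0^l=H_{n-1}^lH_1^l=H_{n-1}^lH_0^lu$ gives a left inverse, and the two-sided freeness of the components (Lemma \ref{l6.5}, or normality of $u_{11}$) gives the other side. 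Your second step (collecting left degrees into residue classes mod $t$ to get $H=\bigoplus_{k=0}^{t-1}\widetilde{H}u^k$) is fine once $H_j^l=H_0^lu^j$ is available, but that availability rests on the two points above.
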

\begin{proof} By the bigraded structure of $H$, we have
$$H_{0,m_it}H_{11}\subseteq H_{1, 1+m_it},\;\;\;\; H_{0, (e_i-1)m_it}H_{1, 1+m_it}\subseteq
H_{11},$$ which imply $$H_{0,m_it}H_{0, (e_i-1)m_it}H_{1, 1+m_it}\subseteq
H_{0,m_it}H_{11}\subseteq H_{1, 1+m_it},$$
for all $1\leq i\leq \theta.$

Since $H_{0,m_it}H_{0,(e_i-1)m_it}=y_{m_i}^{e_i}H_{00}$ is a maximal ideal of $H_{00}=\k[y_{m_1}^{e_1}]=\k[y_{m_i}^{e_i}]$ and $H_{1,1+m_it}$ is a free $H_{00}$-module of rank one (by Lemma \ref{l6.5}),  $H_{0,m_it}H_{0,(e_i-1)m_it}H_{1, 1+m_it}$ is a maximal $H_{00}$-submodule of $H_{1, 1+m_it}$.
Thus $$H_{0,m_it}H_{11}=H_{0,m_it}H_{0,
(e_i-1)m_it}H_{1, 1+m_it}=y_{m_i}^{e_i}H_{1, 1+m_it}\;\; \textrm{or}\;\; H_{0,m_it}H_{11}=H_{1, 1+m_it}.$$

If $H_{0,m_it}H_{11}=y_{m_i}^{e_i}H_{1, 1+m_it}$, then $y_{m_i}u_{11}=y_{m_i}^{e_i}\alpha(y_{m_i}^{e_i})u_{1,
1+m_it}$ for some polynomial $\alpha(y_{m_i}^{e_i})\in \k[y_{m_i}^{e_i}]$. So
$$y_{m_i}(u_{11}-y_{m_i}^{e_i-1}\alpha(y_{m_i}^{e_i})u_{1, 1+m_it})=0.$$
Therefore, $y_{m_i}^{e_i}(u_{11}-y_{m_i}^{e_i-1}\alpha(y_{m_i}^{e_i})u_{1, 1+m_it})=0$.
Note that each homogenous $H_{i, i+jt}$ is a torsion-free
$H_{00}$-module, so $$u_{11}=y_{m_i}^{e_i-1}\alpha(y_{m_i}^{e_i})u_{1, 1+m_it}.$$
By  assumption, $\epsilon(u_{11})=1$.
But, by definition,   $\epsilon(y_{m_i})= 0$. This is impossible. So $H_{0,m_it}H_{11}=H_{1, 1+m_it}$ which implies that $H_{0,m_it}u_{11}=H_{1, 1+m_it}$.

Since above $i$ is arbitrary, that is $1\leq i\leq \theta$, we can show that $H_{0,jt}u_{11}=H_{1, 1+jt}$
for $0\leqslant j\leqslant m-1$.
 Thus $H_1^l=H_0^lu_{11}$. Since $H=\bigoplus_{0\leqslant j\leqslant n-1} H_j^l$ is strongly graded, $u_{11}$ is invertible and $H_j^l=H_0^lu_{11}^j$ for all $0\leqslant
j\leqslant n-1$. Let $u:=u_{11}$, then we have
$$H=\bigoplus_{0\leqslant k\leqslant t-1}\widetilde{H}u^k.$$
\end{proof}

We are in a position to determine the structure of $H$ now.

 \begin{lemma}\label{l6.7} With above notations, we have
 $$u^t=g, \;\;\;\;y_{m_i}u=\zeta^{m_i} uy_{m_i}\;\;\;\;(1\leq i\leq \theta),$$
where $\zeta$ is a primitive $n$th root of $1$.
 \end{lemma}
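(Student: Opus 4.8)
The plan is to pin down the structure constants appearing when we push the generators of $\widetilde{H}=T(\underline{m},1,\xi)$ past the invertible element $u=u_{11}$ of degree $(1,1)$, using the bigrading, the group-likeness of $g=u_{11}^{t}$, and the skew-primitivity of the $y_{m_i}$'s. First I would observe that $u^{t}\in H_{tt}=\k[y_{m_1}^{e_1}]g$, so $u^{t}=f(y_{m_1}^{e_1})g$ for some polynomial $f$; applying $\epsilon$ and using $\epsilon(u)=\epsilon(u_{11})=1$ (our normalization) together with $\epsilon(g)=1$, $\epsilon(y_{m_i})=0$ gives $f(0)=1$. To upgrade $f$ to the constant $1$, I would use that $g$ is group-like and that $\D$ must respect the grading: writing $\D(u)=\sum_{s} C_{ss}u_{1s}\otimes u_{s1}$ as in the analysis carried over from \cite[Proposition 6.7]{BZ} (the off-diagonal $C_{st}$ vanish), one finds $\D(u^{t})$ has $g\otimes g$ as its $(t,t)\otimes(t,t)$ component with coefficient $1$ only if $f$ is a unit in $\k[y_{m_1}^{e_1}]$, hence $f\equiv$ constant $=f(0)=1$. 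Thus $u^{t}=g$.

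Next I would treat the commutation relation $y_{m_i}u=\zeta^{m_i}uy_{m_i}$. Since $y_{m_i}\in H_{0,m_it}$ and $u\in H_{11}$, both $y_{m_i}u$ and $uy_{m_i}$ lie in $H_{1,1+m_it}$, which by Lemma \ref{l6.5} is a free $H_{00}$-module of rank one generated by $u_{1,1+m_it}$; hence $y_{m_i}u=\lambda_i(y_{m_1}^{e_1})u_{1,1+m_it}$ and $uy_{m_i}=\mu_i(y_{m_1}^{e_1})u_{1,1+m_it}$ for polynomials $\lambda_i,\mu_i$. From Lemma \ref{l6.6}'s proof we already know $H_{0,m_it}u=H_{1,1+m_it}$, so $\lambda_i$ is a unit, i.e.\ a nonzero scalar; applying $\epsilon$ forces a normalization of $u_{1,1+m_it}$, and comparing $\epsilon$ on both sides shows $\mu_i$ too is a nonzero scalar. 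Hence $y_{m_i}u=c_i\,uy_{m_i}$ for scalars $c_i\in\k^{\times}$. To identify $c_i$, I would iterate: $y_{m_i}u^{t}=c_i^{t}u^{t}y_{m_i}$, and since $u^{t}=g$ with $y_{m_i}g=\xi^{m_i}gy_{m_i}$ in $\widetilde{H}$, we get $c_i^{t}=\xi^{m_i}$. Since $\xi$ is a primitive $m$th root of $1$ and $n=mt$, there is a primitive $n$th root $\zeta$ with $\zeta^{t}=\xi$ (choosing $\zeta$ once and for all), and the skew-primitivity constraint $\D(y_{m_i})=1\otimes y_{m_i}+y_{m_i}\otimes g^{m_i}$ combined with $\D(u)$'s form pins $c_i=\zeta^{m_i}$ rather than another $t$th root: indeed applying $\D$ to $y_{m_i}u=c_iuy_{m_i}$ and comparing the component in $H_{11}\otimes H_{1,1+m_it}$ (using that the $g^{m_i}$-tail of $\D(y_{m_i})$ contributes a factor coming from $g=u^{t}$) yields the precise value $\zeta^{m_i}$.

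The main obstacle I expect is the last identification step: ruling out all the spurious $t$th roots of $\xi^{m_i}$ and showing the \emph{same} $\zeta$ works simultaneously for all $i$ (compatibly with $\zeta^{t}=\xi$). The cleanest route is to first establish $y_{m_i}u=c_i u y_{m_i}$ with $c_i^{t}=\xi^{m_i}$ as above, then fix $\zeta$ by defining it through the action of $u$ on a \emph{single} generator, say $y_{m_1}$ — set $c_1=:\zeta^{m_1}$ after checking $(\zeta)^{t}=\xi$ is forced — and finally use the coassociativity identities for $\D(u)$ (exactly the relations \eqref{eq5.4}, \eqref{eq5.5} style computations already invoked in the proof of Proposition \ref{p5.7}) together with $y_{m_i}y_{m_j}=y_{m_j}y_{m_i}$ to propagate the choice to all $c_i$, giving $c_i=\zeta^{m_i}$. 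Once $u^{t}=g$ and $y_{m_i}u=\zeta^{m_i}uy_{m_i}$ are in hand, together with $y_{m_i}y_{m_j}=y_{m_j}y_{m_i}$ and $y_{m_i}^{e_i}=y_{m_j}^{e_j}$ inherited from $\widetilde{H}$, one reads off a Hopf surjection $T(\underline{m},t,\zeta)\twoheadrightarrow H$ (sending $g\mapsto u^{t}$, $y_{m_i}\mapsto y_{m_i}$, and the Taft-generator $g$ to $u$), which is an isomorphism by comparing GK-dimension one primeness on both sides — but that conclusion belongs to the next result, so here I would stop at the two displayed relations.
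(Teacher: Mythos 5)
Your first half is essentially the paper's argument, except that your mechanism for showing $u^t=f(y_{m_1}^{e_1})g$ has constant $f$ is shakier than it needs to be: since $u$ is invertible (Lemma \ref{l6.6}), $u^t$ is invertible, and $u^t=f\cdot g$ with $g$ invertible forces $f$ to be a unit of $\k[y_{m_1}^{e_1}]$, hence a nonzero scalar, which $\epsilon(u)=1$ normalizes to $1$; no coproduct computation is needed (and the one you sketch presupposes information about $\D(u)$ that is only obtained later, in Lemma \ref{l6.8}). Likewise your claim that ``comparing $\epsilon$ on both sides shows $\mu_i$ is a nonzero scalar'' cannot work as stated, because $\epsilon(y_{m_i})=0$ kills both sides; the correct route (as in the paper) is to write $y_{m_i}u=uy_{m_i}\beta_i(y_{m_i}^{e_i})$ using $H_{0,m_it}u=uH_{0,m_it}$, iterate to $y_{m_i}u^t=u^ty_{m_i}\beta_i'$, and compare with $y_{m_i}g=\xi^{m_i}gy_{m_i}$ to force $\beta_i$ to be a constant $\zeta_i$ with $\zeta_i^t=\xi^{m_i}$. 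Up to these repairs you do arrive at the same intermediate statement $y_{m_i}u=c_iuy_{m_i}$, $c_i^t=\xi^{m_i}$.

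The genuine gap is in the final identification. You have the logic backwards: one does not pre-choose a primitive $n$th root $\zeta$ with $\zeta^t=\xi$ and then ``pin'' $c_i=\zeta^{m_i}$ by coproduct comparisons -- the scalars $c_i$ are already determined by $H$, and no coalgebraic constraint selects among the $t$th roots of $\xi^{m_i}$. What must actually be proved is (i) that a \emph{single} $\zeta$ with $\zeta^{m_i}=c_i$ for all $i$ and $\zeta^t=\xi$ exists, and (ii) that this $\zeta$ is a \emph{primitive} $n$th root of unity. For (i), your proposed propagation via $y_{m_i}y_{m_j}=y_{m_j}y_{m_i}$ yields nothing: applying the commutation rules to $y_{m_i}y_{m_j}u$ in either order gives the same factor $c_ic_j$, so no relation between the $c_i$'s is produced; the paper instead normalizes $(m_1,\ldots,m_\theta)=1$ (via Proposition \ref{p4.3}) and builds $\zeta$ from the $\zeta_i$'s. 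For (ii), primitivity is not a formal consequence of the Hopf axioms at all; the paper proves it using primeness of $H$: if $\zeta$ had order $n'<n$ (necessarily $m\mid n'\mid n$), then $u':=u^{n'}$ is central with $(u')^{n/n'}=u^n=g^m=1$, producing nontrivial orthogonal central idempotents $\sum_j\varsigma^{-lj}(u')^j$ and contradicting primeness. Your proposal never invokes primeness in this step, so the statement that $\zeta$ is a primitive $n$th root of $1$ -- which is exactly what the next proposition needs to identify $H$ with $T(\underline{m},t,\zeta)$ -- is left unproved.
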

\begin{proof}
 For all $1\leq i\leq \theta$, by $H_{0,m_it}u=uH_{0,m_it}$, there exists a polynomial $\beta_i(y_{m_i}^{e_i})\in \k[y_{m_i}^{e_i}]$ such that
$$y_{m_i}u=uy_{m_i}\beta_i(y_{m_i}^{e_i}).$$  Then
$$y_{m_i}u^t=u^ty_{m_i}{\beta}_i'(y_{m_i}^{e_i})$$ for some polynomial ${\beta}_i'(y_{m_i}^{e_i})\in
\k[y_{m_i}^{e_i}]$ induced by $\beta(y_{m_i}^{e_i})$. Since $u^t$ is invertible and
$u^t\in H_{t,t}=\k[y_{m_i}^{e_i}]g$,  $u^t=ag$ for $0\neq a\in k$. By
assumption, $\epsilon(u)=1$ and thus $a=1$. Therefore,
$$u^t=g.$$
Since $y_{m_i}g=\xi^{m_i} gy_{m_i}$,  we have ${\beta}_{i}'(y_{m_i}^{e_i})=\xi^{m_i}$. Then it is easy to see that $\beta_i(y_{m_i}^{e_i})=\zeta_i\in \k$ with $\zeta_i^t=\xi^{m_i}$. By assumption, $(m_1,\ldots,m_{\theta})=1$ and thus there exists $\zeta\in \k$ such that $\zeta^{t}=\xi$ and $\zeta^{m_i}=\zeta_i$ for all $1\leq i\leq \theta.$ Of course,
$\zeta^n=1$.

The last job is to show that $\zeta$ is a primitive $n$th root of
$1$. Indeed, assume $\zeta$ is a primitive $n'$th root of 1. By definition, $m|n'|n$ and $n'\neq n$. Therefore, it is not hard to see that $$u':=u^{n'}\in C(H)$$
the center of $H$. Since $g^{m}=u^{n}=(u')^{\frac{n}{n'}}=1$, we have orthogonal central idempotents $1_{l}:=\sum_{j=0}^{\frac{n}{n'}-1} \varsigma^{-lj}(u')^{j}$ for $0\leq l\leq \frac{n}{n'}-1$ and $\varsigma$ a primitive $\frac{n}{n'}$th root of unity. This contradicts to the fact that $H$ is prime.
\end{proof}

\begin{lemma}\label{l6.8} The element $u$ is a group-like element of $H$.
\end{lemma}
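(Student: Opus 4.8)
The plan is to prove directly that $\Delta(u)=u\otimes u$; combined with $\epsilon(u)=1$ and the invertibility of $u$ (Lemma~\ref{l6.6}) this says exactly that $u\in G(H)$. First I would localise $\Delta(u)$ in the $\Z_n$-bigrading: since the $\widehat{G_\pi^l}$- and $\widehat{G_\pi^r}$-gradings are compatible and $\Delta(H_{i,j})\subseteq\bigoplus_s H_{i,s}\otimes H_{s,j}$ (Lemma~\ref{l2.8}(1)), while $H_{1,s}=0$ unless $s\equiv 1\ (\mathrm{mod}\ t)$ by Lemma~\ref{l6.1}(1), one can write
$$\Delta(u)=\sum_{j=0}^{m-1}d_j,\qquad d_j\in H_{1,1+jt}\otimes H_{1+jt,1}.$$
By Lemma~\ref{l6.5} each $H_{1,1+jt}$ is a free $H_{00}$-module of rank one, and in the additive case $H_{00}=\k[y_{m_1}^{e_1}]\cong\k[x]$. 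It therefore suffices to show $d_j=0$ for $j\ne 0$: the diagonal term automatically satisfies $d_0\in H_{1,1}\otimes H_{1,1}=(H_{00}\otimes H_{00})(u\otimes u)$, so once the off-diagonal terms vanish we may write $\Delta(u)=w\,(u\otimes u)$ with $w\in H_{00}\otimes H_{00}\cong\k[x_1,x_2]$; running the same analysis on $u^{-1}$ exhibits $w$ as a unit of this polynomial ring, hence $w\in\k^\times$, and applying $\epsilon\otimes\id$ with $\epsilon(u)=1$ forces $w=1$. (That $d_0\ne 0$ is clear from $(\id\otimes\epsilon)\Delta(u)=u$ and Lemma~\ref{l2.8}(4).)

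The heart of the matter is the vanishing of $d_j$ for $j\ne 0$, and the tool is the relation $y_{m_i}u=\zeta^{m_i}uy_{m_i}$ of Lemma~\ref{l6.7}. I would first note that each $y_{m_i}$ is normal in $H$: one has $y_{m_i}\widetilde H=\widetilde H y_{m_i}$ inside $\widetilde H=T(\underline m,1,\xi)$ and $y_{m_i}u^k=\zeta^{km_i}u^ky_{m_i}$, so $y_{m_i}H=Hy_{m_i}$ by Lemma~\ref{l6.6}; the induced algebra endomorphism $\sigma_i$ (conjugation by $y_{m_i}$) fixes $y_{m_1}^{e_1}$ and rescales $u$, $g=u^t$ and each $y_{m_k}$ by scalars, hence acts as multiplication by a scalar on every homogeneous component $H_{ab}$ (being $H_{00}$-linear and degree-preserving there); the same holds for conjugation by the unit $g^{m_i}$. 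Applying $\Delta$ to $y_{m_i}u=\zeta^{m_i}uy_{m_i}$, with $\Delta(y_{m_i})=1\otimes y_{m_i}+y_{m_i}\otimes g^{m_i}$ and $g^{m_i}\in H_{m_it,m_it}$, and isolating the component in bidegree $\big((1,1+\ell t),(1+\ell t,1+m_it)\big)$, one obtains after cancelling the scalar factors the identity
$$(\xi^{\ell m_i}-1)\,d_\ell(1\otimes y_{m_i})=(1-\xi^{m_i\ell-m_i^2})\,d_{\ell-m_i}(y_{m_i}\otimes g^{m_i})\qquad(1\le i\le\theta).$$

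Finally I would run an induction on the length $N(\ell):=\sum_k a_k$ of the expansion $\ell\equiv\sum_k a_km_k$ with $0\le a_k<e_k$, which is well defined and unique by Definition~\ref{d3.1}(4). For $N(\ell)=0$ there is nothing to prove. For $N(\ell)=1$, say $\ell=m_i$ with $e_i\ge 2$, the displayed identity with this $i$ has vanishing right-hand side ($\xi^{m_i\ell-m_i^2}=1$), and since $\xi^{m_i^2}\ne 1$ (because $\gamma_i=\xi^{-m_i^2}$ is a primitive $e_i$th root of unity) and $y_{m_i}$ is a regular element by Lemma~\ref{l2.10}(a), it follows that $d_{m_i}=0$. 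For $N(\ell)\ge 2$, pick $i$ with $a_i\ge 1$; then $N(\ell-m_i)=N(\ell)-1\ge 1$ so $\ell-m_i\ne 0$ and $d_{\ell-m_i}=0$ by induction, while $\ell m_i\equiv a_im_i^2\ (\mathrm{mod}\ m)$ with $0<a_i<e_i$ gives $\xi^{\ell m_i}\ne 1$, so the identity forces $d_\ell(1\otimes y_{m_i})=0$, hence $d_\ell=0$ by regularity. This completes the proof. The main obstacle is the careful bookkeeping in the previous paragraph — tracking the scalar factors correctly through each homogeneous component — and it is exactly here that the \emph{primitivity} of $\zeta$ (the last and most delicate clause of Lemma~\ref{l6.7}) and the arithmetic of fractions are used essentially, since without primitivity certain of the $\xi$-powers above could equal $1$ and the induction would break down.
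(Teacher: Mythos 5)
Your argument is correct, but it takes a much longer road than the paper's, which is a three-line invertibility argument using only the coarse one-sided gradings. By Lemma \ref{l2.8}(1) one has $\Delta(u)\in H_1^l\otimes H_1^r$, and since $H_1^l=H_0^lu$ (Lemma \ref{l6.6}) and likewise $H_1^r=H_0^ru$ (because $u\in H_{11}\subseteq H_1^r$ is a unit), the element $\Delta(u)(u\otimes u)^{-1}$ lies in $H_0^l\otimes H_0^r\cong\k[x,y]$; being a unit of $H\otimes H$ that is homogeneous of trivial degree, it is a unit of $\k[x,y]$, hence a nonzero scalar, and $\e(u)=1$ forces it to be $1$. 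So the last two steps of your proof (from $\Delta(u)=w(u\otimes u)$ to $w=1$) already constitute the whole proof: the vanishing of the off-diagonal components $d_j$ is a corollary of $\Delta(u)=u\otimes u$, not a prerequisite. Your computation is nonetheless sound --- the displayed scalar identity checks out using $\xi=\zeta^t$, and the induction on $N(\ell)$ closes because $\xi^{m_i^2}$ has exact order $e_i$ --- but three points deserve more care than you give them: the mid-index compatibility $\Delta(H_{ij})\subseteq\bigoplus_s H_{is}\otimes H_{sj}$ is true but is \emph{not} a consequence of Lemma \ref{l2.8}(1) alone (it is the statement $C^{ij}_{st}=0$ for $s\neq t$ from \cite[Proposition 6.7]{BZ}, which the paper quotes separately in the proof of Proposition \ref{p5.7}); the claim that $\sigma_i$ acts on $H_{ab}$ by a \emph{scalar}, rather than by an element of $H_{00}$, needs the observation that $H_{ab}$ is spanned over $\k$ by monomials in $y_{m_1},\dots,y_{m_\theta},u$, each a $\sigma_i$-eigenvector with eigenvalue $\zeta^{m_ia}$ depending only on the left degree ($H_{00}$-linearity of an endomorphism of a free rank-one module only gives multiplication by a polynomial); and the step ``$w$ is a unit of $\k[x_1,x_2]$'' is precisely the homogeneous-unit argument that powers the paper's proof. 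Finally, contrary to your closing remark, the paper's route uses neither the primitivity of $\zeta$ nor the arithmetic of the fraction for this lemma; even on your route, what is actually needed is only that $\xi=\zeta^t$ is a primitive $m$th root of unity, not the full primitivity of $\zeta$ as an $n$th root.
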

\begin{proof} First of all $H_0^r\cong \k[x]\cong H_0^l$. Then $H_0^r\otimes H_0^l\cong \k[x, y]$ and the only invertible elements in $H_0^r\otimes H_0^l$ are nonzero scalars in $\k$. Since $\Delta(u)$ and $u\otimes u$ are invertible, $\Delta(u)(u\otimes u)^{-1}$ is invertible (and hence a scalar). Thus $u$ must be group-like by noting that $\epsilon(u)=1$.
\end{proof}

The next proposition follows from above lemmas directly.

\begin{proposition}\label{p6.9}  Let $H$ be a prime regular Hopf algebra of GK-dimension one satisfying (Hyp1), (Hyp2) and $\ord(\pi)=n>\mi(\pi)=m>1$. If $H$ is additive, then $H$ is isomorphic as a Hopf algebra to a fraction version of infinite dimensional Taft algebra.
\end{proposition}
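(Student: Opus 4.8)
The plan is to assemble the lemmas of this subsection into the isomorphism. By Corollary \ref{c6.3} together with the assumption that $H$ is additive, we know $\widetilde{H}\cong T(\underline{m},1,\xi)$ for some fraction $\underline{m}=\{m_1,\ldots,m_\theta\}$ of $m$, and by Proposition \ref{p4.3} we may normalize so that $(m_1,\ldots,m_\theta)=1$. First I would fix the generators $g,y_{m_1},\ldots,y_{m_\theta}$ of $\widetilde{H}$ as in the displayed presentation, normalize $u:=u_{11}$ so that $\epsilon(u)=1$, and invoke Lemma \ref{l6.6} to get that $u$ is invertible with $H=\bigoplus_{0\le k\le t-1}\widetilde{H}u^k$. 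Then Lemma \ref{l6.7} supplies the two structural relations $u^t=g$ and $y_{m_i}u=\zeta^{m_i}uy_{m_i}$ with $\zeta$ a primitive $n$th root of $1$ (recall $n=mt$), and Lemma \ref{l6.8} gives that $u$ is group-like.

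Next I would package these relations into a surjection from $T(\underline{m},t,\zeta)$. The point is that in $T(\underline{m},t,\zeta)$ the generator (also called $g$) satisfies $g^n=1$, $y_{m_i}g=\zeta^{m_i}gy_{m_i}$, $y_{m_i}^{e_i}=y_{m_j}^{e_j}$, with $\Delta(g)=g\otimes g$ and $\Delta(y_{m_i})=1\otimes y_{m_i}+y_{m_i}\otimes g^{tm_i}$. Sending the generator $g$ of $T(\underline{m},t,\zeta)$ to $u\in H$ and $y_{m_i}\mapsto y_{m_i}$, one checks: $u^n=g^m=1$ in $H$; $y_{m_i}u=\zeta^{m_i}uy_{m_i}$ is exactly Lemma \ref{l6.7}; the relation $y_{m_i}^{e_i}=y_{m_j}^{e_j}$ already holds inside $\widetilde{H}\subseteq H$; the $y_{m_i}$ commute; and $u$ being group-like with $u^t=g$ forces $\Delta(y_{m_i})=1\otimes y_{m_i}+y_{m_i}\otimes u^{tm_i}$ in $H$ because $\Delta(y_{m_i})=1\otimes y_{m_i}+y_{m_i}\otimes g^{m_i}$ in $\widetilde{H}$ and $g^{m_i}=(u^t)^{m_i}=u^{tm_i}$. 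Hence the assignment extends to a Hopf algebra map $\phi:\;T(\underline{m},t,\zeta)\to H$, which is surjective because its image contains $\widetilde{H}$ and $u$, and $H=\bigoplus_{0\le k\le t-1}\widetilde{H}u^k$.

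Finally, surjectivity plus a dimension/GK-dimension count finishes the argument exactly as in the proofs of Proposition \ref{p5.7}: both $T(\underline{m},t,\zeta)$ and $H$ are prime affine of GK-dimension one (for $T(\underline{m},t,\zeta)$ this is Proposition \ref{p4.1}; for $H$ it is our standing hypothesis), and a Hopf surjection between two prime Hopf algebras of the same finite GK-dimension whose PI-degrees agree must be an isomorphism; here $\mathrm{PI.deg}(T(\underline{m},t,\zeta))=n=\mathrm{PI.deg}(H)$ by Proposition \ref{p4.1} and (Hyp1). (Alternatively, one sees directly from $H=\bigoplus_{0\le k\le t-1}\widetilde{H}u^k$ and the analogous decomposition of $T(\underline{m},t,\zeta)$ over its copy of $T(\underline{m},1,\xi)$ that $\phi$ is bijective on each graded piece.) I expect the only genuinely delicate point to be verifying that $\zeta$ can be chosen to be a primitive $n$th root of unity compatibly with all the $m_i$ simultaneously and that it is genuinely primitive — but this is precisely the content of Lemma \ref{l6.7}, whose proof uses $(m_1,\ldots,m_\theta)=1$ and the primeness of $H$ to rule out a smaller order; so the remaining work here is bookkeeping rather than a new obstacle. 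Thus $H\cong T(\underline{m},t,\zeta)$, a fraction version of an infinite dimensional Taft algebra, as claimed.
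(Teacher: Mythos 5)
Your proposal is correct and follows essentially the same route as the paper: the paper states that Proposition \ref{p6.9} "follows from above lemmas directly," meaning exactly the assembly you carry out — Corollary \ref{c6.3} plus Lemmas \ref{l6.6}, \ref{l6.7}, \ref{l6.8} give the relations $u^t=g$, $y_{m_i}u=\zeta^{m_i}uy_{m_i}$ with $u$ group-like, yielding a Hopf surjection $T(\underline{m},t,\zeta)\to H$ that is an isomorphism since both are prime of GK-dimension one (the same closing argument as in Proposition \ref{p5.7}). Your added remarks (the automatic compatibility $\zeta^{e_im_i}=\zeta^{e_jm_j}$ from $y_{m_i}^{e_i}=y_{m_j}^{e_j}$ and the graded-piece alternative) are harmless bookkeeping consistent with the paper.
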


\subsection{Multiplicative case.}
If $H$ is multiplicative, then $\widetilde{H}=B(\underline{m}, \omega,
\gamma)$ for $\underline{m}=\{m_1,\ldots,m_\theta\}$ a fraction of $m$, $\gamma$ a primitive $m$th root of $1$ and $\omega$ a positive integer. As usual, the generators of $B(\underline{m}, \omega, \gamma)$ are denoted by $x^{\pm 1},y_{m_1},\ldots,y_{m_\theta}$ and $g$. By equation \eqref{eq4.5} and \cite[Remark 6.3]{WLD}, we can assume that
 $\widetilde{H}=\bigoplus_{0\leqslant i, j\leqslant m-1}H_{it,
jt}$ with $$H_{it, jt}=\k[x^{\pm 1}]y_{j-i}g^i$$ (the index $j-i$ is
interpreted mod $m$). In particular, $H_{00}=\k[x^{\pm 1}]$, $H_{0,
jt}=\k[x^{\pm 1}]y_{j}$ and $H_{t, t}=\k[x^{\pm 1}]g$.

Set $u_j:=u_{1,1+jt} (0\leqslant j\leqslant m-1)$ for convenience.
By the structure of the bigrading of $H$, we have
\begin{equation}\label{eq6.2} y_{m_i}u_j=\phi_{m_i,j}u_{m_i+j}\end{equation}
and
\begin{equation}\label{eq6.3}u_jy_{m_i}=\varphi_{m_i,j}u_{m_i+j}\end{equation}
for some polynomials $\phi_{m_i,j}, \varphi_{m_i,j}\in \k[x^{\pm 1}]$ and $1\leqslant i\leqslant
\theta,\;0\leq j\leq m-1.$  With these notions and the equality $y_{m_i}^{e_i}=1-x^{\omega \frac{e_im_i}{m}}$, we find that
\begin{equation}\label{eq6.4}(1-x^{\omega \frac{e_im_i}{m}})u_j=y_{m_i}^{e_i}u_j=\phi_{m_i,j}\phi_{m_i,m_i+j}\cdots
\phi_{m_i,(e_i-1)m_i+j}u_j\end{equation} and
\begin{equation}\label{eq6.5} u_j(1-x^{\omega \frac{e_im_i}{m}})=u_jy_{m_i}^{e_i}=\varphi_{m_i,j}\varphi_{m_i,m_i+j}\cdots
\varphi_{m_i,(e_i-1)m_i+j}u_j,\end{equation}
for $1\leqslant i\leqslant
\theta$ and $0\leq j\leq m-1.$

\begin{lemma}\label{l6.10} There is no such $H$ satisfying $\ord(\pi)=n>\mi(\pi)=m>1$ and $n/m>2$.
\end{lemma}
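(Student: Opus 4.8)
\textbf{Proof proposal for Lemma \ref{l6.10}.}

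The plan is to argue by contradiction: assume $H$ is multiplicative with $\ord(\pi)=n$, $\mi(\pi)=m$, and $t=n/m\geq 3$, and derive a contradiction with primeness of $H$, exactly in the spirit of the additive case (Lemmas \ref{l6.6}--\ref{l6.8}) but now exploiting the fact that the ``top'' generator of $H$ over $\widetilde H$ is forced to be group-like and to give rise to too large a group of group-likes inside the center. First I would localize attention to $H_1^l$. Using the bigrading together with the fact (Lemma \ref{l6.5}) that each $H_{i,i+jt}$ is a rank-one free $H_{00}$-module, and running the same maximal-ideal argument as in Lemma \ref{l6.6} (applied to $H_{00}=\k[x^{\pm1}]$ and the relations $y_{m_i}^{e_i}=1-x^{\omega e_im_i/m}$), I would show $H_1^l=H_0^l u$ for a suitably normalized element $u:=u_{1,1+0\cdot t}=u_{11}$ with $\epsilon(u)=1$, that $u$ is invertible, and hence $H=\bigoplus_{0\le k\le t-1}\widetilde H u^{k}$. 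Here one must be a little careful: $H_{00}=\k[x^{\pm1}]$ has unit group $\k^\times\times\langle x\rangle$ rather than just $\k^\times$, so instead of reading off a scalar directly one reads off a monomial $ax^r$; but since $u^t\in H_{tt}=\k[x^{\pm1}]g$ is invertible and $\epsilon(u)=1$, one gets $u^t=x^{r}g$ for some $r\in\Z$, and after replacing $g$ by $x^{-r}g$ (equivalently adjusting the presentation of $\widetilde H$) one may as well take $u^t=g$. Likewise $y_{m_i}u=\zeta_i u y_{m_i}$ with $\zeta_i\in\k^\times$ (the $x$-part must cancel because $y_{m_i}u$ and $uy_{m_i}$ lie in the same homogeneous component which is $H_{00}$-torsion-free), and $\zeta_i^{t}=\gamma^{m_i}$; choosing a common $t$-th root one gets a single $n$-th root of unity $\zeta$ with $\zeta^t=\gamma$ and $y_{m_i}u=\zeta^{m_i}uy_{m_i}$.

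Next I would pin down the structure of $u$ itself. As in Lemma \ref{l6.8}, since $H_0^l\cong H_0^r\cong\k[x^{\pm1}]$, the tensor product $H_0^r\otimes H_0^l\cong \k[x^{\pm1},x'^{\pm1}]$ has unit group generated by $\k^\times$ and the two Laurent variables, so $\Delta(u)(u\otimes u)^{-1}$, being a unit with counit $1$, must be a Laurent \emph{monomial} in those variables — and because $\Delta(u)\in H_1^l\otimes H_1^r$ sits in the correct bidegree, this monomial is of the form $x^{a}\otimes x^{b}$ for some $a,b\in\Z$ satisfying the constraints imposed by coassociativity and counitality; chasing these forces $a=b=0$, i.e.\ $u$ is group-like. (This is the place where the argument genuinely diverges from the additive case, where the only units were scalars; the extra bookkeeping with Laurent monomials is routine but must be done.) Thus $G(H)$ contains the group generated by $x$ and $u$, with $u^{t}=g$, $g^{m}=x^{\omega}$, so $u^{n}=x^{\omega}$, while $u$ itself has order a multiple of $t\ge 3$ modulo $\langle x\rangle$.

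Finally I would extract the contradiction from primeness. The element $u^{n}=x^{\omega}$ is central, and $x$ is central; hence $z:=u\,x^{-\omega/n}$ — more precisely, since $\omega/n$ need not be an integer, I would instead work directly with the central subalgebra generated by $x^{\pm1}$ and consider $u^{t}=g$ together with $g^{m}=x^{\omega}$, which shows $u^{n}x^{-\omega}=1$. Then $u$ generates, over the central Laurent ring $\k[x^{\pm1}]$, a group-like element with $u^{n}=x^\omega$; if $u$ had order strictly less than $n$ modulo $\langle x\rangle$ — say $u^{n'}=x^{\omega'}$ with $n'\mid n$, $n'<n$, $m\mid n'$ — then exactly as in the last paragraph of Lemma \ref{l6.7} one builds a nontrivial family of orthogonal central idempotents $1_l=\tfrac{n'}{n}\sum_{j}\varsigma^{-lj}(u^{n'}x^{-\omega'})^{j}$ and contradicts primeness; so $\zeta$ is a primitive $n$-th root of $1$ and $u$ has exact order $n$ over $\langle x\rangle$. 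But then $H$ contains the group algebra of $\langle x,u\rangle$ with $u$ of infinite-order-modulo-torsion behaviour controlled by $t\ge 3$, and I claim this again produces central idempotents: because $t\ge 3$, the relation $u^{t}=g$ with $g^{m}=x^\omega$ and $y_{m_i}u=\zeta^{m_i}uy_{m_i}$ makes $H$ a crossed product of $\widetilde H$ by $\Z/t$ in which the action of $u^{t}=g$ on $\widetilde H$ is \emph{inner} (conjugation by $g\in\widetilde H$), forcing $H\cong \widetilde H\otimes_{\k} \k[\Z/t]$-like decompositions after twisting — and $\k[\Z/t]$ for $t\ge 2$ is not a domain, yielding nontrivial central idempotents in $H$ unless $t\mid 2$. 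Hence $t\le 2$, contradicting $t\ge 3$. I expect this last step — showing that $t\ge 3$ (as opposed to merely $t\neq 1$) forces the crossed product to split off a factor that destroys primeness — to be the main obstacle: the additive case never needed it because there $g$ had finite order with no Laurent part, whereas here one must carefully separate the ``$x$-direction'' (harmless, central, invertible) from the ``$u$-direction'' and show that three or more copies of $\widetilde H$ cannot be glued primely over $\Z/t$. The key input is that the winding-automorphism grading is \emph{strong} and that $\widetilde H$ already realizes the full $\pi$-minor $m$, so the extra $\Z/t$ contributes nothing to the PI-degree and can only be an inner (hence splittable) piece.
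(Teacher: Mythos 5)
Your reduction in the first step does not survive the passage from $\k[x]$ to $\k[x^{\pm 1}]$, and this is a genuine gap rather than bookkeeping. The maximal-ideal dichotomy of Lemma \ref{l6.6} works in the additive case because $y_{m_i}^{e_i}$ generates a maximal ideal of $H_{00}$, so $H_{0,m_it}H_{11}$ is either all of $H_{1,1+m_it}$ or $y_{m_i}^{e_i}H_{1,1+m_it}$. In the multiplicative case $y_{m_i}^{e_i}=1-x^{\omega e_im_i/m}$ splits into many coprime factors in $\k[x^{\pm 1}]$, so intermediate possibilities occur, and they really do occur: in the existing $t=2$ Hopf algebras $D(\underline{m},d,\gamma)$ one has $y_{m_i}u_0=\phi_{m_i,0}u_{m_i}$ with $\phi_{m_i,0}$ a proper factor, $u_0^2=\frac{1}{m}x^{a}\prod_i[0,e_i-2]_{m_i}\,g$ is not a unit, and $u_{11}$ is neither invertible nor group-like. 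Since nothing in your first two steps uses $t\geq 3$, an argument forcing $u_{11}$ to be invertible, to satisfy $u^t=g$, and to be group-like would apply verbatim to $t=2$ and contradict the existence of $D(\underline{m},d,\gamma)$; so $H_1^l=H_0^l u$ cannot be established this way and everything downstream collapses. (Also, the unit you need to analyze lives in $H_0^l\otimes H_0^r$, which here is $\k[x^{\pm1},y_{m_1},\ldots,y_{m_\theta}]^{\otimes 2}$, not $\k[x^{\pm1}]^{\otimes 2}$.) The closing step is likewise unsubstantiated: the innerness you exhibit is that of $u^t=g$, not of $u$, and you offer no mechanism distinguishing $t\geq 3$ from $t=2$, where the analogous ``crossed product'' is prime; as stated, the splitting claim proves too much.

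The paper's own proof needs none of this. From $u_jH_{00}=H_{00}u_j$ one gets $u_jx=\lambda_jx^{a_j}u_j$; since $u_j^t\in H_{t,(1+jt)t}\subseteq\widetilde{H}$ commutes with $x$, one finds $a_j^t=1$, hence $a_0=\pm1$ and (via $\epsilon$) $\lambda_0=1$. If $a_0=1$ then $x$ is central, $C(H)=\k[x^{\pm1}]$ and $\mathrm{rank}_{C(H)}H=nm<n^2$; if $a_0=-1$ then $C(H)=\k[x^s+x^{-s}\,|\,s\geq 1]$ and the rank is $2nm$, which can equal $n^2$ only when $n/m=2$. Either way one contradicts the fact that for a prime PI algebra the PI-degree $n$ is the square root of the rank over the center — this rank count is exactly the $t$-dependent input your sketch lacks. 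Your observation that the degree-one part can only act on $x$ by $x\mapsto x^{\pm1}$ is the right germ; pushed through the center/rank computation instead of a splitting of a crossed product, it becomes the paper's argument.
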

\begin{proof} Since $u_jH_{00}=H_{00}u_j$, we have $$u_jx=\alpha_j(x^{\pm
1})u_j \ \ \ \ \text{and}\ \ \ \ u_jx^{-1}=\beta_j(x^{\pm 1})u_j$$
for some $\alpha_j(x^{\pm 1}), \beta_j(x^{\pm 1})\in \k[x^{\pm 1}]$ for $0\leq j\leq m-1$.
From
$$u_j=u_jxx^{-1}=\alpha_j(x^{\pm
1})u_jx^{-1}=\alpha_j(x^{\pm 1})\beta_j(x^{\pm 1})u_j,$$ we get
$\alpha_j(x^{\pm 1})\beta_j(x^{\pm 1})=1$ and thus $\alpha_j(x^{\pm
1})=\lambda_jx^{a_j}$ for some $0\neq \lambda_j\in \k, 0\neq a_j\in
\mathbb{Z}$. Note that $u_j^t\in H_{t,(1+jt)t}=\k[x^{\pm 1}]y_{\bar{jt}}g$, where
$\bar{jt}\equiv jt\; (\text{mod}\; m)$. So we have
$u_j^t=\gamma_j(x^{\pm 1})y_{\bar{jt}}g$ for some $\gamma_j(x^{\pm
1})\in \k[x^{\pm 1}]$. Hence $u_j^t$ commutes with $x$. Applying
$u_jx=\lambda_jx^{a_j}u_j$ to $u_j^tx=xu_j^t$, we get
$\lambda_j^{\sum_{s=0}^{t-1}a_j^s}=1$ and $x^{a_j^t}=x$. If $t$ is odd, $a_j=1$ and if $t$ is even, then $a_j$ is either $1$ or $-1$.

Now we consider the special case $j=0$.  By
$\epsilon(xu_0)=\epsilon(u_0x)\neq 0$, we find that $\lambda_0=1$.

If $a_0=1$, that is $u_0x=xu_0$, then we will see $u_jx=xu_j$ for
all $0\leqslant j\leqslant m-1$. In fact, for this, it is enough to show that $u_{m_{i}}x=xu_{m_{i}}$ for all $1\leq i\leq \theta.$ Since
$$\phi_{m_i,0}xu_{m_i}=x\phi_{m_i,0}u_{m_i}=xy_{m_i}u_0=yxu_0=y_{m_i}u_0x=\phi_{m_i,0}u_{m_i}x,$$ we have $u_{m_i}x=xu_{m_i}$ since
$H_{1,1+m_i t}$ is a torsion-free $H_{00}$-module. Then by the strongly graded
structure $u_{i,i+jt}\in H_i^l=(H_1^l)^i$ and $x$ is commutative with $H_1^l$, it is not hard to see that $u_{i,i+jt}x=xu_{i,i+jt}$ for
all $0\leqslant i\leqslant n-1, 0\leqslant j\leqslant m-1$.
Therefore the center $C(H)\supseteq H_{00}=\k[x^{\pm 1}]$. By \cite[Lemma 5.2]{BZ}, $C(H)\subseteq H_0$ and thus $C(H)=H_0=\k[x^{\pm 1}]$. This implies that
$$\text{rank}_{C(H)}H=\text{rank}_{H_{00}}H= nm < n^2,$$ which contradicts  the fact:
the PI-degree of $H$ is $n$ and equals the square root of the rank
of $H$ over $C(H)$.

If $a_0=-1$, that is $u_0x=x^{-1}u_0$, we can deduce that
$u_{i,i+jt}x=x^{-1}u_{i,i+jt}$ for all $0\leqslant i\leqslant n-1,\ 0\leqslant j\leqslant m-1$ by using  the parallel proof of the case $a_0=1$. For $s\in \mathbbm{N}$, let $z_s:=x^s+x^{-s}$.
Define $\k[z_s|s\geq 0]$ to be the subalgebra of $\k[x^{\pm 1}]$ generated by all $z_{s}$. Note that $\k[x^{\pm 1}]$ has rank 2 over $\k[z_s| s\geqslant 1]$.
Thus $C(H)\supseteq \k[z_s|s\geq 0]$. Using  \cite[Lemma 5.2]{BZ} again, we have $C(H)= \k[z_s|s\geq 0]$. Hence
$$\text{rank}_{C(H)}H = 2nm\neq n^2$$
since $n/m >2$ by assumption. This contradicts the fact that the PI-deg$H=n$ again.

Combining these two cases, we get the desired result.
\end{proof}

We turn now to consider the case: $\ord(\pi)=2\mi(\pi)=2m$. In this case, $t=2$. As
discussed in the proof of Lemma \ref{l6.10}, if such $H$ exists then the following relations
\begin{equation}\label{rl} u_jx=x^{-1}u_j\ \ (0\leqslant i\leqslant m-1)\end{equation}
 hold in $H$. Using these relations and \eqref{eq6.5}, we have
 \begin{equation}\label{r2} \varphi_{m_i,j}\varphi_{m_i,m_i+j}\cdots \varphi_{m_i,(e_i-1)m_i+j}=1-x^{-\omega\frac{e_im_i}{m}}.
 \end{equation}
 for all $1\leq i\leq \theta$ and $0\leq j\leq m-1$. To determine the structure of $H$, we need to give some harmless assumptions on the choice of $u_j$ ($0\leqslant j\leqslant m-1$) and $\phi_{m_i,j}$:
 \begin{itemize}\item[(1)] We assume $\epsilon(u_0)=1$;
 \item[(2)] For each $1\leq i\leq \theta$, let $\xi_{i,s}:=e^{\frac{2s\pi i}{\omega\frac{e_im_i}{m}}}$ and thus $1-x^{\omega\frac{e_im_i}{m}}=\Pi_{s\in S_i}(1-\xi_{i,s}x)$, where $S_i:=\{0,1,
\cdots, \omega\frac{e_im_i}{m}-1\}$.
Since \begin{equation*}\phi_{m_i,j}\cdots \phi_{m_i,(e_i-1)m_i+j}=y_{m_i}^{e_i}=1-x^{\omega\frac{e_im_i}{m}},\end{equation*} there is no harm to assume that
\begin{equation*} \phi_{m_i, tm_i+j}=\Pi_{s\in S_{i,j,t}}(1-\xi_{i,s}x),\end{equation*}
where $S_{i,j,t}$ is a subset of $S_{i}$.
 \item[(3)]
By the strongly graded structure of $H$, the equality $H_2^l=H^{l}_0g$ and
the fact that $g$ is invertible in $H$, we can take $u_{k, k+2j}$ such that
\begin{equation*}
u_{k, k+2j}=
\begin{cases}
 g^{\frac{k-1}{2}}u_j  & \text{if}\ \ k \ \ \text{is odd},\\
 y^jg^{\frac{k}{2}}    & \text{if}\ \ k \ \ \text{is even},
\end{cases}
\end{equation*}
 for all
$2\leqslant k \leqslant 2m-1$. \end{itemize}

In the rest of this section, we always make these assumptions.

 We still need two notations, which appeared in the proof of Proposition \ref{p4.7}. For
 a polynomial
$f=\sum a_ix^{b_i} \in \k[x^{\pm 1}]$, we denote by $\bar{f}$ the polynomial $\sum
a_ix^{-b_i}$. Then by \eqref{rl}, we have $fu_i=u_i\bar{f}$ and $u_if=\bar{f}u_i$ for all $0\leqslant i\leqslant m-1$.
For any $h\in H\otimes H$, we use
 $$h_{(s_1,t_1)\otimes (s_2,t_2)}$$
 to denote the homogeneous part of $h$ in $H_{s_{1},t_{1}}\otimes H_{s_{2},t_{2}} $.
Both these notations will be used frequently in the proof of the next proposition.

\begin{proposition}\label{p6.11} Keep the notations above. Let $H$ be a prime Hopf algebra of GK-dimension one satisfying (Hyp1) and (Hyp2). Assume that $\widetilde{H}=B(\underline{m},\omega,\gamma)$ and $\ord(\pi)=2\mi(\pi)>2$, then we have
 \begin{itemize}
 \item[(1)]$m|\omega,\;\; 2|\sum_{i=1}^{\theta}(m_i-1)(e_i-1), \;\;2|\sum_{i=1}^{\theta}(e_i-1)m_i\frac{\omega}{m}$.
 \item[(2)] As a Hopf algebra, $$H\cong D(\underline{m},d,\gamma)$$ constructed as in Subsection \ref{ss4.4} where $d=\frac{\omega}{m}$.
\end{itemize}

\end{proposition}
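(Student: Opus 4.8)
The plan is to follow the template established in the additive case (Lemmas \ref{l6.6}--\ref{l6.8}, Proposition \ref{p6.9}) and in \cite[Section 7]{WLD}, but now working over the base $H_{00}=\k[x^{\pm1}]$ and keeping track of the many generators $y_{m_i}$. First I would pin down the parity and divisibility constraints of part (1). Since $\ord(\pi)=2m$ with $t=2$, Lemma \ref{l6.10}'s proof already forces the relations $u_jx=x^{-1}u_j$; squaring the element $u_0$ and using $u_0^2\in H_{2,2}=\k[x^{\pm1}]g$ together with $\epsilon(u_0)=1$ gives $u_0^2=g$ up to absorbing a scalar, and more generally $u_j^2$ is a scalar multiple of $y_{\overline{2j}}g$. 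Comparing $g^m=x^\omega$ with the relation obtained by writing $g$ as a power of $u_0$ and tracking the $x$-degree shows $m\mid\omega$; set $d=\omega/m$. The two parity conditions $2\mid\sum(m_i-1)(e_i-1)$ and $2\mid\sum(e_i-1)m_i d$ will fall out of consistency of the antipode formula $S(y_{m_i})=-y_{m_i}g^{-m_i}$ propagated through $S(u_j)$, exactly as the single-generator condition ``$(1+m)d$ even'' arises in Subsection 2.3; concretely one computes $S^2(u_0)$ in two ways and matches exponents of $x$ and signs.

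Next I would derive the straightening relations among $y_{m_i}$, $x$, $g$ and the $u_j$. From the bigrading, $y_{m_i}u_j=\phi_{m_i,j}u_{m_i+j}$ and $u_jy_{m_i}=\varphi_{m_i,j}u_{m_i+j}$ for polynomials in $\k[x^{\pm1}]$; equations \eqref{eq6.4}, \eqref{eq6.5}, \eqref{r2} together with the factorization assumptions (1)--(3) above let me solve for the $\phi_{m_i,j}$ and $\varphi_{m_i,j}$ up to monomials. The key point, mirroring \cite{WLD}, is that freeness of each $H_{i,i+jt}$ over $H_{00}$ (Lemma \ref{l6.5}) plus torsion-freeness forces each $\phi_{m_i,j}$ to be a product of linear factors $1-\gamma^{-m_i^2(?)}x^{m_id}$; I would show by induction on $j$ (using $\phi_{m_i,0}\cdots\phi_{m_i,(e_i-1)m_i}=1-x^{e_im_id}$) that $\phi_{m_i,j}=1-\gamma^{-m_i(m_i+j)}x^{m_id}$, i.e. exactly the $\phi_{m_i,j}$ of Subsection \ref{ss4.4}. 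Then I would verify $y_{m_i}u_j=\xi_{m_i}x^{m_id}u_jy_{m_i}$ by comparing $\phi_{m_i,j}$ with $\varphi_{m_i,j}$, which determines $\xi_{m_i}$ as a fixed square root of $\gamma^{m_i}$ (here the parity conditions guarantee a consistent global choice). The relation $u_jg=\gamma^jx^{-2d}gu_j$ comes from $u_jg=u_ju_{11}^{2}$ rewritten via $u_jx=x^{-1}u_j$ and the already-established $u_0^2=g$.

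The heart of the argument — and the main obstacle — is reconstructing the product $u_ju_l$ and showing it agrees with formula \eqref{eq4.14}. Since $u_ju_l\in H_{2,2+2(j+l)}=\k[x^{\pm1}]y_{\overline{j+l}}g$, there is a unique polynomial coefficient $c_{j,l}(x)\in\k[x^{\pm1}]$ with $u_ju_l=c_{j,l}(x)\,y_{\overline{j+l}}g$; the task is to evaluate $c_{j,l}$. The strategy is to use associativity: compute $(u_0 u_j)u_l$ and $u_0(u_ju_l)$, or better, use $y_{m_i}u_{j}u_{l}$ in two ways to get recursions relating $c_{j,l}$, $c_{j+m_i,l}$ and $c_{j,l+m_i}$, together with the normalization $\epsilon$ and the boundary values $c_{j,-j}$ coming from $u_ju_{-j}=(\text{scalar})\,g$ (forced by antipode axioms $S*\id=\epsilon$). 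Solving this recursion will produce the $q$-binomial factors $[j_i,e_i-2-l_i]_{m_i}$ and the prefactor $\frac1m x^{-\frac{2+\sum(e_i-1)m_i}{2}d}\prod\xi_{m_i}^{-l_i}\gamma^{m_i^2 l_i(l_i+1)/2}$; the combinatorial identities of Lemma \ref{l3.4}(1)--(6) and Lemma \ref{l3.5} are exactly what is needed to check the recursion closes. This is delicate because the coefficients live in $\k[x^{\pm1}]$ (not just $\k$) and because the ``otherwise'' case of the product in Subsection \ref{ss4.4} involves wrapping the $\phi$'s around; I expect to spend most effort isolating the monomial in $x$ correctly, which is where the parity hypotheses and the sign bookkeeping re-enter.

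Finally, having all defining relations of $D(\underline{m},d,\gamma)$ holding in $H$, I would exhibit the surjective algebra map $\phi:D(\underline{m},d,\gamma)\to H$ sending $x\mapsto x$, $g\mapsto g$, $y_{m_i}\mapsto y_{m_i}$, $u_j\mapsto u_j$; surjectivity follows because these elements generate $H$ (from Lemma \ref{l6.6}-type decomposition $H=\bigoplus_{0\le k\le t-1}\widetilde H u^k$ with $t=2$ and Corollary \ref{c6.3}). Checking $\phi$ respects $\D$ reduces to the coproduct computations already done in the proof of Proposition \ref{p4.7}, applied to the specific polynomials $\phi_{m_i,j}$ and coefficients $c_{j,l}$ identified above, so it is a matter of matching formulas rather than new work. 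Since $D(\underline{m},d,\gamma)$ is prime of GK-dimension one (Proposition \ref{p4.8}(1)) and $H$ is prime of GK-dimension one, a surjection between them must be an isomorphism (a proper quotient would drop GK-dimension or violate primeness), giving $H\cong D(\underline{m},d,\gamma)$ with $d=\omega/m$ as claimed.
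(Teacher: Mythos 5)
Your overall architecture (establish the straightening relations in $H$, identify the polynomials $\phi_{m_i,j}$ and the products $u_ju_l$, then build a surjection $D(\underline{m},d,\gamma)\to H$ and conclude by primeness and GK-dimension one) is the paper's architecture, and your final step matches the paper's. But two of your key intermediate claims are false in the multiplicative case, and they carry most of the weight. First, $u_0^2$ is \emph{not} $g$ up to a scalar: $u_0^2\in H_{22}=\k[x^{\pm1}]g$, but its coefficient is the non-invertible polynomial $\tfrac1m x^{a}\prod_{i}\phi_{m_i,0}\cdots\phi_{m_i,(e_i-2)m_i}$ (this is exactly the relation $u_0u_0$ in $D(\underline{m},d,\gamma)$); likewise $u_ju_{m-j}$ and $u_j^2$ carry non-unit polynomial coefficients, not scalars. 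Unlike the additive case, $u_0$ is not invertible here, so you cannot ``write $g$ as a power of $u_0$''. Consequently your derivations of $m\mid\omega$ and of $u_jg=\gamma^jx^{-2d}gu_j$, and your boundary values $c_{j,-j}$ for the recursion, all collapse. The paper gets $m\mid\omega$ instead from associativity: computing $y_{m_i}^{e_i}u_jy_{m_i}^{e_i(e_i-1)}$ in two ways forces $\phi_{m_i,sm_i+j}^{e_i}=-x^{\omega e_im_i/m}\varphi_{m_i,sm_i+j}^{e_i}$, hence $e_i\mid \omega e_im_i/m$ and then $m\mid\omega$ by coprimality; and it gets $u_jg=\lambda_jx^{-2d}gu_j$ from invertibility of $g$ together with $g^m=x^\omega$, with primeness (to ensure $u_ju_l\neq0$) pinning $\lambda_j=\pm\gamma^j$.

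Second, you cannot determine the individual factors $\phi_{m_i,j}$ ``by induction from $\phi_{m_i,0}\cdots\phi_{m_i,(e_i-1)m_i}=1-x^{e_im_id}$'': that identity only constrains the product, not how the linear factors are distributed among the $\phi_{m_i,j}$. In the paper this, the sign $\lambda_j=+\gamma^j$, the scalars in $\D(u_j)$, the exponent $a$, the identity $\xi_{m_i}^2=\gamma^{m_i}$ and the two parity conditions are all extracted from the \emph{coalgebra} structure of $H$: one must first prove that $\D_H(u_j)=\sum_k f_{jk}u_k\otimes h_{jk}g^ku_{j-k}$ with monomial coefficients, then that $f_{jk}=\gamma^{k(j-k)}$, $h_{jk}=x^{-kd}$, and finally use $(S*\Id)(u_0)=(\Id*S)(u_0)=1$ and $(\Id*S)(u_{m_i})=0$. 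Your proposal never determines $\D_H(u_j)$ at all, yet this is needed both to fix those constants and to check that your candidate map $D(\underline{m},d,\gamma)\to H$ is a coalgebra map; the coproduct computations in the proof of Proposition \ref{p4.7} verify the Hopf axioms for $D$ and say nothing about what $\D_H$ does to the elements $u_j$ of $H$, so this cannot be dismissed as ``matching formulas.'' This omitted analysis is the bulk of the paper's proof of Proposition \ref{p6.11}.
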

\noindent\emph{Proof.} We divide the proof into several steps.

\emph{Claim 1. We have $m|\omega$ and for $1\leq i\leq \theta, 0
\leq j\leq m-1$,  $y_{m_i}u_j=\phi_{m_i,j} u_{m_i+j}=\xi_{m_i} x^{dm_i}u_jy_{m_i}$ for $d=\frac{\omega}{m}$ and some $\xi_{m_i}\in \k$ satisfying $\xi_{m_i}^{e_i}=-1$.}

\emph{Proof of Claim 1:} By associativity of the multiplication, we have many equalities:
\begin{align*}
y_{m_i}u_jy_{m_i}^{e_i-1}&=\phi_{m_i,j}\varphi_{m_i,m_i+j}\varphi_{m_i,2m_i+j}\cdots \varphi_{m_i,(e_i-1)m_i+j}u_0\\
           &=\varphi_{m_i,j}\phi_{m_i,m_i+j}\varphi_{m_i,2m_i+j}\cdots \varphi_{m_i,(e_i-1)m_i+j}u_0\\
           &\cdots\\
           &=\varphi_{m_i,j}\varphi_{m_i,m_i+j}\varphi_{m_i,2m_i+j}\cdots \phi_{m_i,(e_i-1)m_i+j}u_0,
\end{align*}
which imply that \begin{equation}\label{eq6.8}\phi_{m_i,sm_i+j}\varphi_{m_i,tm_i+j}
=\varphi_{m_i,sm_i+j}\phi_{m_i,tm_i+j}\end{equation}
for all $0\leq s,
t\leq e_i-1$. Using associativity again, we have
\begin{align*}
y_{m_i}^{e_i}u_jy_{m_i}^{e_i(e_i-1)}&=(1-x^{\omega\frac{e_im_i}{m}})
u_j(1-x^{\omega\frac{e_im_i}{m}})^{e_i-1}\\
&=-x^{\omega\frac{e_im_i}{m}}
(1-x^{-\omega\frac{e_im_i}{m}})^{e_i}u_j\\
&=-x^{\omega\frac{e_im_i}{m}}(\varphi_{m_i,j}\varphi_{m_i,m_i+j}\varphi_{m_i,2m_i+j}\cdots \varphi_{m_i,(e_i-1)m_i+j})^{e_i}u_j\\
&=(\phi_{m_i,j}\varphi_{m_i,m_i+j}\varphi_{m_i,2m_i+j}\cdots \varphi_{m_i,(e_i-1)m_i+j})^{e_i}u_j\\
&=(\varphi_{m_i,j}\phi_{m_i,m_i+j}\varphi_{m_i,2m_i+j}\cdots \varphi_{m_i,(e_i-1)m_i+j})^{e_i}u_j\\
&\cdots\\
&=(\varphi_{m_i,j}\varphi_{m_i,m_i+j}\varphi_{m_i,2m_i+j}\cdots \phi_{m_i,(e_i-1)m_i+j})^{e_i}u_j,
\end{align*}
where the fourth ``$=$", for example, is gotten in the following way: We multiply $u_j$ by one $y_{m_i}$ from left side at first, then multiply it with $y_{m_i}^{e_i-1}$ from right side, then continue the procedures above. From these equalities, we have
 $$\phi_{m_i,sm_i+j}^{e_i}=-x^{\omega\frac{e_im_i}{m}}\varphi_{m_i,sm_i+j}^{e_i}$$ for all $0\leq s\leq e_i-1$.  This implies that
 $$e_i|\omega\frac{e_im_i}{m}.$$
 So, $m|\omega m_i$ for all $1\leq i\leq \theta$. Since $m$ is coprime to $(m_1,\ldots,m_{\theta})$, we have
 $$m|\omega.$$
 So $\phi_{m_i,sm_i+j}=\xi_{m_i,sm_i+j}x^{dm_i}\varphi_{m_i,sm_i+j}$ where $d=\frac{\omega}{m}$ and $\xi_{m_i,sm_i+j}\in \k$ satisfying $\xi_{m_i,sm_i+j}^{e_i}=-1$. We next want to prove that $\xi_{m_i,sm_i+j}$ does not depend on the number $sm_i+j.$ In fact, by equation \eqref{eq6.8}, we can see $\xi_{m_i,sm_i+j}=\xi_{m_i,tm_i+j}$ for all
$0\leq s, t\leq e_i-1$, and so we can write it through $\xi_{m_i,j}$. Now consider for any $1\leq i'\leq \theta$, by definition we have
$\phi_{m_{i'},0}u_{m_{i'}}=y_{m_{i'}}u_{0}$.  Therefore
\begin{align*} y_{m_i}y_{m_{i'}}u_{0}&= \phi_{m_{i'},0}y_{m_i}u_{m_{i'}}\\
&=\phi_{m_{i'},0}\xi_{m_i,m_{i'}}x^{m_id}u_{m_{i'}}y_{m_i},
\end{align*}
and
\begin{align*} y_{m_i}y_{m_{i'}}u_{0}&= y_{m_{i'}}y_{m_i}u_{0}\\
&=\xi_{m_i,0}x^{m_id}y_{m_{i'}}u_{0}y_{m_i}\\
&=\phi_{m_{i'},0}\xi_{m_i,0}x^{m_id}u_{m_{i'}}y_{m_i}.
\end{align*}
So, $\xi_{m_i,0}=\xi_{m_i,m_{i'}}$ which indeed tells us that $\xi_{m_i,j}$ does depend on $j$ (due to $j$ is generated by these $m_{i'}$'s) and so we write it as $\xi_{m_i}.$\qed

 In the following of the proof, $d$ is fixed to be the number $\omega/m$.

\noindent\emph{Claim 2. We have $u_jg=\lambda_j x^{-2d}gu_j$ for $\lambda_j=\pm \gamma^j$ and $0\leq j\leq m-1$.}

\emph{Proof of Claim 2:}
Since $g$ is invertible in $H$, $u_jg=\psi_jgu_j$ for some
invertible $\psi_j\in \k[x^{\pm 1}]$. Then $u_jg^m=\psi_j^mg^mu_j$ yields $\psi_j^m=x^{-2\omega}$. So $\psi_j=\lambda_jx^{-2d}$ for $\lambda_j\in \k$ with
$\lambda_j^m=1$. Our last task is to show that $\lambda_j=\pm \gamma^j$. To show this, we need a preparation, that is, we need to show that $u_ju_l\neq 0$ for all $j,l$. Otherwise, assume that there exist $j_0,l_0\in \{0,\ldots,m-1\}$ such that
$u_{j_0}u_{l_0}=0$. Using Claim 1, we can find that $u_{j_0}u_l\equiv 0$ and $u_ju_{l_0}\equiv 0$ for all $j,l$. Let $(u_{j_0})$ and $(u_{l_0})$ be the ideals generated by $u_{j_0}$ and $u_{l_0}$ respectively. Then it is not hard to find that
$(u_{j_0})(u_{l_0})=0$ which contradicts  $H$ being prime. So we always have
\begin{equation}\label{r3} u_ju_l\neq 0
\end{equation} for all $0\leq j,l\leqslant m-1$.

 Applying this observation, we have $0\neq
u_j^2\in H_{2,2+4j}=\k[x^{\pm 1}]y_{2j}g$,
$u_j^2g=\psi_j\overline{\psi_j}gu_j^2=\gamma^{2j}gu_j^2$. Thus
$\psi_j=\pm \gamma^jx^{-2d}$ which implies that $\lambda_j=\pm \gamma^j$. The proof is ended.\qed

We can say more about $\lambda_j$ at this stage.  By $0\neq
u_ju_lg=\gamma^{j+l}gu_ju_l$, we know that $\psi_j=\gamma^jx^{-2d}$ for all $j$ or
$\psi_j=-\gamma^jx^{-2d}$ for all $j$. So
 \begin{equation}\label{r5} \lambda_{j}=\gamma^{j}\ \ \textrm{or}\ \ \lambda_{j}=-\gamma^{j}
 \end{equation} for all $0\leq j\leq m-1$. In fact, we will show that  $\psi_j=\gamma^jx^{-2d}$ for all $j$ later.

\noindent\emph{Claim 3. For each $0\leqslant j\leqslant m-1$, there are $f_{jl},h_{jl}\in \k[x^{\pm 1}]$ with $h_{jl}$ monic such that \begin{equation}\label{r4}\D(u_j)=\sum_{k=0}^{m-1}f_{jk}u_k\otimes
h_{jk}g^ku_{j-k},\end{equation}
where the following $j-k$ is interpreted \emph{mod} $m$.}

\emph{Proof of Claim 3:} Since $u_j\in H_{1,1+2j}$, $\D(u_j)\in H_1^l\otimes H_{1+2j}^r$ by Lemma \ref{l2.8}. Noting that $H_1^l=\bigoplus_{k=0}^{m-1}H_{00}u_k$ and $H_{1+2j}^r=\bigoplus_{s=0}^{m-1}H_{00}g^su_{j-s}$, we can write $$\D(u_j)=\sum_{0\leq k,s\leq m-1}F^j_{ks}(u_k\otimes g^su_{j-s}),$$ where $F^j_{ks}\in H_{00}\otimes H_{00}$.
Then we divide the proof into two steps.

\noindent $\bullet$ \emph{Step} 1 ($\D(u_j)=\sum_{0\leq k\leqslant m-1}F^j_{kk}(u_k\otimes g^ku_{j-k})$).

Recall that $u_jg=\lambda_jx^{-2d}gu_j$, where $\lambda_j$ is either $\gamma^j$ for all $j$ or $-\gamma^j$ for all $j$. The equations
\begin{align*}
\D(u_jg)&=\D(u_j)\D(g)=\sum_{0\leq k,s\leq m-1}F^j_{ks}(u_k\otimes g^su_{j-s})(g\otimes g)\\
           &=\sum_{0\leq k,s\leq m-1}F^j_{ks}(\lambda_k x^{-2d}gu_k\otimes \lambda_{j-s} x^{-2d}g^{s+1}u_{j-s})\\
           &=\sum_{0\leq k,s\leq m-1}\lambda_k\lambda_{j-s}( x^{-2d}g\otimes x^{-2d}g)F^j_{ks}(u_k\otimes g^s u_{k-s})
\end{align*}
and
\begin{align*}
\D(\lambda_j x^{-2d}gu_j)&=\lambda_j ( x^{-2d}g\otimes x^{-2d}g) \sum_{0\leq k,s\leq m-1}F^j_{ks}(u_k\otimes g^su_{j-s})\\
           &=\sum_{0\leq k,s\leq m-1}\lambda_j ( x^{-2d}g\otimes x^{-2d}g)F^j_{ks}(u_k\otimes g^s u_{j-s})
\end{align*}
imply that $\lambda_j=\lambda_k\lambda_{j-s}$ for all $k, s$. If $\lambda_j=-\gamma^j$ for all $j$, then we have $-\gamma^j=\lambda_j=\lambda_k\lambda_{j-s}=\gamma^{k+j-s}.$ This implies $k=s\pm m/2$. Applying $(\epsilon\otimes \id)$ to $\D(u_j)$, $$(\epsilon\otimes \id)\D(u_j)=(\epsilon\otimes \id)(F^j_{0,\; m/2})g^{m/2}u_{j-m/2}\neq u_j,$$ which is absurd. If $\lambda_j=\gamma^j$ for all $j$, then $\gamma^j=\lambda_j=\lambda_k\lambda_{j-s}=\gamma^{k+j-s}$. This implies $k=s$ (which is compatible with the equality $(\epsilon\otimes \id)\D(u_j)=u_j$). So we get $F^j_{ks}\neq 0$ only if $k=s$ and $\lambda_j=\gamma^j$ for all $j$. Thus we have $\D(u_j)=\sum_{0\leq k\leq m-1}F^j_{kk}(u_k\otimes g^ku_{j-k})$ for all $j$.

\noindent $\bullet$ \emph{Step} 2 (There exist $f_{jk}, h_{jk} \in H_{00}$ with $h_{jk}$ monic such that $F^j_{kk}=f_{jk}\otimes h_{jk}$ for $0\leq j,k\leq m-1$).

We replace $F^j_{kk}$ by $F^j_{k}$ for convenience. Since
\begin{align*}
(\D\otimes \id)\D(u_j)&=(\D\otimes \id)(\sum_{0\leq k\leq m-1}F^j_{k}(u_k\otimes g^ku_{j-k}))\\
           &=\sum_{0\leq k\leq m-1}(\D\otimes \id)(F^j_{k})(\sum_{0\leq s\leq m-1}F_s^k(u_s\otimes g^su_{k-s})\otimes g^ku_{j-k})\\
           &=\sum_{0\leq k, s\leqslant m-1}(\D\otimes \id)(F^j_{k})(F_s^k\otimes 1)(u_s\otimes g^su_{k-s}\otimes g^ku_{j-k})
\end{align*}
and
\begin{align*}
(\id\otimes\D)\D(u_j)&=(\id\otimes\D)(\sum_{0\leq k\leq m-1}F^j_{k}(u_k\otimes g^ku_{j-k}))\\
           &=\sum_{0\leq k\leq m-1}(\id\otimes\D)(F^j_{k})(u_k\otimes(\sum_{0\leq s\leq m-1}F_s^{j-k}(g^ku_s\otimes g^{k+s}u_{j-k-s}))\\
           &=\sum_{0\leq k, s\leqslant m-1}(\id\otimes\D)(F^j_{s})(1\otimes F_{k-s}^{j-s})(u_s\otimes g^su_{k-s}\otimes g^ku_{j-k}),
\end{align*}
we have \begin{equation}\label{eqclaim3}(\D\otimes \id)(F^j_{k})(F_s^k\otimes 1)=(\id\otimes\D)(F^j_{s})(1\otimes F_{k-s}^{j-s})\end{equation} for all $0\leq j,k, s\leq m-1$.

Begin with the case $j=k=s=0$. Let $F_0^0=\sum_{p,q}k_{pq}x^p\otimes x^q$. Comparing equation
\begin{align*}
(\D\otimes \id)(F^0_{0})(F_0^0\otimes 1)&=(\sum_{p,q}k_{pq}x^p\otimes x^p\otimes x^q)(\sum_{p',q'}k_{p'q'}x^{p'}\otimes x^{q'}\otimes 1)\\
           &=(\sum_{p,q,p',q'}k_{pq}k_{p'q'}x^{p+p'}\otimes x^{p+q'}\otimes x^q)
\end{align*}
and equation
\begin{align*}
(\id\otimes\D)(F^0_{0})(1\otimes F_0^0)&=(\sum_{p,q}k_{pq}x^p\otimes x^q\otimes x^q)(\sum_{p',q'}k_{p'q'}1 \otimes x^{p'}\otimes x^{q'} )\\
           &=(\sum_{p,q,p',q'}k_{pq}k_{p'q'}x^{p}\otimes x^{q+p'}\otimes x^{q+q'}),
\end{align*}
one can see that $p=q=0$ by comparing the degrees of $x$ in these two expressions. Then $F_0^0= 1\otimes 1$ by applying $(\epsilon\otimes \id)\D$ to $u_0$.
Next, consider the case $k=s=0$. Write $F_0^j=\sum_{p,q}k_{pq}x^p\otimes x^q$. Similarly, we have $F_0^j= x^{a_j}\otimes 1$ for some $a_j\in \mathbb{Z}$ by the equation $$(\D\otimes \id)(F^j_{0})(F_0^0\otimes 1)=(\id\otimes\D)(F^j_{0})(1\otimes F_0^j).$$  Finally, write $F_k^j=\sum_{p,q}k_{pq}x^p\otimes x^q$ and consider the case $s=0$.
Let $F_0^j= x^{a_j}\otimes 1$ and $F_0^k=x^{a_k}\otimes 1$. The equation
\begin{align*}
&(\sum_{p,q}k_{pq}x^{p+a_k}\otimes x^p\otimes x^q)
=(\D\otimes \id)(F^j_{k})(F_0^k\otimes 1)\\
&\quad \quad=(\id\otimes\D)(F^j_{0})(1\otimes F_k^j)
=(\sum_{p,q}k_{pq}x^{a_j}\otimes x^p\otimes x^q)
\end{align*}
shows that $p=a_j-a_k$, that is, $F_k^j=x^{c_{jk}}\otimes \beta_{jk}$ some $c_{jk}\in \mathbb{Z}, \beta_{jk}\in H_{00}$.

By steps 1 and 2, $F^j_k$ can be written as $f_{jk}\otimes h_{jk}$ with $h_{jk}$ monic after multiplying suitable scalar, where $f_{jk}, h_{jk}\in \k[x^{\pm 1}]$. That is, $$\D(u_j)=\sum_{k=0}^{m-1}f_{jk}u_k\otimes
h_{jk}g^ku_{j-k},$$ where $f_{jk},h_{jk}\in \k[x^{\pm 1}]$ with $h_{jk}$ monic. \qed

Since $\lambda_j=\gamma^j$ for all $j$ has been shown above, we can improve Claim 2 as

\noindent \emph{Claim 2'. We have $u_jg=\gamma^{j} x^{-2d}gu_j$ for $0\leq j\leq m-1$.}

By Claim 2', we have a unified formula in $H$: For all $s\in \mathbbm{Z}$,
\begin{equation}\label{r7} u_jg^s=\gamma^{js}x^{-2sd}g^su_j.\end{equation}

\noindent\emph{Claim 4. We have $\phi_{m_i,j}=1-\gamma^{-m_i(m_i+j)}x^{m_id}=1-\gamma^{-m_i^2(1+j_i)}x^{m_id}$ for $1\leq i\leq \theta$ and $0\leq j\leq m-1$.}

\emph{Proof of Claim 4:} By Claim 3, there are polynomials $f_{0j},h_{0j},$ such that
$$\D(u_0)=u_0\otimes u_0+ f_{01}u_1\otimes
h_{01}gu_{m-1}+ \cdots  + f_{0, m-1}u_{m-1}\otimes h_{0,
m-1}g^{m-1}u_1.$$

Firstly, we will show $\phi_{m_i,0}=1-\gamma^{-m_i^2}x^{m_id}$ by considering the equations
$$\D(y_{m_i}u_0)_{11\otimes (1,1+2m_i)}=\D(\xi_{m_i} x^{m_id} u_0y_{m_i})_{11\otimes (1,1+2m_i)}=\D(\phi_{m_i,0}u_{m_i})_{11\otimes (1,1+2m_i)}.$$
Direct computations show that
\begin{align*}
&\D(y_{m_i}u_0)_{11\otimes (1,1+2m_i)}\\
&=u_0\otimes y_{m_i}u_0+y_{m_i} f_{0, (e_i-1)m_i} u_{(e_i-1)m_i}\otimes g^{m_i} h_{0, (e_i-1)m_i} g^{(e_i-1)m_i}u_{-(e_i-1)m_i}\\
&=u_0\otimes \phi_{m_i,0}u_{m_i}+ f_{0, (e_i-1)m_i}\phi_{m_i,(e_i-1)m_i} u_{0}\otimes x^{e_im_id} h_{0, (e_i-1)m_i} u_{-(e_i-1)m_i},\\
&\D(\xi_{m_i} x^{m_id} u_0y_{m_i})_{11\otimes (1,1+2m_i)}=\xi_{m_i} x^{m_id}u_0\otimes x^{m_id}u_0y_{m_i}\\
&\quad\quad
+ \xi_{m_i} x^{m_id} f_{0, (e_i-1)m_i} u_{(e_i-1)m_i}y_{m_i}\otimes x^{m_id} h_{0, (e_i-1)m_i} g^{(e_i-1)m_i}u_{-(e_i-1)m_i} g^{m_i}\\
&= x^{m_id}u_0\otimes \phi_{m_i,0}u_{m_i}+ f_{0, (e_i-1)m_i}\phi_{m_i,(e_i-1)m_i} u_{0}\otimes \gamma^{m_i^2}x^{(e_i-1)m_id} h_{0, (e_i-1)m_i}u_{-(e_i-1)m_i}.
\end{align*}
Owing to $\D(y_{m_i}u_0)_{11\otimes (1,1+2m_i)}=\D(\xi_{m_i} x^{m_id} u_0y_{m_i})_{11\otimes (1,1+2m_i)}$,
\begin{align*}&(1-x^{m_id})u_0\otimes \phi_{m_i,0}u_{m_i}\\
& \quad + f_{0, (e_i-1)m_i}\phi_{m_i,(e_i-1)m_i}u_0\otimes (x^{m_id}-\gamma^{m_i^2}) x^{(e_i-1)m_id} h_{0, (e_i-1)m_i} u_{-(e_i-1)m_i}\\
&=0.\end{align*}

Thus we can assume $\phi_{m_i,0}=c_0 (x^{m_id}-\gamma^{m_i^2}) x^{(e_i-1)m_id} h_{0,
(e_i-1)m_i}$ for some $0\neq c_0\in \k$. Then $1-x^{m_id}=-c_0^{-1}f_{0,
(e_i-1)m_i}\phi_{m_i,(e_i-1)m_i}$. Therefore,
\begin{align*}
&\D(y_{m_i}u_0)_{11\otimes (1,1+2m_i)}\\
&=u_0\otimes \phi_{m_i,0}u_{m_i}-c_0(1-x^{m_id})u_0\otimes \frac{1}{c_0}\frac{x^{m_id}\phi_{m_i,0}}{x^{m_id}-\gamma^{m_i^2}}  u_{-(e_i-1)m_i}\\
                       &=u_0\otimes (1-\frac{x^{m_id}}{x^{m_id}-\gamma^{m_i^2}})\phi_{m_i,0}u_{-(e_i-1)m_i}
                       +x^{m_id}u_{0}\otimes \frac{x^{m_id}\phi_{m_i,0}}{x^{m_id}-\gamma^{m_i^2}}  u_{-(e_i-1)m_i}\\
                       &=u_0\otimes -\frac{\gamma^{m_i^2}}{x^{m_id}-\gamma^{m_i^2}}\phi_{m_i,0}u_{-(e_i-1)m_i}
                       +x^{m_id}u_{0}\otimes \frac{x^{m_id}\phi_{m_i,0}}{x^{m_id}-\gamma^{m_i^2}}  u_{-(e_i-1)m_i},
\end{align*} where $\frac{\phi_{m_i,0}}{x^{m_id}-\gamma^{m_i^2}}$ is understood as $c_0 x^{(e_i-1)m_i}h_{0,(e_i-1)m_i}$. Note that $\D(y_{m_i}u_0)_{11\otimes (1,1+2m_i)}=\D(\phi_{m_i,0}u_{m_i})_{11\otimes (1,1+2m_i)}=\D(\phi_{m_i,0})(f_{m_i,0}u_0\otimes u_{m_i})$. From which, we get $\phi_{m_i,0}=1+c x^{m_id}$ for
some $c\in \k$. Then it is not hard to see that $f_{m_i, 0}=1,
 h_{0, (e_i-1)m_i}=x^{-(e_i-1)m_id}$ and
$c=-\gamma^{-m_i^2}$. So $\phi_{m_i,0}=1-\gamma^{-m_i^2}x^{m_id}$.

Secondly, we want to determine $\phi_{m_i,j}$ for $0\leq j\leq m-1$. We note that we always have $h_{j0}=f_{jj}=1$ due to $(\e\otimes \id)\D(u_j)=u_j$. To determine $\phi_{m_i,j}$, we  will prove the fact
\begin{equation}\label{r6} f_{j0}=1 \end{equation}
 for all $0\leqslant j\leqslant m-1$ at the same time. We proceed by induction. We already know that $f_{00}=h_{00}=f_{m_i 0}=1$. Assume that $f_{j, 0}=1$ now. We consider the case $j+m_i$. Similarly, direct computations show that
 \begin{align*}
&\D(y_{m_i}u_j)_{11\otimes (1, 1+2j+2m_i)}\\
&=u_0\otimes y_{m_i}u_j+y_{m_i} f_{j, (e_i-1)m_i} u_{(e_i-1)m_i}\otimes g^{m_i} h_{j, (e_i-1)m_i} g^{(e_i-1)m_i}u_{m_i+j}\\
&=u_0\otimes \phi_{m_i,j}u_{m_i+j}+f_{j, (e_i-1)m_i}\phi_{m_i,(e_i-1)m_i} u_{0}\otimes x^{e_im_id} h_{j, (e_i-1)m_i}u_{m_i+j},\\
&\D(\xi_{m_i} x^{m_id} u_jy_{m_i})_{11\otimes (1, 1+2j+2m_i)}\\
&=\xi_{m_i} x^{m_id}u_0\otimes x^{m_id}u_jy_{m_i}+ \xi_{m_i} x^{m_id} f_{j, (e_i-1)m_i} u_{(e_i-1)m_i} y_{m_i}\otimes x^{m_id} h_{j, (e_i-1)m_i} g^{(e_i-1)m_i}u_{j+m_i} g^{m_i}\\
&=x^{m_id}u_0\otimes \phi_{m_i,j}u_{m_i+j}+  f_{j, (e_i-1)m_i}\phi_{m_i,(e_i-1)m_i} u_{0} \otimes \gamma^{m_i(j+m_i)}x^{(e_i-1)m_id} h_{j, (e_i-1)m_i} u_{j+m_i}.
\end{align*}
By $\D(y_{m_i}u_j)_{11\otimes (1, 1+2j+2m_i)}=\D(\xi_{m_i} x^{m_id} u_jy_{m_i})_{11\otimes (1, 1+2j+2m_i)}$,
\begin{align*}&(1-x^{m_id})u_0\otimes \phi_{m_i,j}u_{m_i+j}\\
&\quad + f_{j, (e_i-1)m_i}\phi_{m_i,(e_i-1)m_i}u_0\otimes
 (x^{m_id}-\gamma^{m_i(m_i+j)}) x^{(e_i-1)m_id} h_{j, (e_i-1)m_i} u_{j+m_i}\\
 &=0.\end{align*}
Thus we can assume $\phi_{m_i,j}=c_j (x^{m_id}-\gamma^{m_i(m_i+j)}) x^{(e_i-1)m_id} h_{j, (e_i-1)m_i}$ for some $0\neq c_j\in \k$. Then $1-x^{m_id}=-c_j^{-1}f_{j,
(e_i-1)m_i}\phi_{m_i,(e_i-1)m_i}$. Therefore
\begin{align*}
&\D(y_{m_i}u_j)_{11\otimes (1, 1+2j+2m_i)}\\
&=u_0\otimes \phi_{m_i,j}u_{m_i+j}-c_j(1-x^{m_id})u_0\otimes \frac{1}{c_j}\frac{x^{m_id}}{x^{m_id}-\gamma^{m_i(m_i+j)}} \phi_{m_i,j} u_{m_i+j}\\
&=u_0\otimes \frac{-\gamma^{m_i(m_i+j)}}{x^{m_id}-\gamma^{m_i(m_i+j)}}\phi_{m_i,j}u_{m_i+j}
+x^{m_id}u_0\otimes \frac{x^{m_id}}{x^{m_id}-\gamma^{m_i(m_i+j)}} \phi_{m_i,j} u_{m_i+j}.
\end{align*}
Note that $\D(y_{m_i}u_j)_{11\otimes (1, 1+2j+2m_i)}=\D(\phi_{m_i,j}u_{m_i+j})_{11\otimes (1, 1+2j+2m_i)}=\D(\phi_{m_i,j})(f_{m_i+j,
0}u_0\otimes h_{m_i+j, 0}u_{m_i+j})$. Comparing the first components of
$$\D(y_{m_i}u_j)_{11\otimes (1, 1+2j+2m_i)}\;\; \textrm{and}\;\; \D(\phi_{m_i,j}u_{m_i+j})_{11\otimes (1, 1+2j+2m_i)},$$ we get $\phi_{m_i,j}=1-\gamma^{-m_i(m_i+j)}x^{m_id}$ similarly. And it is
not hard to see that $f_{m_i+j, 0}=1$. Since here $i$ is arbitrary and $m_1,\ldots,m_\theta$ generate $0,1,\ldots,m-1$,  we prove that $f_{j, 0}=h_{j, 0}=1$ at the same time for all $0\leq j\leq m-1$.\qed

\noindent \emph{Claim 5. The coproduct of $H$ is given by
$$\D(u_j)=\sum_{k=0}^{m-1}\gamma^{k(j-k)}u_k\otimes
x^{-kd}g^ku_{j-k}$$for $0\leq j\leqslant m-1$.}

\emph{Proof of Claim 5:} By Claim 3, $\D(u_j)=\sum_{k=0}^{m-1}f_{jk}u_k\otimes
h_{jk}g^ku_{j-k}$. So, to show this claim, it is enough to determine the explicit form of every $f_{jk}$ and $h_{jk}$. By \eqref{r6} and the sentence before it, $f_{j,0}=h_{j,0}=1$ for all $0\leq j\leq m-1$. We will prove that $f_{jk}=\gamma^{k(j-k)}$ and $h_{jk}=x^{-kd}$ for all $0\leqslant j,
k \leqslant m-1$ by induction. So it is enough to show that
$f_{j,k+m_i}=\gamma^{(k+m_i)(j-k-m_i)}$ and $h_{j, k+m_i}=x^{-(k+m_i)d}$
 for all $1\leq i\leq \theta$ under the hypothesis of $f_{jk}=\gamma^{k(j-k)}$ and
 $h_{jk}=x^{-kd}$. In fact, for $1\leq i\leq \theta$,
\begin{align*}
&\D(y_{m_i}u_j)_{(1, 1+2k+2m_i)\otimes (1+2k+2m_i, 1+2j+2m_i)}\\
&=y_{m_i}f_{jk}u_k\otimes g^{m_i}h_{jk}g^ku_{j-k}+ f_{j, k+m_i} u_{k+m_i}\otimes y_{m_i} h_{j, k+m_i} g^{k+m_i}u_{j-k-m_i}\\
&=f_{jk}y_{m_i}u_k\otimes h_{jk}g^{k+m_i}u_{j-k}+ f_{j, k+m_i} u_{k+m_i}\otimes \gamma^{(k+m_i)m_i} h_{j, k+m_i} g^{k+m_i}y_{m_i}u_{j-k-m_i},\\
&\D(\xi_{m_i} x^{m_id} u_jy_{m_i})_{(1, 1+2k+2m_i)\otimes (1+2k+2m_i, 1+2j+2m_i)}\\
&=\xi_{m_i} x^{m_id}f_{jk}u_ky_{m_i}\otimes x^{m_id}h_{jk}g^ku_{j-k}g^{m_i}\\
&\quad+ \xi_{m_i} x^{m_id}f_{j, k+m_i} u_{k+m_i}\otimes x^{m_id} h_{j, k+m_i} g^{k+m_i}u_{j-k-m_i} y_{m_i}\\
&=f_{jk}y_{m_i}u_k\otimes \gamma^{(j-k)m_i}x^{-m_id}h_{jk}g^{m_i+k}u_{j-k}\\
&\quad+  x^{m_id}f_{j, k+m_i} u_{k+m_i}\otimes  h_{j, k+m_i} g^{k+m_i}y_{m_i}u_{j-k-m_i}.
\end{align*}
Since they are equal,
\begin{align*}&f_{jk}y_{m_i}u_k\otimes (1-\gamma^{(j-k)m_i}x^{-m_id})h_{jk}g^{m_i+k}u_{j-k}\\
 &= (x^{m_id}-\gamma^{(k+m_i)m_i})f_{j, k+m_i} u_{k+m_i}\otimes h_{j, k+m_i} g^{k+m_i}y_{m_i} u_{j-k-m_i}.\end{align*}
Using induction and the expression of $\phi_{m_i,k}$, we have
\begin{align*}&\gamma^{k(j-k)}(1-\gamma^{-m_i(m_i+k)}x^{m_id})u_{k+m_i}\otimes
(1-\gamma^{(j-k)m_i}x^{-m_id})x^{-kd}g^{m_i+k}u_{j-k}\\& =
\gamma^{k(j-k)}(1-\gamma^{-m_i(m_i+k)}x^{m_id})u_{k+m_i}\otimes
(x^{m_id}-\gamma^{(j-k)m_i})x^{-(k+m_i)d}g^{m_i+k}u_{j-k}\\
&= (x^{m_id}-\gamma^{(k+m_i)m_i})f_{j, k+m_i} u_{k+m_i}\otimes (1-\gamma^{-(j-k)m_i}x^{m_id})h_{j, k+m_i} g^{k+m_i}u_{j-k}.\end{align*}
This implies that
$h_{j, k+m_i}=x^{-(k+m_i)d}$ and
$$f_{j,k+m_i}=\gamma^{k(j-k)-m_i^2-m_ik+m_ij-m_ik}=\gamma^{(k+m_i)(j-k-m_i)}.$$\qed

\noindent \emph{Claim 6. For $0\leqslant j,l\leqslant m-1$, the multiplication between $u_j$ and $u_l$ satisfies that $$u_ju_l=\frac{1}{m}x^{a} \prod_{i=1}^{\theta}
(-1)^{l_i}\xi_{m_i}^{-l_i}\gamma^{m_i^2\frac{l_i(l_i+1)}{2}}[j_i,e_i-2-l_i]_{m_i}y_{j+l}g$$
for some $a\in \mathbbm{Z}$ and where $[-,-]_{m_i}$ is defined as \eqref{eqpre} and $j+l$ is interpreted \emph{mod} $m$.}

\emph{Proof of Claim 6:} We need to consider the relation between $u_0^2$ and $u_ju_{m-j}$
for all $1\leqslant j \leqslant m-1$ at first. We remark that as before for any $k\in \Z$ we write $u_{k}:=u_{\overline{k}}$ where $\overline{k}$ is the remainder of $k$ dividing by $m$.  Thus  $u_j=u_{j_1m_1+\ldots+j_{\theta}m_{\theta}}$ and $u_{m-j}=u_{(e_1-j_1)m_1+\ldots+(e_\theta-j_\theta)m_{\theta}}.$

By definition, $x^{m_id}\overline{\phi_{m_i,sm_i}}=-\gamma^{-m_i^2(s+1)}\phi_{m_i,(e_i-s-2)m_i}$ for all $s$.  Then
\begin{align*}
&y_{m_1}^{e_1}y_{m_2}^{e_2}\cdots y_{m_\theta}^{e_\theta} u_0^2\\
&=\xi_{m_1}^{e_1-j_1}\xi_{m_2}^{e_2-j_2}\cdots \xi_{m_\theta}^{e_\theta-j_\theta} x^{(e_1-j_1)m_1d+\ldots +(e_{\theta}-j_\theta)m_\theta d} y_{j}u_0y_{m-j}u_0\\
&=\prod_{i=1}^{\theta}[\xi_{m_i}^{e_i-j_i}x^{(e_i-j_i)m_id}\phi_{m_{i},0}\cdots \phi_{m_i,(j_i-1)m_i}]u_j \prod_{i=1}^{\theta}[\phi_{m_{i},0}\cdots \phi_{m_i,(e_i-j_i-1)m_i}]u_{m-j}\\
&=\prod_{i=1}^{\theta}[\xi_{m_i}^{e_i-j_i}x^{(e_i-j_i)m_id}\phi_{m_{i},0}\cdots \phi_{m_i,(j_i-1)m_i}\overline{\phi_{m_{i},0}}\cdots \overline{\phi_{m_i,(e_i-j_i-1)m_i}}]u_j u_{m-j}\\
&=\prod_{i=1}^{\theta}[(-1)^{e_i-j_i}\xi_{m_i}^{e_i-j_i}
\gamma^{-m_i^2\frac{(e_i-j_i)(e_i-j_i+1)}{2}}\phi_{m_{i},0}\cdots \phi_{m_i,(e_i-2)m_i}{\phi_{m_{i},(j_i-1)m_i}}]u_j u_{m-j}.
\end{align*}
By $\phi_{m_{i},0}\cdots \phi_{m_i,(e_i-2)m_i}\phi_{m_i,(e_i-1)m_i}=1-x^{e_im_id}$ (see Lemma \ref{l3.4} (2)), we have
\begin{align*}
\label{r8}&\phi_{m_1,(e_1-1)m_1}\cdots \phi_{m_\theta,(e_\theta-1)m_\theta} y_{m_1}^{e_1}\cdots y_{\theta}^{e_\theta} u_0^2\\
&=\prod_{i=1}^{\theta}[(-1)^{e_i-j_i}\xi_{m_i}^{e_i-j_i}
\gamma^{-m_i^2\frac{(e_i-j_i)(e_i-j_i+1)}{2}}(1-x^{e_im_ed}){\phi_{m_{i},(j_i-1)m_i}}]u_ju_{m-j} .\end{align*}
Due to $y_{m_i}^{e_i}=1-x^{e_im_id}$, we get a desired formula
\begin{equation}\label{r8} \prod_{i=1}^{\theta}[\phi_{m_i,(e_i-1)m_i}]u_0^2=\prod_{i=1}^{\theta}[(-1)^{e_i-j_i}\xi_{m_i}^{e_i-j_i}
\gamma^{-m_i^2\frac{(e_i-j_i)(e_i-j_i+1)}{2}}{\phi_{m_{i},(j_i-1)m_i}}]u_ju_{m-j} .
\end{equation}
Since $u_0^2, u_ju_{m-j}\in H_{22}=\k[x^{\pm 1}]g$, we may assume
$u_0^2=\alpha_0 g, u_ju_{m-j}=\alpha_j g$ for some $\alpha_0,
\alpha_j \in \k[x^{\pm 1}]$ for all $1\leqslant j \leqslant m-1$.

Then Equation \eqref{r8} implies $\alpha_0=\alpha \prod_{i=1}^{\theta}[\phi_{m_{i},0}\cdots \phi_{m_i,(e_i-2)m_i}]$ for some $\alpha \in \k[x^{\pm 1}]$. We claim $\alpha$ is
invertible. Indeed, by
$$\prod_{i=1}^{\theta}[\phi_{m_i,(e_i-1)m_i}]\alpha_0=\prod_{i=1}^{\theta}[(-1)^{e_i-j_i}\xi_{m_i}^{e_i-j_i}
\gamma^{-m_i^2\frac{(e_i-j_i)(e_i-j_i+1)}{2}}{\phi_{m_{i},(j_i-1)m_i}}]
\alpha_j,$$ we have
$$\alpha_j=\prod_{i=1}^{\theta}[(-1)^{j_i-e_i}\xi_{m_i}^{j_i-e_i}
\gamma^{m_i^2\frac{(e_i-j_i)(e_i-j_i+1)}{2}}]j_i-1,j_i-1[_{m_i}]\alpha.$$ Then
$$H_{11}\cdot H_{11} + \sum_{j=1}^{m-1}H_{1, 1+2j}\cdot H_{1,
1+2(m-j)}\subseteq \alpha H_{22}.$$
By the strong grading of $H$,
$$H_{22}=H_{11}\cdot H_{11} + \sum_{j=1}^{m-1}H_{1, 1+2j}\cdot H_{1,
1+2(m-j)},$$  which shows that $\alpha$ must be invertible. Since
$\epsilon(\alpha_{0})=1, \epsilon(\phi_{m_i,0}\cdots \phi_{m_i,(e_i-2)m_i})=e_i$ and $m=e_1\cdots e_\theta$, we
may assume $\alpha_0=\frac{1}{m}x^a \prod_{i=1}^{\theta}[\phi_{m_i,0}\cdots \phi_{m_i,(e_i-2)m_i}]$ for
some integer $a$. Thus
\begin{align*}
u_ju_{m-j}&=\frac{1}{m}x^{a}\prod_{i=1}^{\theta}[(-1)^{j_i-e_i}\xi_{m_i}^{j_i-e_i}
\gamma^{m_i^2\frac{(e_i-j_i)(e_i-j_i+1)}{2}}]j_i-1,j_i-1[_{m_i}]\;g.
\end{align*}

Now \begin{align*} &y_{j}y_{l}u_0^2\\
&=\prod_{i=1}^{\theta}\xi_{m_i}^{l_i}x^{l_im_id} y_{j}u_0y_{l}u_{0}\\
&=\prod_{i=1}^{\theta}[\xi_{m_i}^{l_i}x^{l_im_id}\phi_{m_i,0}\phi_{m_i,m_i}\cdots \phi_{m_i,(j_i-1)m_i}]u_j \prod_{i=1}^{\theta}[\phi_{m_i,0}\phi_{m_i,m_i}\cdots \phi_{m_i,(l_i-1)m_i}]u_l\\
&=\prod_{i=1}^{\theta}[\xi_{m_i}^{l_i}x^{l_im_id}\phi_{m_i,0}\cdots \phi_{m_i,(j_i-1)m_i}\overline{\phi_{m_i,0}}\cdots \overline{\phi_{m_i,(l_i-1)m_i}}]u_ju_{l}\\
&=\prod_{i=1}^{\theta}[(-1)^{l_i}\xi_{m_i}^{l_i}\gamma^{-m_i^2\frac{l_i(l_i+1)}{2}}
\phi_{m_i,0}\cdots \phi_{m_i,(j_i-1)m_i}{\phi_{m_i,(e_i-2)m_i}}\cdots {\phi_{m_i,(e_i-1-l_i)m_i}}]u_ju_{l}
\end{align*}
For each $1\leq i\leq \theta$, we find that
\begin{align*}&\phi_{m_i,0}\cdots \phi_{m_i,(j_i-1)m_i}{\phi_{m_i,(e_i-2)m_i}}\cdots {\phi_{m_i,(e_i-1-l_i)m_i}}\\
&=\begin{cases}
\phi_{m_i,0}\cdots \phi_{m_i,(j_i-1)m_i}\phi_{m_i,(e_i-1-l_i)m_i}\cdots\phi_{m_i,(e_i-2)m_i}, & \textrm{if}\; j_i+l_i\leq e_i-2
\\\phi_{m_i,0}\cdots \phi_{m_i,(e_i-2)m_i}, & \textrm{if}\; j_i+l_i=e_i-1 \\
\phi_{m_i,0}\cdots \phi_{m_i,(j_i-1)m_i}\phi_{m_i,(e_i-1-l_i)m_i}\cdots\phi_{m_i,(e_i-1)m_i}, & \textrm{if}\;
j_i+l_i\geq e_i.
\end{cases}
\end{align*}

Using the same method to compute $u_ju_{m-j}$ given above and the notations introduced in equations \eqref{eqomit} and \eqref{eqpre}, we have a unified
expression:
\begin{align*}
u_ju_l&=\frac{1}{m}x^{a} \prod_{i=1}^{\theta}
(-1)^{l_i}\xi_{m_i}^{-l_i}\gamma^{m_i^2\frac{l_i(l_i+1)}{2}}[j_i,e_i-2-l_i]_{m_i}y_{j+l}g\\
      &=\frac{1}{m}x^{a} \prod_{i=1}^{\theta}
(-1)^{l_i}\xi_{m_i}^{-l_i}\gamma^{m_i^2\frac{l_i(l_i+1)}{2}}]-1-l_i,j_i-1[_{m_i}y_{j+l}g
\end{align*}
for all $0\leq j, l\leq m-1$. \qed

\noindent \emph{Claim 7. We have $\xi_{m_i}^2=\gamma^{m_i}, \ a=-\frac{2+\sum_{i=1}^{\theta}(e_i-1)m_i}{2}d$ and
$$S(u_j)=x^{b}g^{m-1}\prod_{i=1}^{\theta}[(-1)^{j_i}\xi_{m_i}^{-j_i}\gamma^{-
m_i^2\frac{j_i(j_i+1)}{2}}
x^{j_im_id}
g^{-j_im_i}]u_j$$ for $0\leqslant j\leqslant m-1$ and $b=(1-m)d-\frac{\sum_{i=1}^{\theta}(e_i-1)m_i}{2}d.$}

\emph{Proof of Claim 7:}
By Lemma \ref{l2.8} (3), $S(H_{ij})=H_{-j, -i}$ and thus  $S(u_0)=hg^{m-1}u_0$ for some
$h\in \k[x^{\pm 1}]$. Combining
\begin{align*}
S(y_{m_i}u_0)&=S(u_0)S(y_{m_i})=hg^{m-1}u_0  (-y_{m_i}g^{-m_i})=-\xi_{m_i}^{-1} x^{-m_id}hg^{m-1}y_{m_i}u_0g^{-m_i}\\
       &=-\xi_{m_i}^{-1}\gamma^{-m_i^2} x^{m_id}hg^{m-1-m_i}y_{m_i}u_0=-\xi_{m_i}^{-1}\gamma^{-m_i^2} x^{m_id}hg^{m-1-m_i}\phi_{m_i,0}u_{m_i}
\end{align*} with
$$S(y_{m_i}u_0)=S(\phi_{m_i,0} u_{m_i})=S(u_{m_i})S(\phi_{m_i,0})=\phi_{m_i,0} S(u_{m_i}),$$
we get $S(u_{m_i})=-\xi_{m_i}^{-1} \gamma^{-m_i^2} x^{m_id} h g^{m-1-m_i} u_{m_i}$. The computation above tells us that we can prove that
$$S(u_j)=hg^{m-1}\prod_{i=1}^{\theta}[(-1)^{j_i}\xi_{m_i}^{-j_i}\gamma^{-
m_i^2\frac{j_i(j_i+1)}{2}}
x^{j_im_id}
g^{-j_im_i}]u_j$$ by induction. In fact, in order to prove above formula for the antipode it is enough to show that it is still valid for $j+m_i$ for all $1\leq i\leq \theta$ under assumption that it is true for $j$. By combining
\begin{align*}
&S(y_{m_i}u_j)\\
&=S(u_j)S(y_{m_i})\\
&=hg^{m-1}\prod_{s=1}^{\theta}[(-1)^{j_s}\xi_{m_s}^{-j_s}\gamma^{-
m_s^2\frac{j_s(j_s+1)}{2}}
x^{j_sm_sd}
g^{-j_sm_s}]u_j (-y_{m_i}g^{-m_i})\\
       &=-\xi_{m_i}^{-1}x^{-m_id}hg^{m-1}\prod_{s=1}^{\theta}[(-1)^{j_s}\xi_{m_s}^{-j_s}\gamma^{-
m_s^2\frac{j_s(j_s+1)}{2}}
x^{j_sm_sd}
g^{-j_sm_s}]y_{m_i}u_jg^{-m_i}\\
&= -\xi_{m_i}^{-1}x^{-m_id}hg^{m-1}\prod_{s=1}^{\theta}[(-1)^{j_s}\xi_{m_s}^{-j_s}\gamma^{-
m_s^2\frac{j_s(j_s+1)}{2}}
x^{j_sm_sd}
g^{-j_sm_s}]y_{m_i}\gamma^{-m_i^2j_i}x^{2m_id}g^{-m_i}u_j\\
&= -\xi_{m_i}^{-1}x^{m_id}hg^{m-1}\prod_{s=1}^{\theta}[(-1)^{j_s}\xi_{m_s}^{-j_s}\gamma^{-
m_s^2\frac{j_s(j_s+1)}{2}}
x^{j_sm_sd}
g^{-j_sm_s}]\gamma^{-m_i^2(j_i+1)}g^{-m_i}y_{m_i}u_j\\
&= -\xi_{m_i}^{-1}x^{m_id}hg^{m-1}\prod_{s=1}^{\theta}[(-1)^{j_s}\xi_{m_s}^{-j_s}\gamma^{-
m_s^2\frac{j_s(j_s+1)}{2}}
x^{j_sm_sd}
g^{-j_sm_s}]\gamma^{-m_i^2(j_i+1)}g^{-m_i}\phi_{m_i,j}u_{j+m_i}
\end{align*} with $$S(y_{m_i}u_j)=S(\phi_{m_i,j} u_{j+m_i})=S(u_{j+m_i})S(\phi_{m_i,j})=\phi_{m_i,j} S(u_{j+m_i}),$$
we find that $$S(u_{j+m_i})=hg^{m-1}\prod_{s=1}^{\theta}[(-1)^{(j+m_i)_s}\xi_{m_s}^{-(j+m_i)_s}\gamma^{-
m_s^2\frac{(j+m_i)_s((j+m_i)_s+1)}{2}}
x^{(j+m_i)_sm_sd}
g^{-(j+m_i)_sm_s}]u_j.$$

In order to determine the relationship between $\xi$ and $\gamma$,
we consider the equality $(\Id*S)(u_{m_i})=0$. By computation,
\begin{align*}
&(\Id*S)(u_{m_i})\\
&=\sum_{j=0}^{m-1}\gamma^{j(m_i-j)}u_jS(x^{-jd}g^ju_{m_i-j}) \\
&=\sum_{j=0}^{m-1}\gamma^{j(m_i-j)}u_jhg^{m-1}
\prod_{s=1}^{\theta}
[(-1)^{(m_i-j)_s}\xi_{m_s}^{-(m_i-j)_s}\gamma^{-m_s^2\frac{(m_i-j)_s((m_i-j)_s+1)}{2}}
\\
&\quad\quad\quad x^{(m_i-j)_sm_sd}g^{-(m_i-j)_sm_s}]u_{m_i-j} g^{-j}x^{jd}\\
&=\sum_{j=0}^{m-1}\gamma^{-j(m_i-j)}\overline{h}u_jg^{m-1}
\prod_{s=1}^{\theta}
[(-1)^{(m_i-j)_s}\xi_{m_s}^{-(m_i-j)_s}\gamma^{-m_s^2\frac{(m_i-j)_s((m_i-j)_s+1)}{2}}
\\
&\quad\quad\quad x^{(m_i-j)_sm_sd}g^{-(m_i-j)_sm_s}]\gamma^{(m_i-j)j}x^{jd}g^{-j}u_{m_i-j} \\
&=\sum_{j=0}^{m-1}\overline{h}u_jg^{m-1}
\prod_{s=1}^{\theta}
[(-1)^{(m_i-j)_s}\xi_{m_s}^{-(m_i-j)_s}\gamma^{-m_s^2\frac{(m_i-j)_s((m_i-j)_s+1)}{2}}]
\\
&\quad\quad\quad x^{m_id}g^{-m_i}u_{m_i-j}\\
&=\sum_{j=0}^{m-1}
\prod_{s=1}^{\theta}
[(-1)^{(m_i-j)_s}\xi_{m_s}^{-(m_i-j)_s}\gamma^{-m_s^2\frac{(m_i-j)_s((m_i-j)_s+1)}{2}}]
\\
&\quad\quad\quad \overline{h}x^{-m_id}\gamma^{j(-1-m_i)}x^{-2(m-1-m_i)d}g^{m-1-m_i}u_ju_{m_i-j}\\
&=\sum_{j=0}^{m-1}
\prod_{s=1}^{\theta}
[(-1)^{(m_i-j)_s}\xi_{m_s}^{-(m_i-j)_s}\gamma^{-m_s^2\frac{(m_i-j)_s((m_i-j)_s+1)}{2}}]
\\
&\quad\quad\quad \gamma^{j(-1-m_i)}x^{(-2m+2+m_i)d}\overline{h}g^{m-1-m_i}u_ju_{m_i-j}\\
&=\sum_{j=0}^{m-1}
\prod_{s=1}^{\theta}
[(-1)^{(m_i-j)_s}\xi_{m_s}^{-(m_i-j)_s}\gamma^{-m_s^2\frac{(m_i-j)_s((m_i-j)_s+1)}{2}}]
\gamma^{j(-1-m_i)}x^{(-2m+2+m_i)d}\overline{h}g^{m-1-m_i}\\
&\quad\quad\quad \frac{1}{m}x^{a} \prod_{s=1}^{\theta}
(-1)^{(m_i-j)_i}\xi_{m_s}^{-(m_i-j)_s}\gamma^{m_s^2\frac{(m_i-j)_s((m_i-j)_s+1)}{2}}
[j_s,e_s-2-(m_i-j)_s]_{m_s}y_{m_i}g\\
&=\frac{1}{m}\gamma^{m_i}x^{(-2m+2+m_i)d+a}\overline{h}g^{m-m_i}y_{m_i}\sum_{j=0}^{m-1}
\prod_{s=1}^{\theta}
[\xi_{m_s}^{-2(m_i-j)_s}\gamma^{j(-1-m_i)}[j_s,e_s-2-(m_i-j)_s]_{m_s}]\\
&=\frac{1}{m}\gamma^{m_i}\xi_{m_i}^{-2}x^{(-2m+2+m_i)d+a}\overline{h}g^{m-m_i}y_{m_i}\\
&\prod_{s=1,s\neq i}^{\theta}[ \sum_{j_s=0}^{e_s-1}
\xi_{m_s}^{2j_s}\gamma^{-j_sm_s}]j_s-1,j_s-1[_{m_s}]\sum_{j_i=0}^{e_i-1}
\xi_{m_i}^{2j_i}\gamma^{-j_im_i(1+m_i)}]j_i-2,j_i-1[_{m_i}
\end{align*}

where Equation \eqref{r7} is used. By Lemma \ref{l3.5} (1), each $\sum_{j_s=0}^{e_s-1}
\xi_{m_s}^{2j_s}\gamma^{-j_sm_s}]j_s-1,j_s-1[_{m_s}\neq 0$.
Thus $$(\Id*S)(u_i)=0 \Leftrightarrow
\sum_{j_i=0}^{e_i-1}
\xi_{m_i}^{2j_i}\gamma^{-j_im_i(1+m_i)}]j_i-2,j_i-1[_{m_i}=0.$$
This forces $\xi_{m_i}^2=\gamma^{m_i}$ by Lemma \ref{l3.5} (2).

Next, we will determine the expression of $h$ and $a$ through considering the equations
$$(S*\Id)(u_0)=(\Id*S)(u_0)=1.$$ Indeed,
\begin{align*}
&(S*\Id)(u_0)\\
&=\sum_{j=0}^{m-1}S(\gamma^{-j^2}u_j)x^{-jd}g^ju_{-j} \\
           &=\sum_{j=0}^{m-1}\gamma^{-j^2} hg^{m-1}
\prod_{i=1}^{\theta}
[(-1)^{j_i}\xi_{m_i}^{-j_i}\gamma^{-m_i^2\frac{j_i(j_i+1)}{2}}
x^{j_im_id}g^{-j_im_i}]u_jx^{-jd}g^ju_{-j}\\
&=hg^{m-1}\sum_{j=0}^{m-1}\prod_{i=1}^{\theta}
[(-1)^{j_i}\xi_{m_i}^{-j_i}\gamma^{-m_i^2\frac{j_i(j_i+1)}{2}}]u_ju_{-j}\\
 &=hg^{m-1}\sum_{j=0}^{m-1}\prod_{i=1}^{\theta}
[(-1)^{j_i}\xi_{m_i}^{-j_i}\gamma^{-m_i^2\frac{j_i(j_i+1)}{2}}] \frac{1}{m}x^{a} \\ &\quad\prod_{i=1}^{\theta}
[(-1)^{(-j)_i}\xi_{m_i}^{-(-j)_i}\gamma^{m_i^2\frac{(-j)_i((-j)_i+1)}{2}}]j_i-l,j_i-1[_{m_i}]g\\
&=\frac{1}{m}x^{a}hg^{m}\sum_{j=0}^{m-1}\prod_{i=1}^{\theta}
[(-1)^{e_i}\xi_{m_i}^{-e_i}\gamma^{-m_i^2(\frac{e_i(e_i+1)}{2}-j_i)}]j_i-l,j_i-1[_{m_i}]  \\
&=\frac{1}{m}x^{a}hg^{m}(-1)^{\sum_{i=1}^{\theta}(m_i-1)(e_i+1)}
\prod_{i=1}^{\theta}\sum_{j_i=0}^{e_i-1}
\gamma^{-m_i^2j_i}]j_i-l,j_i-1[_{m_i}\\
&=\frac{1}{m}x^{a}hg^{m}(-1)^{\sum_{i=1}^{\theta}(m_i-1)(e_i+1)}
\prod_{i=1}^{\theta}e_ix^{(e_i-1)m_id} \quad (\,\textrm{by  Lemma}\ \ref{l3.4}\ (3))\\
&=(-1)^{\sum_{i=1}^{\theta}(m_i-1)(e_i+1)}x^{a+\sum_{i=1}^{\theta}(e_i-1)m_id+md}h,
\end{align*}
\begin{align*} &(\Id*S)(u_0)\\
&=\sum_{j=0}^{m-1}\gamma^{-j^2}u_j S(x^{-jd}g^ju_{-j}) \\
&=\sum_{j=0}^{m-1}\gamma^{-j^2}u_j S(u_{-j})g^{-j}x^{jd} \\
&=\sum_{j=0}^{m-1}\gamma^{-j^2}u_j hg^{m-1}
\prod_{i=1}^{\theta}
[(-1)^{(-j)_i}\xi_{m_i}^{-(-j)_i}\gamma^{-m_i^2\frac{(-j)_i((-j)_i+1)}{2}}
x^{(-j)_im_id}g^{-(-j)_im_i}]u_{-j}g^{-j}x^{jd}\\
&=\sum_{j=0}^{m-1}u_j
\prod_{i=1}^{\theta}
[(-1)^{(-j)_i}\xi_{m_i}^{-(-j)_i}\gamma^{-m_i^2\frac{(-j)_i((-j)_i+1)}{2}}
]hg^{m-1}u_{-j} \\
&= x^{(2-2m)d}\overline{h}g^{m-1}\sum_{j=0}^{m-1}
\gamma^{-j}\prod_{i=1}^{\theta}
[(-1)^{(-j)_i}\xi_{m_i}^{-(-j)_i}\gamma^{-m_i^2\frac{(-j)_i((-j)_i+1)}{2}}
]u_{j}u_{-j}\\
&=\sum_{j=0}^{m-1}u_j
\prod_{i=1}^{\theta}
[(-1)^{(-j)_i}\xi_{m_i}^{-(-j)_i}\gamma^{-m_i^2\frac{(-j)_i((-j)_i+1)}{2}}
]hg^{m-1}u_{-j} \\
&= x^{(2-2m)d}\overline{h}g^{m-1}\sum_{j=0}^{m-1}
\gamma^{-j}\prod_{i=1}^{\theta}
[(-1)^{(-j)_i}\xi_{m_i}^{-(-j)_i}\gamma^{-m_i^2\frac{(-j)_i((-j)_i+1)}{2}}]\\
&\quad \frac{1}{m}x^{a} \prod_{i=1}^{\theta}
[(-1)^{(-j)_i}\xi_{m_i}^{-(-j)_i}\gamma^{m_i^2\frac{(-j)_i((-j)_i+1)}{2}}]j_i-l,j_i-1[_{m_i}]g\\
&=\frac{1}{m}x^{(2-m)d+a}\overline{h}\sum_{j=0}^{m-1}
\gamma^{-j}\prod_{i=1}^{\theta}
\xi_{m_i}^{-2(-j)_i}]j_i-l,j_i-1[_{m_i}]\\
&=x^{(2-m)d+a}\overline{h} \quad (\,\textrm{by  Lemma}\ \ref{l3.4}\ (1)).
\end{align*}
So, $(S*\Id)(u_0)=(\Id*S)(u_0)=1$ implies
$h=x^{-a-\sum_{i=1}^{\theta}(e_i-1)m_id- md}(-1)^{\sum_{i=1}^{\theta}(m_i-1)(e_i-1)}=x^{(2-m)d+a}$. Thus
$$a=-d-\frac{\sum_{i=1}^{\theta}(e_i-1)m_id}{2}\quad \textrm{and}\quad 2|\sum_{i=1}^{\theta}(m_i-1)(e_i-1),\;\; 2|\sum_{i=1}^{\theta}(e_i-1)m_id.$$
And $h=x^{(1-m)d-\frac{\sum_{i=1}^{\theta}(e_i-1)m_id}{2}}.$
Therefore, for $0\leqslant j\leqslant m-1$,$$S(u_j)=x^{b}g^{m-1}\prod_{i=1}^{\theta}[(-1)^{j_i}\xi_{m_i}^{-j_i}\gamma^{-
m_i^2\frac{j_i(j_i+1)}{2}}
x^{j_im_id}
g^{-j_im_i}]u_j$$ for $b=(1-m)d-\frac{\sum_{i=1}^{\theta}(e_i-1)m_i}{2}d.$  \qed

From Claim 7, we know that $a=-d-\frac{\sum_{i=1}^{\theta}(e_i-1)m_id}{2}$ and we can improve Claim 6 as the following form:

\noindent \emph{Claim 6'.  For $0\leqslant j,l\leqslant m-1$, the multiplication between $u_j$ and $u_l$ satisfies that $$u_ju_l=\frac{1}{m}x^{-d-\frac{\sum_{i=1}^{\theta}(e_i-1)m_id}{2}} \prod_{i=1}^{\theta}
(-1)^{l_i}\xi_{m_i}^{-l_i}\gamma^{m_i^2\frac{l_i(l_i+1)}{2}}[j_i,e_i-2-l_i]_{m_i}y_{j+l}g$$
where $j+l$ is interpreted \emph{mod} $m$.}

We can prove Proposition \ref{p6.11} now. The statement (1) is gotten from Claim 1 and the proof of Claim 7. For (2), by Claims 1,2',3,4,5,6' and 7, we have a natural surjective Hopf homomorphism
$$f:\; D(\underline{m},d,\gamma)\to H,\;x\mapsto x,\ y_{m_i}\mapsto y_{m_i},\ g\mapsto g,\ u_{j}\mapsto u_{j}$$
for $1\leq i\leq \theta$ and $0\leq j\leq m-1$. It is not hard to see that $f|_{D_{st}}:\; D_{st}\to H_{st}$ is an isomorphism of
$\k[x^{\pm 1}]$-modules for $0\leqslant s,t\leqslant 2m-1$. So $f$ is an isomorphism. \qed

\section{Main result and consequences}
We conclude this paper by giving the classification of  prime Hopf algebras of GK-dimension one satisfying (Hyp1), (Hyp2) and some consequences.
\subsection{Main result.} The main result of this paper can be stated as follows.
\begin{theorem}\label{t7.1} Let $H$ be a prime Hopf algebra of GK-dimension one which satisfies (Hyp1) and (Hyp2). Then $H$ is isomorphic to one of Hopf algebras constructed in Section 4.
\end{theorem}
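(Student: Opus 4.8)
The plan is to assemble the theorem from the case analysis already carried out in Sections~5 and~6. Since $\mi(\pi)\mid\ord(\pi)$ by definition, a prime Hopf algebra $H$ of GK-dimension one satisfying (Hyp1) and (Hyp2) falls into exactly one of three cases: (i)~$\mi(\pi)=1$; (ii)~$\ord(\pi)=\mi(\pi)>1$; (iii)~$\ord(\pi)>\mi(\pi)>1$. Indeed, if $\mi(\pi)>1$ then $\mi(\pi)\mid\ord(\pi)$ leaves only $\ord(\pi)=\mi(\pi)$ or $\ord(\pi)\geqslant 2\mi(\pi)>\mi(\pi)$. First I would dispose of the two ideal cases. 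In case (i), if $\mathrm{PI.deg}(H)=1$ then $H$ is a commutative affine domain of GK-dimension one admitting a Hopf structure, hence $\k[x]$ or $\k[x^{\pm1}]$ by Subsection~\ref{ss2.3}, which are the degenerate members (with $m=1$) of the families $T(\underline{m},t,\xi)$ and $B(\underline{m},\omega,\gamma)$ of Section~4; if $\mathrm{PI.deg}(H)>1$, Proposition~\ref{p5.4} gives $H\cong T(n,0,\xi)$ or $H\cong\k\mathbb{D}$, again a special fraction version ($\k\mathbb{D}=D(\underline{m},d,\gamma)$ with $m=1$ by Remark~\ref{r4.9}, and $T(n,0,\xi)$ a degenerate $T(\underline{m},t,\xi)$). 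In case (ii), Proposition~\ref{p5.7} gives $H\cong T(\underline{n},1,\xi)$ when $H$ is additive and $H\cong B(\underline{n},\omega,\gamma)$ when $H$ is multiplicative, both constructed in Section~4.

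The remaining work is case (iii), $n:=\ord(\pi)>m:=\mi(\pi)>1$. Here I would invoke Corollary~\ref{c6.3}: the Hopf subalgebra $\widetilde{H}=\bigoplus_{0\leqslant i,j\leqslant m-1}H_{it,jt}$ is itself prime of GK-dimension one, satisfies (Hyp1) and (Hyp2) with $\ord(\pi|_{\widetilde{H}})=\mi(\pi|_{\widetilde{H}})=m$, and is therefore either a fraction version $T(\underline{m},1,\xi)$ of an infinite dimensional Taft algebra or a fraction version $B(\underline{m},\omega,\gamma)$ of a generalized Liu algebra. This reinstates the additive/multiplicative dichotomy for $H$ (the first according as $H_{00}=\k[x]$, the second according as $H_{00}=\k[x^{\pm1}]$). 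In the additive subcase, Proposition~\ref{p6.9} shows directly that $H$ is isomorphic, as a Hopf algebra, to a fraction version of an infinite dimensional Taft algebra $T(\underline{m},t,\zeta)$ for $\zeta$ a primitive $n$th root of unity. In the multiplicative subcase, Lemma~\ref{l6.10} forces $n/m\leqslant 2$, and since $n>m$ we must have $n/m=2$; then Proposition~\ref{p6.11} identifies $H$ with $D(\underline{m},d,\gamma)$, $d=\omega/m$, and in particular verifies the arithmetic constraints \eqref{eq4.7} needed for that construction to make sense. Collecting the cases, $H$ is always one of the Hopf algebras of Section~4, which is the assertion.

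As for the main obstacle: at the level of this final theorem there is essentially none --- it is a bookkeeping assembly of Propositions~\ref{p5.4}, \ref{p5.7}, \ref{p6.9} and~\ref{p6.11}, the only genuine checks being that the trichotomy on $(\mi(\pi),\ord(\pi))$ is exhaustive and that the degenerate low-degree examples of Subsection~\ref{ss2.3} are subsumed by the Section-4 families. The substantive difficulties were already met earlier: proving that $D(\underline{m},d,\gamma)$ is a Hopf algebra (Proposition~\ref{p4.7}), and --- as stressed in the introduction --- controlling the invariant components, which here are merely general commutative domains rather than Dedekind domains; the device that makes the structure theorems of Sections~5 and~6 go through in this generality is the notion of a fraction of a natural number (Definition~\ref{d3.1}) together with the combinatorial identities of Subsection~3.3.
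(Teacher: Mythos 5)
Your proposal is correct and follows essentially the same route as the paper's own proof: the trichotomy on $(\mi(\pi),\ord(\pi))$, Propositions~\ref{p5.4} and~\ref{p5.7} for the ideal cases, and Corollary~\ref{c6.3} with Propositions~\ref{p6.9} and~\ref{p6.11} (plus Lemma~\ref{l6.10}) for the remaining case. Your extra observation that the low-degree examples $\k[x]$, $\k[x^{\pm1}]$, $T(n,0,\xi)$ and $\k\mathbb{D}$ are degenerate members of the Section~4 families is a sensible bookkeeping point that the paper defers to the remark following the theorem.
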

\begin{proof} Let $\pi:\;H\to \k$ be the canonical 1-dimensional representation of $H$ which exits by (Hyp1). If PI-deg$(H)=1$, then it is  easy to see that $H$ is commutative and thus $H\cong \k[x]$ or $\k[x^{\pm 1}]$. So, we assume that $n:=$ PI-deg$(H)>1$ in the following analysis. If $\mi (\pi)=1$, then $H$ is isomorphic to either a $T(n,0,\xi)$ or $\k\mathbb{D}$ by Proposition \ref{p5.4}. If $\ord(\pi)=\mi(\pi)$, then $H$ is isomorphic to either a $T(\underline{n},1,\xi)$ or a $B(\underline{n},\omega,\gamma)$ by Proposition \ref{p5.7}. The last case is $n=\ord(\pi)>m:=\mi(\pi)>1$. In such case, using Corollary \ref{c6.3}, $H$ is either additive or multiplicative. If, moreover, $H$ is additive then $H$ is isomorphic to  a $T(\underline{m},t,\xi)$ by Proposition \ref{p6.9} for $t=\frac{n}{m}$ and if $H$ is multiplicative then it is isomorphic to a $D(\underline{m},d,\gamma)$ by Proposition \ref{p6.11}.
\end{proof}

\begin{remark} \emph{(1) All prime Hopf algebras of GK-dimension one which are regular are special cases of their fraction versions. For example, the infinite dimensional Taft algebra $T(n,t,\xi)$ is isomorphic to $T(\underline{n},t,\xi)$ where $\underline{n}=\{1\}$ is a fraction of $n$ of length $1$ (that is, $\theta=1$ by previous notation).}

\emph{(2) By Proposition \ref{p4.8}, we know that $D(\underline{m},d,\gamma)$ is not a pointed Hopf algebra if $m\neq 1$. Thus we get more examples of non-pointed Hopf algebras of GK-dimension one.}

\emph{(3) In \cite[Question 7.3C.]{BZ}, the authors asked that what other Hopf algebras can be included if the regularity hypothesis is dropped. So our result gives many this kind of Hopf algebras. }
\end{remark}
\subsection{Question \eqref{1.1}.}
As an application, we can give the answer to question \eqref{1.1} now. We give the following definition at first.
\begin{definition} \emph{We call an irreducible algebraic curve $C$} a fraction line \emph{if there is a natural number $m$ and a fraction $m_1,\ldots, m_\theta$ of $m$ such that it's coordinate algebra $\k[C]$ is isomorphic to $\k[y_{m_1},\ldots,y_{m_{\theta}}]/(y_{m_i}^{e_i}-y_{m_j}^{e_j},\;1\leq i\neq j\leq \theta).$}
\end{definition}

The answer to question \eqref{1.1} is given as follows.

\begin{proposition} Assume $C$ is an irreducible algebraic curve over $\k$ which can be realized as a Hopf algebra in ${^{\Z_{n}}_{\Z_n}\mathcal{YD}}$ where $n$ is as small as possible. Then $C$ is either an algebraic group or a fraction line.
\end{proposition}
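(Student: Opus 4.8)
The plan is to mimic the proof of Corollary \ref{c2.9}, but now drawing on the full classification of Theorem \ref{t7.1} rather than just the regular classification of Lemma \ref{l2.12}. Let $C$ be an irreducible algebraic curve whose coordinate algebra $\k[C]$ carries a braided Hopf algebra structure in ${^{\Z_n}_{\Z_n}\mathcal{YD}}$ with $n$ chosen minimal. First I would form the Radford biproduct $A:=\k[C]\#\k\Z_n$, which is an ordinary Hopf algebra with $A=\bigoplus_{i=0}^{n-1}\k[C]g^i$ and hence is strongly $\widehat{\Z_n}$-graded with invariant component $\k[C]$, a commutative domain (since $C$ is irreducible) of GK-dimension one. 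By Lemma \ref{l2.10}(b)--(d), the $\widehat{\Z_n}$-action on $\k[C]$ is the adjoint $\Z_n$-action, which is faithful by the standing assumption that the action on the curve is faithful (and we may always reduce to this case by passing to a quotient group $\Z_m$, $m\mid n$; minimality of $n$ then forces faithfulness); thus $\mathrm{PI.deg}(A)=n$ and $A$ is prime. So $A$ is a prime Hopf algebra of GK-dimension one with a $1$-dimensional representation $\chi$ (the grading character) of order $n=\mathrm{PI.deg}(A)$, i.e. (Hyp1) holds, and the invariant component $A^l_{0,\chi}=\k[C]$ is a domain, i.e. (Hyp2) holds.

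Next I would apply Theorem \ref{t7.1}: $A$ is isomorphic to one of the Hopf algebras constructed in Section 4, namely $\k[x]$, $\k[x^{\pm1}]$, $\k\mathbb{D}$, a fraction Taft algebra $T(\underline{m},t,\xi)$, a fraction generalized Liu algebra $B(\underline{m},\omega,\gamma)$, or a fraction $D(\underline{m},d,\gamma)$. The heart of the argument is then to identify, in each case, the invariant component with respect to the distinguished representation $\chi$ coming from the $\Z_n$-grading, and to check it is either the coordinate ring of an algebraic group ($\k[x]$ or $\k[x^{\pm1}]$) or a fraction line. For the commutative cases $A=\k[x],\k[x^{\pm1}]$ the curve is the additive or multiplicative group. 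For $A=\k\mathbb{D}$ one checks the relevant invariant component is $\k[x^{\pm1}]$, the multiplicative group. For $A=T(\underline{m},t,\xi)$ the computation in Subsection \ref{ss4.1} gives $T_{00}=\k[y_{m_1}^{e_1}]\cong\k[x]$ when $\theta=1$, but in general one must be careful: the invariant component relative to the \emph{grading} character $\chi$ (not necessarily the canonical $\pi$) is the subalgebra $\k[y_{m_1},\dots,y_{m_\theta}]$ — this is precisely a fraction line by the defining relations $y_{m_i}^{e_i}=y_{m_j}^{e_j}$, which matches Definition of a fraction line after identifying $y_{m_i}^{e_i}-y_{m_j}^{e_j}$. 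Similarly for $B(\underline{m},\omega,\gamma)$ and $D(\underline{m},d,\gamma)$ one must locate which graded piece serves as $\k[C]$; for these the coinvariants under the projection to $\k\Z_n$ contain the invertible generator $x^{\pm1}$ as well, so one has to argue that the braided Hopf structure forces $C$ to still be (the affine part of) a fraction line, or that these cases simply do not arise with $\k[C]$ a genuine curve that is not already an algebraic group.

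The main obstacle I anticipate is bookkeeping about \emph{which} $1$-dimensional representation is in play. Theorem \ref{t7.1} classifies $A$ abstractly, but the statement about $C$ requires that $\k[C]$ be recovered as the coinvariant algebra $A^{co\,\k\Z_n}$ under the specific Hopf projection $A\to\k\Z_n$ built into the biproduct; this projection corresponds to the grading character $\chi$, and I must verify that under the isomorphism furnished by Theorem \ref{t7.1} this $\chi$ is conjugate (by a Hopf automorphism, or at least induces the same invariant subalgebra up to isomorphism) to the canonical representation $\pi$ used in the constructions of Section 4. Equivalently, I would show directly that in each model Hopf algebra every faithful $1$-dimensional representation of order $\mathrm{PI.deg}$ has invariant component isomorphic either to $\k[x]$, $\k[x^{\pm1}]$, or to a fraction line. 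Once this is pinned down the proposition follows immediately; I would organize the write-up as: (i) reduce to faithful action and build $A$; (ii) verify (Hyp1)+(Hyp2) via Lemma \ref{l2.10}; (iii) invoke Theorem \ref{t7.1}; (iv) a short case-by-case identification of the coinvariant algebra as an algebraic group or a fraction line, citing the explicit descriptions in Subsections \ref{ss4.1}, \ref{ss4.3}, \ref{ss4.4}.
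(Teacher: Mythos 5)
Your proposal is essentially the paper's own argument: form the biproduct $\k[C]\#\k\Z_n$, verify (Hyp1) and (Hyp2) exactly as in Corollary \ref{c2.9} via Lemma \ref{l2.10} (the paper works with the right invariant component where you use the left, which is immaterial by Lemma \ref{l2.7}), invoke Theorem \ref{t7.1}, and conclude by identifying the coinvariant algebra case by case. The paper itself leaves that last step at the level of ``one can check the proposition case by case,'' so your more explicit discussion of which $1$-dimensional representation and which coinvariant algebra is in play is at least as detailed as the published proof.
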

\begin{proof} If $n=1$, then $\k[C]$ is a Hopf algebra and thus $C$ is an algebraic group of dimension one. Now assume $n>1$. By assumption, $\Z_n$ acts on $\k[C]$ faithfully. Using Lemma \ref{l2.10} and the argument developed in the proof of Corollary \ref{c2.9},  the Hopf algebra $\k[C]\# \k\Z_n$ (the Radford's biproduct) is a prime Hopf algebra of GK-dimension one with PI-degree $n$. It is known that $\k\Z_n$ has a 1-dimensional representation of order $n$:
$$\k\Z_n=\k\langle g|g^n=1\rangle \To \k,\;\;\;\;g\mapsto \xi$$
for a primitive $n$th root of unity $\xi$. Through the canonical projection $\k[C]\# \k\Z_n\to \k\Z_n$ we get a 1-dimensional representation $\pi$ of $H:=\k[C]\# \k\Z_n$ of order $n=$PI-deg$(H)$. Therefore, $H$ satisfies (Hyp1). Also, by the definition of the Radford's biproduct we know that  the right invariant component $H_{0}^{r}$ of $\pi$ is exactly the domain $\k[C]$. Therefore, $H$ satisfies (Hyp2) too. The classification result, that is Theorem \ref{t7.1}, can be applied now. One can check the proposition case by case.
\end{proof}
\subsection{Finite-dimensional quotients.} We realize that from the Hopf algebra $D(\underline{m},d,\gamma)$ we can get many new finite-dimensional Hopf algebras through quotient method. Among of them, two kinds of Hopf algebras are particularly interesting for us: one series are semisimple and another series are nonsemisimple. As a byproduct of these new examples, we can give an answer to a professor Siu-Hung Ng's question at least. We will give and analysis the structures and representation theory of these two kinds of finite-dimensional Hopf algebras.

$\bullet$ \emph{The series of semisimple Hopf algebras.} Keep the notations used in Section 4 and let $D=D(\underline{m},d,\gamma)$ where $\underline{m}=\{m_1,\ldots,m_\theta\}$ a fraction of $m$. For simple, we assume that $(m_1,m_2,\ldots,m_\theta)=1$. Consider the quotient Hopf algebra
$$\overline{D}:=D/(y_{m_1},\ldots,y_{m_\theta}).$$
We want to give the generators, relations and operations for $\overline{D}$ at first. For notational convenience, the images of $x,g,u_{j}$ in $\overline{D}$ are still
written as $x,g$ and $u_j$ respectively. By the definition of $D$, we see that:
 As an algebra, $\overline{D}=\overline{D}(\underline{m},d,\gamma)$ is generated by $x^{\pm 1}, g^{\pm 1}, u_0, u_1, \cdots,
u_{m-1}$, subject to the following relations
\begin{eqnarray}
\notag&&xx^{-1}=x^{-1}x=1,\quad gg^{-1}=g^{-1}g=1,\quad xg=gx,\\
&& 0=1-x^{e_im_id},\quad g^{m}=x^{md},\label{eq7.1}\\
\notag&&xu_j=u_jx^{-1},\quad 0=\phi_{m_i,j}u_{j+m_i},\quad u_j g=\gamma^j x^{-2d}gu_j,\\
\notag &&u_ju_l=\left \{
\begin{array}{ll} \frac{1}{m}x^{a}\prod_{i=1}^{\theta}(-1)^{l_i}
\gamma^{\frac{l_i(l_i+1)}{2}}\xi_{m_i}^{-l_i}[j_i,e_i-2-l_i]_{m_i}g, &  \ j+l\equiv 0\ (\textrm{mod} \ m),\\
0, & \text{otherwise,}
\end{array}\right.
\end{eqnarray}
for $1\leq i\leq \theta,\;0\leq j,l\leq m-1$ and   $a=-\frac{2+\sum_{i=1}^{\theta}(e_i-1)m_i}{2}d$.

The coproduct $\D$, the counit $\epsilon$ and the antipode $S$ of $\overline{D}(\underline{m},d,\gamma)$ are given by
\begin{eqnarray*}
&&\D(x)=x\otimes x,\;\; \D(g)=g\otimes g,\\
&&\D(u_j)=\sum_{k=0}^{m-1}\gamma^{k(j-k)}u_k\otimes x^{-kd}g^ku_{j-k};\\
&&\epsilon(x)=\epsilon(g)=\epsilon(u_0)=1,\;\;\epsilon(u_s)=0;\\
&&S(x)=x^{-1},\;\; S(g)=g^{-1}, \\
&& S(u_j)=x^{b}g^{m-1}\prod_{i=1}^{\theta}(-1)^{j_i}\xi_{m_i}^{-j_i}\gamma^{-
m_i^2\frac{j_i(j_i+1)}{2}}
x^{j_im_id}
g^{-j_im_i}u_j,
\end{eqnarray*}
for $1\leq s\leq m-1\;,0\leq j\leq m-1$ and $b=(1-m)d-\frac{\sum_{i=1}^{\theta}(e_i-1)m_i}{2}d$.
As an observation, we find that

\begin{lemma}\label{l7.5} The Hopf algebra $\overline{D}$ is a semisimple Hopf algebra of dimension $2m^2d.$
\end{lemma}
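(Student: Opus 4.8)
The plan is to establish the dimension first and then semisimplicity. For the dimension, I would use the bigrading on $D=D(\underline{m},d,\gamma)$ recorded in \eqref{eqD} and \eqref{eq4.24}: $D=\bigoplus_{i,j=0}^{2m-1}D_{ij}$ with each $D_{ij}$ a free rank-one module over $\k[x^{\pm1}]$, supported only on the diagonal-type indices. Passing to the quotient $\overline{D}=D/(y_{m_1},\ldots,y_{m_\theta})$, the relations \eqref{eq7.1} show that $x$ now satisfies $x^{e_im_id}=1$ for every $i$; since $(m_1,\ldots,m_\theta)=1$ and the $e_i$ are pairwise coprime with $\prod e_i=m$, the exponents $e_im_id$ have greatest common divisor $\gcd_i(e_im_id)$. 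A short number-theoretic computation (using that $e_i\mid m_j$ for $j\neq i$, as in the proof of Proposition \ref{p4.6}(4)) gives that this gcd equals $md$, so $x$ has order exactly $md$ in $\overline{D}$. Hence each surviving graded piece $\k[x^{\pm1}]y_{\overline{(j-i)/2}}g^{i/2}$ or $\k[x^{\pm1}]g^{(i-1)/2}u_{(j-i)/2}$ collapses: the $y$'s vanish, so only the ``odd-odd'' pieces $\k[x^{\pm1}]g^{(i-1)/2}u_j$ and the ``even-even'' pieces $\k[x^{\pm1}]g^{i/2}$ survive, each now a free module of rank $md$ over $\k$. Counting: there are $m$ choices of $g$-power times $m$ choices among $u_0,\ldots,u_{m-1}$ in the odd sector, plus $m$ group-like powers $g^{0},\ldots,g^{m-1}$ in the even sector — but one must be careful because $g^m=x^{md}$ identifies things. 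Organizing this bookkeeping via the strongly $\Z_{2m}$-graded structure inherited from Lemma \ref{l2.10}, $\overline{D}=\bigoplus_{i=0}^{2m-1}\overline{D}{}^l_i$ with $\overline{D}{}^l_0=\k[x]/(x^{md}-1)$ of dimension $md$ times $\dim_{\k}$ of the degree-zero part $\k\langle u_0,\ldots,u_{m-1}\rangle/(\cdots)$, which is $m$-dimensional (spanned by $u_0,\dots,u_{m-1}$ modulo the relation $u_ju_l=0$ for $j+l\not\equiv0$ together with $g^m=x^{md}$). Multiplying the strong-grading rank $2m$ by $\dim \overline{D}{}^l_0 = md\cdot(\text{something})$ should yield $2m^2d$; I would verify the exact count by exhibiting the explicit $\k$-basis $\{x^{a}g^{b}\mid 0\le a<d,\ 0\le b<m\}\cup\{x^{a}g^{b}u_j\mid 0\le a<d,\ 0\le b<m,\ 0\le j<m\}$, which plainly has $dm+dm^2$ elements; reconciling this with $2m^2d$ forces a relation reducing the group-like count, and pinning this down precisely is the one spot requiring genuine care.

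For semisimplicity, I would apply the Larson–Radford theorem: in characteristic zero a Hopf algebra is semisimple if and only if it is cosemisimple, equivalently $S^2=\id$. From Claim 7 in the proof of Proposition \ref{p6.11} (or directly from the antipode formula displayed just above the lemma), $S(u_j)=x^{b}g^{m-1}\prod_{i}(-1)^{j_i}\xi_{m_i}^{-j_i}\gamma^{-m_i^2 j_i(j_i+1)/2}x^{j_im_id}g^{-j_im_i}u_j$, and since $g^m=x^{md}$ and $\xi_{m_i}^2=\gamma^{m_i}$, a direct computation of $S^2(u_j)$ collapses all the scalar factors and all the $x$-powers (because $x^{e_im_id}=1$ now kills the surviving $x$-exponents) leaving $S^2(u_j)=u_j$; similarly $S^2(x)=x$, $S^2(g)=g$ trivially. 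Thus $S^2=\id$ on generators, hence on all of $\overline{D}$, so $\overline{D}$ is semisimple (and cosemisimple).

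Alternatively, and perhaps more cleanly, I would observe that $\overline{D}$ is a quotient of the finite-dimensional Hopf algebra obtained from $D(m,d,\gamma)$ of Subsection 2.3 in the same way, or directly exhibit a separating idempotent; but the $S^2=\id$ route is shortest and self-contained. The main obstacle I anticipate is not semisimplicity — which is essentially immediate from the already-established antipode formula and Larson–Radford — but rather the precise dimension count: making sure the interplay between $g^m=x^{md}$, the order of $x$ being $md$, and the vanishing of the $y_{m_i}$ does not introduce hidden identifications or hidden freedoms, so that the basis $\{x^ag^b, x^ag^bu_j : 0\le a<d,\ 0\le b<m,\ 0\le j<m\}$ is genuinely a basis of size $md+m^2d=m d(m+1)$, and then reconciling that this indeed equals $2m^2d$ only under the parity/constraint hypotheses \eqref{eq4.7} built into $D(\underline{m},d,\gamma)$ — I would double-check whether the intended dimension is $2m^2d$ or whether the count is $md(m+1)$ and re-examine the hypothesis $(1+m)d$ even, which governs exactly this kind of halving. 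That reconciliation is the step I would spend the most effort verifying carefully.
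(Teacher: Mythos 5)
There is a genuine gap, and it is the same missing ingredient in both halves of your argument: in $\overline{D}$ the relations coming from $y_{m_i}u_j=\phi_{m_i,j}u_{j+m_i}$ do not merely ``kill the $y$'s''; they become $0=\phi_{m_i,j}u_{j+m_i}$, i.e. $x^{m_id}u_k=\gamma^{m_ik}u_k$ for every $k$, and since $(m_1,\dots,m_\theta)=1$ this gives $x^{d}u_k=\gamma^{k}u_k$ (this is exactly \eqref{e7.4} in the paper, derived from \eqref{eq7.3}). This relation is what makes each piece $\k[x^{\pm1}]g^{b}u_s$ only $d$-dimensional, while the group-like span $\{x^ag^b\}$ keeps all $md$ powers of $x$ (your earlier claim that the $u$-pieces are free of rank $md$, and your later basis with the group-like part truncated to $0\le a<d$, are both incorrect). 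The correct basis is $\{x^ag^b:0\le a\le md-1,\,0\le b\le m-1\}\cup\{x^tg^bu_s:0\le t\le d-1,\,0\le b\le m-1,\,0\le s\le m-1\}$, giving $m^2d+m^2d=2m^2d$; your count $md(m+1)$ and your closing suggestion that the stated dimension might be wrong stem from not having identified $x^du_k=\gamma^ku_k$, not from the parity hypotheses \eqref{eq4.7}, which play no role here.

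The same omission undermines your semisimplicity step as written. From Claim 7 one gets $S^2(u_j)=x^{-\sum_i(e_i-1)m_id}\,\gamma^{-\sum_i m_i^2j_i}u_j$, and the exponent $-\sum_i(e_i-1)m_id$ is in general \emph{not} a multiple of $md$ (take $\theta=1$, $m_1=1$, $e_1=m$: it is $-(m-1)d$), so the relation $x^{md}=1$ alone does not ``kill the surviving $x$-powers'' as you assert. One needs $x^du_j=\gamma^ju_j$ to convert the $x$-power into the scalar $\gamma^{-j\sum_i(e_i-1)m_i}$, which then cancels $\gamma^{-\sum_i m_i^2j_i}=\gamma^{-j\sum_i m_i}$ (using $\gamma^{e_im_i}=1$ and $\gamma^{m_im_k}=1$ for $i\ne k$), yielding $S^2=\id$. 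With that relation in hand your Larson--Radford route does give semisimplicity and is a legitimate alternative to the paper's argument, which instead exhibits the explicit left integral $\sum_{i,j}x^ig^j+\sum_{i,j}x^ig^ju_0$ and checks $u_s\cdot\int=0$ for $s\ne 0$ (again via \eqref{eq7.3}) before applying Maschke; but as it stands both your dimension count and your $S^2$ computation are unjustified at precisely this point.
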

\begin{proof} Before the proof, we want to simplify a relation given in the definition of $\overline{D}.$ That is, a relation formulated in \eqref{eq7.1}: $x^{e_im_id}=1$ for all $1\leq i\leq \theta.$ We claim that it it equivalent to the following relation
\begin{equation}\label{eq7.2} x^{md}=1.
\end{equation}
Clearly, it is enough to show that \eqref{eq7.1} implies \eqref{eq7.2} since by definition $m|e_im_i$ for all $1\leq i\leq \theta.$ Indeed, by (3) of the definition of a fraction \ref{d3.1}, $e_i|m$ and thus we know that $(\frac{e_1m_1}{m},\frac{e_2m_2}{m},\ldots,\frac{e_\theta m_\theta}{m})=1$ since we already assume that $(m_1,m_2,\ldots,m_\theta)=1.$ Therefore, there exist $s_i\in\Z$ such that $\sum_{i=1}^{\theta}s_i\frac{e_im_i}{m}=1$ and thus
$$x^{md}=x^{md\sum_{i=1}^{\theta}s_i\frac{e_im_i}{m}}=x^{\sum_{i=1}^{\theta}s_i e_im_id}=1.$$
By \eqref{eq7.2}, we further get $g^{m}=1$ since $g^{m}=x^{md}.$

We use the classical Maschke Theorem to show that $\overline{D}$ is semisimple. To do that, we construct the left integral of $\overline{D}$ as follows:
$$\int_{\overline{D}}^l:=\sum_{i=0}^{md-1}\sum_{j=0}^{m-1}x^ig^j+\sum_{i=0}^{md-1}\sum_{j=0}^{m-1}x^ig^ju_0.$$
Let's show that it is really a left integral. Indeed, it is not hard to see that $x\int_{\overline{D}}^l=g\int_{\overline{D}}^l=\int_{\overline{D}}^l$ and
\begin{align*}u_0\cdot \int_{\overline{D}}^l&=
\sum_{i=0}^{md-1}\sum_{j=0}^{m-1}x^ig^ju_0+\sum_{i=0}^{md-1}\sum_{j=0}^{m-1}x^ig^ju_0^{2}\\
&=\sum_{i=0}^{md-1}\sum_{j=0}^{m-1}x^ig^ju_0+\sum_{i=0}^{md-1}\sum_{j=0}^{m-1}x^ig^j\\
&=\epsilon(u_0)\cdot \int_{\overline{D}}^l.
\end{align*}
Now by the relation $0=\phi_{m_i,j}u_{j+m_i}$, \begin{equation}\label{eq7.3}
u_{j+m_i}=\gamma_i^{1+j_i}x^{m_id}u_{j+m_i}\end{equation}
for $0\leq j\leq m-1$ and $\gamma_i=\gamma^{-m_i^2}.$ So for any $1\leq s\leq m-1$, there must exit an $1\leq i\leq \theta$ such that $s_i\neq 0.$ From \eqref{eq7.3}, we have $u_s=\gamma_{i}^{s_i}x^{m_id}u_s$ and thus
\begin{align*}u_s\cdot \int_{\overline{D}}^l&=
\sum_{i=0}^{md-1}\sum_{j=0}^{m-1}\gamma^{sj}x^ig^ju_s\\
&=\sum_{i=0}^{md-1}\sum_{j=0}^{m-1}\gamma^{sj}x^ig^j\gamma_{i}^{s_i}x^{d}u_s\\
&=\gamma_{i}^{s_i}\sum_{i=0}^{md-1}\sum_{j=0}^{m-1}\gamma^{sj}x^ig^ju_s,
\end{align*}
which implies that $\sum\limits_{i=0}^{md-1}\sum\limits_{j=0}^{m-1}\gamma^{sj}x^ig^ju_s=0$, and so
$u_s\cdot \int_{\overline{D}}^l=0=\epsilon(u_{s})\int_{\overline{D}}^l$
for all $1 \leqslant s \leqslant m-1$. Combining above equations together, $\int_{\overline{D}}^l$ is a left integral of $\overline{D}.$
Clearly,  $\epsilon(\int_{\overline{D}}^l)=2m^{2}d\neq
0$. So $\overline{D}$ is semisimple.

At last, we want to determine the dimension of this semisimple Hopf algebra. The main idea is to apply the bigrading \eqref{eqD} and \eqref{eq4.24} of $D$ to $\overline{D}.$ To apply them, we need determine the dimension of space spanned by $\{x^{t}u_j|0\leq t\leq md-1\}$ for any $0\leq j\leq m-1$. To do that, we want give an equivalent form of the \eqref{eq7.3}. By \eqref{eq7.3},
$x^{m_id}u_{k}=\gamma^{m_ik}u_k$ for any $0\leq k\leq m-1$. Note that $(m_1,\ldots,m_{\theta})=1$ and thus we have $s_i\in \Z$ such that $\sum_{i=1}^{\theta}s_im_i=1$. Therefore
 \begin{equation}\label{e7.4} x^du_{k}=x^{\sum_{i=0}^{\theta}s_im_id}u_{k}=\gamma^{\sum_{i=0}^{\theta}s_im_ik}u_{k}=\gamma^{k}u_{k}.
 \end{equation}By this formula, the space spanned by $\{x^{t}u_j|0\leq t\leq md-1\}$ is the same as the space spanned by $\{x^{t}u_j|0\leq t\leq d-1\}$ and its dimension is $d.$ Now applying the bigrading \eqref{eqD} and \eqref{eq4.24}, we see that the set
$$\{x^ig^j, x^{t}g^{j}u_s|0\leq i\leq md-1, 0\leq j\leq m-1, 0\leq t\leq d-1, 0\leq s\leq m-1\}$$ is a basis of $\overline{D}$ and thus
$$\dim_{\k} \overline{D}=2m^2d.$$
\end{proof}

Next we want to analysis the coalgebra and algebra structure of this semisimple Hopf algebra. Its coalgebra structure can be determined easily.

\begin{proposition}\label{p7.6} Keep above notations.
\begin{itemize}\item[(1)] Let $C$ be the subspace spanned by $\{g^iu_j|0\leq i,j\leq m-1\}$. Then $C$ is simple coalgebra.
\item[(2)] The following is the decomposition of $\overline{D}$ into simple coalgbras
$$\overline{D}=\bigoplus_{i=0}^{md-1}\bigoplus_{j=0}^{m-1}\k x^ig^j\oplus \bigoplus_{i=0}^{d-1}x^iC.$$
\item[(3)] Up to isomorphisms of comodules, $\overline{D}$ has $m^2d$-number of 1-dimensional comodules and $d$-number of $m$-dimensional simple comodules.
\end{itemize}
\end{proposition}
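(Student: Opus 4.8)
\emph{Proof proposal.} The key preliminary observation is that in $\overline{D}$ the coproduct of $u_j$ collapses to a much cleaner form than in $D$. Using the relation $x^{d}u_{k}=\gamma^{k}u_{k}$ of \eqref{e7.4} together with $xg=gx$, one gets $x^{-kd}g^{k}u_{j-k}=\gamma^{-k(j-k)}g^{k}u_{j-k}$, so that the formula $\D(u_j)=\sum_{k=0}^{m-1}\gamma^{k(j-k)}u_k\otimes x^{-kd}g^ku_{j-k}$ becomes
$$\D(u_j)=\sum_{k=0}^{m-1}u_k\otimes g^{k}u_{j-k}$$
in $\overline{D}$, with indices read mod $m$, recalling $g^{m}=1$ in $\overline{D}$ and $u_s:=u_{\bar s}$. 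I would record this identity first; everything else follows from it by elementary manipulation.

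For (1) I would identify $C$ with the $m^{2}$-dimensional comatrix coalgebra $\mathcal{M}^{c}_{m}(\k)$ (the $\k$-linear dual of the matrix algebra $\M_m(\k)$), which is simple. Set $c_{ij}:=g^{i}u_{j-i}$ for $0\le i,j\le m-1$ (indices mod $m$). Since $\D$ is an algebra map and $\D(g)=g\otimes g$, the displayed formula and the substitution $k\mapsto l-i$ give $\D(c_{ij})=\sum_{l=0}^{m-1}c_{il}\otimes c_{lj}$, while $\epsilon(c_{ij})=\epsilon(g^{i})\epsilon(u_{j-i})=\delta_{ij}$ because $\epsilon(u_s)=\delta_{s,0}$; in particular $\D(C)\subseteq C\otimes C$. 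As $(i,j)\mapsto(i,j-i)$ is a bijection of $\Z_m\times\Z_m$, the set $\{c_{ij}\}$ coincides with $\{g^{i}u_{s}\}_{0\le i,s\le m-1}$, which is the $t=0$ part of the $\k$-basis of $\overline{D}$ exhibited in the proof of Lemma \ref{l7.5}, hence linearly independent. Thus $C$ is a subcoalgebra whose structure constants are those of $\mathcal{M}^{c}_{m}(\k)$, so $C$ is simple.

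For (2), each line $\k x^{i}g^{j}$ is a one-dimensional subcoalgebra since $x^{i}g^{j}$ is group-like. For the block $x^{i}C$ with $0\le i\le d-1$: because $\D$ is an algebra map, $\D(x^{i}c)=(x^{i}\otimes x^{i})\D(c)$ and $\epsilon(x^{i}c)=\epsilon(c)$, so left multiplication by $x^{i}$ restricts to a coalgebra map $C\to x^{i}C$; it is bijective since $x^{i}$ is a unit (recall $x^{md}=1$ by \eqref{eq7.2}), whence $x^{i}C\cong C$ is a simple subcoalgebra of dimension $m^{2}$ with basis $\{x^{i}g^{j}u_{s}\}_{0\le j,s\le m-1}$. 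Comparing with the basis of $\overline{D}$ from the proof of Lemma \ref{l7.5}: the $x^{i}g^{j}$ with $0\le i\le md-1,\ 0\le j\le m-1$ span $\bigoplus_{i,j}\k x^{i}g^{j}$, the $x^{t}g^{j}u_{s}$ with $0\le t\le d-1,\ 0\le j,s\le m-1$ span $\bigoplus_{i=0}^{d-1}x^{i}C$, and the two families together are exactly that basis; the count $m^{2}d+m^{2}d=2m^{2}d=\dim\overline{D}$ confirms the sum is direct and exhausts $\overline{D}$. This exhibits $\overline{D}$ as a direct sum of simple subcoalgebras, i.e. it is cosemisimple with the stated decomposition.

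Part (3) is then immediate from the structure theory of cosemisimple coalgebras over the algebraically closed field $\k$: isomorphism classes of simple $\overline{D}$-comodules correspond bijectively to the simple subcoalgebras of $\overline{D}$, a simple comodule having dimension equal to the square root of the dimension of its associated simple subcoalgebra. The $m^{2}d$ lines $\k x^{i}g^{j}$ contribute $m^{2}d$ pairwise non-isomorphic one-dimensional comodules (distinct group-likes), and the $d$ blocks $x^{i}C\cong\mathcal{M}^{c}_{m}(\k)$ contribute $d$ pairwise non-isomorphic $m$-dimensional simple comodules. The only mildly delicate point throughout is the index bookkeeping (mod $m$ for $g$ and the $u$'s, mod $md$ for $x$) and the reindexing that turns $\D(u_j)$ into comatrix form; once the simplified coproduct above is in hand there is no substantive obstacle.
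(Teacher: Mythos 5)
Your proposal is correct and follows essentially the same route as the paper: the paper proves (1) by dualizing and identifying $C^{*}$ with the matrix algebra $M_m(\k)$ via the basis $\{(x^{-d}g)^iu_j\}$, while you exhibit the comatrix identities directly in $C$ using the simplified coproduct $\D(u_j)=\sum_{k}u_k\otimes g^ku_{j-k}$ obtained from \eqref{e7.4} — a dual formulation of the same computation. Parts (2) and (3) are handled exactly as in the paper (dimension count against the basis from the proof of Lemma \ref{l7.5}, then standard cosemisimple coalgebra theory), with your write-up merely supplying details the paper leaves implicit.
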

\begin{proof} (1) One can apply similar method used in \cite{Wu} to prove this statement. For completeness, we write the details out. Clearly, to show the result, it is sufficient to show that the $\k$-linear dual $C^*:=\Hom_\k(C, \k)$ is a simple algebra. In fact, we will see that $C^*$ is the matrix algebra of order $m$. We change the basis of $C$ for the convenience. Using relation \eqref{e7.4}, $C$ is also spanned by $\{(x^{-d}g)^iu_j|0\leq i,j\leq m-1\}$. Denote by $f_{ij}:=((x^{-d}g)^iu_j)^*$, that is, $\{f_{ij}| 0\leqslant i, j\leqslant m-1\}$ is the dual basis of the basis $\{(x^{-d}g)^iu_j|0\leq i,j\leq m-1\}$ of $C$. We prove this fact by two steps: firstly, we study the multiplication of the dual basis; secondly, we construct an algebraic isomorphism from $C^*$ to the matrix algebra of order $m$.

\noindent \emph{Step 1.}\quad Since
\begin{align*}
&(f_{i_1, j_1}*f_{i_2, j_2})((x^{-d}g)^iu_j)\\
&=m(f_{i_1, j_1}\otimes f_{i_2, j_2})(\D((x^{-d}g)^iu_j))\\
   \notag  &=m(f_{i_1, j_1}\otimes f_{i_2, j_2})(\sum_{s=0}^{m-1}\gamma^{s(j-s)}(x^{-d}g)^iu_s\otimes (x^{-d}g)^{i+s}u_{j-s} )\\
    &=\sum_{s=0}^{m-1}\gamma^{s(j-s)}f_{i_1, j_1}((x^{-d}g)^iu_s)f_{i_2, j_2}((x^{-d}g)^{i+s}u_{j-s})\\
\end{align*}
one can see that $(f_{i_1, j_1}*f_{i_2, j_2})((x^{-d}g)^iu_j)\neq 0$ if and only if $i_1=i, j_1=s, i_2=i+s$ and $j_2=j-s$ for some $0\leqslant s\leqslant m-1$. This forces $i_1+j_1=i_2, i=i_1$ and $j=j_1+j_2$. So we have
\begin{equation}\label{eq*}f_{i_1, j_1}*f_{i_2, j_2}=\begin{cases}
\gamma^{j_1j_2}f_{i_1, j_1+j_2}, & \textrm{if}\; i_1+j_1=i_2,\\
0, & \textrm{otherwise}.
\end{cases}\end{equation}

\noindent \emph{Step 2.}\quad Set $M=M_m(\k)$ and let $E_{ij}$ be the matrix units (that is, the matrix with 1 is in the (i, j)
entry and 0 elsewhere) for $0\leqslant i, j\leqslant m-1$. Now we claim that $$\varphi: C^*\to M, f_{ij}\mapsto \gamma^{ij}E_{i, i+j}$$ is an algebraic isomorphism (the index $i+j$ in $E_{i+j}$ is interpreted mod $m$).
It is sufficient to verify that $\varphi$ is an algebraic map. In fact,
\begin{align*}\varphi(f_{i_1, j_1})\varphi(f_{i_2, j_2})&=\gamma^{i_1j_1}E_{i_1, i_1+j_1}\gamma^{i_2j_2}E_{i_2, i_2+j_2}\\
&=\begin{cases}
\gamma^{i_1j_1+i_2j_2}E_{i_1, i_2+j_2}, & \textrm{if}\; i_1+j_1=i_2,\\
0, & \textrm{otherwise},
\end{cases}\\
&=\begin{cases}
\gamma^{i_1j_1+i_2j_2-i_1(j_1+j_2)}\varphi(f_{i_1, j_1+j_2}), & \textrm{if}\; i_1+j_1=i_2,\\
0, & \textrm{otherwise},
\end{cases}\\
&=\begin{cases}
\varphi(f_{i_1, j_1+j_2}), & \textrm{if}\; i_1+j_1=i_2,\\
0, & \textrm{otherwise},
\end{cases}\\
&=\varphi(f_{i_1, j_1}*f_{i_2, j_2}).
\end{align*}
So $\varphi$ is an algebraic map and the proof is completed.

(2) Comparing the dimensions of left side and right side, we have the statement.

(3) This is a direct consequence of (2).
\end{proof}

Next, we want to determine the algebraic structure of $\overline{D}$. As in the proof of Lemma \ref{l7.5}, $\{x^ig^j, x^tg^ju_s|0\leq i\leq md-1, 0\leq j\leq m-1, 0\leq t\leq d-1, 0\leq s\leq m-1\}$ is a basis of $\overline{D}$. Denote by $G$ the group of all group-likes of $\overline{D}$. Then clearly every element in $\overline{D}$ can be written uniquely in the following way:
$$f+\sum_{i=0}^{m-1}f_iu_i$$
for $f, f_{i}\in \k G$ and $0\leq i\leq m-1$.

We use $C(\overline{D})$ to denote the center of $\overline{D}$. Next result helps us to determine the center of $\overline{D}$.

\begin{lemma}\label{l7.7} The element $e=f+\sum_{i=0}^{m-1}f_iu_i\in C(\overline{D})$ if and only if $f, f_0u_0\in C(\overline{D})$ and $f_1=\ldots=f_{m-1}=0$.
\end{lemma}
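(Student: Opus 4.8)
The plan is to work with the decomposition $\overline{D}=\k G\oplus\bigoplus_{i=0}^{m-1}\k G\,u_i$ coming from the basis $\{x^ig^j,\;x^tg^ju_s\}$, and to test the commutation of a general element $e=f+\sum_{i=0}^{m-1}f_iu_i$ against the generators $x,g,u_0,\ldots,u_{m-1}$. The point is that the grading by the group $\widehat{G_\pi^l}\times\widehat{G_\pi^r}$ (inherited from the bigrading \eqref{eqD}, \eqref{eq4.24} on $D$) is preserved in the quotient $\overline{D}$, so $\k G$ sits in the even-even part and each $\k G\,u_i$ in a single odd-odd homogeneous strip; hence $e\in C(\overline{D})$ forces each graded piece of $e$ to be central separately, which already separates $f$ from the $f_iu_i$ and separates the $f_iu_i$ from one another. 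So the ``only if'' direction reduces to: (a) $f$ central; (b) each $f_iu_i$ central; (c) for $i\neq 0$, $f_iu_i$ central implies $f_i=0$.

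First I would handle the easy implications. For the ``if'' direction, I only need that $f_0u_0$ and $f$ are central, which is assumed, so nothing to prove there. For the ``only if'' direction, step (a) follows by testing $e x=xe$ and $eu_j=u_je$ and using the grading argument to isolate $f$. For step (c), the key relation is \eqref{eq7.3} (equivalently $x^{m_id}u_i=\gamma^{m_i i}u_i$, so $x^du_i=\gamma^iu_i$ by \eqref{e7.4}): if $i\neq 0$ then there is an index $\ell$ with $i_\ell\neq 0$, hence $u_i=\gamma_\ell^{i_\ell}x^{m_\ell d}u_i$ with $\gamma_\ell^{i_\ell}\neq1$. Now commuting $f_iu_i$ with $x$ gives $f_iu_ix=xf_iu_i$; since $u_ix=x^{-1}u_i$ and $fx^{-1}$-type moves are governed by $f\mapsto\bar f$, the centrality of $f_iu_i$ translates (after using $x^du_i=\gamma^iu_i$ to normalize) into a constraint forcing $f_i=\overline{f_i}\cdot(\text{root of unity}\neq1)$ on each monomial, which has only the zero solution. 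I would also double-check with the relation $u_iu_{m-i}$ versus $u_{m-i}u_i$ if the $x$-test alone is not decisive, but I expect the $x$-conjugation plus \eqref{e7.4} to kill $f_i$ outright.

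The remaining and most substantive point is step (b): showing that if $f+\sum f_iu_i$ is central then its $u_0$-component $f_0u_0$ is by itself central. This does not follow purely formally from ``each graded piece is central'', because centrality must be tested against \emph{all} of $\overline{D}$, in particular against the $u_j$'s, and $u_ju_0$ lands in the same strip as $g$-times-group-likes, not in a $u$-strip; one has to check that the graded component of the equation $e u_j=u_j e$ lying in the appropriate strip, once the $f_i=0$ ($i\neq0$) facts are in hand, reduces exactly to $f_0u_0u_j=u_jf_0u_0$ for every $j$, together with $f_0u_0x=xf_0u_0$ and $f_0u_0g=gf_0u_0$. So I would: first run steps (a) and (c) to reduce $e$ to $f+f_0u_0$ with $f$ central; then test $(f+f_0u_0)u_j=u_j(f+f_0u_0)$ and $(f+f_0u_0)g=g(f+f_0u_0)$; subtract the (already known) central contributions of $f$; and read off $f_0u_0u_j=u_ju_0\cdot(\text{same scalar})\cdot f_0$-type identities, using the explicit product formula in \eqref{eq7.1} for $u_ju_0$ and $u_0u_j$. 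The one genuine obstacle is bookkeeping the scalars $\gamma^{k(j-k)}$, $\xi_{m_i}^{-l_i}$ and the bracket polynomials $[\,\cdot\,,\cdot\,]_{m_i}$ when comparing $u_ju_0$ with $u_0u_j$; here I would invoke the normality of the $u_j$ (which in the quotient still says $u_j\k G=\k G u_j$ and $u_j u_0, u_0 u_j$ differ only by a group-like factor) to avoid grinding the combinatorics, concluding that centrality of $f+f_0u_0$ is equivalent to centrality of $f$ and of $f_0u_0$ separately. That finishes both directions.

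Finally I would record the trivial converse explicitly for the write-up: if $f\in C(\overline{D})$, $f_0u_0\in C(\overline{D})$ and $f_1=\cdots=f_{m-1}=0$, then $e=f+f_0u_0$ is a sum of two central elements, hence central. Thus the stated equivalence holds, and the proof is complete once steps (a), (b), (c) above are carried out in detail.
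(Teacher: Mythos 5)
Your overall architecture (decompose $e$ along the bigrading inherited by $\overline{D}$, conclude that each piece $f$ and $f_iu_i$ is separately central, then kill the pieces with $i\neq 0$) is sound, but the decisive step (c) has a genuine gap as written. Testing $f_iu_i$ against $x$ only gives $x f_iu_i=f_iu_ix=x^{-1}f_iu_i$, i.e. $(x^2-1)f_iu_i=0$; this is \emph{not} visibly ``only the zero solution''. Using \eqref{e7.4}, $x$ acts on the strip $\k G u_i$ with eigenvalues $\zeta^{i+\ell m}$ ($0\le \ell\le d-1$), so $(x^2-1)$ annihilates a nonzero vector exactly when some $\zeta^{2(i+\ell m)}=1$, and for $i\neq 0$ this happens precisely when $m$ is even, $d$ is odd and $i=m/2$. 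That borderline case has to be excluded, and doing so requires the parity conditions \eqref{eq4.7} together with the pairwise coprimality of the $e_i$ (these force $d$ odd $\Rightarrow m$ odd); your proposal neither notices the exceptional case nor supplies this argument — the relation $u_i=\gamma_\ell^{i_\ell}x^{m_\ell d}u_i$ from \eqref{eq7.3} concerns $x^{m_\ell d}$, not $x^2$, so it does not close the argument by itself. The paper avoids all of this by testing against $g$ instead: in $\overline{D}$ one has $u_ig=\gamma^{i}x^{-2d}gu_i=\gamma^{-i}gu_i$ (again by \eqref{e7.4}), so centrality gives $(1-\gamma^{-i})gf_iu_i=0$ and $f_iu_i=0$ immediately since $\gamma^{-i}\neq 1$ for $1\le i\le m-1$ — a one-line decisive test you never apply at this step.

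By contrast, your worry in step (b) is unfounded and the extra machinery you propose there is unnecessary. Once $f$ is known to be central (your grading argument, or the paper's direct comparison of the components of $eu_i=u_ie$) and $f_i=0$ for $i\neq 0$, the element $f_0u_0=e-f$ is central simply as a difference of central elements — exactly the paper's one-line conclusion — so no bookkeeping of the product formulas in \eqref{eq7.1}, nor any appeal to normality of the $u_j$, is needed. In short: fix step (c) either by the eigenvalue/parity argument above or, more cleanly, by conjugating with $g$ as in the paper; the rest of your plan then collapses to essentially the paper's proof.
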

\begin{proof} The sufficiency is obvious. We just prove the necessity. At first, we show that
$f_1=\ldots=f_{m-1}=0$. Otherwise, assume that, say, $f_1\neq 0$. By assumption, $ge=eg$ which implies that $gf_1u_1=f_1u_1g$. By the definition of $\overline{D}$, $f_1u_1g=\gamma^{-1}gf_1u_1$. So we have $\gamma^{-1}gf_1u_1=gf_1u_1$ which is absurd. Similarly, we have $f_2=\ldots=f_{m-1}=0$. Secondly, let's show that $f\in C(\overline{D})$. Also, by $eu_i=u_ie$ we know that $fu_i=u_if$ for $0\leq i\leq m-1$. By definition, $f$ commutes with all elements in $G$. Therefore, $f\in C(\overline{D})$. Since $e=f+f_0u_0$ and $e\in C(\overline{D})$, $f_0u_0\in C(\overline{D})$ too.
\end{proof}

Let $\zeta$ be an $md$th root of unity satisfying $\zeta^{d}=\gamma$. Define
$$1_i^{x}:=\frac{1}{md}\sum_{j=0}^{md-1}\zeta^{-ij}x^j,\ \ 1_k^{g}:=\frac{1}{m}\sum_{j=0}^{m-1}\gamma^{-kj}g^j$$
for $0\leq i\leq md-1$ and $0\leq k\leq m-1$. It is well-known that $\{1_i^{x}1_k^{g}|0\leq i\leq md-1, 0\leq k\leq m-1\}$ is also a basis of $\k G$. Therefore, one can assume that
$$f=\sum_{i=0}^{md-1}\sum_{j=0}^{m-1}a_{ij}1_i^{x}1_j^{g}=\sum_{i,j}a_{ij}1_i^{x}1_j^{g}.$$
For any natural number $i$, we use $i'$ to denote the remainder of $i$ divided by $m$ in the following of this subsection.

\begin{lemma}\label{l7.8} Let $f=\sum_{i,j}a_{ij}1_i^{x}1_j^{g}$ be an element in $\k G$. Then $f\in C(\overline{D})$ if and only if $a_{ij}=a_{md-i,j-i'}$ for all $0\leq i\leq md-1, 0\leq j\leq m-1$.
\end{lemma}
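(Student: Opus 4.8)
The plan is to reduce the centrality of $f$ to commutation with the generators $u_0,\dots,u_{m-1}$, and then translate each such commutation into a linear condition on the matrix $(a_{ij})$ by pushing the orthogonal idempotents $1^x_i,1^g_j$ past $u_s$; throughout I read the indices of $1^x_\bullet$ modulo $md$ and those of $1^g_\bullet$ modulo $m$. First, since $G=\langle x,g\rangle$ is abelian, $\k G$ is commutative, so $f$ automatically commutes with $x$ and $g$; as $\overline D$ is generated as an algebra by $x^{\pm1},g^{\pm1},u_0,\dots,u_{m-1}$, this gives $f\in C(\overline D)$ if and only if $fu_s=u_sf$ for all $0\le s\le m-1$.

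Next I would record the conjugation rules for the idempotents. From $u_sx=x^{-1}u_s$ one gets $u_s1^x_i=1^x_{-i}u_s$, and from $u_sg=\gamma^sx^{-2d}gu_s$ together with $x^du_s=\gamma^su_s$ (equation \eqref{e7.4}) one gets $u_s1^g_j=1^g_{j+s}u_s$; combining them, $1^x_i1^g_ju_s=u_s\,1^x_{-i}1^g_{j-s}$ (with $-i$ meaning $md-i$). Hence
\[
fu_s=\sum_{i,j}a_{ij}\,u_s\,1^x_{-i}1^g_{j-s}=u_s\Bigl(\sum_{i,j}a_{-i,\,j+s}\,1^x_i1^g_j\Bigr),
\]
so that $fu_s=u_sf$ is equivalent to $u_s\cdot h_s=0$, where $h_s:=\sum_{i,j}\bigl(a_{-i,\,j+s}-a_{ij}\bigr)1^x_i1^g_j\in\k G$.

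The technical heart is to identify the kernel of left multiplication by $u_s$ on $\k G$. Using $x^{qd+r}u_s=\gamma^{qs}x^ru_s$ (again from \eqref{e7.4}) and splitting $1^x_i=\tfrac1{md}\sum_k\zeta^{-ik}x^k$ accordingly, I would show $1^x_iu_s=\tfrac1d\sum_{r=0}^{d-1}\zeta^{-ir}x^ru_s$ when $i\equiv s\pmod m$ and $1^x_iu_s=0$ otherwise; the first case is nonzero since $x^0u_s,\dots,x^{d-1}u_s$ belong to the basis of $\overline D$ from Lemma \ref{l7.5}. Writing $h=\sum h_{ij}1^x_i1^g_j$, one then has $u_sh=\sum_{a\equiv s\,(m)}1^x_a(u_sh)$ because $\sum_a1^x_a=1$ and $1^x_au_s=0$ for $a\not\equiv s\pmod m$, and these summands lie in the mutually complementary subspaces $1^x_a\overline D$. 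For $a\equiv s\pmod m$, applying $1^x_a1^x_{-i}=\delta_{a,-i}1^x_a$ collapses $1^x_a(u_sh)$ to $\bigl(\sum_j h_{md-a,\,j-s}1^g_j\bigr)(1^x_au_s)$, and a Vandermonde argument in $\gamma$ (to separate the $1^g$–components against the nonzero element $1^x_au_s$) forces $1^x_a(u_sh)=0\iff h_{md-a,\,j}=0$ for all $j$. Therefore $u_sh=0$ if and only if $h_{ij}=0$ for every $i$ with $i\equiv-s\pmod m$ and every $j$. I expect this linear-independence bookkeeping — and keeping the two moduli $md$ and $m$ straight — to be the only real obstacle; the rest is routine manipulation.

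Finally, assembling: applying the previous paragraph to $h=h_s$, the equation $fu_s=u_sf$ holds if and only if $a_{-i,\,j+s}=a_{ij}$ for all $j$ and all $i$ with $i\equiv-s\pmod m$. For a fixed $i$ with remainder $i'$ upon division by $m$, there is exactly one $s\in\{0,\dots,m-1\}$ with $i\equiv-s\pmod m$, namely $s\equiv-i'\pmod m$; for this $s$ we have $j+s\equiv j-i'\pmod m$ and $-i=md-i$, so the condition reads $a_{ij}=a_{md-i,\,j-i'}$. As $s$ runs over $\{0,\dots,m-1\}$ this covers every index $i$ exactly once, so $f\in C(\overline D)$ if and only if $a_{ij}=a_{md-i,\,j-i'}$ for all $0\le i\le md-1$ and $0\le j\le m-1$, which is the assertion.
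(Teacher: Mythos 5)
Your argument is correct and follows essentially the same route as the paper's proof: both rest on the commutation rules of $1_i^x$ and $1_j^g$ with the elements $u_s$ (the paper's formula \eqref{eq7.5} and its companion, which are exactly your conjugation identities $u_s1_i^x=1_{-i}^xu_s$, $u_s1_j^g=1_{j+s}^gu_s$) together with the linear independence supplied by the basis of Lemma \ref{l7.5}. The only difference is organizational — you recast the comparison as determining the kernel of left multiplication by $u_s$ on $\k G$ via the decomposition $\overline{D}=\bigoplus_a 1_a^x\overline{D}$, whereas the paper matches coefficients of $fu_k$ and $u_kf$ directly against the elements $\unit_i^x1_j^gu_k$ — and your final reindexing to $a_{ij}=a_{md-i,\,j-i'}$ agrees with the paper's conclusion.
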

\begin{proof} Define $$\unit_{i}^x:=\frac{1}{d}(1+\zeta^{-i} x+\zeta^{-2i} x^2+\ldots+\zeta^{-(d-1)i} x^{d-1})$$
for $0\leq i\leq md-1$. For any $0\leq k\leq m-1$, it is not hard to see that the elements in $\{\unit_{i}^{x}|i\equiv k\ (\textrm{mod}\ m)\}$ are linear independent. Using equation \eqref{e7.4} and a direct computation, one can show that \begin{equation}\label{eq7.5}1_i^xu_k=\left \{
\begin{array}{ll} \unit_i^{x}u_k, & \;\;\text{if}\ \ i\equiv k\ (\textrm{mod}\ m),\\
0, & \;\;\text{otherwise,}
\end{array}\right.\end{equation}
\begin{equation}u_k1_i^x=\left \{
\begin{array}{ll} u_k\unit_i^{x}, & \;\;\text{if}\ \ i+k\equiv 0\ (\textrm{mod}\ m),\\
0, & \;\;\text{otherwise.}
\end{array}\right.\end{equation}
and
$$1_{md-i}^{x}=1_{i}^{x^{-1}}.$$
Therefore, we have
$$
fu_k=\sum_{i,j}a_{ij}1_{i}^{x}1_{j}^{g}u_k=\sum_{i,j}a_{ij}1_{i}^{x}u_k1_{j-k}^{g}=
\sum_{i\equiv k\;(\textrm{mod}\;m),j}a_{ij}\unit_{i}^{x}1_{j}^{g}u_k,
$$
\begin{eqnarray*}u_kf&&=\sum_{i,j}a_{ij}u_k1_{i}^{x}1_{j}^{g}=\sum_{i+k\equiv 0\;(\textrm{mod}\;m),j}a_{ij}u_{k}1_{i}^{x}1_{j}^{g}=\sum_{i+k\equiv 0\;(\textrm{mod}\;m),j}a_{ij}1_{i}^{x^{-1}}u_k1_{j}^{g}\\
&&=\sum_{i+k\equiv 0\;(\textrm{mod}\;m),j}a_{ij}1_{md-i}^{x}u_k1_{j}^{g}=\sum_{i+k\equiv 0\;(\textrm{mod}\;m),j}a_{ij}\unit_{md-i}^{x}1_{j+k}^{g}u_k.
\end{eqnarray*}
This means that $fu_k=u_kf$ if and only if $a_{k+lm,j}=a_{m(d-l)-k,j-k}$ for some $0\leq k\leq m-1$. From this, the proof is done.
\end{proof}

Assume that $f_0=\sum_{i,j}b_{ij}1_i^x1_j^g$. Using \eqref{eq7.5}, we know that
$$f_0u_0=\sum_{i\equiv 0\;(\textrm{mod}\;m),j}b_{ij}\unit_i^x1_j^gu_0.$$
So we can assume that $f_0=\sum_{i\equiv 0\;(\textrm{mod}\;m),j}b_{ij}1_i^x1_j^g$ directly. With this assumption, we have the following result.
\begin{lemma} The element $f_0u_0$ belongs to the center of $\overline{D}$ if and only if
\begin{equation}\label{eq7.7} f_0=\left \{
\begin{array}{ll} \sum_{j}b_{0j}1_0^{x}1_j^g, & \;\;\emph{if}\ \ d\ \emph{is\ odd}, \\
\sum_{j}b_{0j}1_0^{x}1_j^g+\sum_{j}b_{\frac{d}{2}m,j}1_{\frac{d}{2}m}^{x}1_j^g, & \;\;\emph{if}\ \ d\ \emph{is\ even}.\end{array}\right.
\end{equation}
\end{lemma}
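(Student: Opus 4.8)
The plan is to test directly when $f_0u_0$ lies in $C(\overline D)$ by checking commutation with the algebra generators $x$, $g$ and $u_0,\dots,u_{m-1}$ of $\overline D$. Two of these commutations are immediate. For $k\neq0$ the defining multiplication rules give $u_0u_k=u_ku_0=0$, and moving $f_0\in\k G$ through $u_k$ only produces another element of $\k G$ times $u_k$; hence $u_k(f_0u_0)=0=(f_0u_0)u_k$. Thus the decisive condition is commutation with $x$, and I claim that once this is imposed, commutation with $g$ and with $u_0$ comes for free; so the whole ``if and only if'' will follow from one eigenvalue computation.

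First I would record how $x$ acts on the relevant elements. By \eqref{e7.4} we have $x^du_0=u_0$, and together with $\zeta^d=\gamma$, $\gamma^m=1$ and $x^{md}=1$ this yields, for $0\leqslant l\leqslant d-1$,
\[
x\cdot\unit_{lm}^xu_0=\zeta^{lm}\,\unit_{lm}^xu_0,\qquad x^{2d}\cdot\unit_{lm}^xu_0=\unit_{lm}^xu_0 ,
\]
while the $m^2d$ elements $\{\unit_{lm}^x1_j^gu_0\mid 0\leqslant l\leqslant d-1,\ 0\leqslant j\leqslant m-1\}$ are linearly independent (the transition matrix from $\{x^t1_j^gu_0\}$ is Vandermonde in the powers of $\zeta^m$, which has order $d$; alternatively invoke the basis of $\overline D$ exhibited in Lemma \ref{l7.5}). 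Since $xu_0=u_0x^{-1}$, we get $x(f_0u_0)=(f_0u_0)x^{-1}$, so $f_0u_0$ commutes with $x$ iff $(f_0u_0)(x^2-1)=0$, i.e. iff $x^{-2}$ fixes $f_0u_0$ on the left; by the display this happens exactly when $\zeta^{2lm}=1$ for every $l$ with some $b_{lm,j}\neq0$, i.e. when $d\mid 2l$. For $d$ odd this forces $l=0$, and for $d$ even $l\in\{0,d/2\}$, which is precisely the shape \eqref{eq7.7}. This proves the ``only if'' direction, and also supplies the commutation with $x$ needed for ``if''.

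It remains to verify, for $f_0$ of the form \eqref{eq7.7}, that $f_0u_0$ commutes with $g$ and with $u_0$. From $x^du_0=u_0$ we get $x^{2d}u_0=u_0$, hence $x^{2d}u_0^2=u_0^2$; from $u_0g=x^{-2d}gu_0$ we get $gu_0=x^{2d}u_0g$, and therefore $gu_0^2=u_0^2g$ and $g(f_0u_0)=x^{2d}(f_0u_0)g=(f_0u_0)g$, the last step because $x^{2d}$ acts trivially on each $\unit_{lm}^xu_0$. For $u_0$: since $u_0x=x^{-1}u_0$ and $u_0g=x^{-2d}gu_0$, one has $u_0\,\xi=\sigma(\xi)\,u_0$ for $\xi=x^ig^j\in G$, where $\sigma(x^ig^j)=x^{-i-2dj}g^j$; thus $u_0(f_0u_0)=\sigma(f_0)u_0^2$. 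Now $\sigma(1_0^x)=1_0^x$, and when $d$ is even (so $md$ is even) $\sigma\bigl(1_{(d/2)m}^x\bigr)=1_{(d/2)m}^x$ because $\zeta^{md/2}=-1$; combining these with $\sigma(g^s)=x^{-2ds}g^s$, with $gu_0^2=u_0^2g$, and with $x^{2ds}u_0^2=u_0^2$ (and using $1_{lm}^xu_0=\unit_{lm}^xu_0$ from \eqref{eq7.5}) collapses $\sigma(f_0)u_0^2$ to $f_0u_0^2=(f_0u_0)u_0$. Hence $f_0u_0\in C(\overline D)$, completing ``if''.

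The one genuinely delicate point is the bookkeeping in this last paragraph: $u_0$ satisfies three different twisted commutation relations (with $x$, with $g$, and with itself), and they all interact. The way I would keep it under control is exactly as above—factor the motion of $f_0$ past $u_0$ through the map $\sigma$ on $\k G$, and then rely on the two clean identities $x^{2d}u_0^2=u_0^2$ and $gu_0^2=u_0^2g$, both of which come directly from \eqref{e7.4} and the defining relations of $\overline D$. Everything else is routine verification with the idempotents $1_i^x,1_j^g$ and the multiplication/antipode formulas for $\overline D$.
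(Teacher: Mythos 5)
Your proof is correct and follows essentially the same route as the paper: centrality is reduced to commutation with $x$, and since $x$ acts on $\unit_{lm}^x1_j^gu_0$ with eigenvalue $\zeta^{lm}$, the condition $x^{2}f_0u_0=f_0u_0$ forces $d\mid 2l$, which is exactly \eqref{eq7.7}; the converse, which the paper dismisses as straightforward, you verify explicitly (commutation with $u_k$, $k\neq0$, and with $g$ is automatic, and commutation with $u_0$ follows from $\sigma(f_0)u_0^2=f_0u_0^2$ using $\sigma(1_0^x)=1_0^x$, $\sigma(1_{\frac{d}{2}m}^x)=1_{\frac{d}{2}m}^x$ and $x^{2d}u_0^2=u_0^2$). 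The only slip is cosmetic: the linearly independent set $\{\unit_{lm}^x1_j^gu_0\}$ has $md$ elements, not $m^2d$.
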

\begin{proof} From $xf_0u_0=f_0u_0x$, we have $xf_0=x^{-1}f_0$, which implies exactly the equation \eqref{eq7.7}. The converse is straightforward.
\end{proof}

Next, we want determine when a central element is idempotent.
\begin{lemma}\label{l7.9} Let $e=f+f_0u_0$ be an element living in the center $C(\overline{D})$. Then $e^2=e$ if and only if $f=f^2+f_0^2u_0^2$ and $f_0=2ff_0$.\end{lemma}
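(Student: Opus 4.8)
The plan is to compute $e^2$ directly using the decomposition $e = f + f_0 u_0$ and the multiplication rules of $\overline{D}$, then compare homogeneous components. First I would expand
\[
e^2 = (f + f_0 u_0)(f + f_0 u_0) = f^2 + f f_0 u_0 + f_0 u_0 f + f_0 u_0 f_0 u_0.
\]
Since $e$ is assumed central, Lemma \ref{l7.7} already tells us that $f \in C(\overline{D})$ and $f_0 u_0 \in C(\overline{D})$, so in particular $u_0$ commutes with $f$ and with $f_0$. Hence $u_0 f = f u_0$ and $u_0 f_0 = f_0 u_0$, which lets me rewrite the cross terms as $f f_0 u_0 + f_0 u_0 f = 2 f f_0 u_0$ and the last term as $f_0 u_0 f_0 u_0 = f_0^2 u_0^2$. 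Therefore
\[
e^2 = (f^2 + f_0^2 u_0^2) + 2 f f_0\, u_0.
\]

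Next I would invoke the basis structure established in the proof of Lemma \ref{l7.5}: every element of $\overline{D}$ is written uniquely as $h + \sum_{i=0}^{m-1} h_i u_i$ with $h, h_i \in \k G$, and by the relation $u_0^2 \in \k[x^{\pm1}]g \subseteq \k G$ (from \eqref{eq7.1} with $j = l = 0$), the element $f^2 + f_0^2 u_0^2$ lies in $\k G$. Thus in the expression $e^2 = (f^2 + f_0^2 u_0^2) + 2 f f_0 u_0$ the piece $f^2 + f_0^2 u_0^2$ is the ``$\k G$-part'' (the part with no $u_i$), the coefficient of $u_0$ is $2 f f_0$, and the coefficients of $u_1, \dots, u_{m-1}$ are all zero. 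Comparing this with $e = f + f_0 u_0$ under the uniqueness of such expressions, the equation $e^2 = e$ holds if and only if the $\k G$-parts agree, i.e. $f = f^2 + f_0^2 u_0^2$, and the $u_0$-coefficients agree, i.e. $f_0 = 2 f f_0$ (the $u_i$-coefficients for $i \geq 1$ vanish on both sides automatically). This gives exactly the stated equivalence.

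The main obstacle, though a mild one, is making sure the cross term really is $2 f f_0 u_0$ and not something more complicated: this rests on $u_0$ being central (so $u_0 f = f u_0$ and $u_0 f_0 = f_0 u_0$), which is part of the hypothesis $e \in C(\overline{D})$ as unpacked in Lemma \ref{l7.7}. One should also double-check that $f f_0 u_0$ is genuinely in the span of $u_0$ over $\k G$ with coefficient $f f_0$ and not partially absorbed into the $\k G$-part — but since $f f_0 \in \k G$ and $u_0$ is one of the free module generators, this is immediate from the basis description. Everything else is bookkeeping with the already-proven uniqueness of the normal form $f + \sum f_i u_i$, so the proof is short.
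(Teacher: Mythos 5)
Your proof is correct and is essentially the paper's own argument: expand $e^2=f^2+2ff_0u_0+f_0^2u_0^2$ using Lemma \ref{l7.7}, note that $u_0^2\in \k G$ so $f^2+f_0^2u_0^2$ is the group-algebra part, and compare components of the normal form $h+\sum_i h_iu_i$ against $e=f+f_0u_0$. One small wording point: Lemma \ref{l7.7} gives that $f$ and $f_0u_0$ are central, and centrality of $f_0u_0$ already yields $f_0u_0f_0u_0=\bigl((f_0u_0)f_0\bigr)u_0=f_0^2u_0^2$, so you do not need (and have not actually justified from Lemma \ref{l7.7}) the stronger claim that $u_0$ itself commutes with $f_0$.
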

\begin{proof} By Lemma \ref{l7.7}, $f$ commutes with $f_0u_0$ and $(f_0u_0)^2=f_0^2u_0^2$. From this, the lemma becomes clear.
\end{proof}

\begin{lemma}\label{l7.11} Let $f=\sum_{i,j}a_{ij}1_i^x1_j^g$ and $f_0=\sum_{i,j}b_{ij}1_i^x1_j^g$ satisfying $e=f+f_0u_0$ is a central element. Then $e$ is an idempotent if and only if
\begin{eqnarray} a_{sm,j}&=&a_{sm,j}^2+b_{sm,j}^2\zeta^{asm}\gamma^j\;\;\;\;(0\leq s\leq d-1, \ 0\leq j\leq m-1)\label{eq7.8}\\
a_{ij}^2&=&a_{ij}\;\;\;\;(i\not\equiv 0\ (\emph{mod} \ m), \ 0\leq j\leq m-1)\\
b_{ij}&=&2a_{ij}b_{ij}\;\;\;\;(0\leq i\leq md-1,\ 0\leq j\leq m-1).\label{eq7.10}
\end{eqnarray}
where $a=-\frac{2+\sum_{i=1}^{\theta}(e_i-1)m_i}{2}d.$
\end{lemma}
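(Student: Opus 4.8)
The plan is to unwind the condition $e^2 = e$ for the central element $e = f + f_0 u_0$ using the structural description of central elements already established in Lemmas \ref{l7.7}, \ref{l7.8} and \ref{l7.9}, and then to project the resulting identity $f = f^2 + f_0^2 u_0^2$ and $f_0 = 2ff_0$ onto the basis $\{1_i^x 1_j^g\}$ of $\k G$ together with the extra basis vector supplied by $u_0^2$. First I would recall from Lemma \ref{l7.9} that, since $f$ is central and commutes with $f_0 u_0$ and $(f_0u_0)^2 = f_0^2 u_0^2$, the single equation $e^2 = e$ is equivalent to the pair
\begin{equation}\label{eq:split} f = f^2 + f_0^2 u_0^2, \qquad f_0 = 2 f f_0.\end{equation}
So the whole task reduces to rewriting \eqref{eq:split} in coordinates.

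The key computational input is the value of $u_0^2$. Setting $j = l = 0$ in the multiplication rule \eqref{eq7.1} (equivalently Claim 6$'$ / equation \eqref{r8} specialised to $j=0$) gives $u_0^2 = \frac{1}{m} x^{a} \prod_{i=1}^{\theta} {[0, e_i-2]_{m_i}}\, g = \frac{1}{m}x^a \prod_i (\phi_{m_i,0}\cdots\phi_{m_i,(e_i-2)m_i})\, g$, with $a = -\frac{2+\sum_{i=1}^\theta (e_i-1)m_i}{2}d$. I would then express $u_0^2$ in the idempotent basis: since $1_i^x u_0 = 0$ unless $m \mid i$ and $1_{sm}^x u_0 = \unit_{sm}^x u_0$ by \eqref{eq7.5}, and since $x^d u_0 = \gamma u_0$ by \eqref{e7.4}, one gets $x^a u_0^2 = \zeta^{a\, sm}\gamma^{?}$-type scalars when multiplied against $1_{sm}^x 1_j^g$; more precisely $f_0^2 u_0^2$, after using $f_0 = \sum_{i,j} b_{ij} 1_i^x 1_j^g$ with $i$ ranging over multiples of $m$, collapses to $\sum_{s,j} b_{sm,j}^2\, \zeta^{a\,sm}\gamma^{j}\, 1_{sm}^x 1_j^g$ (the power of $\gamma$ coming from $g^{\,} u_0$ in $u_0^2 = (\text{scalar})\, (\dots) g$ combined with the action of $1_j^g$). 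Meanwhile $f^2 = \sum_{i,j} a_{ij}^2 1_i^x 1_j^g$ and $2ff_0 = \sum_{i,j} 2 a_{ij} b_{ij} 1_i^x 1_j^g$ because the $1_i^x 1_j^g$ are orthogonal idempotents. Comparing coefficients of $1_i^x 1_j^g$ on both sides of \eqref{eq:split} then yields exactly the three families of scalar equations \eqref{eq7.8}, (7.9), \eqref{eq7.10}: the first for indices $i = sm$ divisible by $m$ (where the $u_0^2$ contribution is present), the second for $i \not\equiv 0 \pmod m$ (where $1_i^x u_0 = 0$ kills the $f_0^2 u_0^2$ term, so $a_{ij}^2 = a_{ij}$), and the third is just $f_0 = 2ff_0$ read off componentwise.

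The main obstacle I anticipate is bookkeeping the scalar $\zeta^{a\,sm}\gamma^j$ correctly: one must track how $x^a$, the factor $g$ in $u_0^2$, and the idempotents $1_{sm}^x$, $1_j^g$ interact, using $1_{sm}^x x^a = \zeta^{a\,sm} 1_{sm}^x$ (valid because $x 1_i^x = \zeta^i 1_i^x$) and $1_j^g g = \gamma^j 1_j^g$, and also verifying that $\prod_i(\phi_{m_i,0}\cdots\phi_{m_i,(e_i-2)m_i})$ evaluated against these idempotents contributes only through powers of $x$ that have already been absorbed — this is where one uses $x^{md}=1$ and equation \eqref{e7.4} to see that $f_0^2 u_0^2$ really is supported on indices $i$ that are multiples of $m$ and carries precisely the claimed scalar. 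Once the scalar is pinned down, the rest is a routine comparison of coefficients against a basis, so no further difficulty arises. I would close by remarking that the constant $a$ appearing in the statement is the same $a = -\frac{2+\sum_{i=1}^\theta(e_i-1)m_i}{2}d$ forced in Claim 7, which is why it appears in \eqref{eq7.8}.
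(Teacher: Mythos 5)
Your proposal is correct and is essentially the paper's own argument: the paper proves Lemma \ref{l7.11} by the one-line remark that one simply translates the conditions $f=f^2+f_0^2u_0^2$ and $f_0=2ff_0$ of Lemma \ref{l7.9} into coefficient identities, which is exactly what you do. Your scalar bookkeeping also comes out right, since $1_{sm}^x x^a=\zeta^{asm}1_{sm}^x$, $1_j^g g=\gamma^j 1_j^g$, and $1_{sm}^x\prod_i\phi_{m_i,0}\cdots\phi_{m_i,(e_i-2)m_i}=\bigl(\prod_i e_i\bigr)1_{sm}^x=m\,1_{sm}^x$ cancels the $\frac{1}{m}$ in $u_0^2$, yielding precisely the coefficient $\zeta^{asm}\gamma^j$ in \eqref{eq7.8}.
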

\begin{proof} We just translate the equivalent conditions in Lemma \ref{l7.9} into the equalities about coefficients.
\end{proof}
By equation \eqref{eq7.10}, we know that $a_{ij}=\frac{1}{2}$ if $b_{ij}\neq 0$. By equation \eqref{eq7.8}, we have that $b_{sm,j}=\pm \frac{1}{2}\sqrt{\gamma^{-j}\zeta^{-asm}}$ if $b_{sm,j}\neq 0$. We use $[.]$ to denote the floor function, i.e. for any rational number $t$, $[t]$ is the biggest integer which is not bigger than $t$. Now we can give the algebraic structure of $\overline{D}$.

\begin{proposition}\label{p7.12} Keep above notations.
\begin{itemize}\item[(1)] If $d$ is even, then the following is a complete set of primitive central idempotents of $\overline{D}$:
\begin{eqnarray*}
&&\frac{1}{2}1_0^x1_j^g+ \frac{1}{2}\sqrt{\gamma^{-j}}1_0^x1_j^gu_0,\ \
  \frac{1}{2}1_0^x1_j^g- \frac{1}{2}\sqrt{\gamma^{-j}}1_0^x1_j^gu_0,\\
&& \frac{1}{2}1_{\frac{d}{2}m}^x1_j^g+ \frac{1}{2}\sqrt{\gamma^{-j}(-1)^{-a}}1_{\frac{d}{2}m}^x1_j^gu_0,\ \
  \frac{1}{2}1_{\frac{d}{2}m}^x1_j^g- \frac{1}{2}\sqrt{\gamma^{-j}(-1)^{-a}}1_{\frac{d}{2}m}^x1_j^gu_0,\\
&& 1_{sm}^x1_j^g+1_{(d-s)m}^x1_j^g,\ \ \ \ (0< s\leq d-1, s\neq \frac{d}{2},\ 0\leq j\leq m-1)\\
&& 1_{lm+i}^x1_j^g+1_{(d-l-1)m+(m-i)}^x1_{j-i}^g,\ \ \ \ (0\leq l\leq d-1, 0< i\leq [\frac{m}{2}],\ 0\leq j\leq m-1).
\end{eqnarray*}
If $d$ is odd, then the following is a complete set of primitive central idempotents of $\overline{D}$:
\begin{eqnarray*}
&&\frac{1}{2}1_0^x1_j^g+ \frac{1}{2}\sqrt{\gamma^{-j}}1_0^x1_j^gu_0,\ \
  \frac{1}{2}1_0^x1_j^g- \frac{1}{2}\sqrt{\gamma^{-j}}1_0^x1_j^gu_0,\\
&& 1_{sm}^x1_j^g+1_{(d-s)m}^x1_j^g,\ \ \ \ (0< s\leq d-1, \ 0\leq j\leq m-1)\\
&& 1_{lm+i}^x1_j^g+1_{(d-l-1)m+(m-i)}^x1_{j-i}^g,\ \ \ \ (0\leq l\leq d-1, 0< i\leq [\frac{m}{2}],\ 0\leq j\leq m-1).
\end{eqnarray*}
\item[(2)] If $d$ is even, then as an algebra $\overline{D}$ has the following decomposition:
$$\overline{D}= \k^{(4m)}\oplus M_{2}(\k)^{(\frac{m^2d-2m}{2})}.$$
If $d$ is odd, then as an algebra $\overline{D}$ has the following decomposition:
$$\overline{D}= \k^{(2m)}\oplus M_{2}(\k)^{(\frac{m^2d-m}{2})}.$$
\end{itemize}
\end{proposition}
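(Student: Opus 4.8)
The statement has two parts: (1) listing a complete set of primitive central idempotents, and (2) deducing the Wedderburn decomposition. I would derive (1) first and then read off (2) by dimension count. The starting point is the preceding sequence of lemmas: by Lemmas \ref{l7.7}, \ref{l7.8}, \ref{l7.9} and \ref{l7.11}, every central idempotent of $\overline{D}$ has the form $e = f + f_0 u_0$ with $f = \sum_{i,j} a_{ij} 1_i^x 1_j^g$, $f_0 = \sum_{i,j} b_{ij} 1_i^x 1_j^g$, and the coefficients satisfy the constraints: $a_{ij}=a_{md-i,j-i'}$ (centrality of $f$), the support of $f_0$ is confined to $i=0$ (and $i=\frac{d}{2}m$ when $d$ is even) by \eqref{eq7.7}, and the idempotency equations \eqref{eq7.8}--\eqref{eq7.10}. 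So the whole problem reduces to solving this explicit (finite, decoupled) system of scalar equations and organizing the minimal solutions.

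\textbf{Key steps.} First, I would split the index set $\{0,\dots,md-1\}\times\{0,\dots,m-1\}$ for the $x$-exponent $i$ and $g$-exponent $j$ according to whether $m \mid i$ or not, since the $b_{ij}$-terms only live on $m\mid i$. On the locus $i \not\equiv 0 \pmod m$, equation \eqref{eq7.10} forces $b_{ij}=0$, so $e$ is a sum of group-algebra idempotents $1_i^x 1_j^g$, and the centrality constraint $a_{ij}=a_{md-i,j-i'}$ pairs up the basis element $1_{lm+i}^x 1_j^g$ with $1_{(d-l-1)m+(m-i)}^x 1_{j-i}^g$; when $i \le [m/2]$ this enumerates each pair once (the case $m$ even, $i=m/2$, pairing an element with a distinct partner, needs a one-line check), yielding the primitive idempotents $1_{lm+i}^x 1_j^g + 1_{(d-l-1)m+(m-i)}^x 1_{j-i}^g$. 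On the locus $i \equiv 0 \pmod m$ but $i \notin \{0, \frac{d}{2}m\}$, again $b_{ij}=0$ but now $1_{sm}^x 1_j^g$ pairs with $1_{(d-s)m}^x 1_j^g$, giving the idempotents $1_{sm}^x 1_j^g + 1_{(d-s)m}^x 1_j^g$. Finally, on $i=0$ (and $i=\frac{d}{2}m$ when $d$ even), $f_0$ can be nonzero; here $1_0^x 1_j^g$ is fixed by the centrality pairing, the idempotency equations \eqref{eq7.8}--\eqref{eq7.10} collapse to $a_{0j}=a_{0j}^2 + b_{0j}^2 \zeta^{0}\gamma^j$ and $b_{0j}=2a_{0j}b_{0j}$, whose minimal nontrivial solution is $a_{0j}=\tfrac12$, $b_{0j}=\pm\tfrac12\sqrt{\gamma^{-j}}$, producing the two rank-one-in-$\k G$-but-nonsplit idempotents $\tfrac12 1_0^x 1_j^g \pm \tfrac12\sqrt{\gamma^{-j}} 1_0^x 1_j^g u_0$; the analogous computation at $i=\frac{d}{2}m$ brings in the extra factor $\zeta^{a\cdot\frac{d}{2}m}=(-1)^{-a}$ (using $\zeta^{dm}=1$, hence $\zeta^{\frac{dm}{2}}=-1$) and gives the remaining two families. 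One must check these are genuinely primitive (each such $e$ projects onto a $2\times 2$ block, not two $1\times 1$ blocks), which follows because $u_0$ acts nontrivially inside the corresponding summand.

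\textbf{From idempotents to the decomposition.} Once the list is shown complete, part (2) is a bookkeeping argument: the idempotents $\tfrac12 1_0^x 1_j^g \pm \tfrac12\sqrt{\gamma^{-j}} 1_0^x 1_j^g u_0$ (and, for $d$ even, the two extra families at $\frac{d}{2}m$) each cut out a $1$-dimensional block $\k$ — there are $2m$ of them for $d$ odd and $4m$ for $d$ even — while all the remaining idempotents cut out $2$-dimensional matrix blocks $M_2(\k)$. Using $\dim_\k \overline{D}=2m^2 d$ (Lemma \ref{l7.5}) and $\dim_\k \k = 1$, $\dim_\k M_2(\k)=4$, the number of $M_2(\k)$-blocks is $(2m^2d - 2m)/4 = (m^2d-m)/2$ when $d$ is odd and $(2m^2d-4m)/4 = (m^2d-2m)/2$ when $d$ is even, matching the claimed $\overline{D}\cong \k^{(2m)}\oplus M_2(\k)^{((m^2d-m)/2)}$ and $\overline{D}\cong \k^{(4m)}\oplus M_2(\k)^{((m^2d-2m)/2)}$ respectively. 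Alternatively — and this is the cleaner way to confirm — I can count the idempotents in the list directly: $2m$ (or $4m$) one-dimensional ones, plus $m^2(d-1)/... $ — but the dimension count is self-checking so I would just present that. The main obstacle I anticipate is not any single computation but the combinatorics of the centrality pairing $a_{ij}=a_{md-i,j-i'}$: one has to verify that the index ranges given in the statement ($0\le l\le d-1$, $0<i\le[\frac m2]$, etc.) hit every non-fixed basis pair exactly once and never double-count, paying special attention to the fixed points of the involution $(i,j)\mapsto (md-i, j-i')$, which are precisely the loci $i\in\{0,\frac d2 m\}$ where the $u_0$-deformation is available — this is exactly why those loci produce split-into-$2\times2$ rather than split-into-two-$1\times1$ idempotents, and getting that dichotomy airtight is the delicate point.
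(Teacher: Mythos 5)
Your proposal is correct and rests on the same ingredients as the paper (Lemmas \ref{l7.7}--\ref{l7.11} and the dimension $\dim_\k\overline{D}=2m^2d$ from Lemma \ref{l7.5}), but it reaches primitivity by a different mechanism, so a comparison is worth recording. You classify \emph{all} central idempotents by solving the coefficient system, organize the solutions along the orbits of the involution $(i,j)\mapsto(md-i,j-i')$, and read off the listed elements as the minimal ones, so primitivity is automatic and the block sizes are settled afterwards by the dimension count; this buys a stronger statement (a description of every central idempotent), at the price of the orbit combinatorics you rightly flag (in particular the $i=m/2$ orbits when $m$ is even, where the stated enumeration lists each idempotent twice). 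The paper is more economical: it only verifies that the listed elements are central idempotents summing to $1$, observes that each ``paired'' idempotent in the last two lines contains a non-central idempotent, so its block has dimension at least $4$, and then the single identity $4m+4\bigl(\tfrac{(d-2)m}{2}+\tfrac{(m-1)dm}{2}\bigr)=2m^2d=\dim_\k\overline{D}$ simultaneously forces primitivity of every listed element and pins each paired block to exactly $M_2(\k)$; no classification of the whole center is needed. Two small repairs to your write-up: in part (2) the assertion that every remaining block is $M_2(\k)$ must be \emph{derived}, not assumed, and the derivation is exactly the paper's closing step --- first the lower bound $\dim e\overline{D}\ge 4$ for each paired $e$ (because its two group-algebra summands such as $1_{sm}^x1_j^g$ are not central, equivalently the relevant $u$'s move between them), then equality in the dimension count; and your sentence about ``a $2\times2$ block, not two $1\times1$ blocks'' is attached to the fixed-point families at $i=0,\tfrac{d}{2}m$, whereas those cut out $1$-dimensional blocks (minimality from your classification already gives their primitivity) --- the $2\times2$ versus $\k\oplus\k$ dichotomy is the issue for the paired idempotents, and there the operative elements are $u_{i}$, $u_{m-i}$ (or $u_0$ for the third line), not $u_0$ throughout. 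Also note that on the locus $i\not\equiv 0\pmod m$ it is \eqref{eq7.7} (the support condition on $f_0$) rather than \eqref{eq7.10} that kills $b_{ij}$, though combining \eqref{eq7.8}--\eqref{eq7.10} gives the same conclusion.
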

\begin{proof} (1) According to Lemmas \ref{l7.8}-\ref{l7.11}, we know all above elements are central idempotents. It is easy to find that the sum of these elements is just $1$. So to show the result, it is enough to show that they are all primitive central idempotents. We just prove this fact for the case $d$ even since the other case can be proved in the same way. In fact, by definition we can find the elements in the last two lines presented in this proposition can be decomposed into a sum of two idempotents which are not central, and so the simple modules corresponding to these central idempotents have dimension $\geq 2$. There are $\frac{(d-2)m}{2}+\frac{(m-1)dm}{2}$ cental idempotents in the last two lines and $4m$ ones in the first two lines. Therefore, all of these idempotents create an ideal with dimension $\geq 4m+4(\frac{(d-2)m}{2}+\frac{(m-1)dm}{2})=2m^2d=\dim_{\k} \overline{D}$. This implies they are all primitive.

(2) This is just a direct consequence of the statement (1).
\end{proof}

Due to our recent great interest on finite tensor categories \cite{EGNO}, in particular fusion categories \cite{ENO}, it seems better to present all simple modules of $\overline{D}$ and their tensor product decomposition law here.

As the proof of above proposition, we only deal with the case $d$ being even (actually, the case of $d$ being odd is quite similar and in fact easier). According to the central idempotents stated in Proposition \ref{p7.12} (1), we construct the following six kinds of simple modules of $\overline{D}$:
\begin{itemize}
\item[(1)] $V_{0,j}^{+}\;(0\leq j\leq m-1)$: The dimension of $V_{0,j}^{+}$ is $1$ and the action of $\overline{D}$ is given by
    \begin{align*}& x\mapsto 1,\quad\quad\quad\quad g\mapsto \gamma^{j}\\
    & u_{0}\mapsto \sqrt{\gamma^j}, \quad\quad u_{i}\mapsto 0 \;(1\leq i\leq m-1).
    \end{align*}
   A basis of this module can be chosen as $\frac{1}{2}1_0^x1_j^g+ \frac{1}{2}\sqrt{\gamma^{-j}}1_0^x1_j^gu_0.$\\
\item[(2)] $V_{0,j}^{-}\;(0\leq j\leq m-1)$: The dimension of $V_{0,j}^{-}$ is $1$ and the action of $\overline{D}$ is given by
    \begin{align*}& x\mapsto 1,\quad\quad\quad\quad\quad g\mapsto \gamma^{j}\\
    & u_{0}\mapsto -\sqrt{\gamma^j}, \quad\quad u_{i}\mapsto 0 \;(1\leq i\leq m-1).
    \end{align*}
    A basis of this module can be chosen as $\frac{1}{2}1_0^x1_j^g- \frac{1}{2}\sqrt{\gamma^{-j}}1_0^x1_j^gu_0.$\\
\item[(3)] $V_{\frac{d}{2}m,j}^{+}\;(0\leq j\leq m-1)$: The dimension of $V_{\frac{d}{2}m,j}^{+}$ is $1$ and the action of $\overline{D}$ is given by
    \begin{align*}& x\mapsto -1,\quad\quad\quad\quad\quad\quad\quad g\mapsto \gamma^{j}\\
    & u_{0}\mapsto \sqrt{\gamma^j(-1)^{-a}}, \quad\quad u_{i}\mapsto 0 \;(1\leq i\leq m-1).
    \end{align*}
   A basis of this module can be chosen as $\frac{1}{2}1_{\frac{d}{2}m}^x1_j^g+ \frac{1}{2}\sqrt{\gamma^{-j}(-1)^{-a}}1_{\frac{d}{2}m}^x1_j^gu_0.$ Recall that by definition $a=-\frac{2+\sum_{i=1}^{\theta}(e_i-1)m_i}{2}d.$\\
\item[(4)] $V_{\frac{d}{2}m,j}^{-}\;(0\leq j\leq m-1)$: The dimension of $V_{\frac{d}{2}m,j}^{-}$ is $1$ and the action of $\overline{D}$ is given by
    \begin{align*}& x\mapsto -1,\quad\quad\quad\quad\quad\quad\quad g\mapsto \gamma^{j}\\
    & u_{0}\mapsto -\sqrt{\gamma^j(-1)^{-a}}, \quad\quad u_{i}\mapsto 0 \;(1\leq i\leq m-1).
    \end{align*}
    A basis of this module can be chosen as $\frac{1}{2}1_{\frac{d}{2}m}^x1_j^g- \frac{1}{2}\sqrt{\gamma^{-j}(-1)^{-a}}1_{\frac{d}{2}m}^x1_j^gu_0.$\\
\item[(5)] $V_{sm,j}\;(0<s\leq d-1,\;s\neq \frac{d}{2},\; 0\leq j\leq m-1)$: The dimension of $V_{sm,j}$ is $2$ with basis $\{1_{sm}^x1_{j}^g, 1_{(d-s)m}^x1_{j}^gu_0\}$ and the action of $\overline{D}$ is given by
    \begin{align*}& x\mapsto \left (
\begin{array}{cc} \zeta^{sm}&0\\0&\zeta^{(d-s)m}
\end{array}\right),\quad\quad\quad\quad g\mapsto \left (
\begin{array}{cc} \gamma^{j}&0\\0&\gamma^{j}
\end{array}\right)\\
    & u_{0}\mapsto \left (
\begin{array}{cc}0&\zeta^{asm}\gamma^{j}\\1&0
\end{array}\right), \quad\quad u_{i}\mapsto 0 \;(1\leq i\leq m-1).
    \end{align*}
    Note that we have $$V_{sm,j}\cong V_{(d-s)m,j}.$$\\
\item[(6)] $V_{lm+i,j}\;(0\leq l\leq d-1,\;0<i<m,\; 0\leq j\leq m-1)$: The dimension of $V_{lm+i,j}$ is $2$ with basis $\{1_{lm+i}^x1_{j}^g, 1_{(d-l-1)m+(m-i)}^x1_{j-i}^gu_{m-i}\}$ and the action of $\overline{D}$ is given by
    \begin{align*}& x\mapsto \left (
\begin{array}{cc} \zeta^{lm+i}&0\\0&\zeta^{(d-l-1)m+(m-i)}
\end{array}\right),\quad\quad\quad\quad g\mapsto \left (
\begin{array}{cc} \gamma^{j}&0\\0&\gamma^{j-i}
\end{array}\right)\\
    & u_{m-i}\mapsto \left (
\begin{array}{cc}0&0\\1&0
\end{array}\right), \quad\quad\quad\quad\quad\quad\quad\quad\quad\quad\quad u_{i}\mapsto \left (
\begin{array}{cc}0&c_i\\0&0
\end{array}\right),\\
& u_{t}\mapsto 0\;\;\;\quad\quad\quad\quad\quad\quad\quad\quad(0\leq t\leq m-1, t\neq m-i,i),
    \end{align*}
where $c_i=\frac{1}{m}\zeta^{(lm+i)a}\gamma^{j}\prod_{s=1}^{\theta}(-1)^{-i_s}\xi_{m_s}^{i_s}
\gamma^{m_s^2\frac{-i_s(-i_s+1)}{2}}[i_s,e_s-2-(m-i)_s]_{m_s}.$ Note that we have
$$V_{lm+i,j}\cong V_{(d-l-1)m+(m-i),j-i}.$$
\end{itemize}

The following table give us the tensor product decomposition law for these simple modules. We omit the proof since it is routine.

\renewcommand{\arraystretch}{2.4}
\setlength{\tabcolsep}{8pt}
\begin{center}
\begin{tabular}{c}
\hline
$\quad\quad\quad\quad \quad\quad\quad\quad \quad\quad\quad\quad  \textrm{The \;fusion\; rule \;I}\quad\quad\quad\quad \quad\quad\quad\quad\quad\quad\quad \quad\quad\quad\quad $\\
\end{tabular}
\begin{tabular}{|ccccc|ccccc|}
\hline
%1
$V^+_{0,j}$&$\otimes$&$ V^+_{0,j'}$&$=$&$V^+_{0,j+j'}$
&
$V^{-}_{0,j}$&$\otimes$&$V^+_{0,k}$&$=$&$V^-_{0,j+k}$\\
%2
$V^+_{0,j}$&$\otimes$&$ V^-_{0,k}$&$=$&$V^-_{0,j+k}$
&
$V^-_{0,j}$&$\otimes$&$V^-_{0,k}$&$=$&$V^+_{0,j+k}$\\
%3
$V^+_{0,j}$&$\otimes$&$ V^+_{\frac{d}{2}m,k}$&$=$&$V^+_{\frac{d}{2}m,j+k}$
&
$V^-_{0,j}$&$\otimes$&$V^+_{\frac{d}{2}m,k}$&$=$&$V^-_{\frac{d}{2}m,j+k}$\\
%4
$V^+_{0,j}$&$\otimes$&$ V^-_{\frac{d}{2}m,k}$&$=$&$V^-_{\frac{d}{2}m,j+k}$
&
$V^-_{0,j}$&$\otimes$&$V^-_{\frac{d}{2}m,k}$&$=$&$V^+_{\frac{d}{2}m,j+k}$\\
%5
$V^+_{0,j}$&$\otimes$&$ V_{sm,k}$&$=$&$V_{sm,j+k}$
&
$V^-_{0,j}$&$\otimes$&$V_{sm,k}$&$=$&$V_{sm,j+k}$\\
%6
$V^+_{0,j}$&$\otimes$&$ V_{lm+i,k}$&$=$&$V_{lm+i,j+k}$
&
$V^-_{0,j}$&$\otimes$&$V_{lm+i,k}$&$=$&$V_{lm+i,j+k}$\\
\hline
%Lower Blocks
%1
$V^+_{\frac{d}{2}m,j}$&$\otimes$&$ V^+_{0,k}$&$=$&$V^+_{\frac{d}{2}m,j+k}$
&
$V^-_{\frac{d}{2}m,j}$&$\otimes$&$V^+_{0,k}$&$=$&$V^-_{\frac{d}{2}m,j+k}$\\
%2
$V^+_{\frac{d}{2}m,j}$&$\otimes$&$ V^-_{0,k}$&$=$&$V^-_{\frac{d}{2}m,j+k}$
&
$V^-_{\frac{d}{2}m,j}$&$\otimes$&$V^-_{0,k}$&$=$&$V^+_{\frac{d}{2}m,j+k}$\\
%3
$V^+_{\frac{d}{2}m,j}$&$\otimes$&$ V^+_{\frac{d}{2}m,k}$&$=$&$V^+_{0,j+k}$
&
$V^-_{\frac{d}{2}m,j}$&$\otimes$&$V^+_{\frac{d}{2}m,k}$&$=$&$V^-_{0,j+k}$\\
%4
$V^+_{\frac{d}{2}m,j}$&$\otimes$&$ V^-_{\frac{d}{2}m,k}$&$=$&$V^-_{0,j+k}$
&
$V^-_{\frac{d}{2}m,j}$&$\otimes$&$V^-_{\frac{d}{2}m,k}$&$=$&$V^+_{0,j+k}$\\
%5
$V^+_{\frac{d}{2}m,j}$&$\otimes$&$ V_{sm,k}$&$=$&$V_{(s+\frac{d}{2})m,j+k}$
&
$V^-_{\frac{d}{2}m,j}$&$\otimes$&$V_{sm,k}$&$=$&$V_{(s+\frac{d}{2})m,j+k}$\\
%6
$V^+_{\frac{d}{2}m,j}$&$\otimes$&$ V_{lm+i,k}$&$=$&$V_{(l+\frac{d}{2})m,j+k}$
&
$V^-_{\frac{d}{2}m,j}$&$\otimes$&$V_{lm+i,k}$&$=$&$V_{(l+\frac{d}{2})m,j+k}$\\
\hline
\end{tabular}
\end{center}

\renewcommand{\arraystretch}{3.4}
\setlength{\tabcolsep}{8pt}
\begin{center}
\begin{tabular}{c}
\hline
$\quad\quad\quad\quad \quad\quad\quad\quad \quad\quad\quad\quad  \textrm{The \;fusion\; rule \;II}\quad\quad\quad\quad \quad\quad\quad\quad\quad\quad\quad \quad\quad\quad\quad $\\
\end{tabular}
\begin{tabular}{|ccccc|}
\hline
%1
$V_{sm,j}$&$\otimes$&$ V^{+}_{0,k}$&$=$&$V_{sm,j+k}$\\
$V_{sm,j}$&$\otimes$&$ V^{-}_{0,k}$&$=$&$V_{sm,j+k}$\\
$V_{sm,j}$&$\otimes$&$ V^{+}_{\frac{d}{2}m,k}$&$=$&$V_{(s+\frac{d}{2})m,j+k}$\\
$V_{sm,j}$&$\otimes$&$ V^{-}_{\frac{d}{2}m,k}$&$=$&$V_{(s+\frac{d}{2})m,j+k}$\\
$V_{sm,j}$&$\otimes$&$ V_{lm,k}$&$=$&$V_{(s+l)m,j+k}\oplus V_{(s-l)m,j+k}\;\;\;\;\;\;(\ast)$\\
$V_{sm,j}$&$\otimes$&$ V_{lm+i,k}$&$=$&$V_{(s+l)m+i,j+k}\oplus V_{(l-s)m+i,j+k}$\\
\hline
$V_{lm+i,j}$&$\otimes$&$ V^{+}_{0,k}$&$=$&$V_{lm+i,j+k}$\\
$V_{lm+i,j}$&$\otimes$&$ V^{-}_{0,k}$&$=$&$V_{lm+i,j+k}$\\
$V_{lm+i,j}$&$\otimes$&$ V^{+}_{\frac{d}{2}m,k}$&$=$&$V_{(l+\frac{d}{2})m+i,j+k}$\\
$V_{lm+i,j}$&$\otimes$&$ V^{-}_{\frac{d}{2}m,k}$&$=$&$V_{(l+\frac{d}{2})m+i,j+k}$\\
$V_{lm+i,j}$&$\otimes$&$ V_{sm,k}$&$=$&$V_{(s+l)m+i,j+k}\oplus V_{(l-s)m+i,j+k}$\\
$V_{lm+i,j}$&$\otimes$&$ V_{sm+t,k}$&$=$&$V_{(s+l)m+(i+t),j+k}\oplus V_{(l-s)m+(i-t),j+k-t}\;\;\;\;\;\;(\ast)$\\
\hline
\end{tabular}
\end{center}
where the mark $(\ast)$, say for the case $V_{sm,j}\otimes V_{lm,k}$, has the following meaning:
\begin{itemize}\item[(1)] If $(s+l)m\not\equiv 0,\frac{d}{2}m$ (mod $dm$) and $(s-l)m\not\equiv 0,\frac{d}{2}m$ (mod $dm$), then \begin{equation}\label{eq7.12} V_{sm,j}\otimes V_{lm,k}=V_{(s+l)m,j+k}\oplus V_{(s-l)m,j+k}.\end{equation}
\item[(2)] If $(s+l)m\equiv 0$ (mod $dm$), then in the formula \eqref{eq7.12} $V_{(s+l)m,j+k}$ is decomposed further and  represents $$V^{+}_{0,j+k}\oplus V^{-}_{0,j+k}.$$
\item[(3)] If $(s+l)m\equiv \frac{d}{2}m$ (mod $dm$), then in the formula \eqref{eq7.12} $V_{(s+l)m,j+k}$ is decomposed further and  represents $$V^{+}_{\frac{d}{2}m,j+k}\oplus V^{-}_{\frac{d}{2}m,j+k}.$$
\item[(4)] If $(s-l)m\equiv 0$ (mod $dm$), then in the formula \eqref{eq7.12} $V_{(s-l)m,j+k}$ is decomposed further and  represents $$V^{+}_{0,j+k}\oplus V^{-}_{0,j+k}.$$
\item[(5)] If $(s-l)m\equiv \frac{d}{2}m$ (mod $dm$), then in the formula \eqref{eq7.12} $V_{(s-l)m,j+k}$ is decomposed further and  represents $$V^{+}_{\frac{d}{2}m,j+k}\oplus V^{-}_{\frac{d}{2}m,j+k}.$$
\end{itemize}
Similarly, one can work out the meaning of mark $(\ast)$ for the formula $V_{lm+i,j}\otimes V_{sm+t,k}$. That is,  whenever $(s+l)m+(i+t)\equiv 0$ (mod $dm$) or $(s+l)m+(i+t)\equiv \frac{d}{2}m$ (mod $dm$) the item $V_{(s+l)m+(i+t),j+k}$ will split further and whenever $(l-s)m+(i-t)\equiv 0$ (mod $dm$) or $(l-s)m+(i-t)\equiv \frac{d}{2}m$ (mod $dm$) the item $V_{(l-s)m+(i-t),j+k-t}$ will split further.

$\bullet$\emph{The series of nonsemisimple finite-dimensional Hopf algebras.} Using the Hopf algebra $D=D(\underline{m},d,\gamma)$, we also can get many nonsemisimple finite-dimensional Hopf algebras, which are knew up the author's knowledge. The main idea to construct these finite-dimensional Hopf algebras is to generalize the exact sequence \eqref{eq5.2} \begin{equation*} \k\longrightarrow H_{00}\longrightarrow H\longrightarrow \overline{H}\longrightarrow \k.
\end{equation*}
That is, we want to substitute $H$ by our Hopf algebra $D(\underline{m},d,\gamma)$ and thus get finite-dimensional quotients. One can realize this idea through showing that every Hopf subalgebra $\k[x^{\pm t}]$ for $t\in \N$ is a normal Hopf subalgebra of $D.$  Since by definition we know that the element $x$ commutes with $g, y_{m_i}\;(1\leq i\leq \theta)$, we only need to show that
$ad(u_j)(x^{t})=u_j'x^{t}S(u_j'')\in \k[x^{\pm t}]$
for all $0\leq j\leq m-1.$ Through direct computation, we have
$$ad(u_j)(x^{t})=x^{-t}u_j'S(u_j'')=x^{-t}\e(u_j)\in \k[x^{\pm t}].$$
So we have the following exact sequence of Hopf algebras
\begin{equation}\label{eq7.13} \k\longrightarrow \k[x^{\pm t}]\longrightarrow D\longrightarrow D/(x^{t}-1)\longrightarrow \k.
\end{equation}
We denote the resulted Hopf algebra $D/(x^{t}-1)$ by $D_t$, i.e., $D_t:=D/(x^{t}-1).$
\begin{lemma}
 The Hopf algebra $D_t$ is finite-dimensional and has dimension $2m^2t.$
\end{lemma}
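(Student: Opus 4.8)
The plan is to compute a basis of $D_t := D/(x^t-1)$ by using the $\Z_{2m}\times\Z_{2m}$-bigrading \eqref{eqD}--\eqref{eq4.24} that $D=D(\underline{m},d,\gamma)$ carries, together with the relations defining $D$. First I would note that in $D$ every homogeneous component $D_{ij}$ is a free rank-one module over $D_{00}=\k[x^{\pm1}]$ (this is exactly \eqref{eqD}, and it is the analogue of Lemma \ref{l6.5}); so a basis of $D$ over $\k$ is given by $\{x^\ell v_{ij}\mid \ell\in\Z,\ 0\le i,j\le 2m-1\}$, where $v_{ij}$ is a chosen generator of $D_{ij}$ (namely a power of $g$ times a $y$-monomial, or $g^{(i-1)/2}u_s$ times such). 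Passing to the quotient by the central, normal Hopf ideal $(x^t-1)$ — which is legitimate by the exact sequence \eqref{eq7.13} — kills nothing in the bigrading and simply replaces the free $\k[x^{\pm1}]$-module structure of each $D_{ij}$ by a free $\k[x^{\pm1}]/(x^t-1)$-module structure, hence by a $\k$-space of dimension $t$. Since there are $(2m)^2$ bigraded components this would naively give $\dim_\k D_t = 4m^2 t$, so the real content of the lemma is that only \emph{half} of the bigraded components $D_{ij}$ are nonzero.

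Next I would invoke the structure of the bigrading in \eqref{eqD}: $D_{ij}\ne 0$ only when $i\equiv j\pmod 2$, i.e. $i,j$ have the same parity. Equivalently, writing $i=2p$ or $i=2p+1$, the even--even components are $\k[x^{\pm1}]y_{\overline{(j-i)/2}}g^{i/2}$ and the odd--odd components are $\k[x^{\pm1}]g^{(i-1)/2}u_{(j-i)/2}$, while all other components vanish. Thus exactly $2m^2$ of the $4m^2$ pairs $(i,j)$ give a nonzero component, each contributing dimension $t$ after the quotient. Concretely this says $\{x^\ell g^p y_{\overline k}\mid 0\le \ell\le t-1,\ 0\le p\le m-1,\ 0\le \overline k\le m-1\}$ together with $\{x^\ell g^p u_s\mid 0\le\ell\le t-1,\ 0\le p\le m-1,\ 0\le s\le m-1\}$ is a $\k$-basis of $D_t$, of total size $2\cdot t\cdot m\cdot m = 2m^2t$. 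One must check that these elements are linearly independent in $D_t$, which follows because they are supported in distinct bigraded pieces (and within a fixed piece they are a basis of a free rank-one $\k[x^{\pm1}]/(x^t-1)$-module, using that $g^m=x^{md}$ and $y_{m_i}^{e_i}=1-x^{e_im_id}$ only produce relations already absorbed into $\k[x^{\pm1}]$), and that they span because the bigraded pieces exhaust $D$.

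I would also record the easy companion fact that $D_t$ is finite-dimensional, which is immediate once the spanning set above is exhibited: $x$ has finite order $t$, $g$ has order dividing $mt$ via $g^m=x^{md}$, the $y_{m_i}$ satisfy $y_{m_i}^{e_i}\in\k[x^{\pm1}]$, and the $u_s$ multiply back into $\k[x^{\pm1}]g$ by \eqref{eq4.14}, so the algebra is spanned by finitely many monomials. The main obstacle — and it is a mild one — is the bookkeeping needed to be sure no further collapsing occurs in the quotient, i.e. that the map $\k[x^{\pm1}]/(x^t-1)\otimes_{\k} (\text{chosen generators}) \to D_t$ is injective on each bigraded component; this is handled by the freeness statement \eqref{eqD} inherited from $D$ (equivalently, one may argue via the normal Hopf algebra inclusion $\k[x^{\pm t}]\hookrightarrow D$ of \eqref{eq7.13}, under which $D$ is faithfully flat, so $\dim_\k D_t = \dim_\k D/\dim_\k \k[x^{\pm t}]$ computed in the appropriate graded sense gives $2m^2t$ directly). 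Everything else is routine verification, so I would not grind through it.
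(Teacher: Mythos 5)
Your proposal is correct and follows essentially the same route as the paper: both use the bigrading \eqref{eqD}--\eqref{eq4.24} to see $D$ as a free $\k[x^{\pm 1}]$-module of rank $2m^2$, observe that modding out $x^t-1$ turns each (nonzero) component into a free rank-one $\k[x]/(x^t-1)$-module, and exhibit the same basis $\{x^ig^jy_s,\,x^ig^ju_s\}$ of size $2m^2t$. One small correction: $x^t-1$ is not central in $D$ (since $xu_j=u_jx^{-1}$), but because $d(x^t-1)\in(x^t-1)D$ for every homogeneous $d$, the two-sided ideal $(x^t-1)$ equals $(x^t-1)D=D(x^t-1)$ and is bigraded, so your counting goes through unchanged.
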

\begin{proof} We also want to use the bigrading of $D$ to compute the dimension of $D_t.$ By equation \eqref{eq6.1}, we know that $D$ is a free $\k[x^{\pm 1}]$-module of rank $2m^2$. Now through this bigrading \eqref{eq6.1} and the relation modular $x^{t}-1$, $D_t$ is also bigraded and is a free $\k[x]/(x^t-1)$-module of rank $2m^2$. Therefore, $\dim_{k} D_t=2m^2t.$ Actually, the following elements $\{x^ig^jy_t, x^{i}g^{j}u_{t}|0\leq i\leq t-1, 0\leq j\leq m-1, 0\leq t\leq m-1\}$ (we use the same notations as $D$ for simple) is a basis of $D_t.$
\end{proof}
It is not hard to give the generators and relations of this Hopf algebra:  one just need add one more relation in the definition of the Hopf algebra $D$, that is the relation $x^t=1.$ The coproduct, counit and the antipode are the same as $D$. It seems that there is no need to repeat  them again.

About this Hopf algebra, it has the following properties.
\begin{proposition}\label{p7.14} Retain above notations.
\begin{itemize}\item[(1)] The Hopf algebra $D_t$ is not pointed unless $m=1$. And in case $m>1$, its coradical is not a Hopf subalgebra.
\item[(2)] The Hopf algebra $D_t$ is not semisimple unless $m=1$.
\item[(3)] The Hopf algebra $D_t$ is pivotal, that is, its representation category is a pivotal tensor category.
\end{itemize}
\end{proposition}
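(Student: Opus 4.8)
\textbf{Proof proposal for Proposition \ref{p7.14}.}

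The plan is to deduce all three statements from properties of the ``big'' Hopf algebra $D=D(\underline{m},d,\gamma)$ established in Proposition \ref{p4.8}, together with the exact sequence \eqref{eq7.13}. For part (1), I would first examine how the coradical of $D$, described in the proof of Proposition \ref{p4.8}(3) as $\bigoplus_{i\in\Z,0\le j\le m-1}x^ig^j\oplus(\bigoplus_{i\in\Z,0\le j\le m-1}x^ig^jC_m(d))$, behaves under the quotient map $D\to D_t=D/(x^t-1)$. The key point is that the simple subcoalgebra $C_m(d)$ spanned by $\{(x^{-d}g)^iu_j\mid 0\le i,j\le m-1\}$ involves only the variables $g$ and $u_j$ and the monomial $x^{-d}$, so its image in $D_t$ is again an $m^2$-dimensional subcoalgebra (one checks the defining comultiplication formula $\D(u_j)=\sum_k\gamma^{k(j-k)}u_k\otimes x^{-kd}g^ku_{j-k}$ survives, since $x$ remains invertible in $D_t$ with $x^d\ne$ scalar in general). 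A small computation, parallel to Proposition \ref{p7.6}(1), shows its dual is $M_m(\k)$, hence it is simple; since $m>1$ forces $m^2>1$, the coradical of $D_t$ is strictly bigger than the span of the group-likes, so $D_t$ is not pointed. The fact that the coradical is not a Hopf subalgebra follows as in Proposition \ref{p4.8}(3): the products $u_iu_j$ land outside the coradical (they involve the $y_{m_i}$'s), so the coradical is not closed under multiplication.

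For part (2), I would argue that a nonzero nilpotent element persists in $D_t$. In $D$ the elements $y_{m_i}$ satisfy $y_{m_i}^{e_i}=1-x^{e_im_id}$; in $D_t$ this becomes $y_{m_i}^{e_i}=1-x^{e_im_id\bmod t}$, which for a suitably chosen exponent need not vanish, so this direct route is delicate. Instead, the cleaner approach is to invoke the exact sequence \eqref{eq7.13}: $\k[x^{\pm t}]$ is central (indeed the verification $ad(u_j)(x^t)=x^{-t}\e(u_j)$ given just before the lemma shows normality), and the quotient $\overline{D}=D/(y_{m_1},\dots,y_{m_\theta})$ of Lemma \ref{l7.5} is a common further quotient. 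Actually the most robust argument: if $D_t$ were semisimple it would have bijective antipode of finite order and be a separable algebra; but the quotient $D_t/(y_{m_1},\dots,y_{m_\theta})$ together with the relation $x^t=1$ has a nontrivial Jacobson radical when $m>1$, because the relations $0=\phi_{m_i,j}u_{j+m_i}$ combined with $x^t=1$ (for $t$ not a multiple of all the $e_im_id$) leave $u_s$ nilpotent-like behavior — here I would instead simply note that $D_t$ surjects onto a pointed non-semisimple Hopf algebra, namely a finite-dimensional quotient of the fraction Taft-type subalgebra $B(\underline{m},md,\gamma)/(x^t-1)$ which contains a genuine infinite-dimensional-Taft-like skew-primitive $y_{m_i}$ with $y_{m_i}^{e_i}$ a non-idempotent, non-invertible element; such an algebra is not semisimple, and a Hopf algebra with a non-semisimple Hopf quotient is non-semisimple. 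The main obstacle in this part is to pin down precisely for which $t$ the relevant element is genuinely non-nilpotent-free; I expect one must choose the argument so it is uniform in $t$, most likely by reducing further to $\overline{D_t}$ and citing that $y_{m_i}$ remains a nonzero nilpotent there (its $e_i$-th power becoming $1-x^{e_im_id}$, which is in the augmentation ideal and nilpotent mod $x^t-1$ once $t\mid e_im_id$; in general one replaces $t$ by $\gcd$ considerations or simply observes $y_{m_i}$ lies in the radical of the corresponding local component).

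For part (3), I would show $D_t$ satisfies the hypothesis of Lemma \ref{l2.16}, i.e. $S^2(h)=ghg^{-1}$ for a group-like $g$. This reduces to a computation already implicit in the definition of $D$: from the antipode formulas $S(x)=x^{-1}$, $S(g)=g^{-1}$, $S(y_{m_i})=-y_{m_i}g^{-m_i}$ and $S(u_j)=x^bg^{m-1}\prod_i(-1)^{j_i}\xi_{m_i}^{-j_i}\gamma^{-m_i^2 j_i(j_i+1)/2}x^{j_im_id}g^{-j_im_i}u_j$, one computes $S^2$ on each generator and checks it equals conjugation by the group-like element $\mathfrak{g}:=x^{c}\prod_{i=1}^{\theta}g^{m_i}$ for an appropriate integer $c$ determined by $b$ and $d$ (the same $\mathfrak{g}$ that works for $D$ itself, which is why Proposition \ref{p4.8}(4) was proved this way, as the parenthetical remark ``See the proof of (3) of Proposition \ref{p7.14}'' indicates). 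Since passing to the quotient $D_t$ is compatible with $S$ and with group-likes, the identity $S^2=ad(\mathfrak{g})$ descends to $D_t$, and Lemma \ref{l2.16} gives the pivotal structure on $\Rep(D_t)$. This last part is routine once the element $\mathfrak{g}$ is identified; the genuine work is the bookkeeping of exponents of $x$ and $g$ in $S^2(u_j)$, which I would carry out once and for all in $D$ and then restrict.
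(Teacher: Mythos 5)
Your treatments of (1) and (3) are essentially the paper's: for (1) the paper likewise transports the simple $m^2$-dimensional coalgebra spanned by $\{(x^{-d}g)^iu_j\}$ into $D_t$ exactly as in Proposition \ref{p7.6} and reuses the argument of Proposition \ref{p4.8}(3) for the coradical; for (3) the paper proves the stronger statement that $D$ itself satisfies $S^2(h)=(g^{\sum_i m_i}x^c)h(g^{\sum_i m_i}x^c)^{-1}$ with $c=-\frac{\sum_i(e_i+1)m_id}{2}$ and then passes to $D_t$, which is exactly your plan (the exponent bookkeeping you defer is carried out in the paper and works, using the parity condition \eqref{eq4.7}).

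Part (2), however, has a genuine gap, and two of your intermediate claims are false. First, $D_t/(y_{m_1},\dots,y_{m_\theta})$ is the same as $\overline{D}/(x^t-1)$, a quotient of the algebra $\overline{D}$ which Lemma \ref{l7.5} shows is \emph{semisimple}; so this quotient has zero Jacobson radical, contrary to what you assert, and no non-semisimplicity of $D_t$ can be detected there. Second, there is no Hopf surjection from $D_t$ onto (a quotient of) $B(\underline{m},md,\gamma)/(x^t-1)$: the ideal generated by $u_0,\dots,u_{m-1}$ is not a Hopf ideal since $\e(u_0)=1$ (in fact it is not even proper), and the image of $B(\underline{m},md,\gamma)$ sits inside $D_t$ as a Hopf \emph{subalgebra}, not as a quotient, so the principle ``a Hopf algebra with a non-semisimple Hopf quotient is non-semisimple'' does not apply as you invoke it. Your remaining fallback (nilpotency of $y_{m_i}$, ``gcd considerations'') is left unresolved and indeed fails for general $t$, since $y_{m_i}^{e_i}=1-x^{e_im_id}$ need not vanish or be nilpotent modulo $x^t-1$. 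The paper's argument avoids all of this: if $D_t$ were semisimple it would be cosemisimple by Larson--Radford (characteristic $0$), so its coradical would be all of $D_t$; but $y_{m_i}$ is a nontrivial skew-primitive element, which cannot lie in the coradical — contradiction. (Alternatively, your subalgebra idea can be repaired: the image of $B(\underline{m},md,\gamma)$ in $D_t$ is a pointed Hopf subalgebra with a nontrivial skew-primitive, hence not cosemisimple, hence not semisimple, and semisimplicity passes to Hopf subalgebras in characteristic $0$; but this still ultimately runs through Larson--Radford, not through the quotient or nilpotency arguments you sketch.)
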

\begin{proof} (1) Using totally the same method given the proof of Proposition \ref{p7.6}, the subspace spanned by $\{(x^{-d}g)^{i}u_j|0\leq i,j\leq m-1\}$ is a simple coalgebra, where $x^{-d}$ means its image in $\k[x^{\pm 1}]/(x^t-1).$ So $D_t$ has a simple coalgebra of dimension $m^2$. Therefore it is not pointed. If $m=1$, then it is easy to see that $D_t$ is just a group algebra. Using the same arguments stated in the proof of Proposition \ref{p4.8} (3), its coradical is not a Hopf subalgebra.

(2) Assume $m>1$ and we want to show that $D_t$ is not semisimple. On the contrary, if $D_t$ is semisimple then it is cosemisimple \cite{LR}. This implies every $y_j$ should lie in the coradical. This is absurd since clearly $y_{m_i}$ does not due to it is a nontrivial skew primitive element.

(3) Actually we can prove a stronger result, that is, the Hopf algebra $D$ is pivotal. To prove this stronger result, by Lemma \ref{l2.16} we only need to set the following formula for $S^2$:
\begin{equation}\label{eq7.14} S^2(h)=(g^{\sum_{i=1}^{\theta}m_i}x^c)h(g^{\sum_{i=1}^{\theta}m_i}x^c)^{-1},\;\;\;\;h\in D,
\end{equation}
where $c=-\frac{\sum_{i=1}^{\theta}(e_i+1)m_id}{2}$. Note that by second equation of \eqref{eq4.7}, $\sum_{i=1}^{\theta}(e_i+1)m_id$ is always even. Our task is to prove above formula. Indeed, on one hand,
\begin{align*}S^2(u_j)&=S(x^{b}g^{m-1}\prod_{i=1}^{\theta}(-1)^{j_i}\xi_{m_i}^{-j_i}\gamma^{-
m_i^2\frac{j_i(j_i+1)}{2}}
x^{j_im_id}
g^{-j_im_i}u_j)\\
&=S(u_j)\prod_{i=1}^{\theta}(-1)^{j_i}\xi_{m_i}^{-j_i}\gamma^{-
m_i^2\frac{j_i(j_i+1)}{2}}
x^{-j_im_id}
g^{j_im_i}g^{1-m}x^{-b}\\
&=x^bg^{m-1}\prod_{i=1}^{\theta}\xi_{m_i}^{-2j_i}\gamma^{-
m_i^2j_i(j_i+1)}
x^{j_im_id}
g^{-j_im_i}u_jx^{-j_im_id}
g^{j_im_i}g^{1-m}x^{-b}\\
&=x^{2b}g^{m-1}\gamma^{(1-m)j}\prod_{i=1}^{\theta}\xi_{m_i}^{-2j_i}\gamma^{-
m_i^2j_i(j_i+1)}\gamma^{j(j_im_i)}
x^{-2(1-m)d}g^{1-m}u_j\\
&=x^{2b-2(1-m)d}\prod_{i=1}^{\theta}\gamma^{-m_i^2j_i}u_j,
\end{align*}
where recall that $b=(1-m)d-\frac{\sum_{i=1}^{\theta}(e_i-1)m_i}{2}d.$

On the other hand,
\begin{align*}(g^{\sum_{i=1}^{\theta}m_i}x^c)u_j(g^{\sum_{i=1}^{\theta}m_i}x^c)^{-1}
&=x^{2c}x^{2d\sum_{i=1}^{\theta}m_i}\gamma^{-j\sum_{i=1}^{\theta}m_i}u_j\\
&=x^{2c+2d\sum_{i=1}^{\theta}m_i}\prod_{i=1}^{\theta}\gamma^{-m_i^2j_i}u_j.
\end{align*}
Since $$2c+2d\sum_{i=1}^{\theta}m_i=-\sum_{i=1}^{\theta}(e_i-1)m_id=2b-2(1-m)d,$$ we have $S^2(u_j)=(g^{\sum_{i=1}^{\theta}m_i}x^c)u_j(g^{\sum_{i=1}^{\theta}m_i}x^c)^{-1}.$

So to show the formula \eqref{eq7.14}, we only need to check it for $y_{m_i}$ for $1\leq i\leq \theta$ now. This is not hard. In fact,
\begin{align*}S^2(y_{m_i})&=S(-y_{m_i}g^{-m_i})\\
&=g^{m_i}y_{m_i}g^{-m_i}=\gamma^{-m_i^2}y_{m_i}\\
&=\gamma^{-m_i(m_1+\cdots+m_{\theta})}y_{m_i}\\
&=(g^{\sum_{i=1}^{\theta}m_i}x^c)y_{m_i}(g^{\sum_{i=1}^{\theta}m_i}x^c)^{-1}
\end{align*}
due to $\gamma^{m_im_j}=1$ for $i\neq j$ and $x$ commutes with $y_{m_i}.$

Therefore, the representation category of $D$ is pivotal. As a tensor subcategory, the category of representations of $D_t$ is pivotal automatically.
\end{proof}
  In \cite{BGNR}, the authors posed the following sentence ``it remains unknown whether there exists any Hopf algebra $H$ of dimension 24 such that neither $H$ nor $H^{\ast}$ has the Chevalley property" (see \cite[Introduction, third paragraph]{BGNR}). With the helping of the Hopf algebra $D_t$, we can give one now. We will show that $D_3$ has dimension 24 and has no Chevalley property. However, its dual $(D_3)^{\ast}$ indeed has Chevalley property. That is, we still can't fix the question posed in \cite{BGNR}. Anyway, it seems that the following example didn't written out explicitly in \cite{BGNR} and should be implicated in their classification in a dual version.
 \begin{example}\emph{ Let $m=2$ (this implies that $m$ has no nontrivial fraction now, that is, $\theta=1$) and take $d=6$. The condition $d=6$ guarantees the condition \eqref{eq4.7} is fulfilled and thus Hopf algebra $D(m,d,\gamma)$ exists. So we take $t=3$ and then we find
 $$\dim_{\k} D_t=2m^2t=24.$$
 In order to understand this Hopf algebra well, we give the presentation of this $D_3$: as an algebra, it is generated by $g,x,y,u_0,u_1$ and satisfies
 \begin{align*} & x^3=1,\quad g^2=1,\quad xg=gx,\quad y^2=0,\quad xy=yx,\quad yg=-gy,\\
 & xu_0=u_0x^{-1},\quad  xu_1=u_1x^{-1},\quad yu_0=2u_1=\textrm{i}u_0y,\quad yu_1=0=u_1y,\\
 & u_0g=gu_0,\quad u_1g=-u_1g\\
 & u_0u_0=g,\quad u_0u_1=\frac{- \textrm{i}}{2} yg,\quad u_1u_0=\frac{1}{2}yg,\quad u_1u_1=0,
 \end{align*}
 where $\textrm{i}$ is the imaginary square root of $-1$, that is $\textrm{i}=\sqrt{-1}.$
 The coproduct $\D$, the counit $\epsilon$ and the antipode $S$ of $D_3$ are given by
\begin{eqnarray*}
&&\D(x)=x\otimes x,\;\; \D(g)=g\otimes g,\;\;\D(y)=1\otimes y+y\otimes g,\\
&&\D(u_0)=u_0\otimes u_0-u_1\otimes gu_1,\;\;\D(u_1)=u_0\otimes u_1+u_1\otimes gu_0,\\
&&\epsilon(x)=\epsilon(g)=\epsilon(u_0)=1,\;\;\epsilon(u_1)=\epsilon(y)=0;\\
&&S(x)=x^{-1},\;\; S(g)=g^{-1},\;\;S(y)=-yg^{-1}, \\
&& S(u_0)=gu_0,\;\;S(u_1)=-\textrm{i}u_1.
\end{eqnarray*}
Next we claim that $D_3$ has no Chevalley property while its $(D_3)^{\ast}$ does. Recall that a Hopf algebra is said to have Chevalley property if it's coradical is a Hopf subalgebra. So to show the claim, it is enough to prove that the coradical of $D_3$ is not a Hopf subalgebra and its (Jacobson) radical is a Hopf ideal.  In fact, by Proposition \ref{p7.14} (1), its coradical is not a Hopf subalgebra. Now let's prove that its radical is a Hopf ideal. As usual, denote its radical by $J$ and then it is not hard to see that $y\in J$ since $y$ generates a nilpotent ideal. Using the relation $yu_0=2u_1$, $u_1\in J$. Now consider the quotient $D_3/(y,u_1)$. It is not hard to see that $D_3/(y,u_1)\cong \k (\Z_4\times \Z_3)$. Therefore, $J=(y,u_1)$ and it is a Hopf ideal clearly.}
 \end{example}
 \subsection{The Hypothesis.} We point out that our final aim is to classify all prime Hopf algebras of GK-dimension one. So, as a natural step, we want to consider the question about the Hypothesis (Hyp1) and (Hyp2) listed in the introduction.

 $\bullet$\emph{ The Hypothesis (Hyp1).}  Let $H$ be a prime Hopf algebra of GK-dimension one, does $H$ satisfy (Hyp1) automatically? It is a pity that this is not true as we have the following counterexample.

\begin{example}\label{ex7.5} \emph{Let $n$ be a natural number. As an algebra, $\Lambda(n)$ is generated by $X_1,\ldots,X_n$ and $g$ subject to the following relations:
$$X_i^2=X_j^2,\;\;X_iX_j=-X_jX_i,\;\;g^2=1,\;\;-gX_i=X_ig$$
for all $1\leq i\neq j\leq n.$ The coproduct, counit and the antipode are given by
$$\D(X_i)=1\otimes X_i+X_i\otimes g,\;\;\D(g)=g\otimes g,$$
$$\e(X_i)=0,\;\;\e(g)=1$$
$$S(X_i)=-X_ig,\;\;S(g)=g^{-1}$$
for all $1\leq i\leq n.$ By the following lemma, we know that $\Lambda(n)$ is a prime Hopf algebra of GK-dimension one when $n$ is odd. Moreover, if $n=2m+1$, then the PI-degree of $\Lambda(n)$ is $2^{m+1}$. }

\emph{Now let $$\pi: \Lambda(n)\to \k$$ be a 1-dimensional representation of $\Lambda(n)$. Since $g^2=1$, $\pi(g)=1$ or $\pi(g)=-1$. From the relation $-gX_i=X_ig$, we get $\pi(X_i)=0$ for all $1\leq i\leq n$. This implies that $\ord(\pi)=1$ or $\ord(\pi)=2$. In general, we find that PI-deg$(\Lambda(n))>\ord(\pi)$ and the difference PI-deg$(H)-\ord(\pi)$ can be very large.}
\end{example}
\begin{lemma} Keep the notations and operations used in above example. Then
\begin{itemize}\item[(1)] The algebra $\Lambda(n)$ is a Hopf algebra of GK-dimension one.
\item[(2)] The algebra $\Lambda(n)$ is prime if and only if $n$ is odd.
\item[(3)] If $n=2m+1$ is an odd, then PI-deg $(\Lambda(n))=2^{m+1}$.
\end{itemize}
\end{lemma}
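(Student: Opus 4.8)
Here is my plan for proving the lemma about $\Lambda(n)$.

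\textbf{Setup and the algebra structure.} First I would fix a good basis. The defining relations say that $X_i^2 = X_j^2 =: z$ is a single distinguished element, that the $X_i$ anticommute with each other, that $g$ is an involution, and that $g$ anticommutes with every $X_i$. From these it follows that $z = X_1^2$ is central: it commutes with every $X_i$ (since $X_iX_1^2 = (X_iX_1)X_1 = -X_1X_iX_1 = X_1^2 X_i$) and with $g$ (since $gX_1^2 = -X_1gX_1 = X_1^2 g$). So $\k[z]$ sits in the center. The plan is to show $\Lambda(n)$ is a free module of finite rank over $\k[z]$ with basis $\{X_{i_1}\cdots X_{i_k} g^{\varepsilon} : 1\le i_1 < \cdots < i_k \le n,\ \varepsilon\in\{0,1\}\}$, i.e. rank $2^{n+1}$, by a standard diamond-lemma / Bergman ordering argument on the generators (the overlaps $X_iX_jX_l$, $X_i^2 X_j$, $g X_i^2$, $g^2 g$ are all resolvable because the relations are the ``quantum exterior'' type relations at parameter $-1$ together with a group-like grouplike). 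Granting this, $\Lambda(n)$ is a finitely generated module over the affine commutative domain $\k[z]$, hence affine noetherian of GK-dimension one; and one checks directly (as in the known examples of Section 2.3, and exactly as $D(m,d,\gamma)$ was checked) that $\D,\e,S$ respect the relations, so $\Lambda(n)$ is a Hopf algebra. This gives part (1).

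\textbf{Primeness.} For part (2), I would first dispose of the even case: if $n = 2m$ is even, consider the element $e = X_1X_2\cdots X_n$ (product of all generators). Using the anticommutation relations one computes $e^2 = \pm z^n$ and, crucially, $e$ commutes or anticommutes with each $X_i$ and with $g$ according to a parity count; when $n$ is even one finds that $z^{\,?} \pm e$ or some similar combination generates a proper nonzero ideal whose square is zero, or more cleanly: the localization at $z$ becomes a tensor product of a Clifford-type algebra with $\k\Z_2$, and for $n$ even the relevant Clifford algebra over $\k(z)$ has a nontrivial center giving idempotents, so $\Lambda(n)$ has a nontrivial central idempotent after localization, contradicting primeness; alternatively exhibit explicit zero divisors $ab = 0$ with $ba = 0$, $(a)(b)=0$. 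For $n = 2m+1$ odd, I would prove primeness by the same strategy used repeatedly in the paper via Lemma \ref{l2.10}: $\Lambda(n)$ is $\Z_2$-graded by $\Lambda(n) = \Lambda(n)_0 \oplus \Lambda(n)_1$ where $\Lambda(n)_0$ is the even part (even-length monomials in the $X_i$) and $g$ has degree — actually the cleaner grading is $\widehat{\Z_2}$ coming from the grouplike $g$: $\Lambda(n) = A \oplus Ag$ where $A = \k\langle X_1,\dots,X_n\rangle/(\text{relations})$ is the ``odd Clifford'' algebra; this grading is strong since $g$ is a unit, and one shows $A$, after inverting $z$, is a domain (a generic Clifford algebra in an odd number of variables over the field $\k(z)$ with the quadratic form $z\cdot(\text{identity})$ is a division algebra — a quaternion-algebra-type computation), so its degree-zero piece with respect to the adjoint $\Z_2$-action is a domain, and faithfulness of the adjoint action of $\langle g\rangle$ on $A$ (which holds because $gX_ig^{-1} = -X_i \neq X_i$) yields PI-degree $2\cdot\text{PI-deg}(A)$ and primeness by parts (c),(d) of Lemma \ref{l2.10}.

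\textbf{PI-degree.} For part (3), with $n = 2m+1$: by the remark after the definition of PI-degree, for a prime PI ring the PI-degree is the square root of the rank of the ring over its center. So I need to identify the center $Z$ of $\Lambda(n)$ and compute $\operatorname{rank}_Z \Lambda(n)$. The expectation is $Z = \k[z]$ (nothing else is central when $n$ is odd — the candidate central elements $e = X_1\cdots X_n$ and $eg$ fail to commute with the $X_i$ for $n$ odd by a parity count, and a general element is central only if it lies in $\k[z]$, which one checks by writing it in the monomial basis and imposing commutation with each $X_i$ and with $g$). Then $\operatorname{rank}_{\k[z]}\Lambda(n) = 2^{n+1} = 2^{2m+2} = (2^{m+1})^2$, whence $\text{PI-deg}(\Lambda(n)) = 2^{m+1}$. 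The main obstacle I anticipate is the clean determination of the center in the odd case and the verification that the localized algebra $A[z^{-1}]$ is genuinely a division ring (equivalently, that $\Lambda(n)$ is prime rather than merely semiprime) — this is the Clifford-algebra computation over a field of rational functions, and getting the parity bookkeeping for $e = X_1\cdots X_n$ exactly right (its square, and its commutation behavior, as a function of $n \bmod 4$ and $n\bmod 2$) is where care is needed. Everything else is routine basis-counting and the now-standard application of Lemma \ref{l2.10}.
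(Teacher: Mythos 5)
Your part (1) is fine, and your part (2) for even $n$ is essentially reparable (though note that $X_1\cdots X_n$ itself is \emph{not} central when $n$ is even -- it anticommutes with each $X_i$; the central element the paper uses is $gX_1\cdots X_n$, and the paper then gets zero divisors from the explicit factorization $(X_1^n-agX_1\cdots X_n)(X_1^n+agX_1\cdots X_n)=0$ for a suitable scalar $a$; your localization-and-idempotent variant can be patched by observing that conjugation by $X_1\cdots X_n$ realizes the sign automorphism for even $n$, so $g(X_1\cdots X_n)^{-1}$ becomes central with square a unit after inverting $z$). The genuine gap is in your primeness argument for odd $n$, and it is twofold. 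First, Lemma \ref{l2.10} cannot be applied to the grading $\Lambda(n)=A\oplus Ag$: the hypothesis of that lemma is that the \emph{identity component} of the strong grading is a commutative domain, whereas your identity component $A=\k\langle X_1,\dots,X_n\rangle/(\text{relations})$ is noncommutative, and the lemma's conclusions do not include anything of the form ``PI-deg$(\Lambda(n))=2\cdot$PI-deg$(A)$''. Second, and more seriously, the key claim that $A[z^{-1}]$ is a division algebra is false over our base field: $\k$ is algebraically closed, so by Tsen's theorem the Brauer group of $\k(z)$ (and of the function field of any curve over $\k$) is trivial, and the odd Clifford algebra you are invoking is a matrix algebra over its center, not a division ring. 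Concretely, already for $n=3$ one has $(X_1X_2-\sqrt{-1}\,X_1^2)(X_1X_2+\sqrt{-1}\,X_1^2)=0$ in $A$, since $(X_1X_2)^2=-z^2$. So $A$ is not even a domain for odd $n\geq 3$, and the proposed mechanism for primeness collapses; indeed the whole point of this example in the paper is that $\Lambda(n)$ violates (Hyp2), so no argument resting on an invariant/identity component being a domain can succeed here.

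Primeness for odd $n$ therefore needs a different argument, and the paper's is direct: using $\Lambda(n)\cong\overline{U}(n)\#\k\Z_2$ one records that the center is exactly $\k[X_1^2]$ and that the monomials $g^{l}X_1^{j_1}\cdots X_n^{j_n}$ ($l,j_i\in\{0,1\}$) form a free basis over the center, hence every monomial is regular; then, given nonzero ideals $I,J$ with $IJ=0$, one takes a nonzero homogeneous $a\in I$ (for the grading $\deg X_i=1$, $\deg g=0$) and applies repeated anticommutators $a\mapsto X_ia+aX_i$ to force an honest monomial into $I$, a contradiction. Your part (3) is correct in outline (center $=\k[z]$, rank $2^{n+1}$, so PI-degree $2^{m+1}$ by the prime-PI rank formula), but as written it depends on the flawed primeness step, so it is not yet a proof; with the paper's primeness argument in place, your computation of part (3) coincides with the paper's.
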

\begin{proof} (1) is clear.

(2) If $n$ is even, then we consider the element $g\prod_{i=1}^{n}X_i$. Direct computation shows that this element belongs to the center $C(\Lambda(n))$. Also we know that $X_{1}^{n}$ lives in the center too. Thus $$X_1^{n}-ag\prod_{i=1}^{n}X_i\in C(\Lambda(n))$$
for any $a\in \k$. Now, $(X_1^{n}-ag\prod_{i=1}^{n}X_i)(X_1^{n}+ag\prod_{i=1}^{n}X_i)
=X_1^{2n}-a^2(-1)^\frac{n(n+1)}{2}\prod_{i=1}^{n}X_i^2=
X_1^{2n}-a^2(-1)^\frac{n(n+1)}{2}X_1^{2n}.$ Taking $a$ such that $a^2(-1)^\frac{n(n+1)}{2}=1$, we see that the central element $X_1^{n}-ag\prod_{i=1}^{n}X_i$ has nontrivial zero divisor and thus $\Lambda(n)$ is not prime.

So the left task is to show that $\Lambda(n)$ is prime when $n$ is odd. To prove this, we give the following two facts about the algebra $\Lambda(n)$: 1) The center of $\Lambda(n)$ is $\k[X_1^2]$ ($=\k[X_i^2]$ for $1\leq i\leq n$); 2) $\Lambda(n)$ is a free module over its center with basis $\{g^{l}\prod_{i=1}^{n}X_{i}^{j_i}|0\leq l\leq 1, 0\leq j_i\leq 1\}$. Both of these two facts can be gotten through the following observation easily: As an algebra, one has $\Lambda(n)\cong \overline{U}(n)\# \k\Z_2$ where $\overline{U}(n)=U(n)/(X_i^2-X_j^2|1\leq i\neq j\leq n)$ and $U(n)$ is the enveloping algebra of the commutative Lie superalgebra of dimension $n$ with degree one basis $\{X_i|1\leq i\leq n\}$.

From above two facts about $\Lambda(n)$, every monomial generated by $g$ and $X_i$ ($1\leq i\leq n$) is not a zero divisor and in fact regular. Now to show the result, assume that $I,J$ be two nontrivial ideals of $\Lambda(n)$ satisfying $IJ=0$. We will show that $I$ contains a monomial and thus get a contradiction. For this, through setting $\deg (g)=0$ and $\deg (X_i)=1$ we find that $\Lambda(n)$ is a graded algebra. Let $a$ and $b$ be two nonzero element of $I$ and $J$ respectively. Since $\Lambda(n)$ is $\Z$-graded which is an order group, we can assume that both $a$ and $b$ are homogenous elements through $ab=0$. In particular, we can take $a$ to be a nonzero homogenous element. For simple, we assume that $a$ has degree one (for other degrees one can prove the result using the same way as degree one). So, $$a=\sum_{i=1}^{n} a_iX_i+\sum_{i=1}^{n}a_i'gX_{i},$$
for $a_i,a_i'\in \k.$ Now $a':=X_1a+aX_1=2a_1X_1^2-2\sum_{i\neq 1}a_i'gX_1X_i$. For any $i\neq 1$, we have $a'':=X_{i}a'+a'X_i=4a_1X_1^2X_i-4a_i'gX_1X_i^2$ and continue this process $a''':=X_ja''+a''X_j=-8a_i'gX_1X_i^2X_j\in I$ for any $j\neq 1,i$ (such $j$ exists unless $n=1$. But in case $n=1$, $\Lambda(n)$ is clear prime). This implies that we have a monomial in $I$ if $a_i'\neq 0$ for $i\neq 1$. We next consider the case $a_i'\equiv 0$ for all $i\neq 1.$ Looking back the element $a''$, we can assume that $a_1=0$ too. Repeat above precess through substituting $X_1$ by other $X_j$ and we can assume all $a_j=0$ and $a_t'=0$ with $t\neq j$. That's impossible since $0\neq a$ and in one word we must have a monomial in $I$.

(3) By the proof of the part (2), we know that $\Lambda(n)$ is a free module over its center with basis $\{g^{l}\prod_{i=1}^{n}X_{i}^{j_i}|0\leq l\leq 1, 0\leq j_i\leq 1\}$ and so the rank of $\Lambda(n)$ over its center is $2^{n+1}=2^{2(m+1)}$. Therefore, PI-deg$(\Lambda(n))=\sqrt{2^{2(m+1)}}=2^{m+1}.$
\end{proof}

$\bullet$ \emph{The Hypothesis (Hyp2).} We next want to consider the question about the second hypothesis (Hyp2): Let $H$ be a prime Hopf algebra of GK-dimension one, does $H$ has a one-dimensional representation $\pi: H\to \k$ such its invariant components are domains? This is also not true in general. In fact, by Example \ref{ex7.5}, we find that the left invariant component must contains the subalgebra generated by $X_i$ ($1\leq i\leq n$) for any one-dimensional representation and thus it is not a domain (if it is, it must be commutative by the proof of Lemma \ref{l2.7}).

$\bullet$ \emph{Relation between (Hyp1) and (Hyp2).} In the introduction, (Hyp2) is built on (Hyp1), i.e., they used the same one-dimensional representation. However, it is clear we can consider (Hyp1) and (Hyp2) individually, that is, for each hypothesis we consider a one-dimensional representation which may be different. Until now, we still don't know the exactly relationship between (Hyp1) and (Hyp2) for a prime Hopf algebra of GK-dimension one. So, we formulate the following question for further considerations.

\begin{question} \begin{itemize}
\item[(1)] \emph{Let $H$ be a prime Hopf algebra of GK-dimension one satisfying (Hyp1), does $H$ satisfy (Hyp2) automatically?}
\item[(2)] \emph{Let $H$ be a prime Hopf algebra of GK-dimension one satisfying (Hyp2), does $H$ satisfy (Hyp1) automatically?}
\end{itemize}
\end{question}

\subsection{A conjecture.} From all examples stated in this paper, it seems that prime Hopf algebras of GK-dimension one exist widely. However, we still can find some common points about them. Among of these points, we formulate a conjecture on the structure of prime Hopf algebras of GK-dimension in the following way.
\begin{conjecture}\label{con7.19} \emph{Let $H$ be a prime Hopf of GK-dimension one. Then we have an exact sequence of Hopf algebras:}
\begin{equation} \k\longrightarrow \emph{alg.gp} \longrightarrow H\longrightarrow \emph{f.d. Hopf}\longrightarrow \k,
\end{equation}
\emph{where ``alg.gp" denotes the coordinate algebra of a connected algebraic group of dimension one and ``f.d. Hopf" means a finite-dimensional Hopf algebra.}
\end{conjecture}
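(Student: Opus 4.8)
The plan is to realize the desired four-term exact sequence by producing inside $H$ a normal commutative Hopf subalgebra $A$ which is the coordinate algebra of a connected algebraic group of dimension one, and then to show that the quotient $\overline{H}=H/HA^{+}$ is automatically finite-dimensional. The subalgebra of $H$ generated by any family of Hopf subalgebras is again a Hopf subalgebra, and the property of being central is preserved; hence there is a unique largest central Hopf subalgebra $Z_{0}\subseteq Z(H)$, and $Z_{0}$ is automatically a normal Hopf subalgebra. This $Z_{0}$ is my candidate for $A$: as a subalgebra of $Z(H)$ — which is a commutative affine domain of GK-dimension one, since $H$ is prime and, by the Small--Stafford--Warfield theorem recalled in Subsection \ref{subs2.2}, $H$ is module-finite over its center — $Z_{0}$ is a commutative affine domain carrying a Hopf structure.

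The argument then splits into two claims. \textbf{(i)} $Z_{0}$ is infinite-dimensional. Granting this, $Z_{0}$ is an affine commutative Hopf domain of GK-dimension one, hence by the discussion of connected algebraic groups of dimension one in Subsection \ref{ss2.3} it is isomorphic as a Hopf algebra to $\k[x]$ or to $\k[x^{\pm 1}]$; this is the ``alg.gp'' term. \textbf{(ii)} $H$ is a finitely generated $Z_{0}$-module. Granting this, $\overline{H}=H/HZ_{0}^{+}$ is finite-dimensional, and, $Z_{0}$ being a normal Hopf subalgebra of the Noetherian Hopf algebra $H$, the standard faithful-flatness results for Hopf algebras over normal Hopf subalgebras yield precisely the exact sequence $\k\to Z_{0}\to H\to \overline{H}\to\k$. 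Claim (ii) I expect to be routine: $H$ is already module-finite over $Z(H)$, so it suffices that $Z(H)$ be integral over $Z_{0}$, which can be obtained by a norm argument along the finite quotient $H\to\overline H$, or deduced from the equality of GK-dimensions of $H$ and $Z_{0}$ together with faithful flatness (when $Z_{0}\cong\k[x^{\pm1}]$ one also checks, via the same integrality, that $Z(H)$ contributes no units beyond those of $Z_0$).

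The genuine difficulty is \textbf{(i)}: exhibiting an infinite central Hopf subalgebra with no extra hypotheses on $H$. When $H$ satisfies (Hyp1) and (Hyp2) there is nothing to do, since by Theorem \ref{t7.1} $H$ is one of the explicit families of Section 4, each of which visibly contains a copy of $\k[x]$ or $\k[x^{\pm1}]$ as a central Hopf subalgebra (for instance $\k[y_{m_1}^{e_1t}]$ in $T(\underline m,t,\xi)$, or $\k[x^{\pm 1}]$ in $B(\underline m,\omega,\gamma)$ and $D(\underline m,d,\gamma)$). For the general case my plan would be to first reduce toward the ideal case $\ord(\pi)=\mi(\pi)$: take $\pi$ the one-dimensional representation coming from the left homological integral and replace $H$ by the prime Hopf subalgebra $B=\bigoplus_{\chi\in G_\pi^l\cap G_\pi^r}H_{\chi}^{l}$ of Lemma \ref{l2.10}(e), which has strictly smaller PI-degree but the same invariant component, iterating until one lands in an ideal case where the arguments of Sections 5--6 produce the central $\k[x]$ or $\k[x^{\pm1}]$ directly. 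The obstruction — and the reason this is stated only as a conjecture — is that this reduction breaks down exactly when no one-dimensional representation detects the full PI-degree, as in Example \ref{ex7.5}: for $\Lambda(n)$ one has $\ord(\pi)\le 2$ while the PI-degree is arbitrarily large, yet $\Lambda(n)$ still has the central primitive element $X_1^{2}$ generating a copy of $\k[x]$. Thus what is really missing is a hypothesis-free mechanism that manufactures a central primitive (or grouplike) element of $H$ — morally a ``norm'' of a skew-primitive element, or a suitable power of a normal element landing in the center — and I expect establishing the existence of such an element to be the crux of the conjecture.
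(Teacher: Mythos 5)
The statement you are trying to prove is Conjecture \ref{con7.19}: the paper offers no proof of it, only the observation that every example constructed in the paper satisfies it and a remark identifying it with Brown's notion of a commutative-by-finite Hopf algebra. So there is no ``paper proof'' to compare against, and the real question is whether your proposal closes the conjecture. It does not, and you say so yourself: your claim \textbf{(i)} --- that the largest central Hopf subalgebra $Z_{0}$ of an arbitrary prime Hopf algebra $H$ of GK-dimension one is infinite-dimensional --- is exactly the open content of the conjecture, and nothing in your sketch supplies it. Your proposed hypothesis-free mechanism, iterating the passage to $B=\bigoplus_{\chi\in K}A_{\chi}$ via Lemma \ref{l2.10}(e), cannot work as stated: that part of the lemma is only available when $\mathrm{PI.deg}(A)$ equals the order of the grading group, which is precisely (Hyp1), the hypothesis you are trying to dispense with; and Example \ref{ex7.5} (which you cite) shows that for $\Lambda(n)$ every $1$-dimensional representation has order at most $2$ while the PI-degree is $2^{m+1}$, so the graded/winding machinery of Sections 5--6 never gets off the ground there. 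That $\Lambda(n)$ nevertheless contains the central primitive $X_{1}^{2}$ is verified by hand, not produced by your reduction.

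The surrounding scaffolding is plausible but also not free of gaps, so if you intend this as a genuine reduction of the conjecture to claim (i) you should make the conditional steps precise. You need $Z_{0}$ to be an \emph{affine} domain before invoking the classification of connected one-dimensional algebraic groups (it sits inside $Z(H)$, which is affine by Artin--Tate since $H$ is module-finite over its center by the result of \cite{SSW} quoted in Subsection \ref{subs2.2}, but a subalgebra of an affine algebra is not automatically affine and this requires an argument in dimension one); the integrality of $Z(H)$ over $Z_{0}$ in claim \textbf{(ii)} is asserted via a ``norm argument'' that is not carried out; and exactness of $\k\to Z_{0}\to H\to H/HZ_{0}^{+}\to\k$ needs a faithful-flatness input over the central Hopf subalgebra, which should be cited or proved rather than called standard. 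For the case covered by Theorem \ref{t7.1} your observation is correct and matches what the paper itself does (checking the explicit families), but for the general statement your proposal is a program with the decisive step missing, not a proof.
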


It is not hard to see that all examples given in this paper always satisfy above conjecture.
\begin{remark} \emph{Recently, professor Ken Brown showed the author one of his slides in which he introduced the definition so called \emph{commutative-by-finite} as follows: A Hopf algebra is commutative-by-finite if it is a finite (left or right) module over a commutative normal Hopf subalgebra. So our Conjecture \ref{con7.19} just says that every prime Hopf algebra of GK-dimension one should be a commutative-by-finite Hopf algebra.}
\end{remark}

\end{document}